\numberwithin{equation}{section}
\newtheorem{theorem}{Theorem}[section]
\newtheorem{lemma}{Lemma}[section]
\newtheorem{proposition}{Proposition}[section]
\newtheorem{corollary}{Corollary}[section]
\newtheorem{remark}{Remark}[section]
\newtheorem{assumption}{Assumption}[section]
\newtheorem{definition}{Definition}[section]
\def\ba{\boldsymbol{a}}
\def\bb{\boldsymbol{b}}
\def\bc{\boldsymbol{c}}
\def\bg{\boldsymbol{g}}
\def\bi{\boldsymbol{i}}
\def\bk{\boldsymbol{k}}
\def\bl{\boldsymbol{l}}
\def\bu{\boldsymbol{u}}
\def\bv{\boldsymbol{v}}
\def\bY{\boldsymbol{Y}}
\def\bphi{\boldsymbol{\phi}}
\def\bvarphi{\boldsymbol{\varphi}}
\def\bpi{\boldsymbol{\pi}}
\def\bnu{\boldsymbol{\nu}}
\def\bzero{\mathbf{0}}
\def\bone{\mathbf{1}}
\def\calD{\mathcal{D}}
\def\calE{\mathcal{E}}
\def\calS{\mathcal{S}}
\def\calL{\mathcal{L}}
\def\scrI{\mathscr{I}}
\def\spr{\mbox{\rm spr}}
\def\diag{\mbox{\rm diag}}
\def\adj{\mbox{\rm adj}}
\def\Ker{\mbox{\rm Ker}}
\def\vecM{\mbox{\rm vec}}
\title{Exact asymptotic formulae of the stationary distribution of a discrete-time two-dimensional QBD process}
\author{Toshihisa Ozawa${}^{\dagger}$ and Masahiro Kobayashi${}^{\dagger\dagger}$ \\ 
${}^{\dagger}$Faculty of Business Administration, Komazawa University \\
${}^{\dagger\dagger}$Department of Mathematical Science, Tokai University \\
${}^{\dagger}$1-23-1 Komazawa, Setagaya-ku, Tokyo 154-8525, Japan \\
E-mail: ${}^{\dagger}$toshi@komazawa-u.ac.jp, ${}^{\dagger\dagger}$m\_kobayashi@tsc.u-tokai.a.c.jp 
}
\begin{document}

\maketitle

\begin{abstract}
We consider a discrete-time two-dimensional process $\{(L_{1,n},L_{2,n})\}$ on $\mathbb{Z}_+^2$ with a supplemental process $\{J_n\}$ on a finite set, where individual processes $\{L_{1,n}\}$ and $\{L_{2,n}\}$ are both skip free. We assume that the joint process $\{Y_n\}=\{(L_{1,n},L_{2,n},J_n)\}$ is Markovian and that the transition probabilities of the two-dimensional process $\{(L_{1,n},L_{2,n})\}$ are modulated depending on the state of the background process $\{J_n\}$. This modulation is space homogeneous except for the boundaries of $\mathbb{Z}_+^2$. 
We call this process a discrete-time two-dimensional quasi-birth-and-death (2D-QBD) process and obtain the exact asymptotic formulae of the stationary distribution in the coordinate directions. 

\smallskip
{\it Key wards}: quasi-birth-and-death process, stationary distribution, asymptotic property, matrix analytic method, two-dimensional reflecting random walk

\smallskip
{\it Mathematical Subject Classification}: 60J10, 60K25
\end{abstract}

%
%
\section{Introduction} \label{sec:intro}

We deal with a discrete-time two-dimensional process $\{(L_{1,n},L_{2,n})\}$ on $\mathbb{Z}_+^2$ with a supplemental process $\{J_n\}$ on a finite set $S_0$. We call the supplemental process a phase process. Individual processes $\{L_{1,n}\}$ and $\{L_{2,n}\}$ are skip free, which means that their increments take values in $\{-1,0,1\}$. 
We assume that the joint process $\{\bY_n\}=\{(L_{1,n},L_{2,n},J_n)\}$ is Markovian and that the transition probabilities of the two-dimensional process $\{(L_{1,n},L_{2,n})\}$ are modulated depending on the state of the phase process $\{J_n\}$. This modulation is space homogeneous except for the boundaries of $\mathbb{Z}_+^2$. 
Since a one-dimensional version of this model is called a discrete-time quasi-birth-and-death (QBD) process (see, for example, Latouche and Ramaswami \cite{Latouche99}), we call it a discrete-time two-dimensional quasi-birth-and-death (2D-QBD) process \cite{Ozawa13}.  
Note that a 2D-QBD process is also a two-dimensional skip-free Markov modulated reflecting random walk (MMRRW) \cite{Ozawa15} and stochastic models arising from various Markovian queueing networks with two nodes such as generalized two-node Jakson networks with Markovian arrival processes and phase-type service processes can be represented as 2D-QBD processes (see, for example, Ozawa \cite{Ozawa13} and \cite{Ozawa15}).

Assume that the 2D-QBD process $\{\bY_n\}$ is irreducible, aperiodic and positive recurrent, and denote by $\bnu=(\bnu_{k,l},\,(k,l)\in\mathbb{Z}^2)$ its stationary distribution, where $\bnu_{k,l}=(\nu_{k,l,j},\,j\in S_0)$ and $\nu_{k,l,j}=\lim_{n\to\infty} \mathbb{P}(\bY_n=(k,l,j))$.  
Our aim is to reveal asymptotic properties of the stationary distribution $\bnu$, especially, to obtain the directional exact asymptotic formulae $h_1(k)$ and $h_2(k)$ that satisfy, for some nonzero vector $\bc_1$ and $\bc_2$, 
\begin{equation}
\lim_{k\to\infty} \frac{\bnu_{k,0}}{h_1(k)}=\bc_1,\qquad 
\lim_{k\to\infty} \frac{\bnu_{0,k}}{h_2(k)}=\bc_2.
\end{equation}
A 2D-QBD process {\it without a phase process} is a two-dimensional skip-free reflecting random walk (RRW), which is called a double QBD process in Miyazawa \cite{Miyazawa09}, and the directional exact asymptotic formulae of the stationary distribution of the double QBD process, denoted by $h'_1(k)$ and $h'_2(k)$, are obtained in Kobayashi and Miyazawa \cite{Kobayashi13}. For $i\in\{1,2\}$, $h'_i(k)$ is given in a form
\begin{equation}
h'_i(k) = k^{\alpha'_i-1} (r'_i)^{-k}, 
\end{equation}
where $\alpha'_i\in\{-\frac{1}{2},\frac{1}{2},1,2\}$ and $r'_i$ is the geometric decay rate satisfying $r'_i>1$. 
We will demonstrate that, under certain conditions, the same results also hold for the 2D-QBD process, i.e., the directional exact asymptotic formulae $h_i(k),\,i=1,2,$ are given in a form
\begin{equation}
h_i(k) = k^{\alpha_i-1} r_i^{-k}, 
\end{equation}
where $\alpha_i\in\{-\frac{1}{2},\frac{1}{2},1,2\}$ and $r_i>1$. 

While there are several possible approaches for getting asymptotic properties of the stationary distribution (see Miyazawa \cite{Miyazawa11}), we shall adopt an analytic function approach using the convergence domain, which has been used in Kobayashi and Miyazawa \cite{Kobayashi13}. 
Let $\bvarphi_1(z)$ be the generating function defined as $\bvarphi_1(z)=\sum_{k=0}^\infty \bnu_{k,0} z^k$, where $z$ is a complex variable, and let $\bvarphi_2(z)$ be defined analogously.  By Cauchy's criterion, we see that the radius of convergence of $\bvarphi_1(z)$ and that of $\bvarphi_2(z)$ are respectively given by the directional geometric decay rates $r_1$ and $r_2$ of the stationary distribution $\bnu$, which have already been obtained in Ozawa \cite{Ozawa13}. 
Therefore, in order to obtain the exact asymptotic formula $h_1(k)$ (resp.\ $h_2(k)$), it suffices to analytically extend $\bvarphi_1(z)$ (resp.\ $\bvarphi_2(z)$) beyond its convergence domain and clarify the singularities of $\bvarphi_1(z)$ (resp.\ $\bvarphi_2(z)$) on the circle $|z|=r_1$ (resp.\ $|z|=r_2$). 
To this end, we use the following key expression (see Lemma \ref{le:varphi1}): 
\begin{align*}
\bvarphi_1(z) 
&= \biggl\{ \sum_{j=1}^\infty \bnu_{0,j} \Big( A_{*,-1}^{(2)}(z) + A_{*,0}^{(2)}(z) G_1(z) + A_{*,1}^{(2)}(z) G_1(z)^2 \Big) G_1(z)^{j-1} - \sum_{j=1}^\infty \bnu_{0,j} G_1(z)^j \cr 
&\qquad\quad + \bnu_{0,0} \Big(A_{*,0}^{(0)}(z)+A_{*,1}^{(0)}(z) G_1(z)-I \Big) \biggr\} \Bigl( I-A_{*,0}^{(1)}(z)-A_{*,1}^{(1)}(z) G_1(z)) \Bigr)^{-1}, 
\end{align*}
where each $A_{*,k}^{(l)}(z)$ is a matrix whose entries are Laurent polynomials in $z$; $G_1(z)$ is a solution to the following matrix quadratic equation of $X$: 
\[
A_{*,-1}(z) + (A_{*,0}(z)-I) X + A_{*,1}(z) X^2 = O, 
\]
where each $A_{*,k}(z)$ is also a matrix whose entries are Laurent polynomials in $z$. 
A similar expression also holds for $\bvarphi_2(z)$. 
The key expression of $\bvarphi_1(z)$ corresponds to equation (29) in Kobayashi and Miyazawa \cite{Kobayashi13}, and $G_1(z)$ corresponds to a so-called G-matrix of QBD process (see, for example, Latouche and Ramaswami \cite{Latouche99}). We first define this matrix function $G_1(z)$ on an annular domain on the complex plane, then analytically extend it beyond the annular domain. Using the key expression above and the extended $G_1(z)$, we clarify the singularities of $\bvarphi_1(z)$ on the circle $|z|=r_1$. The singularities of $\bvarphi_2(z)$ on the circle $|z|=r_2$ can also be clarified in the same way. 
Note that, under the assumptions that we state in Section \ref{sec:model}, $\bvarphi_1(z)$ (resp.\ $\bvarphi_2(z)$) has just one singularity $z=r_1$ (resp.\ $z=r_2$) on the circle $|z|=r_1$ (resp.\ $|z|=r_2$), and the singularity is a pole and/or branch point. 

The rest of the paper is organized as follows.  
In Section \ref{sec:model}, the 2D-QBD process we consider is described in detail and our main results are stated. 
In Section \ref{sec:generating_function}, we consider the generating function of the stationary distribution and derive the key expressions mentioned above. 
In Section \ref{sec:Gmatrix}, we redefine the matrix function $G_1(z)$ on an annular domain and analytically extend it. 
The asymptotic formulae $h_1(k)$ and $h_2(k)$ are obtained in Section \ref{sec:asymptotics}.

%
%
\section{Preliminary and main results} \label{sec:model} 

Before describing the model, we present several notations used in the paper. 
$\mathbb{R}$ is the set of all real numbers and $\mathbb{R}_+$ that of all nonnegative real numbers. $\mathbb{Z}$ is the set of all integers and $\mathbb{Z}_+$ that of all nonnegative integers. $\mathbb{N}$ is the set of all positive integers. 
A set $\mathbb{H}$ is defined as $\mathbb{H}=\{-1,0,1\}$ and $\mathbb{H}_+$ as $\mathbb{H}_+=\{0,1\}$. 
$\mathbb{C}$ is the set of all complex numbers. For $a,b\in\mathbb{R}_+$, $\mathbb{C}[a,b]$ and $\mathbb{C}[a,b)$ are defined as $\mathbb{C}[a,b] = \{z\in\mathbb{C}: a\le |z|\le b \}$ and $\mathbb{C}[a,b) = \{z\in\mathbb{C}: a\le |z|< b \}$, respectively. $\mathbb{C}(a,b]$ and $\mathbb{C}(a,b)$ are analogously defined. 
For $z\in\mathbb{C}$ and $r\in\mathbb{R}_+$, $\Delta(z,r)$ is the open disc of center $z$ and radius $r$. $\bar{\Delta}(z,r)$ and $\partial \Delta(z,r)$ are the closed disc and circle of the same center and radius, respectively. 
We denote by $\Delta_r$ the open disc with center $0$ and radius $r$, i.e., $\Delta_r=\Delta(0,r)$. 
For a matrix $A=(a_{ij})$, we denote by $[A]_{i,j}$ the $(i,j)$-entry of $A$. The transpose of a matrix $A$ is denoted by $A^\top$. 
We denote by $\spr(A)$ the spectral radius of $A$, which is the maximum modulus of eigenvalue of $A$. 
We denote by $|A|$ the matrix each of whose entries is the modulus of the corresponding entry of $A$, i.e., $|A|=(|a_{ij}|)$. 
$O$ is a matrix of $0$'s, $\bone$ is a column vector of $1$'s and $\bzero$ is a column vector of $0$'s; their dimensions are determined in context. $I$ is the identity matrix. 
For a $k\times l$ matrix $A=\begin{pmatrix} \ba_1 & \ba_2 & \cdots & \ba_l \end{pmatrix}$, ${\rm vec}(A)$ is a $kl\times 1$ vector defined as 
\[
\vecM(A) = \begin{pmatrix} \ba_1 \cr \ba_2 \cr \vdots \cr \ba_k \end{pmatrix}. 
\]
For matrices $A$, $B$ and $C$, the identity $\vecM(ABC)=(C^\top\otimes A)\,\vecM(B)$ holds (see, for example, 
Horn and Johnson \cite{Horn91}).

%
%
\subsection{Two-dimensional quasi-birth-and-death process}

Consider a two-dimensional process $\{(X_{1,n}, X_{2,n})\}$ on $\mathbb{Z}_+^2$ and a background process $\{J_n\}$ on a finite state space $S_0$, where we denote by $s_0$ the number of elements of $S_0$, i.e., $S_0=\{1,2,...,s_0\}$. We assume that both $\{X_{1,n}\}$ and $\{X_{2,n}\}$ are skip free and that the joint process $\{\bY_n\} = \{ (X_{1,n}, X_{2,n}, J_n) \}$ is Markovian. 
To be precise, $\{\bY_n\}$ is a discrete-time Markov chain on the state space $\calS = \mathbb{Z}_+^2 \times S_0$ and the transition probability matrix 
\[
P=\begin{pmatrix} p_{(x_1,x_2,j),(x_1',x_2',j')},(x_1,x_2,j),(x_1',x_2',j')\in\mathbb{Z}_+^2\times S_0 \end{pmatrix},
\]
where $p_{(x_1,x_2,j),(x_1',x_2',j')} = \mathbb{P}(\bY_1=(x_1',x_2',j')\,|\,\bY_0=(x_1,x_2,j))$, is given in terms of $s_0\times s_0$ non-negative block matrices 
\[
A_{i,j}, i,j\in\mathbb{H},\quad
A^{(1)}_{i,j}, i\in\mathbb{H},j\in\mathbb{H}_+,\quad
A^{(2)}_{i,j}, i\in\mathbb{H}_+,j\in\mathbb{H},\quad
A^{(0)}_{i,j}, i,j\in\mathbb{H}_+,
\]
as follows: for $(x_1,x_2),(x_1',x_2')\in\mathbb{Z}_+^2$, 
\[
\begin{pmatrix} p_{(x_1,x_2,j),(x_1',x_2',j')}, j,j'\in S_0 \end{pmatrix} 
= \left\{ \begin{array}{ll} 
A_{\varDelta x_1,\varDelta x_2}, & \mbox{if  $x_1\ne 0$, $x_2\ne 0$, $\varDelta x_1,\varDelta x_2\in\mathbb{H}$}, \cr 
A^{(1)}_{\varDelta x_1,\varDelta x_2}, & \mbox{if $x_1\ne 0$, $x_2=0$, $\varDelta x_1\in\mathbb{H}$, $\varDelta x_2\in\mathbb{H}_+$}, \cr
A^{(2)}_{\varDelta x_1,\varDelta x_2}, & \mbox{if $x_1=0$, $x_2\ne 0$, $\varDelta x_1\in\mathbb{H}_+$, $\varDelta x_2\in\mathbb{H}$}, \cr
A^{(0)}_{\varDelta x_1,\varDelta x_2}, & \mbox{if $x_1=x_2=0$, $\varDelta x_1,\varDelta x_2\in\mathbb{H}_+$}, \cr
O, & \mbox{otherwise}, \end{array} \right. 
\]
where $\varDelta x_1=x_1'-x_1$ and $\varDelta x_2=x_2'-x_2$. 
The matrices $A_{*,*}$, $A^{(1)}_{*,*}$, $A^{(2)}_{*,*}$ and $A^{(0)}_{*,*}$ defined as
\[
A_{*,*} = \sum_{i,j\in\mathbb{H}} A_{i,j},\quad
A^{(1)}_{*,*} = \sum_{i\in\mathbb{H},j\in\mathbb{H}_+} A^{(1)}_{i,j},\quad
A^{(2)}_{*,*} = \sum_{i\in\mathbb{H}_+,j\in\mathbb{H}} A^{(2)}_{i,j},\quad
A^{(0)}_{*,*} = \sum_{i,j\in\mathbb{H}_+} A^{(0)}_{i,j},
\]
are stochastic. 
The Markov chain $\{\bY_n\}$ is called a discrete-time two-dimensional quasi-birth-and-death (2D-QBD) process in Ozawa \cite{Ozawa13}, where $(X_{1,n},X_{2,n})$ is called the level and $J_n$ the phase. 

\begin{remark}
The 2D-QBD process explained above is a simplified model of that introduced in Ozawa \cite{Ozawa13}. In order to make the structure of the vector generating function defined later simple, we adopt the simplified model. 
A general 2D-QBD process can be reduced to a simplified one with keeping the stationary distribution unchanged: see Miyazawa \cite{Miyazawa15}. 
\end{remark}

We assume the following condition throughout the paper. 
\begin{assumption} \label{as:irreducibleYn}
The Markov chain $\{\bY_n\}$ is irreducible and aperiodic.
\end{assumption}

We consider three kinds of Markov chain generated from $\{\bY_n\}$ by removing one or two boundaries and denote them by $\{\tilde{\bY}_n\}=\{(\tilde{X}_{1,n},\tilde{X}_{2,n},\tilde{J}_n)\}$, $\{\tilde{\bY}^{(1)}_n\}=\{(\tilde{X}^{(1)}_{1,n},\tilde{X}^{(1)}_{2,n},\tilde{J}^{(1)}_n)\}$ and $\{\tilde{\bY}^{(2)}_n\}=\{(\tilde{X}^{(2)}_{1,n},\tilde{X}^{(2)}_{2,n},\tilde{J}^{(2)}_n)\}$, respectively.  
$\{\tilde{\bY}_n\}$ is the Markov chain on the state space $\mathbb{Z}^2\times S_0$, generated by removing the boundaries on the $x_1$ and $x_2$-axes, and the transition probability matrix 
\[
\tilde{P}=\begin{pmatrix} \tilde{p}_{(x_1,x_2,j),(x_1',x_2',j')},(x_1,x_2,j),(x_1',x_2',j')\in\mathbb{Z}^2\times S_0 \end{pmatrix},
\]
where $\tilde{p}_{(x_1,x_2,j),(x_1',x_2',j')} = \mathbb{P}(\tilde{\bY}_1=(x_1',x_2',j')\,|\,\tilde{\bY}_0=(x_1,x_2,j))$ is given as 
\[
\begin{pmatrix} \tilde{p}_{(x_1,x_2,j),(x_1',x_2',j')}, j,j'\in S_0 \end{pmatrix} 
= \left\{ \begin{array}{ll} A_{\varDelta x_1,\varDelta x_2}, & \mbox{if $\varDelta x_1,\varDelta x_2\in\mathbb{H}$}, \cr 
O, & \mbox{otherwise}. \end{array} \right. 
\]
where $\varDelta x_1=x_1'-x_1$ and $\varDelta x_2=x_2'-x_2$. 
The Markov chain $\{\tilde{\bY}_n\}$ is a two-dimensional skip-free random walk on $\mathbb{Z}^2$ whose transition probabilities are modulated depending on the phase state $J_n$. From the definition of $\{\tilde{\bY}_n\}$, we see that it is governed only by the block matrices $A_{k,l},\,k,l\in\mathbb{H}$. Hence, we call the Markov chain $\{\tilde{\bY}_n\}$ a Markov chain generated by $\{A_{k,l},\,k,l\in\mathbb{H}\}$. We adopt the following definition. 
\begin{definition} \label{def:inducedMC_Akl}
We say that the set of block matrices, $\{A_{k,l},\,k,l\in\mathbb{H}\}$, is irreducible (resp.\ aperiodic) if the Markov chain generated by the set of block matrices is irreducible (resp.\ aperiodic).  
\end{definition}

$\{\tilde{\bY}^{(1)}_n\}$ is the Markov chain on the state space $\mathbb{Z}\times\mathbb{Z}_+\times S_0$, generated by removing the boundary on the $x_2$-axes, and the transition probability matrix 
\[
\tilde{P}^{(1)} = \begin{pmatrix} \tilde{p}^{(1)}_{(x_1,x_2,j),(x_1',x_2',j')},(x_1,x_2,j),(x_1',x_2',j')\in\mathbb{Z}\times\mathbb{Z}_+\times S_0 \end{pmatrix},
\]
where $\tilde{p}^{(1)}_{(x_1,x_2,j),(x_1',x_2',j')} = \mathbb{P}(\tilde{\bY}^{(1)}_1=(x_1',x_2',j')\,|\,\tilde{\bY}^{(1)}_0=(x_1,x_2,j))$ is given as 
\[
\begin{pmatrix} \tilde{p}^{(1)}_{(x_1,x_2,j),(x_1',x_2',j')}, j,j'\in S_0 \end{pmatrix} 
= \left\{ \begin{array}{ll} 
A_{\varDelta x_1,\varDelta x_2}, & \mbox{if  $x_2\ne 0$, $\varDelta x_1,\varDelta x_2\in\mathbb{H}$}, \cr 
A^{(1)}_{\varDelta x_1,\varDelta x_2}, & \mbox{if $x_2=0$, $\varDelta x_1\in\mathbb{H}$, $\varDelta x_2\in\mathbb{H}_+$}, \cr
O, & \mbox{otherwise}, 
\end{array} \right. 
\]
where $\varDelta x_1=x_1'-x_1$ and $\varDelta x_2=x_2'-x_2$. 
$\{\tilde{\bY}^{(2)}_n\}$ is the Markov chain on the state space $\mathbb{Z}_+\times\mathbb{Z}\times S_0$, generated by removing the boundary on the $x_1$-axes, and the transition probability matrix $\tilde{P}^{(2)}$ is analogously given. 
$\{\tilde{\bY}^{(1)}_n\}$ is the Markov chain generated by $\{ \{A_{k,l},\,k,l\in\mathbb{H}\},\,\{A_{k,l}^{(1)},\,k\in\mathbb{H},\,j\in\mathbb{H}_+\} \}$, and $\{\tilde{\bY}^{(2)}_n\}$ is that generated by $\{ \{A_{k,l},\,k,l\in\mathbb{H}\},\,\{A_{k,l}^{(2)},\,k\in\mathbb{H}_+,\,j\in\mathbb{H}\} \}$. 
We also adopt the following definition. 
\begin{definition} \label{def:inducedMC_Akl12}
We say that the set of block matrices, $\{ \{A_{k,l},\,k,l\in\mathbb{H}\},\,\{A_{k,l}^{(1)},\,k\in\mathbb{H},\,j\in\mathbb{H}_+\} \}$, is irreducible (resp.\ aperiodic) if the Markov chain generated by the set of block matrices is irreducible (resp.\ aperiodic). Irreducibleness and aperiodicity of $\{ \{A_{k,l},\,k,l\in\mathbb{H}\},\,\{A_{k,l}^{(2)},\,k\in\mathbb{H}_+,\,j\in\mathbb{H}\} \}$ are analogously defined. 
\end{definition}

Hereafter, we assume the following condition throughout the paper.
\begin{assumption} \label{as:Akl_irreducible} 
The sets of block matrices, $\{A_{k,l},\,k,l\in\mathbb{H}\}$, $\{ \{A_{k,l},\,k,l\in\mathbb{H}\},\,\{A_{k,l}^{(1)},\,k\in\mathbb{H},\,j\in\mathbb{H}_+\} \}$ and $\{ \{A_{k,l},\,k,l\in\mathbb{H}\},\,\{A_{k,l}^{(2)},\,k\in\mathbb{H}_+,\,j\in\mathbb{H}\} \}$ are irreducible and aperiodic. 
\end{assumption}

Since $A_{*,*}$ is the transition probability matrix of the background process $\{\tilde{J}_n\}$ of $\{\tilde{\bY}_n\}$, we immediately obtain the following proposition.
\begin{proposition} \label{pr:Ass_irreducible}
Under Assumption \ref{as:Akl_irreducible}, $A_{*,*}$ is irreducible and aperiodic.
\end{proposition}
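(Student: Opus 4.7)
The plan is to exploit the fact that $A_{*,*}$ is precisely the one-step transition matrix of the phase marginal of the unrestricted chain $\{\tilde{\bY}_n\}$, so that irreducibility and aperiodicity descend from $\{\tilde{\bY}_n\}$ to $A_{*,*}$ via a simple marginalization argument.

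The first step is to establish the key identity
\[
[A_{*,*}^n]_{j,j'} = \sum_{(k_1,k_2)\in\mathbb{Z}^2} \mathbb{P}\bigl(\tilde{\bY}_n=(x_1+k_1,x_2+k_2,j')\,\big|\,\tilde{\bY}_0=(x_1,x_2,j)\bigr),
\]
which is independent of the choice of $(x_1,x_2)\in\mathbb{Z}^2$ by spatial homogeneity. For $n=1$ this is just the definition $A_{*,*}=\sum_{i,j\in\mathbb{H}} A_{i,j}$, and the general case follows by induction using the Chapman--Kolmogorov equation together with the identity $A_{*,*}\cdot A_{*,*}=\sum_{i_1,j_1,i_2,j_2} A_{i_1,j_1}A_{i_2,j_2}$, whose right-hand side is the phase marginal of two-step transition probabilities of $\{\tilde{\bY}_n\}$.

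Irreducibility of $A_{*,*}$ is then immediate: given any $j,j'\in S_0$, Assumption \ref{as:Akl_irreducible} provides an integer $n\ge 1$ and a point $(x_1',x_2')\in\mathbb{Z}^2$ with $\mathbb{P}(\tilde{\bY}_n=(x_1',x_2',j')\mid \tilde{\bY}_0=(0,0,j))>0$, and by the identity above this contribution forces $[A_{*,*}^n]_{j,j'}>0$. For aperiodicity, fix $j\in S_0$ and apply the same identity with $j'=j$ and $(x_1,x_2)=(0,0)$: every $n$ such that $\mathbb{P}(\tilde{\bY}_n=(x_1',x_2',j)\mid \tilde{\bY}_0=(0,0,j))>0$ for some $(x_1',x_2')$ satisfies $[A_{*,*}^n]_{j,j}>0$. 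Aperiodicity of $\{\tilde{\bY}_n\}$ at the state $(0,0,j)$ makes the $\gcd$ of such $n$ equal to $1$, hence the period of $j$ under $A_{*,*}$ also divides $1$.

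I do not anticipate a genuine obstacle here; the only point requiring a moment of care is verifying that the spatially marginalized probabilities behave multiplicatively, i.e.\ confirming the inductive step for the identity above, which hinges on the space-homogeneity of the transition law of $\{\tilde{\bY}_n\}$ (the blocks $A_{i,j}$ depend only on the increments $\varDelta x_1,\varDelta x_2$). Once that identity is in hand, both conclusions follow in one line each.
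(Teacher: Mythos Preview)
Your proposal is correct and takes essentially the same approach as the paper: the paper simply notes that $A_{*,*}$ is the transition probability matrix of the background process $\{\tilde{J}_n\}$ of $\{\tilde{\bY}_n\}$ and declares the result immediate, while you have written out explicitly the marginalization identity $[A_{*,*}^n]_{j,j'}=\sum_{(k_1,k_2)}\mathbb{P}(\tilde{\bY}_n=(k_1,k_2,j')\mid\tilde{\bY}_0=(0,0,j))$ and the one-line deductions of irreducibility and aperiodicity from it. Your aperiodicity step is fine once one observes that the set of return times of $\{\tilde{\bY}_n\}$ to $(0,0,j)$ is contained in $\{n:[A_{*,*}^n]_{j,j}>0\}$, so the latter's $\gcd$ divides $1$.
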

$A_{*,*}$ is, therefore, positive recurrent and ergodic since its dimension is finite. We denote by $\bpi_{*,*}$ the stationary distribution of $A_{*,*}$. 
%

%
\subsection{Stationary condition}

According to Ozawa \cite{Ozawa15}, we state the condition on which the 2d-QBD process is positive recurrent. Before doing it, we define so-called induced Markov chains and the mean drift vectors derived from the induced Markov chains (see Fayolle et al.\ \cite{Fayolle95}).
Since the 2D-QBD process is a kind of two-dimensional random walk, there exist three induced Markov chains: $\calL^{\{1,2\}}$, $\calL^{\{1\}}$ and $\calL^{\{2\}}$. 
$\calL^{\{1,2\}}$ is the phase process $\{\tilde{J}_n\}$ of $\{\tilde{\bY}_n\}$ and it is a Markov chain governed by the transition probability matrix $A_{*,*}$. The mean drift vector $\ba^{\{1,2\}}=(a^{\{1,2\}}_1,a^{\{1,2\}}_2)$ derived from $\calL^{\{1,2\}}$ is given as 
\[
a^{\{1,2\}}_1 = \bpi_{*,*} (-A_{-1,*}+A_{1,*}) \bone,\quad 
a^{\{1,2\}}_2 = \bpi_{*,*} (-A_{*,-1}+A_{*,1}) \bone,  
\]
where for $k\in\mathbb{H}$, $A_{k,*}=\sum_{l\in\mathbb{H}}A_{k,l}$ and $A_{*,k}=\sum_{l\in\mathbb{H}}A_{l,k}$. 
Define block tri-diagonal transition probability matrices $A^{(1)}_*$ and $A^{(2)}_*$ as 
\[
A^{(1)}_* = 
\begin{pmatrix}
A^{(1)}_{*,0} & A^{(1)}_{*,1} & & & \cr
A_{*,-1} & A_{*,0} & A_{*,1} & & \cr
& A_{*,-1} & A_{*,0} & A_{*,1} & \cr
& & \ddots & \ddots & \ddots 
\end{pmatrix},\ 
A^{(2)}_* = 
\begin{pmatrix}
A^{(2)}_{0,*} & A^{(2)}_{1,*} & & & \cr
A_{-1,*} & A_{0,*} & A_{1,*} & & \cr
& A_{-1,*} & A_{0,*} & A_{1,*} & \cr
& & \ddots & \ddots & \ddots 
\end{pmatrix}, 
\]
where for $k\in\mathbb{H}$, $A^{(1)}_{*,k}=\sum_{l\in\mathbb{H}}A^{(1)}_{l,k}$ and $A^{(2)}_{k,*}=\sum_{l\in\mathbb{H}}A^{(2)}_{k,l}$. We immediately obtain the following proposition.
\begin{proposition} \label{pr:A1sA2s_irreducible}
Under Assumption \ref{as:Akl_irreducible}, $A^{(1)}_*$ and $A^{(2)}_*$ are irreducible and aperiodic.
\end{proposition}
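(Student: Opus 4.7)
The plan is to recognize $A^{(1)}_*$ as the transition matrix of the $(x_2,J)$-marginal of the Markov chain $\{\tilde{\bY}^{(1)}_n\}$ on $\mathbb{Z}\times\mathbb{Z}_+\times S_0$, and then to transfer irreducibility and aperiodicity from that three-dimensional chain (which holds by Assumption \ref{as:Akl_irreducible}) to the marginal. The key structural observation is that the transition probabilities of $\tilde{\bY}^{(1)}_n$ depend on $(x_1,x_1')$ only through the increment $\varDelta x_1=x_1'-x_1$, so summing over $\varDelta x_1\in\mathbb{H}$ produces well-defined one-step transition probabilities on $\mathbb{Z}_+\times S_0$ that do not depend on the current $x_1$. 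A direct check against the definitions of $A^{(1)}_{*,k}$ and $A_{*,k}$ shows that these marginal transition probabilities are precisely the entries of $A^{(1)}_*$, so the marginal $\{(\tilde{X}^{(1)}_{2,n},\tilde{J}^{(1)}_n)\}$ is itself a genuine Markov chain governed by $A^{(1)}_*$.

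Next, I would transfer irreducibility. Given any two states $(x_2,j),(x_2',j')\in\mathbb{Z}_+\times S_0$, fix an arbitrary $x_1\in\mathbb{Z}$ (say $x_1=0$) and apply irreducibility of $\tilde{\bY}^{(1)}_n$ to obtain a finite sequence of states in $\mathbb{Z}\times\mathbb{Z}_+\times S_0$ from $(0,x_2,j)$ to some $(x_1',x_2',j')$ along which each one-step three-dimensional transition probability is positive. Projecting onto the $(x_2,J)$ coordinates yields a finite sequence from $(x_2,j)$ to $(x_2',j')$; since marginal transition probabilities sum over $\varDelta x_1$, each one-step probability in the projected path is at least the corresponding three-dimensional probability and hence positive. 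This establishes irreducibility of $A^{(1)}_*$.

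For aperiodicity, fix any $(x_2,j)\in\mathbb{Z}_+\times S_0$ and consider the return-time set $\mathcal{R}$ for the state $(x_2,j)$ under $A^{(1)}_*$. By the same projection argument, every return time of $\tilde{\bY}^{(1)}_n$ to $(0,x_2,j)$ is also an element of $\mathcal{R}$, so $\mathcal{R}$ contains the return-time set of an aperiodic state of $\tilde{\bY}^{(1)}_n$; the gcd of the latter is $1$, and since $\gcd$ of a superset divides $\gcd$ of a subset, $\gcd\mathcal{R}=1$. The statement for $A^{(2)}_*$ then follows by the symmetric argument, interchanging the roles of the two coordinates.

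The only nontrivial point to make rigorous is the first one: verifying that the entries of $A^{(1)}_*$, as written in the block tridiagonal form, really coincide with the marginal transition probabilities obtained by summing the $\tilde{P}^{(1)}$-kernel over $\varDelta x_1\in\mathbb{H}$. This is a direct computation from the case distinctions defining $\tilde{p}^{(1)}_{(x_1,x_2,j),(x_1',x_2',j')}$, combined with the definitions $A_{*,k}=\sum_{l\in\mathbb{H}}A_{l,k}$ and $A^{(1)}_{*,k}=\sum_{l\in\mathbb{H}}A^{(1)}_{l,k}$; I do not expect this to be an obstacle, but it is the one place where one must be explicit about bookkeeping. Everything else is a standard argument that marginalization of an irreducible and aperiodic Markov chain, over a coordinate whose transitions are translation invariant, yields an irreducible and aperiodic marginal chain.
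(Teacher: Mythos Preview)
Your proposal is correct and matches the paper's implicit reasoning: the paper states this result with the phrase ``we immediately obtain'' and gives no proof, relying on the identification (stated just afterward) of $A^{(1)}_*$ as the transition matrix of the partial process $\{(\tilde{X}^{(1)}_{2,n},\tilde{J}^{(1)}_n)\}$ of $\{\tilde{\bY}^{(1)}_n\}$. Your projection argument is precisely the unpacking of why this identification transfers irreducibility and aperiodicity from Assumption~\ref{as:Akl_irreducible}.
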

$\calL^{\{1\}}$ (resp.\ $\calL^{\{2\}}$) is a partial process $\{(\tilde{X}^{(1)}_{2,n},\tilde{J}^{(1)}_n)\}$ of $\{\tilde{\bY}^{(1)}_n\}$ (resp.\ $\{(\tilde{X}^{(2)}_{1,n},\tilde{J}^{(2)}_n)\}$ of $\{\tilde{\bY}^{(2)}_n\}$) and it is a Markov chain governed by $A^{(1)}_*$ (resp.\ $A^{(2)}_*$). 
Since $\calL^{\{1\}}$ (resp.\ $\calL^{\{2\}}$) is one-dimensional QBD process, it is positive recurrent if and only if $a^{\{1,2\}}_2<0$ (resp.\ $a^{\{1,2\}}_1<0$). 
Denote by $\bpi^{(1)}_*=(\bpi^{(1)}_{*,k},k\in\mathbb{Z}_+)$ and $\bpi^{(2)}_*=(\bpi^{(2)}_{*,k},k\in\mathbb{Z}_+)$ the stationary distributions of $\calL^{\{1\}}$ and $\calL^{\{2\}}$, respectively, if they exist. Then, the mean increment vectors $\ba^{\{1\}}=(a^{\{1\}}_1,a^{\{1\}}_2)$ and $\ba^{\{2\}}=(a^{\{2\}}_1,a^{\{2\}}_2)$ derived from $\calL^{\{1\}}$ and $\calL^{\{2\}}$ are given by 
\begin{align*}
a^{\{1\}}_1 &= \bpi_{*,0}^{(1)}\big(-A_{-1,0}^{(1)}-A_{-1,1}^{(1)}+A_{1,0}^{(1)}+A_{1,1}^{(1)} \big)\,\bone + \bpi_{*,1}^{(1)}\big(I-R^{(1)}_*\big)^{-1} \big(-A_{-1,*}+A_{1,*} \big)\,\bone,\\
a^{\{1\}}_2 &= 0,\quad 
a^{\{2\}}_1 = 0, \\
a^{\{2\}}_2 &= \bpi_{*,0}^{(2)}\big(-A_{0,-1}^{(2)}-A_{1,-1}^{(2)}+A_{0,1}^{(2)}+A_{1,1}^{(2)} \big)\,\bone + \bpi_{*,1}^{(2)}\big(I-R^{(2)}_*\big)^{-1} \big(-A_{*,-1}+A_{*,1} \big)\,\bone, 
\end{align*}
where $R^{(1)}_*$ and $R^{(2)}_*$ are the rate matrix of $\calL^{\{1\}}$ and $\calL^{\{2\}}$, respectively. 
The condition on which $\{\bY_n\}$ is positive recurrent or transient is given by the following lemma. 
%
\begin{lemma}[Theorem 3.4 of Ozawa \cite{Ozawa15}] \label{le:classification_2DMMRRW}
When $a^{\{1,2\}}_1<0$ and $a^{\{1,2\}}_2<0$, the 2D-QBD process $\{\bY_n\}$ is positive recurrent if $a^{\{1\}}_1<0$ and $a^{\{2\}}_2<0$, and it is transient if either  $a^{\{1\}}_1>0$ or $a^{\{2\}}_2>0$. 
When $a^{\{1,2\}}_1>0$ and $a^{\{1,2\}}_2<0$, the 2D-QBD process is positive recurrent if $a^{\{1\}}_1<0$, and it is transient if $a^{\{1\}}_1>0$.
When $a^{\{1,2\}}_1<0$ and $a^{\{1,2\}}_2>0$, the 2D-QBD process is positive recurrent if $a^{\{2\}}_2<0$, and it is transient if $a^{\{2\}}_2>0$.
When $a^{\{1,2\}}_1>0$ and $a^{\{1,2\}}_2>0$, the 2D-QBD process is transient.
\end{lemma}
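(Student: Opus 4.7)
The plan is to apply Foster's criterion for positive recurrence together with a companion criterion for transience (e.g.\ of Meyn--Tweedie type), using piecewise-linear test functions whose mean drift I compute on three regions of $\mathbb{Z}_+^2\times S_0$: the interior $R_0=\{x_1>N,\,x_2>N\}$, the near-$x_1$-axis strip $R_1=\{x_1>N,\,0\le x_2\le N\}$, and the symmetric strip $R_2=\{0\le x_1\le N,\,x_2>N\}$. This is the Fayolle--Malyshev--Menshikov template \cite{Fayolle95} adapted to the Markov-modulated setting: the phase dependence of the one-step drift of a linear function is eliminated by adding bounded corrections $u_{\mathrm{int}}(j)$, $u_1(x_2,j)$, $u_2(x_1,j)$ obtained from Poisson equations for $A_{*,*}$, $A^{(1)}_*$ and $A^{(2)}_*$, which are solvable thanks to Assumption \ref{as:Akl_irreducible} and Propositions \ref{pr:Ass_irreducible}--\ref{pr:A1sA2s_irreducible}. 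A function of the form $V(x_1,x_2,j)=c_1 x_1+c_2 x_2+u_{\mathrm{int}}(j)$ then has mean one-step drift $c_1 a^{\{1,2\}}_1+c_2 a^{\{1,2\}}_2$ on $R_0$; on $R_1$ the $x_1$-contribution becomes $c_1 a^{\{1\}}_1$ after averaging against $\bpi^{(1)}_*$; symmetrically on $R_2$.

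I then dispatch the four sign cases. When $a^{\{1,2\}}_1,a^{\{1,2\}}_2,a^{\{1\}}_1,a^{\{2\}}_2<0$ I pick $c_1,c_2>0$ so that all three regional drifts are strictly negative (the admissible cone is non-empty since the interior inequality already has positive solutions), and Foster's criterion gives positive recurrence. Under the same interior signs but $a^{\{1\}}_1>0$ (resp.\ $a^{\{2\}}_2>0$), the mean drift in $R_1$ (resp.\ $R_2$) points outward, and I obtain transience by dominating excursions of $\{\bY_n\}$ that remain in that strip by those of the induced QBD $\calL^{\{1\}}$ (resp.\ $\calL^{\{2\}}$), whose level process is transient, and deducing $X_{1,n}\to\infty$ (resp.\ $X_{2,n}\to\infty$) with positive probability by a coupling or bounded-submartingale argument. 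If $a^{\{1,2\}}_1>0$ and $a^{\{1,2\}}_2<0$, the interior drift carries the chain into $R_1$ in linear time, so the dichotomy reduces once more to the sign of $a^{\{1\}}_1$; the opposite asymmetric case is symmetric. Finally, if both $a^{\{1,2\}}_1,a^{\{1,2\}}_2>0$, the function $V=c_1 x_1+c_2 x_2+u_{\mathrm{int}}(j)$ with $c_1,c_2>0$ is a strict submartingale outside a compact set, which yields transience.

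The main obstacle I anticipate is the rigorous treatment of the phase in the boundary strips. The formula for $a^{\{1\}}_1$ averages one-step $X_1$-increments against $\bpi^{(1)}_*$, so justifying this as the effective drift in $R_1$ requires that, during excursions of $\{\bY_n\}$ in $R_1$, the pair $(X_{2,n},J_n)$ mixes to $\bpi^{(1)}_*$ uniformly in the $x_1$-coordinate; this in turn needs geometric ergodicity of $\calL^{\{1\}}$, which is available precisely when $a^{\{1,2\}}_2<0$, and similarly for $R_2$. Constructing the Poisson-equation correction $u_1(x_2,j)$ bounded in $(x_2,j)$ relies on the same ergodicity. The second delicate point is gluing the three regional Lyapunov pieces into a single globally admissible test function (non-negative, bounded one-step increments, unbounded off compacts) by choosing $N$ large enough that the contributions from interface cells between $R_0$, $R_1$ and $R_2$ are negligible; once this bookkeeping is arranged, the Foster and transience criteria conclude the argument essentially by inspection.
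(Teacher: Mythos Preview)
The paper does not prove this lemma; it is simply quoted as Theorem~3.4 of Ozawa~\cite{Ozawa15}, so there is no in-paper proof to compare against. Your outline is the standard Fayolle--Malyshev--Menshikov template adapted to the modulated setting, which is indeed the method underlying~\cite{Ozawa15}, so at the level of strategy you are on the same track as the cited source.

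That said, your sketch has a real gap in the asymmetric cases. When $a^{\{1,2\}}_1>0$ and $a^{\{1,2\}}_2<0$, the induced chain $\calL^{\{2\}}$ is \emph{transient} (its level drift is $a^{\{1,2\}}_1>0$), so $\bpi^{(2)}_*$ does not exist, $a^{\{2\}}_2$ is undefined, and the Poisson equation for $A^{(2)}_*$ has no bounded solution. You therefore cannot build the correction $u_2(x_1,j)$ on $R_2$ the way you propose. Saying ``the interior drift carries the chain into $R_1$ in linear time'' is the right intuition but is not a Lyapunov construction: Foster's criterion requires a single function with negative drift everywhere outside a finite set, including on $R_2$. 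The usual repair is either to exploit that on $R_2$ the $x_1$-component has strictly positive mean increment (so the chain exits $R_2$ geometrically fast, and a suitable exponential-in-$x_1$ weight or a two-step/multi-step drift criterion handles this strip), or to work with a piecewise-linear $V$ whose slope vector in $R_2$ is chosen from the interior cone so that the $R_2$ drift inequality does not require $\bpi^{(2)}_*$ at all. Either way, the $R_2$ region needs a separate argument that your proposal does not supply; the symmetric gap arises in the case $a^{\{1,2\}}_1<0$, $a^{\{1,2\}}_2>0$ on $R_1$.

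Your transience arguments are also too loose to count as a proof. ``Dominating excursions by those of $\calL^{\{1\}}$'' is not obviously a valid coupling, because once $\{\bY_n\}$ leaves the strip $R_1$ its $x_1$-increments are governed by $A_{k,l}$, not by the kernel of $\calL^{\{1\}}$, and it may re-enter $R_1$ repeatedly. The clean route is the one you allude to but do not execute: construct an explicit bounded test function (for instance of the form $V=1-e^{-\theta x_1}+u(x_2,j)$ with small $\theta>0$ and a bounded Poisson correction $u$) whose one-step drift is nonnegative outside a finite set and strictly positive somewhere, and invoke the transience half of the Foster--Lyapunov dichotomy (cf.\ Theorem~2.2.7 in~\cite{Fayolle95}). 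Without such a construction the transience claims remain heuristic.
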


In order for the 2D-QBD $\{\bY_n\}$ to be positive recurrent, we assume the following condition throughout the paper. 

\begin{assumption} \label{as:stability_cond}
If $a^{\{1,2\}}_1<0$ and $a^{\{1,2\}}_2<0$, then $a^{\{1\}}_1<0$ and $a^{\{2\}}_2<0$; if $a^{\{1,2\}}_1>0$ and $a^{\{1,2\}}_2<0$, then $a^{\{1\}}_1<0$; if $a^{\{1,2\}}_1<0$ and $a^{\{1,2\}}_2>0$, then $a^{\{2\}}_2<0$. 
\end{assumption}

Denote by $\bnu = \left(\nu_{k,l,j},\, (k,l,j)\in\mathbb{Z}_+^2\times S_0 \right)$ the stationary distribution of $\{\bY_n\}$. We also define $\bnu_{k,l}$ for $k,l\in\mathbb{Z}_+$ as $\bnu_{k,l} = \left(\nu_{k,l,j},\, j\in S_0 \right)$. In terms of $\bnu_{k,l}$, $\bnu$ is represented as $\bnu = \left( \bnu_{k,l},\, (k,l)\in\mathbb{Z}_+^2 \right)$.

%
\subsection{QBD process with a countable phase space}

A QBD process with a countable phase space is a skip-free one-dimensional process on $\mathbb{Z}_+$ with a background process on a countable state space. The one-dimensional process is the {\it level} and the background process the {\it phase}. 
A 2D-QBD process $\{\bY_n\} = \{ (X_{1,n}, X_{2,n}, J_n) \}$ can be represented as a QBD process with a countable phase space in two ways: One is $\{\bY_n^{(1)}\} = \{(X_{1,n}, (X_{2,n}, J_n))\}$, where $X_{1,n}$ is the level and $(X_{2,n}, J_n)$ the phase, and the other $\{\bY_n^{(2)}\} = \{(X_{2,n}, (X_{1,n}, J_n))\}$, where $X_{2,n}$ is the level and $(X_{1,n}, J_n)$ the phase.
Let $P^{(1)}$ and $P^{(2)}$ be the transition probability matrices of $\{Y_n^{(1)}\}$ and $\{Y_n^{(2)}\}$, respectively. They are represented in block form as follows:
\begin{equation} \label{eq:blockformP}
P^{(i)} = 
\begin{pmatrix}
B^{(i)}_0 & B^{(i)}_1 & & & \cr
A^{(i)}_{-1} & A^{(i)}_0 & A^{(i)}_1 & & \cr
& A^{(i)}_{-1} & A^{(i)}_0 & A^{(i)}_1 & \cr
& & \ddots & \ddots & \ddots 
\end{pmatrix},\, i=1,2, 
\end{equation}
where each block is given by, for $j=0,1$,
\[
B^{(1)}_j = 
\begin{pmatrix}
A^{(0)}_{j,0} & A^{(0)}_{j,1} & & & \cr
A^{(2)}_{j,-1} & A^{(2)}_{j,0} & A^{(2)}_{j,1} & & \cr
& A^{(2)}_{j,-1} & A^{(2)}_{j,0} & A^{(2)}_{j,1} & \cr
& & \ddots & \ddots & \ddots 
\end{pmatrix},\ 
B^{(2)}_j = 
\begin{pmatrix}
A^{(0)}_{0,j} & A^{(0)}_{1,j} & & & \cr
A^{(1)}_{-1,j} & A^{(1)}_{0,j} & A^{(1)}_{1,j} & & \cr
& A^{(1)}_{-1,j} & A^{(1)}_{0,j} & A^{(1)}_{1,j} & \cr
& & \ddots & \ddots & \ddots 
\end{pmatrix}, 
\]
and for $j=-1,0,1$, 
\[
A^{(1)}_j = 
\begin{pmatrix}
A^{(1)}_{j,0} & A^{(1)}_{j,1} & & & \cr
A_{j,-1} & A_{j,0} & A_{j,1} & & \cr
& A_{j,-1} & A_{j,0} & A_{j,1} & \cr
& & \ddots & \ddots & \ddots 
\end{pmatrix},\ 
A^{(2)}_j = 
\begin{pmatrix}
A^{(2)}_{0,j} & A^{(2)}_{1,j} & & & \cr
A_{-1,j} & A_{0,j} & A_{1,j} & & \cr
& A_{-1,j} & A_{0,j} & A_{1,j} & \cr
& & \ddots & \ddots & \ddots 
\end{pmatrix}. 
\]
The transition probability matrices $A_*^{(1)}$ and $A_*^{(2)}$ defined in the previous subsection are represented as $A_*^{(1)} = \sum_{k\in\mathbb{H}} A_k^{(1)}$ and $A_*^{(2)} = \sum_{k\in\mathbb{H}} A_k^{(2)}$, respectively. Hence, $A_*^{(1)}$ (resp.\ $A_*^{(2)}$) is also the transition probability matrix of the phase process of $\{\bY_n^{(1)}\}$ (resp.\ $\{\bY_n^{(2)}\}$) when the level is greater than zero. 

Denote by $\bnu^{(1)} = (\bnu^{(1)}_n, n\in\mathbb{Z}_+)$ the stationary distribution of $\{\bY^{(1)}\}$ and by $\bnu^{(2)} = (\bnu^{(2)}_n, n\in\mathbb{Z}_+)$ that of $\{\bY^{(2)}\}$, where $\bnu^{(1)}_n = (\bnu_{n,k}, k\in\mathbb{Z}_+)$ and $\bnu^{(2)}_n = (\bnu_{k,n}, k\in\mathbb{Z}_+)$. 
It is well known that each of $\bnu^{(1)}$ and $\bnu^{(2)}$ has a matrix geometric form (see References \cite{Neuts94,Tweedie82}, especially, Tweedie \cite{Tweedie82} for the case of QBD process with a countable phase space). 
Let $R^{(1)}$ and $R^{(2)}$ be the rate matrices of $\{\bY^{(1)}\}$ and $\{\bY^{(2)}\}$, respectively. They are given by the minimal nonnegative solutions to the matrix quadratic equations:
\begin{equation}
R^{(1)} = (R^{(1)})^2 A^{(1)}_{-1} + R^{(1)} A^{(1)}_{0} + A^{(1)}_{1},\quad 
R^{(2)} = (R^{(2)})^2 A^{(2)}_{-1} + R^{(2)} A^{(2)}_{0} + A^{(2)}_{1}.
\label{eq:rmRto1andRt02}
\end{equation}
The matrix geometric forms of $\bnu^{(1)}$ and $\bnu^{(2)}$ are given by 
\begin{equation}
\bnu^{(1)}_n = \bnu^{(1)}_1 (R^{(1)})^{n-1},\quad
\bnu^{(2)}_n = \bnu^{(2)}_1 (R^{(2)})^{n-1},\ n\ge 1. 
\label{eq:mgfnu1andnu2}
\end{equation}
Since $\bnu^{(1)}_n$ and $\bnu^{(2)}_n$ are represented as $\bnu^{(1)}_n=(\bnu_{n,k,j},\, k\in\mathbb{Z}_+, j\in S_0)$ and $\bnu^{(2)}_n=(\bnu_{k,n,j},\, k\in\mathbb{Z}_+, j\in S_0)$, our aim corresponds to clarifying the asymptotic properties of the stationary distributions $\bnu^{(1)}_n$ and $\bnu^{(2)}_n$ as $n$ tends to infinity. 

The triplet of block matrices $\{A_{-1}^{(1)}, A_0^{(1)}, A_1^{(1)}\}$ (resp.\ $\{A_{-1}^{(2)}, A_0^{(2)}, A_1^{(2)}\}$) is the Markov additive kernel of the Markov additive process obtained from $\{\bY_n^{(1)}\}$ (resp.\ $\{\bY_n^{(2)}\}$). 
Under Assumption \ref{as:Akl_irreducible}, the Markov additive kernels $\{A_{-1}^{(1)}, A_0^{(1)}, A_1^{(1)}\}$ and $\{A_{-1}^{(2)}, A_0^{(2)}, A_1^{(2)}\}$ are 1-arithmetic, where the kernel is said to be $1$-arithmetic if for some $k\ge 0$ and $j\in S_0$, the greatest common divisor of $\{n_1+\cdots+n_l; [A_{n_1}^{(i)} A_{n_2}^{(i)} \cdots A_{n_l}^{(i)}]_{(k,j),(k,j)}>0,\, n_1, ..., n_l\in\mathbb{H},\, l\ge 1 \}$ is one (see, for example, Miyazawa and Zhao \cite{Miyazawa04}).
Since the Markov additive kernels are 1-arithmetic, $R^{(1)}$ and $R^{(2)}$ are aperiodic (see Remark 4.4 of Miyazawa and Zhao \cite{Miyazawa04}).
According to Ozawa \cite{Ozawa13}, we assume the following condition throughout the paper. 
\begin{assumption} \label{as:irreducibleR1R2}
The rate matrices $R^{(1)}$ and $R^{(2)}$ are irreducible.
\end{assumption}

%
\subsection{Directional decay rates of the stationary distribution}

Here we summarize the results of Ozawa \cite{Ozawa13}. 
Let $z_1$ and $z_2$ be positive numbers. Then, $C(z_1,z_2)$ is nonnegative and, by Assumption \ref{as:Akl_irreducible}, it is irreducible and aperiodic. Let $\chi(z_1,z_2)$ be the Perron-Frobenius eigenvalue of $C(z_1,z_2)$, i.e., $\chi(z_1,z_2)=\spr(C(z_1,z_2))$. The modulus of every eigenvalue of $C(z_1,z_2)$ except $\chi(z_1,z_2)$ is strictly less than $\spr(C(z_1,z_2))$. 
According to Kingman \cite{Kingman61}, we say that a positive function $f(x,y)$ is superconvex in $(x,y)$ if $\log f(x,y)$ is convex in $(x,y)$; a superconvex function is also a convex function. We have the following proposition. 
\begin{proposition}[Proposition 3.1 of Ozawa \cite{Ozawa13}] \label{pr:chiconvex}
$\chi(e^{s_1},e^{s_2})$ is superconvex (hence convex) in $(s_1,s_2) \in \mathbb{R}^2$. 
\end{proposition}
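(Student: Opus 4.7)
The plan is to recognize this as a direct application of Kingman's theorem on log-convexity of the Perron--Frobenius eigenvalue, applied entrywise to $C(e^{s_1},e^{s_2})$. From the context, $C(z_1,z_2)$ is the matrix generating function $C(z_1,z_2)=\sum_{i,j\in\mathbb{H}} z_1^i z_2^j A_{i,j}$, so its entries are Laurent polynomials in $(z_1,z_2)$ with nonnegative coefficients, and $C(e^{s_1},e^{s_2})$ is irreducible and aperiodic for every $(s_1,s_2)\in\mathbb{R}^2$ by Assumption \ref{as:Akl_irreducible}.

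First, I would verify the entrywise log-convexity of $C(e^{s_1},e^{s_2})$. Each entry has the form
\[
[C(e^{s_1},e^{s_2})]_{k,l} = \sum_{i,j\in\mathbb{H}} e^{i s_1 + j s_2}\,[A_{i,j}]_{k,l},
\]
which is a nonnegative linear combination of the pure exponentials $e^{i s_1 + j s_2}$; each exponential is trivially log-linear in $(s_1,s_2)$, hence log-convex, and a nonnegative sum of log-convex functions is log-convex (by H\"older's inequality applied to $f(\theta x + (1-\theta) y) \le f(x)^\theta f(y)^{1-\theta}$ summand by summand). So every entry of $C(e^{s_1},e^{s_2})$ is a superconvex function of $(s_1,s_2)$.

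Next, I would invoke Kingman's theorem (Kingman \cite{Kingman61}): if $M(s)$ is a nonnegative irreducible matrix whose entries are log-convex functions of $s\in\mathbb{R}^d$ (identically zero entries being admitted), then $\spr(M(s))$ is log-convex in $s$. Applying this with $M(s_1,s_2)=C(e^{s_1},e^{s_2})$ gives that $\chi(e^{s_1},e^{s_2})=\spr(C(e^{s_1},e^{s_2}))$ is superconvex in $(s_1,s_2)$, and since every superconvex function is convex, convexity follows immediately.

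The only real subtlety I anticipate is making sure Kingman's multivariate version applies cleanly even at entries of $C$ that vanish identically (some $A_{i,j}$ may be zero). This is handled by the standard convention that an identically zero entry is treated as log-convex (with value $-\infty$ on the log scale), which Kingman's proof accommodates; irreducibility of $C(e^{s_1},e^{s_2})$ for all real $(s_1,s_2)$ is inherited from that of $C(1,1) = A_{*,*}$ (Proposition \ref{pr:Ass_irreducible}) because rescaling nonzero entries by positive exponentials preserves the zero/nonzero pattern. Once these points are noted, the conclusion is immediate and no further computation is needed.
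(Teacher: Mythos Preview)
Your proposal is correct and is precisely the standard argument. The paper does not supply its own proof of this proposition; it simply cites Proposition~3.1 of Ozawa~\cite{Ozawa13}, and the surrounding text already points to Kingman~\cite{Kingman61} for the notion of superconvexity, so your invocation of Kingman's theorem on the entrywise log-convex matrix $C(e^{s_1},e^{s_2})$ is exactly the intended route.
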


Define a closed set $\bar{\Gamma}$ as 
\[
\bar{\Gamma}=\{ (s_1,s_2)\in\mathbb{R}^2 : \chi(e^{s_1},e^{s_2}) \le 1\}.
\]
Since $\chi(1,1)=\chi(e^0,e^0)=1$, $\bar{\Gamma}$ contains the point of $(0,0)$ and thus it is not empty. By Proposition \ref{pr:chiconvex}, $\bar{\Gamma}$ is a convex set. The following property holds for $\bar{\Gamma}$. 
\begin{lemma} \label{le:barGamma_bounded}
Under Assumption \ref{as:Akl_irreducible}, $\bar{\Gamma}$ is bounded. 
\end{lemma}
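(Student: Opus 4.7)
The plan is to combine convexity of the function $g(s_1,s_2) := \log\chi(e^{s_1},e^{s_2})$, established in Proposition \ref{pr:chiconvex}, with a coercivity estimate derived from Assumption \ref{as:Akl_irreducible}. Since $A_{*,*}$ is stochastic we have $\chi(1,1) = \spr(A_{*,*}) = 1$, so $g(0,0) = 0$ and $\bar{\Gamma} = \{g \le 0\}$ is a closed convex set containing the origin. For the zero-sublevel set of a continuous convex function on $\mathbb{R}^2$ with $g(0)=0$, boundedness is equivalent to the pointwise coercivity condition $g(tu_1,tu_2)\to\infty$ as $t\to\infty$ for every $(u_1,u_2)\in S^1$; the relevant implication is standard and uses a subsequence extraction on the compact set $S^1$ together with continuity of $g$ and convexity along rays. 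It therefore suffices to verify coercivity along every ray.

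To verify coercivity, fix $(u_1,u_2)\in S^1$ and set $(a,b) := (\mathrm{sgn}(u_1),\mathrm{sgn}(u_2))\in\mathbb{H}^2\setminus\{(0,0)\}$, so that $au_1+bu_2 = |u_1|+|u_2| > 0$. By Assumption \ref{as:Akl_irreducible} the chain $\{\tilde{\bY}_n\}$ is irreducible, so for any fixed $j_0\in S_0$ there exists $k\ge 1$ with
\[
c \;:=\; [\tilde{P}^k]_{(0,0,j_0),(a,b,j_0)} \;>\; 0.
\]
Expanding the matrix generating function $C(z_1,z_2)^k$ as a Laurent polynomial in $(z_1,z_2)$ with matrix coefficients, one checks that $c$ is precisely the coefficient of $z_1^a z_2^b$ in the diagonal entry $[C(z_1,z_2)^k]_{j_0,j_0}$; since all remaining coefficients of that entry are nonnegative, $[C(z_1,z_2)^k]_{j_0,j_0}\ge c\,z_1^a z_2^b$ for $z_1,z_2>0$.

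For positive $z_1,z_2$ the matrix $C(z_1,z_2)^k$ is entrywise nonnegative, and the spectral radius of a nonnegative matrix dominates each of its diagonal entries (viewed as the spectral radius of the corresponding $1\times 1$ principal submatrix, or directly via Collatz--Wielandt). Consequently $\chi(z_1,z_2)^k = \spr(C(z_1,z_2)^k) \ge c\,z_1^a z_2^b$, and substituting $z_i = e^{tu_i}$ gives
\[
g(tu_1,tu_2) \;\ge\; \frac{\log c}{k} + \frac{t(|u_1|+|u_2|)}{k} \;\longrightarrow\; \infty \quad (t\to\infty),
\]
which yields coercivity in direction $(u_1,u_2)$ and hence the boundedness of $\bar{\Gamma}$.

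The main obstacle I anticipate is the bookkeeping that converts abstract irreducibility of $\{\tilde{\bY}_n\}$ into a concrete positive monomial $c\,z_1^a z_2^b$ on the diagonal of $[C(z_1,z_2)^k]_{j_0,j_0}$: one must identify $c = [\tilde{P}^k]_{(0,0,j_0),(a,b,j_0)}$ with the sum, over $k$-step paths of $\{\tilde{\bY}_n\}$ from $(0,0,j_0)$ to $(a,b,j_0)$, of their one-step-transition products, and it is crucial that the endpoint phase coincides with $j_0$ so that the contribution lands on the diagonal of $C^k$. Once this identification is in place, the remaining ingredients --- monotonicity of spectral radius for nonnegative matrices under passage to principal submatrices, together with the convexity/compactness step --- are routine.
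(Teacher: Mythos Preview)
Your proof is correct. Both you and the paper argue by bounding $\chi(e^{s_1},e^{s_2})^k$ from below by a positive monomial extracted from a power $C(z_1,z_2)^k$ via irreducibility of $\{\tilde{\bY}_n\}$, but the executions differ. The paper uses aperiodicity together with irreducibility to find a \emph{single} $n_0$ such that, simultaneously for every phase $j$ and for each of the four targets $(\pm 1,0)$, $(0,\pm 1)$, the diagonal entry $[C^{n_0}]_{jj}$ already contains the corresponding monomial; this yields a lower bound on every row sum of $C(e^{s_1},e^{s_2})^{n_0}$ and hence, via the minimum-row-sum bound for nonnegative matrices, the global estimate $\chi^{n_0}\ge b^*(e^{s_1}+e^{-s_1}+e^{s_2}+e^{-s_2})$, from which boundedness follows immediately. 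You instead treat one direction at a time: for each sign pattern $(a,b)$ you invoke only irreducibility to get some $k$ and some positive diagonal coefficient $c$, bound $\chi^k$ below by that single diagonal entry, and then close with the convex-analysis step that ray-by-ray coercivity of a continuous convex function forces its zero-sublevel set to be bounded. Your route is slightly more modular and in fact uses only the irreducibility part of Assumption~\ref{as:Akl_irreducible}; the paper's route avoids any convexity lemma and produces an explicit uniform lower bound for $\chi$.
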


We give the proof of the lemma in Appendix \ref{sec:proof_barGamma_bounded}. 
For $i\in\{1,2\}$, define the lower and upper extreme values of $\bar{\Gamma}$ with respect to $s_i$, denoted by $\underline{s}_i^*$ and $\bar{s}_i^*$, as 
\[
\underline{s}_i^* = \min_{(s_1,s_2)\in\bar{\Gamma}} s_i,\quad 
\bar{s}_i^* = \max_{(s_1,s_2)\in\bar{\Gamma}} s_i,
\]
where $-\infty< \underline{s}_i^* \le 0$ and $0\le \bar{s}_i^* < \infty$. 
For $i\in\{1,2\}$, define $\underline{z}_i^*$ and $\bar{z}_i^*$ as $\underline{z}_i^*=e^{\underline{s}_i^*}$ and $\bar{z}_i^*=e^{\bar{s}_i^*}$, respectively, where $0<\underline{z}_i^*\le 1$ and $1\le \bar{z}_i^*<\infty$. 
Since $\bar{\Gamma}\subset\mathbb{R}^2$ is a bounded convex set and $\chi(z_1,e^{s_2})$ is convex in $s_2\in\mathbb{R}$, we see that, for each $z_1\in(\underline{z}_1^*,\bar{z}_1^*)$, equation $\chi(z_1,e^{s_2})=1$ has just two real solutions in $s_2\in\mathbb{R}$. Furthermore, we also see that, for $z_1=\underline{z}_1^*\mbox{ or }\bar{z}_1^*$, equation $\chi(z_1,e^{s_2})=1$ has just one real solution in $s_2\in\mathbb{R}$. 
This analogously holds for $\chi(e^{s_1},z_2)$. Hence, we immediately obtain the following proposition (see Proposition 3.9 of Ozawa \cite{Ozawa13}).
\begin{proposition} \label{pr:chisolution}
For each $z_1\in(\underline{z}_1^*,\bar{z}_1^*)$ (resp.\ $z_2\in(\underline{z}_2^*,\bar{z}_2^*)$), equation $\chi(z_1,z_2)=1$ has just two different real solutions $\underline{\zeta}_2(z_1)$ and $\bar{\zeta}_2(z_1)$ (resp.\ $\underline{\zeta}_1(z_2)$ and $\bar{\zeta}_1(z_2)$), where $\underline{\zeta}_2(z_1)<\bar{\zeta}_2(z_1)$ (resp.\ $\underline{\zeta}_1(z_2)<\bar{\zeta}_1(z_2)$). 
For $z_1=\underline{z}_1^*\mbox{ or }\bar{z}_1^*$ (resp.\ $z_2=\underline{z}_2^*\mbox{ or }\bar{z}_2^*$), it has just one real solution $\underline{\zeta}_2(z_1)=\bar{\zeta}_2(z_1)$ (resp.\ $\underline{\zeta}_1(z_2)=\bar{\zeta}_1(z_2)$). 
If $z_1\notin[\underline{z}_1^*,\bar{z}_1^*]$ (resp.\ $z_2\notin[\underline{z}_2^*,\bar{z}_2^*]$), it has no real solutions. 
\end{proposition}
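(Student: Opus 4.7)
The plan is to change variables to $s_i = \log z_i$, so that real positive roots of $\chi(z_1,z_2)=1$ in $z_2$ correspond to real roots of $\chi(e^{s_1},e^{s_2})=1$ in $s_2$, and then exploit the convex-geometric description of $\bar{\Gamma}$. The key identification is that, for fixed $s_1$, the set $E(s_1):=\{s_2\in\mathbb{R}:\chi(e^{s_1},e^{s_2})\le 1\}$ is precisely the horizontal slice of $\bar{\Gamma}$ at height $s_1$. By Lemma~\ref{le:barGamma_bounded} and Proposition~\ref{pr:chiconvex}, $\bar{\Gamma}$ is a bounded convex set and $s_2\mapsto\chi(e^{s_1},e^{s_2})$ is convex, hence $E(s_1)$ is a bounded closed interval (possibly empty or a single point). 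Counting real positive solutions of $\chi(z_1,z_2)=1$ in $z_2$ thus reduces to counting the boundary points of $E(s_1)$.

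If $z_1\notin[\underline{z}_1^*,\bar{z}_1^*]$, then by the very definition of $\underline{s}_1^*$ and $\bar{s}_1^*$, the slice $E(s_1)$ is empty, so there are no real positive solutions. For $s_1\in(\underline{s}_1^*,\bar{s}_1^*)$, I would first verify that $E(s_1)$ has positive length: compactness of $\bar{\Gamma}$ provides points $(\underline{s}_1^*,s_2^{(-)})$ and $(\bar{s}_1^*,s_2^{(+)})$ in $\bar{\Gamma}$, and writing $s_1=(1-\lambda)\underline{s}_1^*+\lambda\bar{s}_1^*$ for some $\lambda\in(0,1)$ and invoking convexity yields $(s_1,(1-\lambda)s_2^{(-)}+\lambda s_2^{(+)})\in\bar{\Gamma}$. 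Strict log-convexity of $\chi$ around such an interior point then produces a whole open neighbourhood contained in $\bar{\Gamma}$, so $E(s_1)=[\underline{\eta}_2(s_1),\bar{\eta}_2(s_1)]$ with $\underline{\eta}_2(s_1)<\bar{\eta}_2(s_1)$. The convex continuous function $g_{s_1}(s_2):=\chi(e^{s_1},e^{s_2})$ satisfies $g_{s_1}\le 1$ on this interval and, by strict convexity, $g_{s_1}<1$ strictly inside, so $g_{s_1}(s_2)=1$ holds exactly at the two endpoints, giving the two distinct real positive solutions $\underline{\zeta}_2(z_1)=e^{\underline{\eta}_2(s_1)}$ and $\bar{\zeta}_2(z_1)=e^{\bar{\eta}_2(s_1)}$.

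For $s_1=\underline{s}_1^*$ or $\bar{s}_1^*$, the slice $E(s_1)$ must be a single point: if it contained a non-trivial interval, then strict convexity of $\chi$ together with convexity of $\bar{\Gamma}$ would allow a small extension of the first coordinate beyond $s_1$ while staying in $\bar{\Gamma}$, contradicting extremality. The unique endpoint then yields exactly one real positive solution $\underline{\zeta}_2(z_1)=\bar{\zeta}_2(z_1)$. The corresponding statements about $\chi(e^{s_1},z_2)=1$ for fixed $z_2$ follow by exchanging the roles of the two coordinates throughout.

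The main obstacle is the justification of strict log-convexity of $\chi$ in $(s_1,s_2)$, since Proposition~\ref{pr:chiconvex} supplies only (non-strict) superconvexity. Without strict convexity the slice at an extreme $s_1$ could be a non-trivial vertical segment of $\bar{\Gamma}$, and $g_{s_1}$ could be identically equal to $1$ on an interval, producing infinitely many solutions. I would extract strict log-convexity from Assumption~\ref{as:Akl_irreducible}: irreducibility and aperiodicity of $\{A_{k,l},\,k,l\in\mathbb{H}\}$ prevent the underlying Markov additive kernel from being supported on a one-dimensional sublattice, which in turn forces the Perron--Frobenius eigenvalue of $C(e^{s_1},e^{s_2})=\sum_{i,j\in\mathbb{H}}e^{is_1+js_2}A_{i,j}$ to be strictly log-convex in $(s_1,s_2)$, a classical consequence of Kingman's theorem combined with the analyticity of the spectral radius for irreducible non-negative matrix families.
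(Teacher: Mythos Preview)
Your approach is essentially the same as the paper's: the paper states just before the proposition that the result follows from $\bar{\Gamma}$ being a bounded convex set together with convexity of $s_2\mapsto\chi(z_1,e^{s_2})$, and then cites Proposition~3.9 of \cite{Ozawa13} for the details. Your write-up fills in exactly this argument, and you correctly isolate the one genuine subtlety the paper's sketch glosses over, namely that mere convexity does not exclude $\chi(e^{s_1},\cdot)$ being identically~$1$ on a nondegenerate interval.

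Your proposed remedy via strict log-convexity is in the right spirit but heavier than necessary. A cleaner route, already implicit in the paper, is to observe that for each fixed $s_1$ the map $s_2\mapsto\chi(e^{s_1},e^{s_2})$ is real-analytic: $C(e^{s_1},e^{s_2})$ is entry-wise analytic in $s_2$, and by Assumption~\ref{as:Akl_irreducible} it is irreducible, so its Perron--Frobenius eigenvalue is simple and hence analytic. Combined with the unboundedness established in the proof of Lemma~\ref{le:barGamma_bounded} (the lower bound $\chi(e^{s_1},e^{s_2})\ge (b^*(e^{s_1}+e^{s_2}+e^{-s_1}+e^{-s_2}))^{1/n_0}$ forces $\chi\to\infty$ as $|s_2|\to\infty$), analyticity rules out $\chi(e^{s_1},\cdot)\equiv 1$ on any interval, so the convex sublevel set $E(s_1)$ has $\{s_2:\chi(e^{s_1},e^{s_2})=1\}$ equal to its boundary, i.e.\ at most two points. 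This handles both the interior case (two points) and, together with your extremality argument, the endpoint case (one point), without needing to establish strict joint log-convexity.
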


Consider the following matrix quadratic equations of $X$:
\begin{align}
A_{*,-1}(z) + A_{*,0}(z) X + A_{*,1}(z) X^2 = X, 
\label{eq:equationG1} \\
A_{-1,*}(z) + A_{0,*}(z) X + A_{1,*}(z) X^2 = X,  
\label{eq:equationG2}
\end{align}
where, for $i\in\mathbb{H}$,
\[
A_{*,i}(z)=\sum_{j\in\mathbb{H}} A_{j,i} z^j,\quad 
A_{i,*}(z)=\sum_{j\in\mathbb{H}} A_{i,j} z^j. 
\]
Denote by $G_1(z)$ and $G_2(z)$ the minimum nonnegative solutions to matrix equations (\ref{eq:equationG1}) and (\ref{eq:equationG2}), respectively, if they exist. 
Furthermore, consider the following matrix quadratic equations of $X$:
\begin{align}
 X^2 A_{*,-1}(z) + X A_{*,0}(z) + A_{*,1}(z) = X, 
\label{eq:equationR1} \\
 X^2 A_{-1,*}(z) + X A_{0,*}(z) + A_{1,*}(z) = X.  
\label{eq:equationR2}
\end{align}
Denote by $R_1(z)$ and $R_2(z)$ the minimum nonnegative solutions to matrix equations (\ref{eq:equationR1}) and (\ref{eq:equationR2}), respectively, if they exist.
Since $\bar{\Gamma}$ is convex, we see that, for every $s_1\in[\underline{s}_1^*,\bar{s}_1^*]$, there exists at least one real number $s_2$ satisfying $\chi(e^{s_1},e^{s_2})\le 1$. This analogously holds for every $s_2\in[\underline{s}_2^*,\bar{s}_2^*]$. Hence, we obtain the next lemma.
\begin{lemma}[Lemma 3.4 of Ozawa \cite{Ozawa13}] \label{le:existenceG1}
For $z\in\mathbb{R}_+\setminus\{0\}$, the minimum nonnegative solutions $G_1(z)$ and $R_1(z)$ (resp.\ $G_2(z)$ and $R_2(z)$) to matrix equations (\ref{eq:equationG1}) and (\ref{eq:equationR1}) (resp.\ equations (\ref{eq:equationG2}) and (\ref{eq:equationR2})) exist if and only if $z\in[\underline{z}_1^*,\bar{z}_1^*]$ (resp.\ $z\in[\underline{z}_2^*,\bar{z}_2^*])$. 
\end{lemma}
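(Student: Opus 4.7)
The plan is to treat both statements as consequences of a standard existence theorem for QBD G- and R-matrices, parameterized by $z$. The key observation is that, for each fixed $z>0$, equations (\ref{eq:equationG1}) and (\ref{eq:equationR1}) are precisely the G- and R-equations of the nonnegative matrix triplet $\{A_{*,-1}(z),A_{*,0}(z),A_{*,1}(z)\}$, whose ``symbol''
\[
\sum_{i\in\mathbb{H}} y^i A_{*,i}(z) \;=\; \sum_{i,j\in\mathbb{H}} z^j y^i A_{j,i} \;=\; C(z,y)
\]
has Perron--Frobenius eigenvalue $\chi(z,y)$. I therefore only need to prove the statement for $G_1(z)$ and $R_1(z)$; the claim for $G_2(z)$ and $R_2(z)$ is symmetric in the two coordinates.

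First I would construct $G_1(z)$ as the limit of the usual monotone iteration $G_1^{(0)}(z)=O$ and $G_1^{(n+1)}(z)=A_{*,-1}(z)+A_{*,0}(z)G_1^{(n)}(z)+A_{*,1}(z)G_1^{(n)}(z)^2$. A simple induction gives $G_1^{(n)}(z)\le G_1^{(n+1)}(z)$ entrywise, so the pointwise limit $G_1(z)\in[0,\infty]^{s_0\times s_0}$ exists and is dominated by every nonnegative solution, hence is the minimum nonnegative solution whenever it is finite. The analogous iteration from $R_1^{(0)}(z)=O$ produces $R_1(z)$.

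For the \emph{sufficiency} direction, assume $z\in[\underline{z}_1^*,\bar{z}_1^*]$. By Proposition \ref{pr:chisolution} there is $y>0$ with $\chi(z,y)\le 1$. By Assumption \ref{as:Akl_irreducible} the matrix $C(z,y)$ is irreducible, so Perron--Frobenius supplies a strictly positive right eigenvector $\bu$ with $C(z,y)\bu\le\bu$, i.e.\ $y^{-1}A_{*,-1}(z)\bu+A_{*,0}(z)\bu+yA_{*,1}(z)\bu\le\bu$. Setting $\bv=y\bu$, this rewrites as $A_{*,-1}(z)\bu+A_{*,0}(z)\bv+A_{*,1}(z)\bv^2/y\le\bv$, from which one shows inductively that $G_1^{(n)}(z)\bu\le\bv$ for every $n$. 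Since $\bu>\bzero$, this bounds every row-sum of $G_1^{(n)}(z)$ uniformly and $G_1(z)$ is finite. An entirely analogous argument using a positive \emph{left} Perron eigenvector of $C(z,y)$ gives the boundedness of $R_1^{(n)}(z)$, hence finiteness of $R_1(z)$.

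For the \emph{necessity} direction, assume $G_1(z)$ is finite; I would show $\chi(z,\rho)\le 1$ for some $\rho>0$. If $\rho=\spr(G_1(z))>0$, take a nonnegative Perron eigenvector $\bu\neq\bzero$ with $G_1(z)\bu=\rho\bu$; substitution into (\ref{eq:equationG1}) gives $C(z,\rho)\bu=\bu$, and since the nonnegative matrix $C(z,\rho)$ has $\bu$ as a nonnegative eigenvector with eigenvalue $1$, one obtains $\chi(z,\rho)=1$ under irreducibility. Combined with Proposition \ref{pr:chisolution} this forces $z\in[\underline{z}_1^*,\bar{z}_1^*]$. The degenerate case $\rho=0$ (nilpotent $G_1(z)$) needs to be ruled out using the irreducibility and aperiodicity in Assumption \ref{as:Akl_irreducible}, which prevents $\{A_{*,-1}(z),A_{*,0}(z),A_{*,1}(z)\}$ from being trivial. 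The main obstacle I anticipate is precisely this necessity direction: ensuring that the nonnegative (not a priori positive) Perron eigenvector of $G_1(z)$ is compatible with irreducibility of $C(z,\rho)$, and handling the boundary/degenerate spectral cases so that the implication $\chi(z,\rho)=1$ can be asserted cleanly. The convexity of $(s_1,s_2)\mapsto\log\chi(e^{s_1},e^{s_2})$ (Proposition \ref{pr:chiconvex}) will be useful in ruling out $z$ outside $[\underline{z}_1^*,\bar{z}_1^*]$, since there $\chi(z,y)>1$ for every $y>0$ and hence the G-iteration cannot be dominated on any positive cone.
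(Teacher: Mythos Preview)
The paper does not prove this lemma: it is quoted verbatim as Lemma~3.4 of Ozawa~\cite{Ozawa13}, with only the one-line observation preceding it (that convexity of $\bar{\Gamma}$ guarantees, for each $z\in[\underline{z}_1^*,\bar{z}_1^*]$, the existence of some $y>0$ with $\chi(z,y)\le 1$) serving as a pointer to the sufficiency half. Your proposal is therefore not competing against an in-paper argument but supplying one, and the route you take---monotone iteration bounded via a Perron--Frobenius supersolution for sufficiency, and reading off $\chi(z,\rho)=1$ from the Perron eigenvector of $G_1(z)$ for necessity---is exactly the standard QBD argument and aligns with the paper's hint.

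Two small points. First, your line ``$A_{*,-1}(z)\bu+A_{*,0}(z)\bv+A_{*,1}(z)\bv^2/y\le\bv$'' is ill-formed ($\bv^2$ is not a vector operation); what you really want is the clean induction $G_1^{(n)}(z)\bu\le y\bu$, which follows directly from $C(z,y)\bu\le\bu$ without introducing $\bv$. Second, the nilpotent case $\spr(G_1(z))=0$ that you flag is genuinely the only loose end; it is ruled out because under Assumption~\ref{as:Akl_irreducible} the matrix $C(z,y)$ is irreducible for every $y>0$, so if $z\notin[\underline{z}_1^*,\bar{z}_1^*]$ then $\chi(z,y)>1$ for all $y>0$ and no nonnegative $\bu\ne\bzero$ can satisfy $C(z,y)\bu\le\bu$, which blocks the domination argument and forces the iteration to diverge. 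Your closing sentence already gestures at this contrapositive, and it is the cleanest way to close the gap.
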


$G_1(z)$, $R_1(z)$, $G_2(z)$ and $R_2(z)$ satisfy the following property. 
\begin{proposition}[Remark 3.5 of Ozawa \cite{Ozawa13}] \label{pr:sprG1}
For $z_1\in[\underline{z}_1^*,\bar{z}_1^*]$ and $z_2\in[\underline{z}_2^*,\bar{z}_2^*]$, 
\begin{align}
&\spr(G_1(z_1)) = \underline{\zeta}_2(z_1),\quad \spr(R_1(z_1)) = \bar{\zeta}_2(z_1)^{-1}, \\
&\spr(G_2(z_2)) = \underline{\zeta}_1(z_2),\quad \spr(R_2(z_2)) = \bar{\zeta}_1(z_2)^{-1}. 
\end{align}
\end{proposition}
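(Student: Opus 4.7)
The plan is to combine a Perron--Frobenius computation with the minimality of $G_1(z)$ and $R_1(z)$; the claims about $G_2(z)$ and $R_2(z)$ follow by the symmetric argument with $z_1$ and $z_2$ exchanged. Throughout, denote by $C(z_1,z_2)=A_{*,-1}(z_1) z_2^{-1}+A_{*,0}(z_1)+A_{*,1}(z_1) z_2$ the nonnegative matrix whose Perron--Frobenius eigenvalue is $\chi(z_1,z_2)$. The key observation is that right-multiplying (\ref{eq:equationG1}) by any right eigenvector $\bv$ of $G_1(z)$ with eigenvalue $\sigma$ gives $A_{*,-1}(z)\bv+\sigma A_{*,0}(z)\bv+\sigma^2 A_{*,1}(z)\bv=\sigma\bv$, which after division by $\sigma$ reads $C(z,\sigma)\bv=\bv$.

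I would first apply this with $\sigma=\spr(G_1(z))$ and $\bv\ge\bzero$ its nonnegative Perron eigenvector: irreducibility of $C(z,\sigma)$ (from Assumption \ref{as:Akl_irreducible}) forces $\bv>\bzero$ and $\chi(z,\sigma)=\spr(C(z,\sigma))=1$, so Proposition \ref{pr:chisolution} confines $\sigma$ to $\{\underline{\zeta}_2(z),\bar{\zeta}_2(z)\}$. To pin $\sigma$ to the smaller root I would use the fact that $G_1(z)$ is the monotone limit of the iterates $X_0=O$, $X_{n+1}=A_{*,-1}(z)+A_{*,0}(z) X_n+A_{*,1}(z) X_n^2$. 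Taking the strictly positive right Perron eigenvector $\bv^*$ of $C(z,\underline{\zeta}_2(z))$, the identity $A_{*,-1}(z)\bv^*+\underline{\zeta}_2(z) A_{*,0}(z)\bv^*+\underline{\zeta}_2(z)^2 A_{*,1}(z)\bv^*=\underline{\zeta}_2(z)\bv^*$ lets me prove by a one-line induction that $X_n\bv^*\le\underline{\zeta}_2(z)\bv^*$ for every $n$; passing to the limit yields $G_1(z)\bv^*\le\underline{\zeta}_2(z)\bv^*$, and the Collatz--Wielandt bound then gives $\spr(G_1(z))\le\underline{\zeta}_2(z)$. Combined with the dichotomy above this forces $\spr(G_1(z))=\underline{\zeta}_2(z)$.

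The treatment of $R_1(z)$ is entirely dual: left-multiplying (\ref{eq:equationR1}) by a left Perron eigenvector $\bu^\top$ of $R_1(z)$ with eigenvalue $\rho$ and dividing by $\rho$ yields $\bu^\top C(z,\rho^{-1})=\bu^\top$, confining $\rho$ to $\{\underline{\zeta}_2(z)^{-1},\bar{\zeta}_2(z)^{-1}\}$; a matching induction along $Y_0=O$, $Y_{n+1}=Y_n^2 A_{*,-1}(z)+Y_n A_{*,0}(z)+A_{*,1}(z)$, tested against the left Perron eigenvector of $C(z,\bar{\zeta}_2(z))$, yields $\spr(R_1(z))\le\bar{\zeta}_2(z)^{-1}$ and hence equality. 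The hard part will be the degenerate case $\spr(G_1(z))=0$ or $\spr(R_1(z))=0$, in which the division by $\sigma$ (resp.\ $\rho$) producing $C(z,\sigma)\bv=\bv$ is illegal; ruling this out requires Assumption \ref{as:Akl_irreducible} together with the nonvanishing of $A_{*,-1}(z)$ and $A_{*,1}(z)$ for $z>0$ to prevent $G_1(z)$ and $R_1(z)$ from being nilpotent. The endpoint cases $z\in\{\underline{z}_1^*,\bar{z}_1^*\}$, where $\underline{\zeta}_2(z)=\bar{\zeta}_2(z)$, are handled by the upper bound alone.
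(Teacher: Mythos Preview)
The paper does not supply its own proof of Proposition~\ref{pr:sprG1}; it simply quotes the result from Remark~3.5 of Ozawa~\cite{Ozawa13}. So there is no in-paper argument to compare against, and your task is really to produce a self-contained proof.

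Your approach is the standard one and is essentially correct. The two ingredients---(i) the Perron--Frobenius identification $C(z,\sigma)\bv=\bv$ obtained by hitting (\ref{eq:equationG1}) with a nonnegative Perron eigenvector of $G_1(z)$, which forces $\sigma\in\{\underline{\zeta}_2(z),\bar{\zeta}_2(z)\}$, and (ii) the monotone-iterate bound $X_n\bv^*\le\underline{\zeta}_2(z)\bv^*$ with $\bv^*$ the positive right Perron eigenvector of $C(z,\underline{\zeta}_2(z))$---combine exactly as you describe, and the induction step is clean because $A_{*,j}(z)\ge O$. The dual argument for $R_1(z)$ with left eigenvectors and the iterates $Y_n$ is likewise correct.

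The one point you flag as ``the hard part'' deserves a crisper resolution than a vague appeal to irreducibility. To rule out $\spr(G_1(z))=0$ directly, note that the nonnegative matrix $G_1(z)$ has \emph{some} nonnegative Perron eigenvector $\bv\ne\bzero$, and if $\sigma=0$ then (\ref{eq:equationG1}) applied to $\bv$ gives $A_{*,-1}(z)\bv=\bzero$; but then $C(z,w)\bv=(A_{*,0}(z)+wA_{*,1}(z))\bv$ is nondecreasing in $w>0$, and irreducibility of $C(z,w)$ together with $\bv\ge\bzero$, $\bv\ne\bzero$ forces $\spr(C(z,w))\ge1$ for all small $w$, contradicting $\chi(z,w)<1$ for $w$ strictly between $\underline{\zeta}_2(z)$ and $\bar{\zeta}_2(z)$ (or, at an endpoint $z\in\{\underline z_1^*,\bar z_1^*\}$, contradicting convexity of $s\mapsto\chi(z,e^s)$). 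Alternatively, and more simply, your iterate bound already gives $\spr(G_1(z))\le\underline{\zeta}_2(z)$ unconditionally, so all you really need from step~(i) is the \emph{lower} bound $\spr(G_1(z))\ge\underline{\zeta}_2(z)$; for that you can test $G_1(z)$ against the positive \emph{left} Perron eigenvector $\bu^*$ of $C(z,\underline{\zeta}_2(z))$ and show by the same induction that $\bu^* X_n$ increases to a vector $\ge$ some positive multiple of $\bu^*$, avoiding the division-by-zero issue altogether. Either fix closes the gap.
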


Define matrix functions $C_1(z,X)$ and $C_2(X,z)$ as 
\begin{equation}
C_1(z,X) = A_{*,0}^{(1)}(z) + A_{*,1}^{(1)}(z) X,\quad 
C_2(X,z) = A_{0,*}^{(2)}(z) + A_{1,*}^{(2)}(z) X, 
\end{equation}
where $X$ is an $s_0\times s_0$ matrix and, for $i\in\mathbb{H}_+$,
\[
A_{*,i}^{(1)}(z) = \sum_{j\in\mathbb{H}} A_{j,i}^{(1)} z^j,\quad 
A_{i,*}^{(2)}(z) = \sum_{j\in\mathbb{H}} A_{i,j}^{(2)} z^j.
\]
Define $\psi_1(z)$ and $\psi_2(z)$ as
\[
\psi_1(z) = \spr(C_1(z,G_1(z))),\quad 
\psi_2(z) = \spr(C_2(G_2(z),z)). 
\]
By Propositions 3.5 and 3.6 of Ozawa \cite{Ozawa13} and their proofs, we obtain the following proposition. 
\begin{proposition} 
For $i\in\{1,2\}$, $\psi_i(e^s)$ is superconvex (hence convex) in $s\in[\underline{s}_i^*,\bar{s}_i^*]$. 
\end{proposition}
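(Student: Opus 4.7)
I will sketch the plan for $\psi_1$; the argument for $\psi_2$ is symmetric and mirrors Propositions~3.5--3.6 of Ozawa \cite{Ozawa13}. The strategy is to reduce $\psi_1(e^s)$ to the Perron--Frobenius eigenvalue of a matrix whose entries are \emph{explicitly} superconvex in $s$, and then combine Kingman's theorem with a composition-of-convex-functions argument.

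First I would identify $\psi_1(e^s)$ with $\spr(\tilde{C}_1(e^s,\underline{\zeta}_2(e^s)))$, where
\[
\tilde{C}_1(z_1,z_2) \;:=\; A^{(1)}_{*,0}(z_1) + z_2\,A^{(1)}_{*,1}(z_1) \;=\; \sum_{i\in\mathbb{H},\,j\in\mathbb{H}_+} A^{(1)}_{i,j}\, z_1^i z_2^j.
\]
Let $\bu(s)$ be a nonnegative right Perron eigenvector of $G_1(e^s)$ at its Perron eigenvalue $t(s):=\spr(G_1(e^s))=\underline{\zeta}_2(e^s)$ (Proposition \ref{pr:sprG1}). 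Substituting $X=G_1(e^s)$ in (\ref{eq:equationG1}) and applying the result to $\bu(s)$ gives
\[
\bigl[A_{*,-1}(e^s) + t(s)\,A_{*,0}(e^s) + t(s)^2\,A_{*,1}(e^s)\bigr]\bu(s) \;=\; t(s)\,\bu(s),
\]
so after dividing by $t(s)$ one sees that $\bu(s)$ is a nonnegative eigenvector of the irreducible nonnegative matrix $C(e^s,t(s))$ at its Perron eigenvalue $\chi(e^s,t(s))=1$; hence $\bu(s)>0$. Using the same substitution,
\[
C_1(e^s,G_1(e^s))\,\bu(s) \;=\; \bigl[A^{(1)}_{*,0}(e^s) + t(s)\,A^{(1)}_{*,1}(e^s)\bigr]\bu(s) \;=\; \tilde{C}_1(e^s,t(s))\,\bu(s),
\]
so $\bu(s)$ is a strictly positive eigenvector of both nonnegative matrices $C_1(e^s,G_1(e^s))$ and $\tilde{C}_1(e^s,t(s))$ with a common eigenvalue. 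The Perron--Frobenius theorem (a nonnegative matrix with a strictly positive eigenvector realizes its spectral radius at that eigenvector) identifies this common value as $\psi_1(e^s)=\spr(\tilde{C}_1(e^s,t(s)))$.

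Next, each entry of $\tilde{C}_1(e^{s_1},e^{s_2})$ is a nonnegative linear combination of exponentials $e^{is_1+js_2}$, hence is log-convex in $(s_1,s_2)$; Kingman's theorem then yields that $F(s_1,s_2):=\log\spr(\tilde{C}_1(e^{s_1},e^{s_2}))$ is convex on $\mathbb{R}^2$, and $F$ is nondecreasing in $s_2$ since $\tilde{C}_1$ is entrywise nondecreasing in $s_2$. The auxiliary function $\hat{\zeta}_2(s):=\log\underline{\zeta}_2(e^s)=\min\{s_2:(s,s_2)\in\bar{\Gamma}\}$ is convex on $[\underline{s}_1^*,\bar{s}_1^*]$ because $\bar{\Gamma}$ is closed, bounded and convex (Proposition \ref{pr:chiconvex}, Lemma \ref{le:barGamma_bounded}). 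The composition rule ``$F$ convex and nondecreasing in its second argument, $g$ convex, imply $F(s,g(s))$ convex'' then gives convexity of $\log\psi_1(e^s)=F(s,\hat{\zeta}_2(s))$, which is the claimed superconvexity.

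The main obstacle is the identification step: $\psi_1(e^s)$ is the spectral radius of a matrix built from $G_1(e^s)$, and $G_1(e^s)$ depends on $s$ only implicitly through (\ref{eq:equationG1}), not as an explicit function of $e^{s_1},e^{s_2}$. The trick is that a spectral radius is pinned down by a positive right eigenvector, and the eigenvector $\bu(s)$ is shared between $G_1(e^s)$, $C(e^s,t(s))$ and $\tilde{C}_1(e^s,t(s))$; this lets us replace the implicit $G_1(e^s)$ by the explicit scalar $t(s)=\underline{\zeta}_2(e^s)$, after which the remainder reduces to routine convexity bookkeeping via Kingman and the composition rule.
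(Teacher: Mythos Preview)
Your argument is correct and follows essentially the same route as Propositions~3.5--3.6 of Ozawa~\cite{Ozawa13}, on which the paper relies without reproducing the proof: first identify $\psi_1(e^s)$ with the Perron--Frobenius eigenvalue of the explicit matrix $\tilde C_1(e^s,\underline{\zeta}_2(e^s))$ via the shared positive eigenvector, then invoke Kingman's theorem together with the convexity of the lower boundary $s\mapsto\log\underline{\zeta}_2(e^s)$ of the convex set $\bar\Gamma$ and the monotone--convex composition rule. The only point that deserves a word of care is the positivity step: you use that a nonnegative matrix admitting a strictly positive right eigenvector must realize its spectral radius there, which is indeed a standard Perron--Frobenius consequence and exactly what is needed to pass from $C_1(e^s,G_1(e^s))$ to $\tilde C_1(e^s,\underline{\zeta}_2(e^s))$.
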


Next, we introduce points $(\theta_1^{(c)},\theta_2^{(c)})$ and $(\eta_1^{(c)},\eta_2^{(c)})$ on the closed curve $\chi(e^{s_1},e^{s_2})=1$, as follows:
\begin{align*}
\theta_1^{(c)} = \max\{s_1\in[\underline{s}_1^*,\bar{s}_1^*]: \psi_1(e^{s_1})\le 1\},\quad 
\theta_2^{(c)} = \log \underline{\zeta}_2(e^{\theta_1^{(c)}}), \cr
\eta_2^{(c)} = \max\{s_2\in[\underline{s}_2^*,\bar{s}_2^*]: \psi_2(e^{s_2})\le 1\},\quad 
\eta_1^{(c)} = \log \underline{\zeta}_1(e^{\eta_2^{(c)}}).
\end{align*}
Furthermore, define $\bar{\theta}_2^{(c)}$ and $\bar{\eta}_1^{(c)}$ as
\[
\bar{\theta}_2^{(c)} = \log \bar{\zeta}_2(e^{\theta_1^{(c)}}),\quad
\bar{\eta}_1^{(c)} = \log \bar{\zeta}_1(e^{\eta_2^{(c)}}).
\]
Since $\bar{\Gamma}$ is a convex set, we can classify the possible configuration of points $(\theta_1^{(c)},\theta_2^{(c)})$ and $(\eta_1^{(c)},\eta_2^{(c)})$ in the following manner. 
\begin{align*}
&\mbox{Type I: $\eta_1^{(c)}<\theta_1^{(c)}$ and $\theta_2^{(c)}<\eta_2^{(c)}$},\quad 
\mbox{Type II: $\eta_1^{(c)}<\theta_1^{(c)}$ and $\eta_2^{(c)}\le\theta_2^{(c)}$}, \\
&\mbox{Type III: $\theta_1^{(c)}\le\eta_1^{(c)}$ and $\theta_2^{(c)}<\eta_2^{(c)}$}. 
\end{align*}
Define the directional decay rates of the stationary distribution of the 2D-QBD process, denoted by $\xi_1$ and $\xi_2$, as
\begin{align*}
\xi_1 &= - \lim_{n\to\infty} \frac{1}{n} \log\nu_{n,j,k}\quad\mbox{for any $j$ and $k$},\\
\xi_2 &= - \lim_{n\to\infty} \frac{1}{n} \log\nu_{i,n,k}\quad\mbox{for any $i$ and $k$}. 
\end{align*}
Then, $\xi_1$ and $\xi_2$ are given as follows.
\begin{lemma}[Corollary 4.3 of Ozawa \cite{Ozawa13}] \label{le:decay_rate}
Under Assumptions \ref{as:irreducibleYn} to \ref{as:irreducibleR1R2}, the directional decay rates $\xi_1$ and $\xi_2$ exist and they are given by 
\begin{align}
&(\xi_1,\xi_2) = \left\{ 
\begin{array}{ll}
(\theta_1^{(c)},\eta_2^{(c)}), & \mbox{Tye I}, \cr
(\bar{\eta}_1^{(c)},\eta_2^{(c)}), & \mbox{Type II}, \cr
(\theta_1^{(c)},\bar{\theta}_2^{(c)}), & \mbox{Type III}.
\end{array} \right.
\end{align}
\end{lemma}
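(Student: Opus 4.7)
The plan is to derive $\xi_1, \xi_2$ from the matrix-geometric representation (\ref{eq:mgfnu1andnu2}) of $\bnu_n^{(1)}, \bnu_n^{(2)}$, reducing the asymptotics of the infinite-phase rate matrices $R^{(1)}, R^{(2)}$ to the two-dimensional Perron--Frobenius geometry of $\chi$ and $\psi_i$ already assembled in this section. Writing
\[
\nu_{n,k,j} \;=\; \bnu_1^{(1)} (R^{(1)})^{n-1} \be_{(k,j)},
\]
and using the irreducibility of $R^{(1)}$ (Assumption \ref{as:irreducibleR1R2}) together with its aperiodicity (the $1$-arithmetic Markov additive kernel), an infinite-matrix Perron--Frobenius argument yields existence of the limit $\xi_1$, and the symmetric argument gives $\xi_2$.

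To identify each rate, I would take a $z$-transform in the phase variable. The block-tridiagonal equation (\ref{eq:rmRto1andRt02}) for $R^{(1)}$ reduces to the $s_0\times s_0$ quadratic (\ref{eq:equationR1}) for $R_1(z)$, with the axial boundary absorbed through the spectral condition on $C_1(z,G_1(z))$. The resulting radius of convergence in $z = z_1$ is limited by two constraints: (i) $z_1 \in [\underline z_1^*, \bar z_1^*]$, equivalently existence of $R_1(z_1)$ by Lemma \ref{le:existenceG1}; (ii) $\psi_1(z_1) < 1$. Each defines a single log-interval by the superconvexity of $\chi(e^{s_1},e^{s_2})$ (Proposition \ref{pr:chiconvex}) and of $\psi_1(e^s)$. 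In direction 1 this yields the candidate $\theta_1^{(c)}$; a second candidate $\bar\eta_1^{(c)}$, the upper real root of $\chi(z_1,e^{\eta_2^{(c)}})=1$, arises from propagation into the interior of the axial boundary mass, which itself decays in $x_2$ at rate $\eta_2^{(c)}$.

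The three-way classification is then the book-keeping of which candidate binds. In Type I the natural direction-$i$ rates $\theta_1^{(c)}, \eta_2^{(c)}$ both bind; in Type II the $x_2$-axis boundary pulls $\xi_1$ down to $\bar\eta_1^{(c)}$ while $\xi_2 = \eta_2^{(c)}$ is still controlled by its own axial boundary; Type III is the mirror case. The main obstacle is the matching lower bound in Types II and III: one must exhibit a genuine contribution to $\nu_{n,\cdot,\cdot}$ from the axial boundary mass at the claimed boundary rate, rather than relegating it to a negligible perturbation. I would handle this by decomposing the stationary equation for $\bnu_n^{(1)}$ into an interior term plus a convolution against the axial profile, and lower-bounding the convolution using the explicit Perron geometry of $\chi(z_1,e^{\eta_2^{(c)}})=1$.
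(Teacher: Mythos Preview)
The paper does not prove this lemma at all: it is stated as Corollary~4.3 of Ozawa~\cite{Ozawa13} and used as an imported result, with no proof given here. There is therefore no ``paper's own proof'' to compare your proposal against.

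As a sketch of how the cited result is obtained, your outline is broadly in the right spirit (matrix-geometric form for $\bnu_n^{(i)}$, reduction of the infinite rate matrix $R^{(i)}$ to the finite $R_i(z)$ and $G_i(z)$ via a transform in the phase coordinate, identification of the decay rate through the convex geometry of $\chi$ and $\psi_i$), and this is indeed the machinery developed in \cite{Ozawa13}. But as written it is only a strategy, not a proof: the step ``an infinite-matrix Perron--Frobenius argument yields existence of the limit $\xi_1$'' hides substantial work, since $R^{(1)}$ acts on an infinite-dimensional space and one needs the compactness/$\beta$-invariant-measure technology (as in \cite{Tweedie82,Miyazawa04,Ozawa13}) rather than ordinary Perron--Frobenius. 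Likewise, your identification of the two candidates $\theta_1^{(c)}$ and $\bar\eta_1^{(c)}$ and the ``binding'' dichotomy is correct heuristically, but in \cite{Ozawa13} the matching upper and lower bounds are obtained by separate and fairly lengthy arguments (Markov-additive large deviations for the upper bound, explicit invariant-measure constructions for the lower bound), not by the convolution decomposition you suggest. If you want a self-contained proof here you would need to import or reproduce those arguments; otherwise, citing \cite{Ozawa13} as the paper does is the appropriate move.
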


Hence, under Assumptions \ref{as:irreducibleYn} to \ref{as:irreducibleR1R2}, the directional geometric decay rate $r_1$ in $x_1$-coordinate and $r_2$ in $x_2$-coordinate exist,  and they are given by $r_1=e^{\xi_1}$ and $r_2=e^{\xi_2}$.

%
%
\subsection{Main results}

We assume the following technical condition. 
\begin{assumption} \label{as:G1_eigen_z1max}
All the eigenvalues of $G_1(r_1)$ are distinct. Also, those of $G_2(r_2)$ are distinct.
\end{assumption}

\begin{remark}
Assumption \ref{as:G1_eigen_z1max} is not necessary in Section \ref{sec:generating_function}. 
In Section \ref{sec:Gmatrix}, another assumption (Assumption \ref{as:eigenG1_distinct}) will be introduced instead of Assumption \ref{as:G1_eigen_z1max}. Assumption \ref{as:eigenG1_distinct} is a necessary condition for Assumption \ref{as:G1_eigen_z1max}. 
In Section \ref{sec:asymptotics}, we rely on Assumption \ref{as:G1_eigen_z1max}.
\end{remark}

The exact asymptotic formula $h_1(k)$ of the stationary distribution in $x_1$-coordinate and $h_2(k)$ in $x_2$-coordinate, where 
\[
\lim_{k\to\infty} \frac{\bnu_{k,0}}{h_1(k)} = nonzero\ constant\ vector,\quad
\lim_{k\to\infty} \frac{\bnu_{0,k}}{h_2(k)} = nonzero\ constant\ vector, 
\]
are given as follows. 
\begin{theorem}[Main results] \label{th:mainresults}
Under Assumptions \ref{as:irreducibleYn} through \ref{as:G1_eigen_z1max}, in the case of Type I, the exact asymptotic formulae are given by
\begin{align}
h_1(k) 
&= \left\{ \begin{array}{ll}
r_1^{-k}, & \psi_1(\bar{z}_1^*)>1, \cr
k^{-\frac{1}{2}} (\bar{z}_1^*)^{-k}, & \psi_1(\bar{z}_1^*)=1, \cr
k^{-\frac{1}{2}(2 l_1+1)} (\bar{z}_1^*)^{-k}, & \psi_1(\bar{z}_1^*)<1,
\end{array} \right. \quad
%
h_2(k) 
= \left\{ \begin{array}{ll}
r_2^{-k}, & \psi_2(\bar{z}_2^*)>1, \cr
k^{-\frac{1}{2}} (\bar{z}_2^*)^{-k}, & \psi_2(\bar{z}_2^*)=1, \cr
k^{-\frac{1}{2}(2 l_2+1)} (\bar{z}_2^*)^{-k}, & \psi_2(\bar{z}_2^*)<1,
\end{array} \right. 
\end{align}
where $l_1$ and $l_2$ are some positive integers. In the case of Type II, they are given by 
\begin{align}
h_1(k) 
&= \left\{ \begin{array}{ll}
r_1^{-k}, & \eta_2^{(c)}<\theta_2^{(c)}, \cr
k\,r_1^{-k}, & \eta_2^{(c)}=\theta_2^{(c)}\ \mbox{and}\ \psi_1(\bar{z}_1^*)>1, \cr
(\bar{z}_1^*)^{-k}, & \eta_2^{(c)}=\theta_2^{(c)}\ \mbox{and}\ \psi_1(\bar{z}_1^*)=1, \cr
k^{-\frac{1}{2}} (\bar{z}_1^*)^{-k}, & \eta_2^{(c)}=\theta_2^{(c)}\ \mbox{and}\ \psi_1(\bar{z}_1^*)<1,
\end{array} \right. \quad
%
h_2(k) =r_2^{-k}. 
\end{align}
In the case of Type III, the are given by 
\begin{align}
h_1(k) &=r_1^{-k}, \quad
%
h_2(k) 
= \left\{ \begin{array}{ll}
r_2^{-k}, & \theta_1^{(c)}<\eta_1^{(c)}, \cr
k\,r_2^{-k}, & \theta_1^{(c)}=\eta_1^{(c)}\ \mbox{and}\ \psi_2(\bar{z}_2^*)>1, \cr
(\bar{z}_2^*)^{-k}, & \theta_1^{(c)}=\eta_1^{(c)}\ \mbox{and}\ \psi_2(\bar{z}_2^*)=1, \cr
k^{-\frac{1}{2}} (\bar{z}_2^*)^{-k}, & \theta_1^{(c)}=\eta_1^{(c)}\ \mbox{and}\ \psi_2(\bar{z}_2^*)<1. 
\end{array} \right. \quad
\end{align}
\end{theorem}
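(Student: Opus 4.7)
The plan is to carry out a singularity analysis of the vector generating function $\bvarphi_1(z) = \sum_{k=0}^\infty \bnu_{k,0}\, z^k$ (and symmetrically $\bvarphi_2(z)$) via the key expression displayed in the Introduction. Cauchy's criterion combined with Lemma \ref{le:decay_rate} tells us that the radius of convergence of $\bvarphi_1$ equals $r_1$, so all the asymptotic information sits on the circle $|z|=r_1$. The strategy, following Kobayashi and Miyazawa \cite{Kobayashi13}, is therefore: (i) analytically continue $\bvarphi_1(z)$ into a slit neighbourhood of every point of $\partial\Delta_{r_1}$, (ii) show that $z=r_1$ is the unique singularity on that circle, (iii) identify the precise local form of $\bvarphi_1$ near $z=r_1$, and (iv) apply a Darboux / transfer-theorem-type argument to read off $h_1(k)$ from the local exponent.

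The first technical ingredient is the extension of the matrix function $G_1(z)$, which is to be carried out in Section~\ref{sec:Gmatrix}. Under Assumption~\ref{as:G1_eigen_z1max}, $G_1(z)$ is holomorphic on an annulus inside $\mathbb{C}[\underline{z}_1^*,\bar{z}_1^*)$ with only square-root branching possible at $z=\bar{z}_1^*$ (arising because the two real solutions $\underline{\zeta}_2(z_1)$ and $\bar{\zeta}_2(z_1)$ of $\chi(z_1,z_2)=1$ merge there, by Proposition~\ref{pr:chisolution}). Plugging this extension into the key expression, the singularities of $\bvarphi_1$ can only come from two sources: a pole where $\det\bigl(I-A_{*,0}^{(1)}(z)-A_{*,1}^{(1)}(z)G_1(z)\bigr)=0$, i.e., where the Perron eigenvalue of $C_1(z,G_1(z))$ equals one, and hence where $\psi_1(z)=1$; or a branch-point singularity inherited from $G_1$ at $z=\bar{z}_1^*$. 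The infinite sums over $j$ inside the braces require that $\spr(G_1(z))<1$, which holds for $|z|<\bar z_1^*$ by Proposition~\ref{pr:sprG1}, and whose closure behaviour at $z=\bar z_1^*$ is governed by whether $\underline{\zeta}_2(\bar{z}_1^*)=\bar{\zeta}_2(\bar{z}_1^*)<1$.

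The three configuration types reduce the problem mechanically. In Type I, $r_1=e^{\theta_1^{(c)}}$ and the two candidate singularities are the pole at the unique $z_1\le\bar z_1^*$ solving $\psi_1(z_1)=1$ and the branch point at $\bar z_1^*$. If $\psi_1(\bar z_1^*)>1$, the pole lies strictly inside $\bar z_1^*$ and is a simple pole (by superconvexity of $\psi_1$ and irreducibility of $R^{(1)}$ via Assumption~\ref{as:irreducibleR1R2}), yielding $h_1(k)\sim r_1^{-k}$; if $\psi_1(\bar z_1^*)=1$, pole and branch point coincide so the local form is $(1-z/\bar z_1^*)^{-1/2}$ and transfer theorems give $k^{-1/2}(\bar z_1^*)^{-k}$; if $\psi_1(\bar z_1^*)<1$, only the branch point survives and the local exponent is determined by how many orders of the Taylor expansion of the braced numerator vanish at $z=\bar z_1^*$, producing $k^{-(2l_1+1)/2}(\bar z_1^*)^{-k}$ for some $l_1\in\mathbb{N}$. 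In Types II and III, $r_1$ is strictly less than $e^{\theta_1^{(c)}}$ on one coordinate, so the corresponding singularity is purely a pole of the $(I-A_{*,0}^{(1)}-A_{*,1}^{(1)}G_1)^{-1}$ factor, whose order depends on whether the Laurent tail $\sum_j \bnu_{0,j} G_1(z)^j$ contributes an additional pole of the factor $(I-zG_1(z)G_2(\cdot))^{-1}$ when $\eta_2^{(c)}=\theta_2^{(c)}$; this produces the $k\,r_1^{-k}$ case.

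The main obstacle will be (a) verifying that $z=r_1$ is indeed the \emph{only} singularity on $|z|=r_1$, which requires a careful Perron--Frobenius argument on $C_1(z,G_1(z))$ showing that its spectral radius is strictly maximized in modulus at $z=r_1$ on the circle (using aperiodicity of the Markov additive kernels), and (b) computing the multiplicity $l_i$ in the branch-point regime of Type I when $\psi_i(\bar z_i^*)<1$, which reduces to determining the smallest nonzero order of $(1-z/\bar z_i^*)^{1/2}$ in the expansion of the numerator of the key expression evaluated against the left Perron eigenvector of $C_1(z,G_1(z))$ at $z=\bar z_1^*$. Once the local form at $r_1$ is pinned down, a standard application of the transfer theorem for algebraic-logarithmic singularities converts $(1-z/r_1)^{-\alpha}$ behaviour into $k^{\alpha-1}r_1^{-k}$, which matches exactly the claimed table. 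The same argument, with indices $1$ and $2$ interchanged, yields $h_2(k)$.
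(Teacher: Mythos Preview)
Your overall strategy is correct and essentially matches the paper's: analytic continuation of $\bvarphi_1$ via the key expression, uniqueness of the singularity at $z=r_1$ on $\partial\Delta_{r_1}$, identification of the local singular behaviour, and then a transfer theorem (the paper uses Theorem~VI.4 of Flajolet--Sedgewick). Your analysis of Type~I and Type~III is on the right track.

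However, your treatment of Type~II contains a genuine error that would derail the argument. You write that in Type~II ``the corresponding singularity is purely a pole of the $(I-A_{*,0}^{(1)}-A_{*,1}^{(1)}G_1)^{-1}$ factor''. This is false in the subcase $\eta_2^{(c)}<\theta_2^{(c)}$: there $r_1=e^{\bar\eta_1^{(c)}}<e^{\theta_1^{(c)}}$, so $\psi_1(r_1)<1$ and $(I-C_1(z,G_1(z)))^{-1}$ is \emph{analytic} at $z=r_1$. The singularity instead comes from the infinite sums $\bvarphi_2(G_1(z))=\sum_{j\ge 1}\bnu_{0,j}G_1(z)^j$ and $\bvarphi_2^{\hat C_2}(z,G_1(z))$. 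The convergence condition for these is $\spr(G_1(z))<r_2$ (not $<1$ as you wrote), and in Type~II one has $\spr(G_1(r_1))=\underline{\zeta}_2(e^{\bar\eta_1^{(c)}})=e^{\eta_2^{(c)}}=r_2$ exactly. Thus $\bvarphi_2(G_1(z))$ inherits a pole from the simple pole of $\bvarphi_2(w)$ at $w=r_2$ through the composition with the Perron eigenvalue $\alpha_{s_0}(z)$ of $G_1(z)$. When $\eta_2^{(c)}=\theta_2^{(c)}$ this pole \emph{coincides} with the zero of $\det(I-C_1(z,G_1(z)))$, and the two singularities compound to give a double pole (hence the factor $k$ in $h_1(k)=k\,r_1^{-k}$); when additionally $r_1=\bar z_1^*$, one must work in the Puiseux variable $(\bar z_1^*-z)^{1/2}$ and the order counts in half-integers.

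The factor ``$(I-zG_1(z)G_2(\cdot))^{-1}$'' you invoke does not appear anywhere in the key expression and is not the mechanism. To repair your argument you need to first establish the analogue of Type~I for $\bvarphi_2$ (a simple pole at $w=r_2$ in Type~II), then feed that back into $\bvarphi_2(G_1(z))$ via the spectral decomposition $G_1(z)=V_1(z)J_1(z)V_1(z)^{-1}$, which isolates the singular contribution as $\bvarphi_2(\alpha_{s_0}(z))\bv_{s_0}(z)\bu_{s_0}(z)$. Also, your statement that ``in Types~II and~III, $r_1$ is strictly less than $e^{\theta_1^{(c)}}$'' is wrong for Type~III, where $r_1=e^{\theta_1^{(c)}}$ and the singularity \emph{is} the simple pole of $(I-C_1)^{-1}$, exactly as in Type~I with $\psi_1(\bar z_1^*)>1$.
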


This theorem will be proved in Section \ref{sec:asymptotics}.

%
%
\section{Generating functions and key expressions} \label{sec:generating_function}

\subsection{Vector generating function}

Let $\bvarphi(z,w)$ be the vector generating function defined as 
\[
\bvarphi(z,w)=\sum_{i=0}^\infty \sum_{j=0}^\infty \bnu_{i,j} z^i w^j. 
\]
This vector generating function satisfies 
\begin{equation}
\bvarphi(z,w) 
= \bnu_{0,0} + \bvarphi_1(z) + \bvarphi_2(w) + \bvarphi_+(z,w), 
\label{eq:bpsiz1z2}
\end{equation}
where 
\[
\bvarphi_1(z) = \sum_{i=1}^\infty \bnu_{i,0} z^i ,\quad 
\bvarphi_2(w)=\sum_{j=1}^\infty \bnu_{0,j} w^j,\quad 
\bvarphi_+(z,w)=\sum_{i=1}^\infty \sum_{j=1}^\infty \bnu_{i,j} z^i w^j. 
\]
We define the following matrix functions:
\begin{align*}
&C(z,w) = \sum_{i\in\mathbb{H}} \sum_{j\in\mathbb{H}} A_{i,j} z^i w^j, \quad
C_0(z,w) = \sum_{i\in\mathbb{H}_+} \sum_{j\in\mathbb{H}_+} A_{i,j}^{(0)} z^i w^j, \\ 
&C_1(z,w) = \sum_{i\in\mathbb{H}} \sum_{j\in\mathbb{H}_+} A_{i,j}^{(1)} z^i w^j, \quad
C_2(z,w) = \sum_{i\in\mathbb{H}_+} \sum_{j\in\mathbb{H}} A_{i,j}^{(2)} z^i w^j. 
\end{align*}
From the stationary equation $\bnu = \bnu P$, if $\bvarphi(z,w)$ is finite, we obtain 
\begin{align}
&\bvarphi(z,w) \cr
&\quad = \bnu_{0,0} C_0(z,w) + \bvarphi_1(z) C_1(z,w) + \bvarphi_2(w) C_2(z,w) + \bvarphi_+(z,w) C(z,w). 
\label{eq:bvarphi}
\end{align}

%
%
\subsection{Convergence domain}

Define the convergence domain $\calD$ of the vector generating function $\bvarphi(z,w)$ as
\[
\calD = \mbox{the interior of }\{(z,w)\in\mathbb{R}_+^2 : \bvarphi(z,w)<\infty\}.
\]
We obtain a sufficiently large subset of the convergence domain $\calD$.  The radius of convergence of $\bvarphi_1(z)$ and that of $\bvarphi_2(z)$ are given in terms of $r_1$ and $r_2$, as follows.
\begin{proposition} \label{pr:radius_conv}
The radius of convergence of the vector generating function $\bvarphi_1(z)$ (resp.\ $\bvarphi_2(z)$) is given by $r_1$ (resp.\ $r_2$), i.e., for $i\in\{1,2\}$, 
\[
\sup\!\left\{z\in\mathbb{R}_+: \bvarphi_i(z) < \infty \right\} = r_i. 
\]
\end{proposition}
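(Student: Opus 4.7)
The plan is to combine the Cauchy--Hadamard theorem with the directional decay rate result of Lemma \ref{le:decay_rate}. Since $\bvarphi_1(z) = \sum_{k=1}^\infty \bnu_{k,0} z^k$ has nonnegative coefficients, the supremum $\sup\{z \in \mathbb{R}_+ : \bvarphi_1(z) < \infty\}$ coincides with the classical radius of convergence of the power series, and for a vector-valued series with coefficients in $\mathbb{R}^{s_0}_+$ this in turn coincides with the minimum over the $s_0$ coordinate-wise radii. So it suffices to show that, for each $j \in S_0$, the scalar series $\sum_{k=1}^\infty \nu_{k,0,j} z^k$ has radius of convergence exactly $r_1$.

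First I would apply Lemma \ref{le:decay_rate} with the second spatial coordinate fixed at $0$. This yields, for every $j \in S_0$,
\begin{equation*}
\lim_{k\to\infty} \frac{1}{k}\log \nu_{k,0,j} = -\xi_1 = -\log r_1,
\end{equation*}
and hence $\nu_{k,0,j}^{1/k} \to r_1^{-1}$. Because this is a genuine limit (not merely a $\limsup$), the Cauchy--Hadamard formula gives the radius of convergence of $\sum_{k} \nu_{k,0,j} z^k$ as exactly $r_1$ for every $j$; taking the minimum over the finite set $S_0$ gives the same value for $\bvarphi_1(z)$. To match the stated form of the proposition verbatim I would then spell out the two directions: for $z \in (0, r_1)$, choosing $\epsilon > 0$ with $(r_1^{-1}+\epsilon) z < 1$ yields a tail bound $\nu_{k,0,j} \le C_\epsilon (r_1^{-1}+\epsilon)^k$ that makes $\bvarphi_1(z)$ finite, while for $z > r_1$, choosing $\epsilon > 0$ with $(r_1^{-1}-\epsilon) z > 1$ produces (eventually in $k$) the lower bound $\nu_{k,0,j} z^k \ge ((r_1^{-1}-\epsilon) z)^k \to \infty$, so $\bvarphi_1(z) = \infty$. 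The argument for $\bvarphi_2(z)$ is identical with the roles of the two coordinates exchanged.

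The proof is essentially a packaging step: the heavy lifting is already carried out in Lemma \ref{le:decay_rate}, so there is no substantive obstacle. The only point that requires attention is that Cauchy--Hadamard naturally expresses the radius through a $\limsup$, whereas we need the full limit to make the two-sided conclusion tight; this is precisely what Lemma \ref{le:decay_rate} supplies. The passage from the scalar radii $r_1$ to the vector radius $r_1$ uses nothing beyond finiteness of $S_0$.
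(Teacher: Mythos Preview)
Your proposal is correct and matches the paper's own approach exactly: the paper states that the proposition ``is immediately obtained from Lemma \ref{le:decay_rate} and Cauchy's criterion'' and omits the proof. You have simply written out the details of that one-line argument.
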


Since this proposition is immediately obtained from Lemma \ref{le:decay_rate} and Cauchy's criterion, we omit the proof.  
\begin{remark}
From Proposition \ref{pr:radius_conv}, we see that if $z>r_1$ or $w>r_2$, then at least one entry of $\bvarphi(z,w)$ is divergent. 
\end{remark}

Define domains $\calD_0$ and $\calD_1$ as 
\begin{align*}
&\calD_0 = \left\{(z,w)\in\mathbb{R}_+^2 : 0<z<r_1,\ 0<w<r_2,\ \spr(C(z,w))<1 \right\}, \\
&\calD_1 = \left\{(z,w)\in\mathbb{R}_+^2 : \mbox{there exists $(z'_1,z'_2)\in\calD_0$ such that $(z,w)<(z'_1,z'_2)$} \right\}.
\end{align*}
Then, we have the following lemma (see Kobayashi et al.\ \cite{Kobayashi17}). 
\begin{lemma} \label{le:domainD1}
$\calD_0\subset\calD_1\subset\calD$.
\end{lemma}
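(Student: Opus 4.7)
The inclusion $\calD_0\subset\calD_1$ is essentially a consequence of the openness of $\calD_0$. Indeed, $\spr(C(z,w))$ of the irreducible nonnegative matrix $C(z,w)$ depends continuously on $(z,w)$, and $\calD_0$ is cut out by the strict inequalities $z<r_1$, $w<r_2$, $\spr(C(z,w))<1$, so $\calD_0$ is open in $\mathbb{R}_+^2$. For any $(z,w)\in\calD_0$ one can therefore find a strictly larger point $(z_1',z_2')\in\calD_0$ by perturbing each coordinate upward, which places $(z,w)$ in $\calD_1$.

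The substantive inclusion is $\calD_1\subset\calD$. Fix $(z,w)\in\calD_1$ with a witness $(z_1',z_2')\in\calD_0$ satisfying $z<z_1'$ and $w<z_2'$. Since $\bvarphi$ is defined by a power series with nonnegative coefficients, it is coordinatewise monotone, so $\bvarphi(z,w)\le\bvarphi(z_1',z_2')$ and it suffices to prove $\bvarphi(z_1',z_2')<\infty$. Because $C(z_1',z_2')$ is nonnegative, irreducible and aperiodic by Assumption \ref{as:Akl_irreducible}, the Perron--Frobenius theorem furnishes a strictly positive left eigenvector $\bu=\bu(z_1',z_2')$ with $\bu\, C(z_1',z_2')=\chi(z_1',z_2')\,\bu$ and $\chi(z_1',z_2')<1$. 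I would introduce the test function $f(i,j,k)=(z_1')^i(z_2')^j\,[\bu]_k$ on $\calS$ and evaluate the one-step expectation $\sum_{(i',j',k')}p_{(i,j,k),(i',j',k')}f(i',j',k')$ separately in the interior and on the two axis boundaries. In the interior ($i,j\ge 1$) this expectation equals $\chi(z_1',z_2')\,f(i,j,k)$, strictly less than $f(i,j,k)$; on each axis, the Laurent-polynomial structure of $C_0$, $C_1$, $C_2$ together with the finite dimension $s_0$ makes the one-step expectation bounded by a finite constant times $f(i,j,k)$.

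Combining these estimates with the stationary identity $\bnu=\bnu P$, one rearranges into
\[
(1-\chi(z_1',z_2'))\!\sum_{i\ge 1,\,j\ge 1,\,k}\nu_{i,j,k}\,f(i,j,k)\ \le\ K_1\!\sum_{i\ge 0,\,k}\nu_{i,0,k}(z_1')^i\,[\bu]_k+K_2\!\sum_{j\ge 0,\,k}\nu_{0,j,k}(z_2')^j\,[\bu]_k,
\]
with finite constants $K_1,K_2$ coming from the axis one-step bounds. Both right-hand sums are finite by Proposition \ref{pr:radius_conv}, since $z_1'<r_1$ and $z_2'<r_2$ imply $\bvarphi_1(z_1')<\infty$ and $\bvarphi_2(z_2')<\infty$ and the entries of $\bu$ are uniformly bounded. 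Hence the interior sum $\bvarphi_+(z_1',z_2')$ is finite, and combining it with $\bnu_{0,0}$, $\bvarphi_1(z_1')$ and $\bvarphi_2(z_2')$ yields $\bvarphi(z_1',z_2')<\infty$, so $(z,w)\in\calD$.

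The main obstacle is not the chain of inequalities itself but the bookkeeping of boundary contributions in the stationary equation: the one-step expectation of $f$ at an axis or corner state involves the blocks $A^{(1)}_{i,j}$, $A^{(2)}_{i,j}$ or $A^{(0)}_{i,j}$ rather than $A_{i,j}$, so the Perron identity is only available in the interior and the axis terms must be dominated by brute force, using that only finitely many distinct block matrices appear and that the components of $\bu$ are bounded above and below. This type of calculation is precisely the one carried out in Kobayashi et al.\ \cite{Kobayashi17}, and Lemma \ref{le:domainD1} is the import of that argument into the present 2D-QBD setting.
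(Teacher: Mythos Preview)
The paper does not actually prove Lemma \ref{le:domainD1}; it simply cites Kobayashi et al.\ \cite{Kobayashi17} (in preparation). Your plan is the standard superharmonic/Lyapunov argument one would expect that reference to contain, and it is essentially correct: openness of $\calD_0$ gives $\calD_0\subset\calD_1$, and for $\calD_1\subset\calD$ you use the positive left Perron eigenvector of $C(z_1',z_2')$ as a test weight, exploit $\chi(z_1',z_2')<1$ in the interior, and control the boundary using finiteness of $\bvarphi_1(z_1')$ and $\bvarphi_2(z_2')$ from Proposition \ref{pr:radius_conv}. You also correctly note that monotonicity of $\bvarphi$ then yields an open rectangle of finiteness containing $(z,w)$, so $(z,w)$ lies in the interior $\calD$.

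One genuine technical point you gloss over: the ``rearrangement'' from $\bnu=\bnu P$ to the displayed inequality with $(1-\chi)\sum_{i,j\ge1}$ on the left is not valid as written if that interior sum is a priori infinite, since both sides of the Tonelli identity would then be $+\infty$ and cannot be subtracted. The fix is routine but must be stated: work with truncated sums over $\{(i,j): 1\le i,j\le N\}$, use that the one-step transitions from this box only leak into the boundary strips and a one-layer frame (which are controlled by the same boundary bounds and by the $N{+}1$ layer, whose contribution is nonnegative and can be dropped on the correct side), obtain a bound on the truncated interior sum independent of $N$, and let $N\to\infty$. With that truncation in place your argument is complete.
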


This $\calD_1$ is the desired subset of $\calD$

\begin{remark}
$\calD_1$ is probably identical to $\calD$. This point is left as a further study. 
\end{remark}

%
%
\subsection{Matrix-type vector generating functions and key expressions}

In order to investigate singularity of the generating functions $\bvarphi_1(z)$ and $\bvarphi_2(z)$, we need key expressions for $\bvarphi_1(z)$ and $\bvarphi_2(z)$ corresponding to equation (29) in Kobayashi and Miyazawa \cite{Kobayashi13}. Therefore, we consider vector generating functions one of whose variables is a matrix. 
Here, we explain only about $\bvarphi_1(z)$ since results for $\bvarphi_2(z)$ are similar to those for $\bvarphi_1(z)$. 

Define the following vector generating function: 
\[
\bvarphi(z,X)=\sum_{i=0}^\infty \sum_{j=0}^\infty \bnu_{i,j}\,z^i\,X^j, 
\]
where $z$ is a scalar and $X$ is an $s_0\times s_0$ matrix.  
We have 
\begin{align}
\bvarphi(z,X) &= \bvarphi_+(z,X) + \bvarphi_1(z) + \bvarphi_2(X) + \bnu_{0,0}, 
\label{eq:phizX1}
\end{align}
where 
\[
\bvarphi_+(z,X) = \sum_{i=1}^\infty \sum_{j=1}^\infty \bnu_{i,j} z^i X^j,\quad 
\bvarphi_2(X) = \sum_{j=1}^\infty \bnu_{0,j} X^j.
\]
If $\bvarphi(z,X)<\infty$, we obtain from the stationary equation $\bnu P=\bnu$ that 
\begin{align}
\bvarphi(z,X) =&\ \sum_{i=1}^\infty \sum_{j=1}^\infty \bnu_{i,j} z^i \hat{C}(z,X) X^{j-1} + \bvarphi_1(z) C_1(z,X) \cr
&\qquad + \sum_{j=1}^\infty \bnu_{0,j} \hat{C}_2(z,X) X^{j-1} + \bnu_{0,0} C_0(z,X), 
\label{eq:phizX2}
\end{align}
where 
\begin{align*}
&\hat{C}(z,X) = \sum_{j\in\mathbb{H}} A_{*,j}(z) X^{j+1}, \quad
\hat{C}_2(z,X) = \sum_{j\in\mathbb{H}} A_{*,j}^{(2)}(z) X^{j+1}, \\
&C_0(z,X) = \sum_{j\in\mathbb{H}_+} A_{*,j}^{(0)}(z) X^j,\quad 
C_1(z,X) = \sum_{j\in\mathbb{H}_+} A_{*,j}^{(1)}(z)X^j,
\end{align*}
and for $j\in\mathbb{H}$ and $j'\in\mathbb{H}_+$, 
\begin{align*}
&A_{*,j}(z) = \sum_{i\in\mathbb{H}} A_{i,j} z^i, \quad
A_{*,j}^{(2)}(z) = \sum_{i\in\mathbb{H}_+} A_{i,j}^{(2)} z^i, \quad
A_{*,j'}^{(1)}(z) = \sum_{i\in\mathbb{H}} A_{i,j}^{(1)} z^i, \\
&A_{*,j'}^{(0)}(z) = \sum_{i\in\mathbb{H}_+} A_{i,j}^{(0)} z^i. 
\end{align*}
Note that $C_1(z,X)$, $A_{*,j}(z)$ and $A_{*,j}^{(1)}(z)$ have already been defined in Section \ref{sec:model}.
With respect to the convergence domain of $\bvarphi(z,X)$, we give the following lemma.  
\begin{lemma} \label{le:mgf_convergence}
For $n\in\mathbb{Z}_+$, let $\ba_n$ be an $1\times m$ complex vector and define a vector series $\bphi(w)$ as $\bphi(w)=\sum_{n=0}^\infty \ba_n w^n$. Furthermore, for an $m\times m$ complex matrix $X$, define $\bphi(X)$ as $\bphi(X)=\sum_{n=0}^\infty \ba_n X^n$. 
Assume that $\bphi(w)$ converges absolutely for all $w\in\mathbb{C}$ such that $|w|<r$ for some $r>0$. Then, if $\spr(X)<r$, $\bphi(X)$ also converges absolutely. 
\end{lemma}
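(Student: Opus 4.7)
The plan is to reduce the matrix-argument convergence to the scalar one by way of Gelfand's spectral radius formula. Since $\spr(X)<r$, I would pick a number $\rho$ with $\spr(X)<\rho<r$ and fix a submultiplicative matrix norm $\|\cdot\|$ on $\mathbb{C}^{m\times m}$ together with a compatible norm on $\mathbb{C}^m$, so that $\|\ba_n X^n\|\le \|\ba_n\|\,\|X^n\|$ for every $n\in\mathbb{Z}_+$.

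Gelfand's formula gives $\lim_{n\to\infty}\|X^n\|^{1/n}=\spr(X)<\rho$, so there exists $N$ with $\|X^n\|\le \rho^n$ for all $n\ge N$, and hence a constant $C\ge 1$ with $\|X^n\|\le C\rho^n$ for every $n\ge 0$. On the other hand, absolute convergence of $\bphi(w)$ on the disc $|w|<r$ together with the choice $\rho<r$ yields $\sum_{n=0}^\infty \|\ba_n\|\rho^n<\infty$. Combining these two bounds,
\[
\sum_{n=0}^\infty \|\ba_n X^n\| \;\le\; C\sum_{n=0}^\infty \|\ba_n\|\,\rho^n \;<\; \infty,
\]
which, because all norms on the finite-dimensional space $\mathbb{C}^m$ are equivalent, is the desired entrywise absolute convergence of $\bphi(X)=\sum_{n=0}^\infty \ba_n X^n$.

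There is no genuine obstacle; the proof is essentially a direct application of Gelfand's formula. The only subtle point worth highlighting is that one cannot use the naive bound $\|X^n\|\le \|X\|^n$, since the operator norm $\|X\|$ may itself exceed $r$. The spectral-radius asymptotic is precisely the ingredient that brings the exponential growth rate of $\|X^n\|$ strictly below $r$, and thereby inside the domain of absolute convergence of the scalar series.
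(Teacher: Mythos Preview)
Your proof is correct. Gelfand's formula delivers the bound $\|X^n\|\le C\rho^n$ for some $\spr(X)<\rho<r$, and the entrywise absolute convergence of $\bphi$ at $\rho$ then controls $\sum_n\|\ba_n\|\rho^n$; combining these gives the result.

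The paper takes a different, more explicit route: it passes to the Jordan canonical form $J=T^{-1}XT$, writes out the $n$-th power of each Jordan block $J_k(\lambda)$ as an upper-triangular matrix with entries ${}_nC_j\,\lambda^{n-j}$, and then observes that $\sum_n |[\ba_n]_i|\,{}_nC_j\,|\lambda|^{n-j}$ is, up to a factorial, the $j$-th derivative of the scalar series $\bphi_{abs}(w)=\sum_n|\ba_n|w^n$ evaluated at $w=|\lambda|\le\spr(X)<r$, which is finite because an absolutely convergent power series is term-by-term differentiable inside its disc of convergence. Your Gelfand-based argument is shorter and avoids the Jordan machinery; the paper's argument is more constructive in that it produces an explicit entrywise bound in terms of derivatives of $\bphi_{abs}$, without invoking the spectral-radius limit as a black box.
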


Since the proof of this lemma is elementary, we give it in Appendix \ref{sec:proof_lemma_mgf_convergence}.
By Lemma \ref{le:domainD1}, $\bvarphi(z,w)$ converges absolutely for every $(z,w)\in\mathbb{C}^2$ such that $(|z|,|w|)\in\calD_1$. Hence, by Lemma \ref{le:mgf_convergence}, we immediately obtain a criterion for convergence of $\bvarphi(z,X)$, as follows. 
\begin{proposition} \label{pr:mgf_domain}
Let $z$ be a complex number and $X$ an $s_0\times s_0$ complex matrix. Then, $\bvarphi(z,X)$ converges absolutely if $(|z|,\spr(X))\in\calD_1$. 
\end{proposition}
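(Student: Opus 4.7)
The plan is to combine Lemma \ref{le:domainD1} (which identifies $\calD_1$ as a convergence region for the scalar generating function $\bvarphi(z,w)$) with Lemma \ref{le:mgf_convergence} (which upgrades scalar absolute convergence at radius $r$ to matrix absolute convergence whenever $\spr(X)<r$). First, I would rearrange the double sum as
\[
\bvarphi(z,X) \;=\; \sum_{i=0}^\infty \sum_{j=0}^\infty \bnu_{i,j}\, z^i X^j \;=\; \sum_{j=0}^\infty \ba_j(z)\, X^j,\qquad \ba_j(z) \;:=\; \sum_{i=0}^\infty \bnu_{i,j}\, z^i,
\]
so that the statement reduces to absolute convergence of a single matrix power series with row-vector coefficients $\ba_j(z)$.

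Next, I need to produce a scalar radius strictly larger than $\spr(X)$ at which these vectors remain absolutely summable. Since $\calD_0$ is defined by strict inequalities it is open, and $\calD_1$ is open and downward-closed by its very definition. Hence, given $(|z|,\spr(X))\in\calD_1$, there exists $r>\spr(X)$ with $(|z|,r)\in\calD_1$. By Lemma \ref{le:domainD1}, $\bvarphi(|z|,r)<\infty$ entry-wise, i.e.\
\[
\sum_{i=0}^\infty \sum_{j=0}^\infty |\nu_{i,j,k}|\, |z|^i\, r^j \;<\; \infty \quad \text{for every } k\in S_0.
\]
This shows both that $\ba_j(z)$ is well-defined (each sum in $i$ converges absolutely) and that the scalar series $\sum_j \ba_j(z)\,w^j$ converges absolutely on $|w|<r$. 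Applying Lemma \ref{le:mgf_convergence} to these coefficients then gives absolute convergence of $\sum_j \ba_j(z)\,X^j$, because $\spr(X)<r$.

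The one remaining issue is to justify the initial rearrangement from the double sum to the iterated sum, which is legitimate as soon as the double sum itself is absolutely convergent. By Gelfand's formula I can choose $\tilde{r}\in(\spr(X),r)$ and a constant $M>0$ such that $\|X^j\|\le M\tilde{r}^j$ for every $j\ge 0$ in any fixed submultiplicative matrix norm. Then, entry-wise,
\[
\sum_{i,j} |\nu_{i,j,k}|\, |z|^i\, \|X^j\| \;\le\; M \sum_{i,j} |\nu_{i,j,k}|\, |z|^i\, \tilde{r}^j \;<\; \infty,
\]
by Lemma \ref{le:domainD1} applied to $(|z|,\tilde{r})\in\calD_1$. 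This finiteness is also the mild technical point of the argument; the rest is bookkeeping on open sets and term-by-term estimates.
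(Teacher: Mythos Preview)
Your proof is correct and follows the same approach as the paper, which simply states that the result is immediate from Lemma~\ref{le:domainD1} and Lemma~\ref{le:mgf_convergence}; you have made explicit the use of openness of $\calD_1$ to obtain a radius $r>\spr(X)$ and the Gelfand bound to justify the rearrangement, details the paper leaves implicit.
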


The desired key expression for $\bvarphi_1(z)$ is given as follows.
\begin{lemma} \label{le:varphi1}
For $z\in[\underline{z}_1^*,\bar{z}_1^*]$, if $(z,\underline{\zeta}_2(z))\in\calD_1$, then we have
\begin{align}
\bvarphi_1(z) 
&= \biggl\{ \bvarphi_2^{\hat{C}_2}(z,G_1(z)) - \bvarphi_2(G_1(z)) + \bnu_{0,0} \bigl(C_0(z,G_1(z))-I\bigr) \biggr\} \Bigl(I-C_1(z,G_1(z))\Bigr)^{-1}, 
\label{eq:varphi1}
\end{align}
where
\[
\bvarphi_2^{\hat{C}_2}(z,X) = \sum_{j=1}^\infty \bnu_{0,j} \hat{C}_2(z,X) X^{j-1}.
\]
\end{lemma}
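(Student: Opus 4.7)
\medskip

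\noindent\textbf{Proof plan for Lemma \ref{le:varphi1}.} The strategy is to equate the two representations of the matrix-argument generating function $\bvarphi(z,X)$ given by equations (\ref{eq:phizX1}) and (\ref{eq:phizX2}) at the specific choice $X=G_1(z)$, exploit the matrix quadratic identity satisfied by $G_1(z)$ to cancel the interior term $\bvarphi_+(z,X)$, and solve for $\bvarphi_1(z)$.

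First I would verify that both representations are well defined at $X=G_1(z)$ for the values of $z$ in the hypothesis. By Proposition \ref{pr:sprG1}, $\spr(G_1(z))=\underline{\zeta}_2(z)$, so the assumption $(z,\underline{\zeta}_2(z))\in\calD_1$ combined with Lemma \ref{le:domainD1} and Proposition \ref{pr:mgf_domain} yields absolute convergence of $\bvarphi(z,G_1(z))$ (and of each of the series $\bvarphi_+(z,G_1(z))$, $\bvarphi_2(G_1(z))$, $\bvarphi_2^{\hat{C}_2}(z,G_1(z))$ that appear). Having convergence in hand, the manipulations below are legitimate termwise.

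Next I would specialize (\ref{eq:phizX2}) at $X=G_1(z)$. The key observation is that $\hat{C}(z,X)=A_{*,-1}(z)+A_{*,0}(z)X+A_{*,1}(z)X^2$, and by the defining equation (\ref{eq:equationG1}) for $G_1(z)$ we have $\hat{C}(z,G_1(z))=G_1(z)$. Consequently
\[
\sum_{i=1}^\infty \sum_{j=1}^\infty \bnu_{i,j}\, z^i\, \hat{C}(z,G_1(z))\, G_1(z)^{j-1}=\sum_{i=1}^\infty \sum_{j=1}^\infty \bnu_{i,j}\, z^i\, G_1(z)^{j}=\bvarphi_+(z,G_1(z)),
\]
so (\ref{eq:phizX2}) becomes
\[
\bvarphi(z,G_1(z))=\bvarphi_+(z,G_1(z))+\bvarphi_1(z)\,C_1(z,G_1(z))+\bvarphi_2^{\hat{C}_2}(z,G_1(z))+\bnu_{0,0}\,C_0(z,G_1(z)).
\]
On the other hand, setting $X=G_1(z)$ directly in (\ref{eq:phizX1}) gives
\[
\bvarphi(z,G_1(z))=\bvarphi_+(z,G_1(z))+\bvarphi_1(z)+\bvarphi_2(G_1(z))+\bnu_{0,0}.
\]
Subtracting the two identities eliminates $\bvarphi_+(z,G_1(z))$ and yields
\[
\bvarphi_1(z)\bigl(I-C_1(z,G_1(z))\bigr)=\bvarphi_2^{\hat{C}_2}(z,G_1(z))-\bvarphi_2(G_1(z))+\bnu_{0,0}\bigl(C_0(z,G_1(z))-I\bigr).
\]

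Finally I would argue that $I-C_1(z,G_1(z))$ is nonsingular, so that right-multiplication by its inverse produces (\ref{eq:varphi1}). This is where the main technical obstacle lies: the identity above is algebraic, but the inversion requires that $\spr(C_1(z,G_1(z)))<1$ (or at least $1$ is not an eigenvalue) on the range $z\in[\underline{z}_1^*,\bar{z}_1^*]$ compatible with $(z,\underline{\zeta}_2(z))\in\calD_1$. I would handle this by invoking the definition of $\psi_1(z)=\spr(C_1(z,G_1(z)))$ together with the observation that, inside the convergence region $\calD_1$, the relevant generating function for the QBD representation $\{\bY_n^{(1)}\}$ of the level process is finite, forcing the spectral radius of the phase kernel $C_1(z,G_1(z))$ to stay strictly below one. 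Once this is established, multiplying both sides on the right by $(I-C_1(z,G_1(z)))^{-1}$ gives the claimed expression (\ref{eq:varphi1}).
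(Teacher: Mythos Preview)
Your proposal is correct and follows essentially the same route as the paper: use Proposition \ref{pr:mgf_domain} for convergence, collapse the interior sum in (\ref{eq:phizX2}) via the identity $\hat{C}(z,G_1(z))=G_1(z)$, subtract (\ref{eq:phizX1}), and invert $I-C_1(z,G_1(z))$. The only difference is in the last step: the paper dispatches invertibility more directly by noting that $(z,\underline{\zeta}_2(z))\in\calD_1$ forces $z<r_1$ (from the very definition of $\calD_0$, hence $\calD_1$), and then $\psi_1(z)<1$ follows from the convexity of $\psi_1(e^s)$ together with $r_1\le e^{\theta_1^{(c)}}$; your phrasing via finiteness of the QBD generating function is a correct heuristic for the same fact but could be tightened to this one-line argument.
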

\begin{proof}
By Lemma \ref{le:existenceG1}, for $z\in[\underline{z}_1^*,\bar{z}_1^*]$, the nonnegative matrix $G_1(z)$ exists. Since \[
(z,\spr(G_1(z))=(z,\underline{\zeta}_2(z))\in\calD_1,
\]
we have, by Proposition \ref{pr:mgf_domain}, $\bvarphi(z,G_1(z))<\infty$. 
$G_1(z)$ satisfies equation (\ref{eq:equationG1}), which corresponds to $\hat{C}(z,X)=X$, and we obtain, from equation (\ref{eq:phizX2}), 
\begin{align}
\bvarphi(z,G_1(z)) &= \bvarphi_+(z,G_1(z)) + \bvarphi_1(z) C_1(z,G_1(z)) + \bvarphi_2^{\hat{C}_2}(z,G_1(z)) + \bnu_{0,0} C_0(z,G_1(z)). 
\end{align}
Hence, from this equation and equation (\ref{eq:phizX1}), we obtain
\begin{align}
\bvarphi_1(z) (I-C_1(z,G_1(z))) &= \bvarphi_2^{\hat{C}_2}(z,G_1(z)) - \bvarphi_2(G_1(z)) + \bnu_{0,0} (C_0(z,G_1(z))-I). 
\end{align}
If $(z,\underline{\zeta}_2(z))\in\calD_1$, then $z<r_1$ and we have $\psi_1(z)=\spr(C_1(z,G_1(z))<1$.  Hence, $I-C_1(z,G_1(z))$ is nonsingular and we obtain expression (\ref{eq:varphi1}).
\end{proof}

A similar expression holds for $\bvarphi_2(z)$ and it is given as follows. 
\begin{proposition} \label{pr:varphi2}
For $w\in[\underline{z}_2^*,\bar{z}_2^*]$, if $(\underline{\zeta}_1(w),w)\in\calD_1$, then we have
\begin{align}
\bvarphi_2(w) 
&= \biggl\{ \bvarphi_1^{\hat{C}_1}(G_2(w),w) - \bvarphi_1(G_2(w)) + \bnu_{0,0} \bigl(C_0(G_2(w),w)-I\bigr) \biggr\} \Bigl(I-C_2(G_2(w),w)\Bigr)^{-1}, 
\label{eq:varphi2}
\end{align}
where
\[
\bvarphi_1^{\hat{C}_1}(X,w) = \sum_{i=1}^\infty \bnu_{i,0} \hat{C}_1(X,w) X^{j-1}\quad 
\mbox{and}\quad 
\hat{C}_1(X,w) = \sum_{i\in\mathbb{H}} A_{i,*}^{(1)}(w) X^{j+1}; 
\]
$\bvarphi_1(X)$, $C_0(X,w)$ and $C_2(X,w)$ are defined in a manner similar to that used for defining $\bvarphi_2(X)$, $C_0(z,X)$ and $C_1(z,X)$. 
\end{proposition}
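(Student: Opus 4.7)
The plan is to mirror the proof of Lemma \ref{le:varphi1} verbatim with the roles of the two coordinates interchanged, since the statement of Proposition \ref{pr:varphi2} is obtained from that of Lemma \ref{le:varphi1} by swapping $(z,w)$, $(G_1,G_2)$, $(\bvarphi_1,\bvarphi_2)$, $(C_1,C_2)$ and $(\hat{C}_1,\hat{C}_2)$.

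First I would introduce the matrix-variable generating function
\[
\bvarphi(X,w) = \sum_{i=0}^\infty \sum_{j=0}^\infty \bnu_{i,j}\, X^i\, w^j,
\]
decompose it as
\[
\bvarphi(X,w) = \bvarphi_+(X,w) + \bvarphi_1(X) + \bvarphi_2(w) + \bnu_{0,0},
\]
and then, starting from the stationary equation $\bnu P = \bnu$ in the same way as in the derivation of equation (\ref{eq:phizX2}), obtain the dual identity
\[
\bvarphi(X,w) = \sum_{i=1}^\infty \sum_{j=1}^\infty \bnu_{i,j}\, \hat{C}(X,w)\, X^{i-1} w^j + \bvarphi_2(w)\, C_2(X,w) + \bvarphi_1^{\hat{C}_1}(X,w) + \bnu_{0,0}\, C_0(X,w),
\]
where $\hat{C}(X,w) = \sum_{i\in\mathbb{H}} A_{i,*}(w) X^{i+1}$. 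The only point that needs care here is ordering, since $X$ is a matrix and must appear on the correct side of each block coefficient; this is routine and mirrors the derivation of (\ref{eq:phizX2}).

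Second, I would substitute $X = G_2(w)$. By Lemma \ref{le:existenceG1}, $G_2(w)$ exists for $w \in [\underline{z}_2^*, \bar{z}_2^*]$, and by Proposition \ref{pr:sprG1}, $\spr(G_2(w)) = \underline{\zeta}_1(w)$. The hypothesis $(\underline{\zeta}_1(w), w) \in \calD_1$ combined with the analog of Proposition \ref{pr:mgf_domain} (swapping the roles of the two variables, proved by the same appeal to Lemma \ref{le:mgf_convergence}) guarantees that $\bvarphi(G_2(w), w)$ converges absolutely. Because $G_2(w)$ satisfies $\hat{C}(X,w) = X$ (equation (\ref{eq:equationG2})), the double-sum term collapses to $\bvarphi_+(G_2(w), w)$, producing
\[
\bvarphi(G_2(w),w) = \bvarphi_+(G_2(w),w) + \bvarphi_2(w)\, C_2(G_2(w),w) + \bvarphi_1^{\hat{C}_1}(G_2(w),w) + \bnu_{0,0}\, C_0(G_2(w),w).
\]
Subtracting this from the decomposition of $\bvarphi(G_2(w), w)$ in the first step yields
\[
\bvarphi_2(w)\bigl(I - C_2(G_2(w),w)\bigr) = \bvarphi_1^{\hat{C}_1}(G_2(w),w) - \bvarphi_1(G_2(w)) + \bnu_{0,0}\bigl(C_0(G_2(w),w) - I\bigr).
\]

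Finally, I would argue invertibility of $I - C_2(G_2(w),w)$. Since $(\underline{\zeta}_1(w), w) \in \calD_1 \subset \{w < r_2\}$, and $\psi_2(w) = \spr(C_2(G_2(w),w))$ satisfies $\psi_2(w) < 1$ exactly on this range (this mirrors the argument used at the end of the proof of Lemma \ref{le:varphi1} for $\psi_1$, and follows from the definition of $\eta_2^{(c)}$ together with Lemma \ref{le:decay_rate}), the matrix $I - C_2(G_2(w),w)$ is nonsingular and one may right-multiply by its inverse to obtain (\ref{eq:varphi2}). The only nontrivial obstacle I anticipate is verifying in full generality that $\psi_2(w) < 1$ throughout the relevant region — but this is handled by the same reasoning already invoked (implicitly) at the analogous step of the proof of Lemma \ref{le:varphi1}, so no new technique is required.
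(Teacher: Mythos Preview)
Your proposal is correct and is precisely the approach the paper intends: the paper does not give a separate proof of Proposition \ref{pr:varphi2} but simply states that ``a similar expression also holds for $\bvarphi_2(z)$,'' meaning the proof of Lemma \ref{le:varphi1} is to be rerun with the two coordinates interchanged, exactly as you outline. Your handling of the matrix ordering, the substitution $X=G_2(w)$ via equation (\ref{eq:equationG2}), and the invertibility of $I-C_2(G_2(w),w)$ from $w<r_2$ all mirror the corresponding steps in the proof of Lemma \ref{le:varphi1} faithfully.
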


%
%
\section{Analytic extension of the G-matrix} \label{sec:Gmatrix}

In order to investigate singularities of $\bvarphi_1(z)$ by using expression (\ref{eq:varphi1}), we must extend $G_1(z)$ to complex variable $z$ and clarify singularities of $G_1(z)$. We carry out it in two steps: first, $G_1(z)$ is redefined as a series of matrices, which converges absolutely in a certain annular domain and next, it is analytically continued via the matrix quadratic equation (\ref{eq:equationG1}). 
Since $G_2(z)$ can analogously be extended, we explain only about $G_1(z)$ in this section.

%
%
\subsection{First definition of $G_1(z)$}

For $z\in\mathbb{C}$, we redefine $G_1(z)$ in a manner similar to that used for defining the so-called {\it G-matrix} of a QBD processes (see Neuts \cite{Neuts94}). 
For the purpose, we use the following sets of index sequences: for $n\ge 1$ and for $m\ge 1$, 
\begin{align*}
&\scrI_n = \biggl\{\bi_{(n)}\in\mathbb{H}^n:\ \sum_{l=1}^k i_l\ge 0\ \mbox{for $k\in\{1,2,...,n-1\}$}\ \mbox{and} \sum_{l=1}^n i_l=0 \biggr\}, \\
&\scrI_{D,m,n} = \biggl\{\bi_{(n)}\in\mathbb{H}^n:\ \sum_{l=1}^k i_l\ge -m+1\ \mbox{for $k\in\{1,2,...,n-1\}$}\ \mbox{and} \sum_{l=1}^n i_l=-m \biggr\}, \\
&\scrI_{U,m,n} = \biggl\{\bi_{(n)}\in\mathbb{H}^n:\ \sum_{l=1}^k i_l\ge 1\ \mbox{for $k\in\{1,2,...,n-1\}$}\ \mbox{and} \sum_{l=1}^n i_l=m \biggr\}, 
\end{align*}
where $\bi_{(n)}=(i_1,i_2,...,i_n)$. 
For $n\ge 1$, let $Q_{11}^{(n)}(z)$, $D_1^{(n)}(z)$ and $U_1^{(n)}(z)$ be defined as 
\begin{align*}
&Q_{11}^{(n)}(z) = \sum_{\bi_{(n)}\in\scrI_n} A_{*,i_1}(z) A_{*,i_2}(z) \cdots A_{*,i_n}(z), \\
&D_1^{(n)}(z) = \sum_{\bi_{(n)}\in\scrI_{D,1,n}} A_{*,i_1}(z) A_{*,i_2}(z) \cdots A_{*,i_n}(z), \\
&U_1^{(n)}(z) = \sum_{\bi_{(n)}\in\scrI_{U,1,n}} A_{*,i_1}(z) A_{*,i_2}(z) \cdots A_{*,i_n}(z), 
\end{align*}
and let $N_1(z)$, $R_1(z)$ and $G_1(z)$ be defined as 
\begin{align*}
&N_1(z) = \sum_{n=0}^\infty Q_{11}^{(n)}(z),\quad 
G_1(z) = \sum_{n=1}^\infty D_1^{(n)}(z), \quad 
R_1(z) = \sum_{n=1}^\infty U_1^{(n)}(z),
\end{align*}
where $Q_{11}^{(0)}(z)=I$. 
Let $Q(z)$ be defined as 
\[
Q(z) 
= \begin{pmatrix}
A_{*,0}(z) & A_{*,1}(z) & & & \cr
A_{*,-1}(z) & A_{*,0}(z) & A_{*,1}(z) & & \cr
& A_{*,-1}(z) & A_{*,0}(z) & A_{*,1}(z) & \cr
& \ddots & \ddots & \ddots & 
\end{pmatrix}, 
\]
and denote by $\tilde{Q}(z)$ the fundamental matrix of $Q(z)$, i.e., $\tilde{Q}(z)=\sum_{n=0}^\infty Q(z)^n$. Then, for $n\ge 0$, $Q_{11}^{(n)}$ is the $(1,1)$-block of $Q(z)^n$ and $N_1(z)$ is that of $\tilde{Q}(z)$. 
When $z=1$, the $(i,j)$-entry of $D_1^{(n)}(1)$ is the probability that a QBD process starting in phase $i$ of level $l$ enters level $l-1$ for the first time just after $n$ steps and the phase at that time is $j$. Hence, $G_1(1)$ is the G-matrix of the QBD process in a usual sense.
Furthermore, the $(i,j)$-entry of $U_1^{(n)}(1)$ is the probability that the QBD process starting in phase $i$ of level $l$ stays in phase $j$ of level $l+1$ just after $n$ steps and it does not enter any level lower than or equal to $l$ until that time. Hence, $R_1(1)$ is the rate matrix of the QBD process in a usual sense.
$N_1(z)$, $G_1(z)$ and $R_1(z)$ satisfy the following properties. 

\begin{lemma} \label{le:Gmatrix}
For $z\in\mathbb{C}[\underline{z}_1^*,\bar{z}_1^*]$, the following statements hold.
\begin{itemize}
\item[(i)] $N_1(z)$, $G_1(z)$ and $R_1(z)$ converge absolutely and satisfy 
\begin{equation}
|N(z)|\le N(|z|),\quad 
|G_1(z)|\le G_1(|z|), \quad 
|R_1(z)|\le R_1(|z|). 
\label{eq:absNRG}
\end{equation}
\item[(ii)] $G_1(z)$ and $R_1(z)$ are represented in terms of $N_1(z)$, as follows. 
\begin{equation}
G_1(z) = N_1(z) A_{*,-1}(z),\quad 
R_1(z) = A_{*,1}(z) N_1(z).
\end{equation}
\item[(iii)] $G_1(z)$ and $R_1(z)$ satisfy the following matrix quadratic equations: 
\begin{align}
A_{*,-1}(z)+A_{*,0}(z) G_1(z)+A_{*,1}(z) G_1(z)^2 = G_1(z), \label{eq:G1equation} \\
R_1(z)^2 A_{*,-1}(z)+R_1(z) A_{*,0}(z)+A_{*,1}(z) = R_1(z). \label{eq:R1equation}
\end{align}
\item[(iv)] Define $H_1(z)$ as $H_1(z) = A_{*,0}(z)+A_{*,1}(z) N_1(z) A_{*,-1}(z)$, then $N_1(z)$ satisfies 
\begin{equation}
(I-H_1(z)) N_1(z) = I. 
\label{eq:H1N1_relation}
\end{equation}
\item[(v)] For nonzero $w\in\mathbb{C}$, $I-C(z,w)$ satisfy the following factorization (e.g., see Lemma 3.1 of Miyazawa and Zhao \cite{Miyazawa04}). 
\begin{equation}
I-C(z,w) = \bigl(w^{-1}I-R_1(z)\bigr) \bigl(I-H_1(z)\bigr) \bigl(w I-G_1(z)\bigr). 
\label{eq:WFfact}
\end{equation}
\end{itemize}
\end{lemma}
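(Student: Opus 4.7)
The plan is to verify the five parts in order: (i) is the technical heart because it establishes the holomorphic extension of the series into an annulus, while (ii)--(v) follow from combinatorial path decompositions and direct algebraic manipulation. The central observation that drives (i) is the termwise bound
\[
|A_{*,i}(z)|\le A_{-1,i}|z|^{-1}+A_{0,i}+A_{1,i}|z|=A_{*,i}(|z|),
\]
which, via the triangle inequality applied to each matrix product in the definitions of $Q_{11}^{(n)}(z)$, $D_1^{(n)}(z)$ and $U_1^{(n)}(z)$, gives entrywise inequalities $|Q_{11}^{(n)}(z)|\le Q_{11}^{(n)}(|z|)$, $|D_1^{(n)}(z)|\le D_1^{(n)}(|z|)$ and $|U_1^{(n)}(z)|\le U_1^{(n)}(|z|)$. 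Hence it suffices to prove absolute convergence of the nonnegative series $N_1(|z|)$, $G_1(|z|)$ and $R_1(|z|)$ for $|z|\in[\underline{z}_1^*,\bar{z}_1^*]$, after which the bounds (\ref{eq:absNRG}) drop out.

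For these real series, existence of $G_1(|z|)$ and $R_1(|z|)$ is guaranteed by Lemma \ref{le:existenceG1} on exactly the interval $[\underline{z}_1^*,\bar{z}_1^*]$. To handle $N_1(|z|)$ I would construct a positive level-indexed super-harmonic function $\tilde{\boldsymbol{h}}=(\underline{\zeta}_2(|z|)^k\boldsymbol{h})_{k\ge 0}$, where $\boldsymbol{h}$ is the Perron right eigenvector of $C(|z|,\underline{\zeta}_2(|z|))$ with eigenvalue one; the identity $C(|z|,\underline{\zeta}_2(|z|))\boldsymbol{h}=\boldsymbol{h}$ shows $Q(|z|)\tilde{\boldsymbol{h}}\le\tilde{\boldsymbol{h}}$ with a nonnegative deficit $\delta$ concentrated at level $0$ and equal to $A_{*,-1}(|z|)\underline{\zeta}_2(|z|)^{-1}\boldsymbol{h}$ there. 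A telescoping of $(I-Q(|z|))\sum_{n=0}^M Q(|z|)^n\tilde{\boldsymbol{h}}$ then gives $N_1(|z|)\delta\le\boldsymbol{h}$, and the irreducibility supplied by Assumption \ref{as:Akl_irreducible} upgrades this to finiteness of every entry of $N_1(|z|)$.

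Parts (ii) and (iv) are then combinatorial decompositions of the index sets. For (ii), any sequence $\bi_{(n)}\in\mathscr{I}_{D,1,n}$ is forced to have $i_n=-1$ by $S_{n-1}\ge 0$ and $S_n=-1$, while $(i_1,\ldots,i_{n-1})\in\mathscr{I}_{n-1}$; summing produces $G_1(z)=N_1(z)A_{*,-1}(z)$, and the analogous first-step decomposition of $\mathscr{I}_{U,1,n}$ yields $R_1(z)=A_{*,1}(z)N_1(z)$. For (iv), split a path in $\mathscr{I}_n$ at its first return to level $0$: the first-return block is either the neutral step $A_{*,0}(z)$ or an up-step followed by a level-$\ge 1$ excursion and a down-step, contributing $A_{*,1}(z)N_1(z)A_{*,-1}(z)$; the remainder is an $\mathscr{I}_{n-k}$ path. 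Summing gives the renewal identity $N_1(z)=I+H_1(z)N_1(z)$. Part (iii) is then the standard first-step analysis on $\mathscr{I}_{D,1,n}$ for $G_1$ and last-step analysis on $\mathscr{I}_{U,1,n}$ for $R_1$, producing the three summands of each matrix quadratic equation.

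Part (v) is a direct algebraic verification from (ii)--(iv). Using (iii), $A_{*,0}(z)G_1(z)+A_{*,1}(z)G_1(z)^2=G_1(z)-A_{*,-1}(z)$, so $(I-H_1(z))(wI-G_1(z))=w(I-H_1(z))-A_{*,-1}(z)$; left-multiplying by $w^{-1}I-R_1(z)$ and invoking the identities $N_1(z)(I-H_1(z))=I$ (from (iv), hence also $R_1(z)(I-H_1(z))=A_{*,1}(z)$) and $R_1(z)A_{*,-1}(z)=A_{*,1}(z)G_1(z)$ (from (ii)), the expression collapses to $I-\sum_{i\in\mathbb{H}}A_{*,i}(z)w^i=I-C(z,w)$. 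The main obstacle is expected to be the convergence of $N_1(|z|)$ at the boundary points $|z|\in\{\underline{z}_1^*,\bar{z}_1^*\}$: there $\underline{\zeta}_2(|z|)=\bar{\zeta}_2(|z|)$ and the spectral gap that drives the telescoping degenerates, so a limiting argument (Fatou together with continuity of $G_1(|z|)$ in $|z|$ on the closed annulus) is needed to close the gap; this is the step requiring the most care.
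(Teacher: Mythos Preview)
Your proposal is correct and follows essentially the same approach as the paper: the termwise bound $|A_{*,j}(z)|\le A_{*,j}(|z|)$ for (i), the combinatorial path decompositions of the index sets $\scrI_n$, $\scrI_{D,1,n}$, $\scrI_{U,1,n}$ for (ii)--(iv), and a direct algebraic expansion for (v). The only difference is that the paper simply \emph{states} finiteness of $N_1(|z|)$, $G_1(|z|)$, $R_1(|z|)$ for real $|z|\in[\underline{z}_1^*,\bar{z}_1^*]$ as a known fact from QBD theory (via Lemma~\ref{le:existenceG1}) rather than deriving it via your super-harmonic construction; incidentally, your concern about the boundary points is unnecessary, since the identity $C(|z|,\underline{\zeta}_2(|z|))\boldsymbol{h}=\boldsymbol{h}$ continues to hold at $|z|\in\{\underline{z}_1^*,\bar{z}_1^*\}$ and the telescoping argument nowhere uses a spectral gap between $\underline{\zeta}_2$ and $\bar{\zeta}_2$.
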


Since the proof of this lemma is elementary, we give it in Appendix \ref{sec:proof_prop_Gmatrix}. 
From statement (iv) of Lemma \ref{le:Gmatrix}, we have 
\[
\det(I-H_1(z)) \det N_1(z) = 1
\]
and this leads us to the following fact. 
\begin{corollary} \label{co:N1_inverse} 
For $z\in\mathbb{C}[\underline{z}_1^*,\bar{z}_1^*]$, both $I-H_1(z)$ and $N_1(z)$ are nonsingular and they satisfy $N_1(z)=(I-H_1(z))^{-1}$. 
\end{corollary}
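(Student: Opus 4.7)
The proof is essentially a one-line observation once Lemma \ref{le:Gmatrix}(iv) is in hand, so the plan is very short.

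My plan is to start from the identity $(I-H_1(z))N_1(z)=I$ established in statement (iv) of Lemma \ref{le:Gmatrix}, which is already valid for every $z\in\mathbb{C}[\underline{z}_1^*,\bar{z}_1^*]$. Taking determinants of both sides and using multiplicativity of $\det$, I obtain $\det(I-H_1(z))\,\det N_1(z)=1$, as already noted in the paragraph preceding the corollary. Since the product of two scalars equals $1$, neither factor can vanish, which immediately yields that both $I-H_1(z)$ and $N_1(z)$ are nonsingular.

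Given nonsingularity, I can then multiply the identity $(I-H_1(z))N_1(z)=I$ on the left by $(I-H_1(z))^{-1}$ (or equivalently on the right by $N_1(z)^{-1}$) to conclude $N_1(z)=(I-H_1(z))^{-1}$. This completes the argument. There is no genuine obstacle here; the corollary is a direct bookkeeping consequence of equation \eqref{eq:H1N1_relation}, and the only thing worth emphasizing is that the matrices involved are finite-dimensional (of size $s_0\times s_0$), so the scalar determinant identity really does force invertibility — an infinite-dimensional analogue would require more care.
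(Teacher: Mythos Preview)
Your proof is correct and matches the paper's own argument exactly: the paper simply notes that $(I-H_1(z))N_1(z)=I$ gives $\det(I-H_1(z))\det N_1(z)=1$, from which the corollary follows immediately.
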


\begin{remark} \label{re:Gmatrix2}
For $w\in\mathbb{C}[\underline{z}_2^*,\bar{z}_2^*]$, $R_2(w)$ and $G_2(w)$ can analogously be redefined and they satisfy
\begin{equation}
I-C(z,w) = \bigl(z^{-1}I-R_2(w)\bigr) \bigl(I-H_2(w)\bigr) \bigl(z I-G_2(w)\bigr),  
\label{eq:WFfact2}
\end{equation}
where $H_2(w)=A_{0,*}(w)+A_{1,*}(w) G_2(w)$. Furthermore, $I-H_2(w)$ is nonsingular and $N_2(w)$ corresponding to $N_1(z)$ is given by $N_2(w)=(I-H_2(z))^{-1}$. We will use these facts later. 
\end{remark}

%
%

From the definition of $D_1^{(n)}(z)$, it can be seen that each entry of $D_1^{(n)}(z)$ is a Laurent polynomial in $z$ and hence, we can represent $D_1^{(n)}(z)$ as 
\[
D_1^{(n)}(z) = \sum_{k=-n}^n D_{1,k}^{(n)}\,z^k, 
\]
where each $D_{1,k}^{(n)}$ is a nonnegative square matrix. Note that $G_1(z)$ converges absolutely for $z\in\mathbb{C}[\underline{z}_1^*,\bar{z}_1^*]$. Hence, using the representation of $D_1^{(n)}(z)$ above, we obtain 
\begin{align}
G_1(z) = \sum_{n=1}^\infty \sum_{k=-n}^n D_{1,k}^{(n)} z^k = \sum_{k=-\infty}^\infty z^k \sum_{n=\max\{|k|,1\}}^\infty D_{1,k}^{(n)}. 
\end{align}
This is a Laurent expansion of $G_1(z)$, and since $G_1(z)$ is absolutely convergent in $\mathbb{C}[\underline{z}_1^*,\bar{z}_1^*]$, we obtain the following lemma (see, for example, Section II.1 of Markushevich \cite{Markushevich05}).
\begin{lemma} \label{le:G_analytic1}
$G_1(z)$ is analytic in the open annular domain $\mathbb{C}(\underline{z}_1^*,\bar{z}_1^*)$. 
\end{lemma}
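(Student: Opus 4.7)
The plan is to deduce analyticity directly from the Laurent expansion displayed just above the lemma statement, combined with absolute convergence of $G_1$ on the closed annulus (statement (i) of Lemma \ref{le:Gmatrix}). Writing $G_1(z) = \sum_{k\in\mathbb{Z}} G_{1,k}\,z^k$ with $G_{1,k} = \sum_{n\ge\max\{|k|,1\}} D_{1,k}^{(n)}$, each coefficient $G_{1,k}$ is a nonnegative matrix because every $D_{1,k}^{(n)}$ is. A matrix-valued function is analytic if and only if each of its entries is analytic, so it suffices to show that each scalar Laurent series $\sum_k [G_{1,k}]_{i,j}\,z^k$ is analytic on the open annulus $\mathbb{C}(\underline{z}_1^*,\bar{z}_1^*)$.

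The key step uses absolute convergence at the two real boundary radii. From Lemma \ref{le:Gmatrix}(i), $G_1(\bar{z}_1^*)<\infty$ entrywise, and since all coefficients are nonnegative this gives $\sum_{k\ge 0} [G_{1,k}]_{i,j}\,(\bar{z}_1^*)^k < \infty$; hence the power series $\sum_{k\ge 0}[G_{1,k}]_{i,j}\,z^k$ has radius of convergence at least $\bar{z}_1^*$ and is analytic on $|z|<\bar{z}_1^*$. Symmetrically, finiteness of $G_1(\underline{z}_1^*)$ together with the substitution $w=1/z$ shows that $\sum_{k<0}[G_{1,k}]_{i,j}\,z^k$ is analytic on $|z|>\underline{z}_1^*$. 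Adding the two pieces gives analyticity of the full Laurent series on the intersection $\mathbb{C}(\underline{z}_1^*,\bar{z}_1^*)$, which is exactly the standard Laurent series theorem cited from Markushevich.

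Alternatively, one could bypass the Laurent rearrangement and work directly with $G_1(z)=\sum_n D_1^{(n)}(z)$: each $D_1^{(n)}(z)$ is a Laurent polynomial, hence analytic on $\mathbb{C}\setminus\{0\}$, and on any compact $K\subset \mathbb{C}(\underline{z}_1^*,\bar{z}_1^*)$, choosing $a,b$ with $\underline{z}_1^*\le a\le |z|\le b\le \bar{z}_1^*$ for $z\in K$ yields the entrywise domination $|D_1^{(n)}(z)|\le D_1^{(n)}(a)+D_1^{(n)}(b)$, whose sum over $n$ is bounded by $G_1(a)+G_1(b)<\infty$; Weierstrass's theorem on uniformly convergent sequences of analytic functions then delivers analyticity of $G_1$ on the open annulus.

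I do not anticipate any real obstacle; the only point requiring care is the use of nonnegativity of the matrices $D_{1,k}^{(n)}$ to transfer absolute convergence on the positive real axis to absolute convergence on the corresponding circles $|z|=r$, which is precisely what allows the radial convergence information from Lemma \ref{le:Gmatrix}(i) to control the two one-sided parts of the Laurent series.
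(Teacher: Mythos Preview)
Your proposal is correct and your primary argument is exactly the paper's approach: the paper simply records the Laurent expansion $G_1(z)=\sum_{k\in\mathbb{Z}} G_{1,k} z^k$, invokes the absolute convergence established in Lemma~\ref{le:Gmatrix}(i) on the closed annulus $\mathbb{C}[\underline{z}_1^*,\bar{z}_1^*]$, and cites the standard Laurent series theorem from Markushevich, whereas you spell out the two one-sided convergence radii explicitly. Your alternative Weierstrass $M$-test argument (using the bound $|D_1^{(n)}(z)|\le D_1^{(n)}(a)+D_1^{(n)}(b)$) is also valid and amounts to the same idea packaged slightly differently.
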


%
%
\subsection{Eigenvalues of $G_1(z)$}

To obtain the Jordan canonical form of $G_1(z)$ in the following subsection, we identify the eigenvalues of $G_1(z)$ and clarify their properties. 
The Jordan canonical forms of usual G-matrices have extensively been studied in Gail et al.\ \cite{Gail96}. 
For $z,w\in\mathbb{C}$, define an $s_0$-dimensional square matrix function $L(z,w)$ as 
\[
L(z,w) = z w (C(z,w)-I) = z A_{*,-1}(z)+z(A_{*,0}(z)-I) w+z A_{*,1}(z) w^2, 
\]
and denote by $\phi(z,w)$ the determinant of $L(z,w)$, i.e., $\phi(z,w)=\det L(z,w)$. $L(z,w)$ is a matrix polynomial in $w$ with degree $2$ and every entry of $L(z,w)$ is a polynomial in $z$ and $w$. 
$\phi(z,w)$ is also a polynomial in $z$ and $w$. Denote by $m$ the degree of $\phi(z,w)$ in $w$, where $s_0\le m\le 2 s_0$. $\phi(z,w)$ is represented as 
\begin{equation}
\phi(z,w) = \sum_{k=0}^m p_k(z) w^k, 
\end{equation}
where for $k\in\{0,1,...,m\}$, $p_k(z)$ is a polynomial in $z$ and $p_m(z)$ is not identically zero. 
For $z\in\mathbb{C}$ such that $p_m(z)\ne 0$, let $\alpha_1(z),\,\alpha_2(z),...,\alpha_m(z)$ be the solutions to $\phi(z,w)=0$, counting multiplicities, i.e., $\phi(z,\alpha_k(z))=0$ for $k\in\{1,2,...,m\}$. 
For nonzero $z,w\in\mathbb{C}$, let $\lambda^C_1(z,w)$, $\lambda^C_2(z,w)$, ..., $\lambda^C_{s_0}(z,w)$ be the eigenvalues of $C(z,w)$, counting multiplicities. Recall that, for $z\in(\underline{z}_1^*,\bar{z}_1^*)$, equation $\chi(z,w)=\spr(C(z,w))=1$ has two different real solutions $\underline{\zeta}_2(z)<\bar{\zeta}_2(z)$, and for $z=\underline{z}_1^*$ or $\bar{z}_1^*$, it has just one real solution $\underline{\zeta}_2(z)=\bar{\zeta}_2(z)$. 
For $z\in\mathbb{C}[\underline{z}_1^*,\bar{z}_1^*]$, let $\lambda^{G_1}_1(z)$, $\lambda^{G_1}_2(z)$, ..., $\lambda^{G_1}_{s_0}(z)$ be the eigenvalues of $G_1(z)$, counting multiplicities, and $\lambda^{R_1}_1(z)$, $\lambda^{R_1}_2(z)$, ..., $\lambda^{R_1}_{s_0}(z)$ those of $R_1(z)$. 
Then, we have the following lemma. 
%
\begin{lemma} \label{le:zeros_and_G1eigenvalues}
Let $z$ be a number in $\mathbb{C}(\underline{z}_1^*,\bar{z}_1^*]$ and $m=m(z)$ be the degree of $\phi(z,w)$ in $w$. 
\begin{itemize}
\item[(i)] When $z\ne \bar{z}_1^*$, there exist a positive number $\varepsilon$ and just $s_0$ solutions to $\phi(z,w)=0$, say $\alpha_1(z), \alpha_2(z), ..., \alpha_{s_0}(z)$, counting multiplicities, such that $|\alpha_l(z)|<\underline{\zeta}_2(|z|)+\varepsilon$ for all $l\in\{1,2,...,s_0\}$. If $z$ is not a real number, $\varepsilon$ can be set at zero. 
Furthermore, the other $(m-s_0)$ solutions to $\phi(z,w)=0$, say $\alpha_{s_0+1}(z), \alpha_{s_0+2}(z), ..., \alpha_m(z)$, counting multiplicities, satisfy $|\alpha_l(z)|\ge\bar{\zeta}_2(|z|)$ for all $l\in\{s_0+1,s_0+2,...,m\}$. 
\item[(ii)] When $z=\bar{z}_1^*$, assume that the multiplicity of the solution $\alpha_{s_0}(z)=\underline{\zeta}_2(\bar{z}_1^*)$ as a zero of $\phi(z,w)$ is two. 
Then, there exist just $s_0-1$ solutions to $\phi(z,w)=0$, say $\alpha_1(z), \alpha_2(z), ..., \alpha_{s_0-1}(z)$, counting multiplicities, such that $|\alpha_l(z)|<\underline{\zeta}_2(\bar{z}_1^*)$ for all $l\in\{1,2,...,s_0-1\}$. 
Furthermore, there exists a solution to $\phi(z,w)=0$, say $\alpha_{s_0+1}(z)$, such that $\alpha_{s_0+1}(z)=\underline{\zeta}_2(\bar{z}_1^*)\ (=\bar{\zeta}_2(\bar{z}_1^*))$.
The other $(m-s_0-1)$ solutions to $\phi(z,w)=0$, say $\alpha_{s_0+2}(z), \alpha_{s_0+3}(z), ..., \alpha_m(z)$, counting multiplicities, satisfy $|\alpha_l(z)|>\bar{\zeta}_2(|\bar{z}_1^*|)$ for all $l\in\{s_0+2,s_0+3,...,m\}$. 
\item[(iii)] In both the cases (i) and (ii), the set of the first $s_0$ solutions corresponds to the set of the eigenvalues of $G(z)$, i.e., 
\[
\{\alpha_1(z), \alpha_2(z), ..., \alpha_{s_0}(z)\}=\{\lambda^{G_1}_1(z), \lambda^{G_1}_2(z), ..., \lambda^{G_1}_{s_0}(z)\}, 
\] 
and the set of the other $(m-s_0)$ solutions to $\phi(z,w)=0$ corresponds to the set of the reciprocals of the nonzero eigenvalues of $R_1(z)$, i.e., 
\[
\{\alpha_{s_0+1}(z),\alpha_{s_0+2}(z),...,\alpha_m(z)\}=\{\lambda^{-1}: \lambda\in\{\lambda^{R_1}_1(z),\lambda^{R_1}_2(z),...,\lambda^{R_1}_{s_0}(z)\},\ \lambda\ne 0 \}. 
\]
\end{itemize}
\end{lemma}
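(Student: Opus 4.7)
The plan is to deduce everything from the Wiener--Hopf factorization (\ref{eq:WFfact}). Taking determinants in $L(z,w) = zw\bigl(C(z,w)-I\bigr)$ and using (\ref{eq:WFfact}) gives
\begin{align*}
\phi(z,w) &= (-1)^{s_0} z^{s_0}\, \det\bigl(I-H_1(z)\bigr) \prod_{l=1}^{s_0}\bigl(1-w\,\lambda^{R_1}_l(z)\bigr) \prod_{l=1}^{s_0}\bigl(w-\lambda^{G_1}_l(z)\bigr).
\end{align*}
By Corollary \ref{co:N1_inverse}, $\det(I-H_1(z)) \neq 0$ on $\mathbb{C}[\underline{z}_1^*,\bar{z}_1^*]$, so the zero set in $w$ of $\phi(z,\cdot)$ consists exactly of the $s_0$ eigenvalues $\lambda^{G_1}_l(z)$ together with the reciprocals $1/\lambda^{R_1}_l(z)$ of the nonzero eigenvalues of $R_1(z)$; comparing degrees in $w$ forces $m - s_0$ to equal the number of nonzero eigenvalues of $R_1(z)$. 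This already yields (iii); the remaining work is to place the two groups with respect to $\underline{\zeta}_2(|z|)$ and $\bar{\zeta}_2(|z|)$.

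For the basic bounds I would combine the entrywise dominations $|G_1(z)| \le G_1(|z|)$ and $|R_1(z)| \le R_1(|z|)$ from Lemma \ref{le:Gmatrix}(i) with Proposition \ref{pr:sprG1}, obtaining
\[
|\lambda^{G_1}_l(z)| \le \spr\bigl(G_1(|z|)\bigr) = \underline{\zeta}_2(|z|), \qquad |\lambda^{R_1}_l(z)| \le \spr\bigl(R_1(|z|)\bigr) = \bar{\zeta}_2(|z|)^{-1}.
\]
For $|z| \in (\underline{z}_1^*, \bar{z}_1^*)$, Proposition \ref{pr:chisolution} gives $\underline{\zeta}_2(|z|) < \bar{\zeta}_2(|z|)$, so the two groups of roots are separated with no overlap; statement (i) then follows by counting the $s_0$ small and $m-s_0$ large roots, with the non-strict estimate on the real axis absorbed into the arbitrary $+\varepsilon$. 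For non-real $z$, the sharper claim $|\lambda^{G_1}_l(z)| < \underline{\zeta}_2(|z|)$ would be obtained via a Perron--Frobenius comparison in the spirit of Wielandt's theorem: since $G_1(|z|)$ is nonnegative and, by Assumption \ref{as:Akl_irreducible} and the Laurent expansion $G_1(z) = \sum_n \sum_k D_{1,k}^{(n)} z^k$, admits a simple dominant eigenvalue $\underline{\zeta}_2(|z|)$ with strictly positive Perron eigenvector, equality of spectral radii with $G_1(z)$ would force $G_1(z)$ to be a unimodular diagonal conjugate of a phase rotation of $G_1(|z|)$, and the nontrivial support of $D_{1,k}^{(n)}$ over multiple values of $k$ would rule this out when $z$ is not real.

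For $z = \bar{z}_1^*$ the common value $\underline{\zeta}_2(\bar{z}_1^*) = \bar{\zeta}_2(\bar{z}_1^*)$ appears simultaneously as the Perron eigenvalue of $G_1(\bar{z}_1^*)$ and as the reciprocal of the Perron eigenvalue of $R_1(\bar{z}_1^*)$, which automatically forces a double root of $\phi(\bar{z}_1^*,\cdot)$ there; under the multiplicity-two hypothesis this double root splits off cleanly, and the remaining $s_0 - 1$ eigenvalues of $G_1(\bar{z}_1^*)$ are strictly subdominant while the remaining $m - s_0 - 1$ reciprocals of nonzero eigenvalues of $R_1(\bar{z}_1^*)$ are strictly dominant, by simplicity of the respective Perron eigenvalues. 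The principal obstacle I anticipate is the strict Perron--Frobenius comparison for non-real $z$: one must verify that the aperiodicity provided by Assumption \ref{as:Akl_irreducible} for the kernel $\{A_{*,-1},A_{*,0},A_{*,1}\}$ survives passage to the infinite series defining $G_1$ and manifests as a genuine spectral-radius drop off the positive real axis. Once that step is established, everything else reduces to degree bookkeeping on the factorized form of $\phi(z,w)$.
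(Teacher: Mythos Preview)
Your overall architecture---take determinants in the Wiener--Hopf factorization (\ref{eq:WFfact}), read off (iii) by degree count, then locate the two groups of roots via $\spr(G_1(z))\le\underline{\zeta}_2(|z|)$ and $\spr(R_1(z))\le\bar{\zeta}_2(|z|)^{-1}$---is exactly what the paper does; the paper records this as Proposition~\ref{pr:G_eigenvalues} and equation~(\ref{eq:phiGR2}), and the deduction of (i)--(iii) from that proposition is the same bookkeeping you describe.

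The one genuine divergence is your handling of the \emph{strict} inequality off the positive real axis, and this is precisely the obstacle you flag. You propose a Wielandt comparison of $G_1(z)$ against $G_1(|z|)$, which would require $G_1(|z|)$ to have a simple Perron eigenvalue with strictly positive eigenvector; but $G$-matrices are not irreducible in general even when the underlying block kernel is, so this step is not free, and proving the needed spectral-gap property of $G_1(|z|)$ from the Laurent coefficients $D_{1,k}^{(n)}$ is at best awkward. The paper sidesteps this entirely: instead of comparing $G_1(z)$ to $G_1(|z|)$, it works one level up with $C(z,w)$. Proposition~\ref{pr:sprCzw} gives $\spr(C(z,w))<\spr(C(|z|,|w|))$ whenever $(z,w)$ is off the positive cone, and this follows directly from irreducibility and aperiodicity of $\{A_{k,l}\}$ because $C$ is a \emph{finite} Laurent polynomial in $(z,w)$---no infinite series, no questions about irreducibility of $G_1$. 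Then if some root $\alpha$ of $\phi(z,\cdot)$ had $|\alpha|=\underline{\zeta}_2(|z|)$ with $(z,\alpha)$ not positive real, one would get $\spr(C(z,\alpha))<\chi(|z|,\underline{\zeta}_2(|z|))=1$, hence $\det(I-C(z,\alpha))\ne 0$, contradicting $\phi(z,\alpha)=0$. The same device handles the strict subdominance of the remaining $s_0-1$ eigenvalues at $z=\bar{z}_1^*$ in part~(ii), again without needing $G_1(\bar{z}_1^*)$ to be primitive.

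So your proof is correct modulo the acknowledged gap, but the paper's route through $C(z,w)$ rather than $G_1(z)$ dissolves that gap rather than filling it.
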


Before proving Lemma \ref{le:zeros_and_G1eigenvalues}, we present several propositions. 
%
\begin{proposition} \label{pr:sprCzw}
Under Assumption \ref{as:Akl_irreducible}, for $z,w\in\mathbb{C}$ such that $z\ne 0$ and $w\ne 0$, if $|z|\ne z$ or $|w|\ne w$, then $\spr(C(z,w))<\spr(C(|z|,|w|)$. 
\end{proposition}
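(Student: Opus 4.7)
The plan is to apply Wielandt's strengthening of Perron--Frobenius to the pair $B = C(z,w)$ and $A = C(|z|,|w|)$. First, since every $A_{p,q}$ is entrywise nonnegative, the triangle inequality gives $|C(z,w)| \le C(|z|,|w|)$ entrywise. Because $|z|,|w|>0$ and $C(|z|,|w|) = \sum_{p,q\in\mathbb{H}} |z|^p |w|^q A_{p,q}$, the zero pattern of $C(|z|,|w|)$ coincides with that of $A_{*,*}$, so by Proposition~\ref{pr:Ass_irreducible} the matrix $C(|z|,|w|)$ is nonnegative and irreducible. Wielandt's theorem therefore yields
\[
\spr(C(z,w)) \le \spr(C(|z|,|w|)),
\]
with equality if and only if $|C(z,w)| = C(|z|,|w|)$ entrywise and there exist $\theta\in\mathbb{R}$ together with a diagonal $D = \diag(d_1,\ldots,d_{s_0})$ with $|d_k|=1$ such that $C(z,w) = e^{i\theta}\, D\, C(|z|,|w|)\, D^{-1}$.

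Suppose for contradiction that equality holds, and write $z = |z|e^{i\alpha}$, $w = |w|e^{i\beta}$. For every $(k,l)$ with $[C(|z|,|w|)]_{k,l}>0$, the equality case of the triangle inequality forces every summand $z^p w^q [A_{p,q}]_{k,l}$ with $[A_{p,q}]_{k,l}>0$ to share a common argument; combining this with Wielandt's matrix identity produces the per-summand relation
\[
e^{i(p\alpha + q\beta)} = e^{i\theta}\, d_k\, d_l^{-1} \quad \text{whenever } [A_{p,q}]_{k,l} > 0.
\]
Telescoping this identity along any admissible path $(k_0,k_1,\ldots,k_n)$ in $\{\tilde{\bY}_n\}$ with increments $(p_r,q_r)$, i.e., one for which $[A_{p_r,q_r}]_{k_{r-1},k_r}>0$ for each $r$, yields
\[
e^{i(\alpha \sum_r p_r + \beta \sum_r q_r)} = e^{in\theta}\, d_{k_0}\, d_{k_n}^{-1}.
\]

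To finish, I would invoke Assumption~\ref{as:Akl_irreducible}. By aperiodicity of $\{\tilde{\bY}_n\}$, the greatest common divisor of cycle lengths at the state $(0,0,k)$ equals $1$; applying the telescoping formula to two such cycles of coprime lengths (on which both sides collapse to $1 = e^{in\theta}$) forces $e^{i\theta}=1$. By irreducibility, there exist admissible paths from $(0,0,k)$ to $(1,0,k)$ and from $(0,0,k)$ to $(0,1,k)$; the formula applied to these then gives $e^{i\alpha}=1$ and $e^{i\beta}=1$, so $z=|z|$ and $w=|w|$, contradicting the hypothesis. The delicate step will be the passage from the matrix equality $C(z,w) = e^{i\theta} D\, C(|z|,|w|)\, D^{-1}$ to the per-summand argument relation above: one must carefully justify descending from the entrywise identity (a sum of complex numbers with equal moduli) to an identity for each individual $(p,q)$ in the support, which is precisely what the triangle-inequality saturation $|C(z,w)| = C(|z|,|w|)$ provides, after which the cycle and path arguments extract $\theta$, $\alpha$, $\beta$ separately using the two parts of Assumption~\ref{as:Akl_irreducible}.
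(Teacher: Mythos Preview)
Your argument is correct, but it takes a somewhat different route from the paper's. The paper does not invoke the full Wielandt equality characterization on $C(z,w)$ itself; instead it passes to a suitable power $C(z,w)^{n_0}$ and shows directly that some diagonal entry satisfies $\bigl|[C(z,w)^{n_0}]_{jj}\bigr| < [C(|z|,|w|)^{n_0}]_{jj}$. Concretely, irreducibility and aperiodicity of $\{\tilde{\bY}_n\}$ furnish a single $n_0$ for which there are paths of length $n_0$ from $(0,0,j)$ to each of $(0,0,j)$, $(1,0,j)$ and $(0,1,j)$; the corresponding terms in the expansion of $[C(z,w)^{n_0}]_{jj}$ then carry arguments $1$, $e^{i\alpha}$, $e^{i\beta}$, which cannot all coincide unless $\alpha=\beta=0$. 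One strict inequality in an entry, together with $|C(z,w)^{n_0}|\le C(|z|,|w|)^{n_0}$ and irreducibility of the latter, gives $\spr(C(z,w))^{n_0}<\spr(C(|z|,|w|))^{n_0}$ via a standard Perron--Frobenius comparison.

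Your approach trades this construction for the structural equality case of Wielandt, then eliminates $\theta$, $\alpha$, $\beta$ separately via telescoping along cycles and paths. This is clean and arguably more conceptual; the paper's argument is more hands-on and avoids ever naming the diagonal conjugation $D$. One small point: aperiodicity guarantees only that the gcd of all return-cycle lengths is $1$, not that two coprime lengths exist; your step ``$1=e^{in\theta}$ for cycles of coprime lengths'' should really read ``for a finite collection of cycle lengths with gcd $1$'', after which B\'ezout still gives $e^{i\theta}=1$. This does not affect the validity of your proof.
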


Since the proof of this proposition is elementary, we give it in Appendix \ref{sec:sprCzw}. 
%
\begin{proposition} \label{pr:G_eigenvalues}
The following statements hold.
\begin{itemize}  
\item[(i)] If $z\in\mathbb{C}(\underline{z}_1^*,\bar{z}_1^*)$, then for every $l\in\{1,2,...,s_0\}$, $|\lambda^{G_1}_l(z)|\le \underline{\zeta}_2(|z|)$ and $|\lambda^{R_1}_l(z)|\le \bar{\zeta}_2(|z|)^{-1}$, where $\underline{\zeta}_2(|z|)<\bar{\zeta}_2(|z|)$. 
\item[(ii)] If $|z|=\bar{z}_1^*$ and $z\ne \bar{z}_1^*$, then for every $l\in\{1,2,...,s_0\}$, $|\lambda^{G_1}_l(z)|< \underline{\zeta}_2(\bar{z}_1^*)$ and $|\lambda^{R_1}_l(z)|< \bar{\zeta}_2(\bar{z}_1^*)^{-1}$, where $\underline{\zeta}_2(\bar{z}_1^*)=\bar{\zeta}_2(\bar{z}_1^*)$. 
\item[(iii)] Consider the case where $z=\bar{z}_1^*$ and assume that $\lambda^{G_1}_{s_0}(z)=\underline{\zeta}_2(\bar{z}_1^*)$ and $\lambda^{R_1}_{s_0}(z)=\bar{\zeta}_2(\bar{z}_1^*)^{-1}$. Furthermore, assume that the algebraic multiplicity of $\lambda^{G_1}_{s_0}(z)$ and that of $\lambda^{R_1}_{s_0}(z)$ are one. Then, for every $l\in\{1,2,...,s_0-1\}$, $|\lambda^{G_1}_l(z)|< \underline{\zeta}_2(\bar{z}_1^*)$ and $|\lambda^{R_1}_l(z)|< \bar{\zeta}_2(\bar{z}_1^*)^{-1}$, where $\underline{\zeta}_2(\bar{z}_1^*)=\bar{\zeta}_2(\bar{z}_1^*)$. 
\end{itemize}
\end{proposition}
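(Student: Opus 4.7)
The engine of the proof is the Wiener--Hopf-type factorization
\[
I-C(z,w)=\bigl(w^{-1}I-R_1(z)\bigr)\bigl(I-H_1(z)\bigr)\bigl(wI-G_1(z)\bigr)
\]
from Lemma \ref{le:Gmatrix}(v), combined with the nonsingularity of $I-H_1(z)$ from Corollary \ref{co:N1_inverse}. These yield the following dictionary: if $\lambda\ne 0$ is an eigenvalue of $G_1(z)$, then substituting $w=\lambda$ makes the right-hand side singular, so $\det(I-C(z,\lambda))=0$; equivalently, $1$ is an eigenvalue of $C(z,\lambda)$, and hence $\spr(C(z,\lambda))\ge 1$. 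Similarly, if $\mu\ne 0$ is an eigenvalue of $R_1(z)$, then $\spr(C(z,\mu^{-1}))\ge 1$. Against this, every extremal-modulus bound in the proposition will be forced by comparing $\spr(C(z,w))$ with $\spr(C(|z|,|w|))=1$ via Proposition \ref{pr:sprCzw}.

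For Step (i), I would use only nonnegativity. Lemma \ref{le:Gmatrix}(i) gives $|G_1(z)|\le G_1(|z|)$ entrywise; combining the standard inequality $|\lambda|\le \spr(|A|)$ (valid for any eigenvalue $\lambda$ of any complex matrix $A$), monotonicity of the Perron--Frobenius spectral radius on nonnegative matrices, and Proposition \ref{pr:sprG1} yields
\[
|\lambda_l^{G_1}(z)|\le\spr(|G_1(z)|)\le\spr(G_1(|z|))=\underline{\zeta}_2(|z|),
\]
and the order $\underline{\zeta}_2(|z|)<\bar{\zeta}_2(|z|)$ is Proposition \ref{pr:chisolution}. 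The identical chain applied to $R_1$ gives the second bound.

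For Step (ii), note that the bound of (i) survives at $|z|=\bar z_1^*$ (the inequality $|G_1(z)|\le G_1(|z|)$ is stated for $z\in\mathbb{C}[\underline z_1^*,\bar z_1^*]$), so $|\lambda_l^{G_1}(z)|\le\underline{\zeta}_2(\bar z_1^*)$. Suppose equality held for some $l$; then $\lambda_l^{G_1}(z)\ne 0$ and the dictionary gives $\spr(C(z,\lambda_l^{G_1}(z)))\ge 1$. Because $z\ne\bar z_1^*=|z|$, Proposition \ref{pr:sprCzw} yields
\[
\spr(C(z,\lambda_l^{G_1}(z)))<\spr(C(\bar z_1^*,\underline{\zeta}_2(\bar z_1^*)))=\chi(\bar z_1^*,\underline{\zeta}_2(\bar z_1^*))=1,
\]
a contradiction. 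The $R_1$ half follows by the same argument with $\mu^{-1}$ in place of $\lambda$; the case $\mu=0$ is trivial.

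For Step (iii), the same scheme at the real point $z=\bar z_1^*$ shows that any eigenvalue $\lambda$ of $G_1(\bar z_1^*)$ with $|\lambda|=\underline{\zeta}_2(\bar z_1^*)$ must equal $\underline{\zeta}_2(\bar z_1^*)$ itself: otherwise $\lambda\ne|\lambda|$, whence Proposition \ref{pr:sprCzw} forces $\spr(C(\bar z_1^*,\lambda))<1$, again contradicting the dictionary. The algebraic-multiplicity-one hypothesis on $\lambda_{s_0}^{G_1}(\bar z_1^*)=\underline{\zeta}_2(\bar z_1^*)$ then rules out further eigenvalues attaining this modulus, so the remaining $s_0-1$ eigenvalues lie strictly inside $|w|=\underline{\zeta}_2(\bar z_1^*)$; $R_1(\bar z_1^*)$ is handled identically. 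The only subtle bookkeeping—and the closest thing to an obstacle—is recognizing that the factorization argument says nothing when $w=0$, but zero eigenvalues trivially satisfy the stated modulus bounds and so can be dismissed without further work.
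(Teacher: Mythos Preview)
Your proof is correct and follows essentially the same approach as the paper: both use the entrywise bounds $|G_1(z)|\le G_1(|z|)$, $|R_1(z)|\le R_1(|z|)$ for part (i), and then the Wiener--Hopf factorization together with Proposition \ref{pr:sprCzw} to force strict inequalities in (ii) and (iii). The only cosmetic difference is that the paper routes the argument through the polynomial $\phi(z,w)=\det L(z,w)$ and its explicit factorization in terms of the eigenvalues of $G_1(z)$ and $R_1(z)$, whereas you invoke the factorization directly to conclude $\det(I-C(z,\lambda))=0$; this is the same idea with less bookkeeping.
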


\begin{proof}
By Lemma \ref{le:Gmatrix}-(i), we have, for $z\in\mathbb{C}[\underline{z}^*,\bar{z}_1^*]$, 
\begin{align}
&\spr(G_1(z))\le\spr(|G_1(z)|)\le\spr(G_1(|z|))=\underline{\zeta}_2(|z|) \label{eq:sprG1_ineq}\\
&\spr(R_1(z))\le\spr(|R_1(z)|)\le\spr(R_1(|z|))=\bar{\zeta}_2(|z|)^{-1}. \label{eq:sprR1_ineq}
\end{align}
From these formulae and the fact that if $z\in\mathbb{C}(\underline{z}^*,\bar{z}_1^*)$, then $\underline{\zeta}_2(|z|)<\bar{\zeta}_2(|z|)$, we obtain Statement (i). 

By Lemma \ref{le:Gmatrix}-(v), we have, for $z\in\mathbb{C}[\underline{z}_1^*,\bar{z}_1^*]$,
\begin{align}
\phi(z,w) &= (-1)^{s_0} z^{s_0} w^{s_0} f_H(z) \det(w^{-1} I-R_1(z)) \det(w I-G_1(z)), 
\end{align}
where $f_H(z)=\det(I-H_1(z))$ and, by Corollary \ref{co:N1_inverse}, we see $f_H(z)\ne 0$.
Let $m$ be the degree of the polynomial $\phi(z,w)$ in $w$. Then, $\phi(z,w)$ has just $m$ zeros, say $\alpha_1(z), \alpha_2(z), ..., \alpha_{m}(z)$, counting multiplicities, and we have 
\begin{equation}
\phi(z,w) = p_{m}(z) \prod_{k=1}^{m} (w-\alpha_k(z)), 
\end{equation}
where $p_{m}(z)\ne 0$. The eigenvalues of $G_1(z)$ (resp.\ $R_1(z)$) are the zeros of the characteristic polynomial $\det(w I-G_1(z))$ in $w$ (resp.\ $\det(w^{-1} I-R_1(z))$ in $w^{-1}$), and we have 
\begin{align}
&\det(w I-G_1(z)) = \prod_{l=1}^{s_0} (w-\lambda^{G_1}_l(z)),\quad 
\det(w^{-1} I-R_1(z)) = \prod_{l=1}^{s_0} (w^{-1}-\lambda^{R_1}_l(z)). 
\end{align}
Hence, we obtain 
\begin{align}
&p_{m}(z) \prod_{k=1}^{m} (w-\alpha_k(z))
= (-1)^{s_0} z^{s_0} f_H(z) \biggl( \prod_{l=1}^{s_0} \left(w-\lambda^{G_1}_l(z)\right) \biggr) \biggl( \prod_{l=1}^{s_0} \left(1-w \lambda^{R_1}_l(z)\right) \biggr). 
\label{eq:phiGR}
\end{align}
Both sides of equation (\ref{eq:phiGR}) are polynomials in $w$. Hence, the degree of the right hand side of the equation must be $m$, and $2 s_0-m$ eigenvalues of $R_1(z)$ must be zero. Therefore, without loss of generality, we assume that $\lambda^{R_1}_l(z)=0$ for $l\in\{m-s_0+1,m-s_0+2,...,2 s_0\}$. Then, for $l\in\{1,2,...,m-s_0\}$, $\lambda^{R_1}_l(z)\ne 0$ and equation (\ref{eq:phiGR}) becomes 
\begin{align}
\phi(z,w) 
&= p_{m}(z) \prod_{k=1}^{m} (w-\alpha_k(z)) \cr
&= p_{m}(z)\biggl( \prod_{l=1}^{s_0} \left(w-\lambda^{G_1}_l(z)\right) \biggr) \biggl( \prod_{l=1}^{m-s_0} \left(w- \lambda^{R_1}_l(z)^{-1}\right) \biggr), 
\label{eq:phiGR2}
\end{align}
where 
\[
p_{m}(z) = (-1)^{s_0} z^{s_0} f_H(z) \biggl( \prod_{l=1}^{m-s_0} \left(-\lambda^{R_1}_l(z)\right) \biggr) \ne 0.
\]
By Proposition \ref{pr:sprCzw}, for $|z|=\bar{z}_1^*$ such that $z\ne \bar{z}_1^*$ and for $|w|= \underline{\zeta}_2(\bar{z}_1^*)$, we obtain 
\begin{align}
\spr(C(z,w)) < \spr(C(\bar{z}_1^*,\underline{\zeta}_2(\bar{z}_1^*)) = 1. 
\end{align}
This implies that $\det(C(z,w)-I)\ne 0$ and $\phi(z,w) = z^{s_0} w^{s_0} \det(C(z,w)-I) \ne 0$. 
Hence, from equation (\ref{eq:phiGR2}), we see that this $w$ is neither an eigenvalue of $G_1(z)$ nor the reciprocal of a nonzero eigenvalue of $R_1(z)$. Therefore, from formulae (\ref{eq:sprG1_ineq}) and (\ref{eq:sprR1_ineq}), we obtain statement (ii). 

Under the assumption of statement (iii), we consider the case where $z=\bar{z}_1^*$. Since the algebraic multiplicity of $\lambda^{G_1}_{s_0}(z)=\underline{\zeta}_2(\bar{z}_1^*)$ and that of $\lambda^{R_1}_{s_0}(z)=\bar{\zeta}_2(\bar{z}_1^*)^{-1}$ are one, we have, for every $l\in\{1,2,...,s_0-1\}$, $\lambda^{G_1}_l(z)\ne\underline{\zeta}_2(\bar{z}_1^*)$ and $\lambda^{R_1}_l(z)\ne\bar{\zeta}_2(\bar{z}_1^*)^{-1}$. 
Suppose that, for some $l\in\{1,2,...,s_0-1\}$, $|\lambda^{G_1}_l(z)|=\underline{\zeta}_2(\bar{z}_1^*)$. Then, by Proposition \ref{pr:sprCzw}, we have $\phi(z,\lambda^{G_1}_l(z))\ne 0$ and this contradicts equation (\ref{eq:phiGR2}). Hence, for every $l\in\{1,2,...,s_0-1\}$, $|\lambda^{G_1}_l(z)|\ne \underline{\zeta}_2(\bar{z}_1^*)$. Analogously, we have, for every $l\in\{1,2,...,s_0-1\}$, $|\lambda^{R_1}_l(z)|\ne \bar{\zeta}_2(\bar{z}_1^*)^{-1}$. Therefore, from formulae (\ref{eq:sprG1_ineq}) and (\ref{eq:sprR1_ineq}), we obtain statement (iii) and this completes the proof.
\end{proof}

%
\begin{proof}[Proof of Lemma \ref{le:zeros_and_G1eigenvalues}]
Since $p_m(z)\ne 0$,  we obtain from equation (\ref{eq:phiGR2}) that 
\begin{align}
&\prod_{k=1}^m (w-\alpha_k(z)) = \biggl( \prod_{l=1}^{s_0} \left(w-\lambda^{G_1}_l(z)\right) \biggr) \biggl( \prod_{l=1}^{m-s_0} \left(w- \lambda^{R_1}_l(z)^{-1}\right) \biggr). 
\label{eq:alphaGR}
\end{align} 
This equation implies that the eigenvalues of $G_1(z)$ and the reciprocals of the nonzero eigenvalues of $R_1(z)$ are the zeros of the polynomial $\phi(z,w)$ in $w$ and vice versa. Therefore, denoting the zeros of $\phi(z,w)$ corresponding to the eigenvalues of $G_1(z)$ by $\alpha_1(z), \alpha_2(z), ..., \alpha_{s_0}(z)$, we have 
\[
\{\alpha_1(z), \alpha_2(z), ..., \alpha_{s_0}(z)\}=\{\lambda^{G_1}_1(z), \lambda^{G_1}_2(z), ..., \lambda^{G_1}_{s_0}(z)\}.
\]
Other zeros of $\phi(z,w)$, $\alpha_{s_0+1}(z),\alpha_{s_0+2}(z),...,\alpha_m(z)$, correspond to the reciprocals of the nonzero eigenvalues of $R_1(z)$. Hence, denoting them by $\lambda^{R_1}_{2 s_0-m+1}(z),\lambda^{R_1}_{2 s_0-m+2}(z),...,\lambda^{R_1}_{s_0}(z)$, we have
\begin{align*}
\{\alpha_{s_0+1}(z),\alpha_{s_0+2}(z),...,\alpha_m(z)\}
&=\{\lambda^{-1}: \lambda\in\{\lambda^{R_1}_1(z),\lambda^{R_1}_2(z),...,\lambda^{R_1}_{s_0}(z)\},\ \lambda\ne 0 \} \cr
&=\{\lambda^{R_1}_{2 s_0-m+1}(z)^{-1},\lambda^{R_1}_{2 s_0-m+2}(z)^{-1},...,\lambda^{R_1}_{s_0}(z)^{-1} \}.
\end{align*}
This complets the proof of statement (iii) of the Lemma. 

If $|z|\ne\bar{z}_1^*$, then by Proposition \ref{pr:G_eigenvalues}-(i), setting $\varepsilon=(\bar{\zeta}_2(|z|)-\underline{\zeta}_2(|z|))/2$, we have, for every $l\in\{1,2,...,s_0\}$ and $l'\in\{2 s_0-m+1,2 s_0-m+2,...,s_0\}$, 
\[
|\lambda^{G_1}_l(z)| < \underline{\zeta}_2(|z|)+\varepsilon < \bar{\zeta}_2(|z|) \le |\lambda^{R_1}_{l'}(z)|^{-1}. 
\]
Hence, from statement (iii) of the lemma, we obtain, for every $l\in\{1,2,...,s_0\}$ and $l'\in\{2 s_0-m+1,2 s_0-m+2,...,s_0\}$, 
\[
|\alpha_l(z)| < \underline{\zeta}_2(|z|)+\varepsilon < \bar{\zeta}_2(|z|) \le |\alpha_{l'}(z)|^{-1}. 
\]
When $|z|\ne z$, suppose $|\alpha_l(z)|=\underline{\zeta}_2(|z|)$ for some $\l\in\{1,2,...,s_0\}$. Then, by Proposition \ref{pr:sprCzw}, we have $\spr(C(z,\alpha_l(z)))<\spr(C(|z|,\underline{\zeta}_2(|z|)))=1$, and this contradicts $\phi(z,\alpha_l(z))=0$. 
Hence, $|\alpha_l(z)|\ne\underline{\zeta}_2(|z|)$ and we see that $\varepsilon$ can be set at $0$ when $|z|\ne z$. This completes the proof of statement (i) of the lemma. 

If $|z|=\bar{z}_1^*$ and $z\ne\bar{z}_1^*$, then, by Proposition \ref{pr:G_eigenvalues}-(ii), setting 
\[
\varepsilon= \frac{1}{2} \left( \min_{1\le k\le m-s_0} |\lambda^{R_1}_{2 s_0-m+k}(z)|^{-1} - \bar{\zeta}_2(\bar{z}_1^*) \right), 
\] 
we have, for every $l\in\{1,2,...,s_0\}$ and $l'\in\{2 s_0-m+1,2 s_0-m+2,...,s_0\}$, 
\[
|\lambda^{G_1}_l(z)| < \bar{\zeta}_2(\bar{z}_1^*)+\varepsilon < |\lambda^{R_1}_{l'}(z)|^{-1}. 
\]
Under the assumption of statement (ii), if $z=\bar{z}_1^*$, we have just two solutions that equal $\underline{\zeta}_2(\bar{z}_1^*)$: one is an eigenvalue of $G_1(z)$, say $\alpha_{s_0}(z)$, and the other that of $R_1(z)$, say $\alpha_{s_0+1}(z)$. The algebraic multiplicities of those eigenvalues are one. Hence, by Proposition \ref{pr:G_eigenvalues}-(iii) and statement (iii) of the lemma, we obtain statement (ii) of the lemma. 
\end{proof}

%
%
\subsection{Analytic continuation of $G_1(z)$}

Hereafter, we assume the following technical condition. 
\begin{assumption} \label{as:eigenG1_distinct}
For some $z_0\in\mathbb{C}(\underline{z}_1^*,\bar{z}_1^*]$, all the eigenvalues of $G_1(z_0)$ are distinct. 
\end{assumption}
Although this assumption always holds under Assumption \ref{as:G1_eigen_z1max}, we assume it in this section instead of Assumption \ref{as:G1_eigen_z1max} in order to make explanation simple. 
Under Assumption \ref{as:eigenG1_distinct}, we define a set of points at which the algebraic multiplicity of some eigenvalue of $G_1(z)$ is greater than one. 
Before doing it, we define a notation. Let $f(z,w)$ be an irreducible polynomial in $z$ and $w$ whose degree with respect to $w$ is $k$. Assume the degree $k$ is greater than or equal to one and denote by $a(z)$ the coefficient of $w^k$ in $f(z,w)$. This $a(z)$ is a polynomial in $z$ satisfying $a(z)\not\equiv 0$ but it may be a nonzero constant. Define a point set $\Xi(f)$ as 
\begin{align*}
\Xi(f) 
&= \{ z\in\mathbb{C}: \mbox{$a(z)=0$ or ($f(z,w)=0$ and $f_w(z,w)=0$ for some $w\in\mathbb{C}$)} \}, 
\end{align*}
where $f_w(z,w)=(\partial/\partial w)f(z,w)$. Each point in $\Xi(f)$ is called an exceptional point of the algebraic function $w=\alpha(z)$ defined by polynomial equation $f(z,w)=0$, and it is an algebraic singularity of $\alpha(z)$, i.e., a removable singularity, pole or branch point with a finite order. For any point $z\in\mathbb{C}\setminus\Xi(f)$, $f(z,w)=0$ has just $k$ distinct solutions, which correspond to the $k$ branches of the algebraic function. 

Without loss of generality, we assume that, for some $n\in\mathbb{N}$ and $l_1,l_2,...,l_n\in\mathbb{N}$, the polynomial $\phi(z,w)=\det L(z,w)$ is factorized as 
\begin{equation}
\phi(z,w) = f_1(z,w)^{l_1} f_2(z,w)^{l_2} \cdots f_n(z,w)^{l_n}, 
\label{eq:phi_fk}
\end{equation}
where $f_k(z,w),\,k=1,2,...,n$, are irreducible polynomials in $z$ and $w$ and they are relatively prime. Here we note that, for some $k$, $f_k(z,w)$ may be a polynomial in either $z$ or $w$. Since the field of coefficients of polynomials we consider is $\mathbb{C}$, this factorization is unique. 
Let $z_0$ be the pint given in Assumption \ref{as:eigenG1_distinct} and $\alpha_k(z_0),\,k=1,2,...,s_0$, the solutions to $\phi(z_0,w)=0$ that correspond to the eigenvalues of $G_1(z_0)$.  By Lemma \ref{le:zeros_and_G1eigenvalues}, we see that, for every $k\in\{1,2,...,s_0\}$, the solution $\alpha_k(z_0)$ is distinct from all other solutions to $\phi(z_0,w)=0$. 
Hence, for each $k\in\{1,2,...,s_0\}$, there exists a unique $q(k)\in\{1,2,...,n\}$ such that $\alpha_k(z_0)$ is a solution to $f_{q(k)}(z_0,w)=0$. Note that there may exist $k$ and $k'$ such that $k\ne k'$ and $q(k)=q(k')$. From factorization (\ref{eq:phi_fk}), we immediately know that $l_{q(k)}=1$ for $k\in\{1,2,...,s_0\}$. 
Define a point set $\calE_1$ as 
\[
\calE_1 = \bigcup_{k=1}^{s_0} \Xi(f_{q(k)}). 
\]
Since, for each $k$, $f_{q(k)}$ is irreducible and not identically zero, the point set $\calE_1$ is finite. 
If $z_0\in\calE_1$, then replace $z_0$ with another point $z_0'$ in a neighborhood of $z_0$ such that $z_0'\in\mathbb{C}(\underline{z}_1^*,\bar{z}_1^*]\setminus\calE_1$ and it satisfies the condition of Assumption \ref{as:eigenG1_distinct}. It is possible because of continuity of the solutions to $\phi(z_0,w)=0$ and finiteness of $\calE_1$. Hereafter, in that case, we denote $z_0'$ by $z_0$. 
Note that $\bar{z}_1^*\in\calE_1$. This will be explained in Subsection \ref{sec:G1_extension} (see Lemma \ref{le:singularity_alpha_s0}).

For every $k\in\{1,2,...,s_0\}$ and any $z\in\mathbb{C}(\underline{z}_1^*,\bar{z}_1^*]\setminus\calE_1$, let $\check{\alpha}_k(z)$ be the analytic continuation of $\alpha_k(z_0)$ along a common path on $\mathbb{C}(\underline{z}_1^*,\bar{z}_1^*]\setminus\calE_1$, starting from $z_0$ and terminating at $z$. We give the following lemma. 
%
\begin{lemma} \label{le:tildealpha_continued}
For any $z\in\mathbb{C}(\underline{z}_1^*,\bar{z}_1^*]\setminus\calE_1$, the set $\{\check{\alpha}_1(z),\check{\alpha}_2(z),...,\check{\alpha}_{s_0}(z)\}$ is unique, i.e., it does not depend on the path along which all $\check{\alpha}_k(z),\,k=1,2,...,s_0$, are simultaneously continued, and it is identical to the set of the eigenvalues of $G_1(z)$. 
\end{lemma}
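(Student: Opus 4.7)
The plan is to exploit the fact that $G_1(z)$ is a single-valued analytic matrix function on $\mathbb{C}(\underline{z}_1^*,\bar{z}_1^*)$ by Lemma \ref{le:G_analytic1}, so its characteristic polynomial $\det(wI-G_1(z))$ has single-valued analytic coefficients and, in particular, the multi-set of eigenvalues of $G_1(z)$ is a function of $z$ only. If I can show that, for every $z$ in the stated domain and along every admissible path, each $\check{\alpha}_k(z)$ lies in this eigenvalue multi-set, then since both collections have cardinality $s_0$ counting multiplicity, they must coincide as multi-sets, which simultaneously yields path-independence.

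First I would work locally near $z_0$. By Assumption \ref{as:eigenG1_distinct} the eigenvalues of $G_1(z_0)$ are distinct, so the implicit function theorem applied to $\det(wI-G_1(z))=0$ yields $s_0$ analytic branches $\lambda_k^{G_1}(z)$ on a small disc $U\subset\mathbb{C}(\underline{z}_1^*,\bar{z}_1^*]\setminus\calE_1$ around $z_0$, with $\lambda_k^{G_1}(z_0)=\alpha_k(z_0)$ after relabeling. Each $\lambda_k^{G_1}$ satisfies the polynomial identity $f_{q(k)}(z,\lambda_k^{G_1}(z))=0$ on $U$, is analytic there, and agrees with $\alpha_k$ at $z_0$; since $U$ is disjoint from $\Xi(f_{q(k)})$, uniqueness of branches of the algebraic function $f_{q(k)}(z,w)=0$ on a connected punctured neighborhood forces $\lambda_k^{G_1}(z)=\check{\alpha}_k(z)$ throughout $U$. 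This proves the lemma locally.

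Next I would extend this along an arbitrary path $\gamma$ in $\mathbb{C}(\underline{z}_1^*,\bar{z}_1^*]\setminus\calE_1$ from $z_0$ to a generic target $z$. Simultaneous analytic continuation keeps each $\check{\alpha}_k$ a root of $\phi(z,w)=0$ (algebraic identities are preserved by continuation), so by Lemma \ref{le:zeros_and_G1eigenvalues}(iii) each continued value is either an eigenvalue of $G_1(z)$ or the reciprocal of a nonzero eigenvalue of $R_1(z)$. Lemma \ref{le:zeros_and_G1eigenvalues}(i),(ii) provides a strict modulus separation between these two families along $\gamma$: in the interior annulus we have $|\lambda_l^{G_1}(z)|<\underline{\zeta}_2(|z|)<\bar{\zeta}_2(|z|)\le|\lambda_l^{R_1}(z)^{-1}|$, and on the outer circle with $z\ne\bar{z}_1^*$ the inequality $|\lambda_l^{G_1}(z)|<\underline{\zeta}_2(\bar{z}_1^*)=\bar{\zeta}_2(\bar{z}_1^*)\le|\lambda_l^{R_1}(z)^{-1}|$ still holds strictly (the only boundary point where the two collide is $\bar{z}_1^*$ itself, which is excluded because it lies in $\calE_1$). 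Continuity therefore prevents $\check{\alpha}_k$, which starts on the $G_1$ side at $z_0$, from crossing this uniform gap into the $R_1^{-1}$ side, so $\check{\alpha}_k(z)$ remains an eigenvalue of $G_1(z)$ throughout $\gamma$.

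Combining the two preceding steps delivers the multi-set identity at the endpoint $z$, and path-independence follows immediately since the right-hand side depends only on $z$. The main obstacle I anticipate is the careful treatment of the boundary circle $|z|=\bar{z}_1^*$, where the usual separation of Lemma \ref{le:zeros_and_G1eigenvalues}(i) degenerates at the single real point $\bar{z}_1^*$; this is handled by placing that point inside $\calE_1$ (as noted in the text preceding the lemma) and invoking Lemma \ref{le:zeros_and_G1eigenvalues}(ii) to maintain strict separation at every other boundary point. A secondary subtlety is that, within $\mathbb{C}(\underline{z}_1^*,\bar{z}_1^*)\setminus\calE_1$, two of the $\check{\alpha}_k$'s coming from different irreducible factors $f_{q(k)}\ne f_{q(k')}$ may coincide at isolated points invisible to $\calE_1$; but each continuation is still analytic as a branch of its own factor, and such coincidences merely correspond to a repeated eigenvalue of $G_1(z)$, which is fully compatible with the multi-set conclusion.
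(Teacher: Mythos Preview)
Your proposal is correct and follows essentially the same approach as the paper: both rely on the modulus separation of Lemma~\ref{le:zeros_and_G1eigenvalues} between the $G_1$-eigenvalues and the reciprocals of the $R_1$-eigenvalues, together with a continuity argument along the path (and the observation that $\bar{z}_1^*\in\calE_1$, so the one point where the separation collapses is excluded). One minor imprecision: in the open annulus your strict inequality $|\lambda_l^{G_1}(z)|<\underline{\zeta}_2(|z|)$ fails for real $z$ (where the Perron--Frobenius eigenvalue attains $\underline{\zeta}_2(z)$), but this does not matter since the essential gap $\underline{\zeta}_2(|z|)<\bar{\zeta}_2(|z|)$ is what drives the intermediate-value contradiction, exactly as in the paper's proof.
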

%
\begin{proof}[Proof of Lemma \ref{le:tildealpha_continued}]
Note that, for each $k\in\{1,2,...,s_0\}$, $\check{\alpha}_k(z)$ is a branch of the algebraic function given by the polynomial $f_{q(k)}(z,w)$ and it can analytically be continued along any path on $\mathbb{C}\setminus\Xi(f_{q(k)})$. 
Further note that $\bar{z}_1^*\in\calE_1$ (see Subsection \ref{sec:G1_extension}). 
Consider three real valued functions of a complex variable $z$: $|\check{\alpha}_k(z)|$, $\underline{\zeta}_2(|z|)$ and $\bar{\zeta}_2(|z|)$. We have, for $z\in\mathbb{C}(\underline{z}_1^*,\bar{z}_1^*)$, $\underline{\zeta}_2(|z|)<\bar{\zeta}_2(|z|)$ and, for $z$ such that $|z|=\bar{z}_1^*$, $\underline{\zeta}_2(|z|)=\bar{\zeta}_2(|z|)$. 
Furthermore, $\check{\alpha}_k(z)$ is a solution to $\phi(z,w)=0$ and, by Lemma \ref{le:zeros_and_G1eigenvalues}, we know that, for $z\in\mathbb{C}(\underline{z}_1^*,\bar{z}_1^*)\setminus\calE_1$, $\check{\alpha}_k(z)$ satisfies either $|\check{\alpha}_k(z)|\le \underline{\zeta}_2(|z|)$ or $|\check{\alpha}_k(z)|\ge\bar{\zeta}_2(|z|)$ and, for $z\in\mathbb{C}$ such that $|z|=\bar{z}_1^*$ and $z\ne\bar{z}_1^*$, $\check{\alpha}_k(z)$ satisfies either $|\check{\alpha}_k(z)|<\underline{\zeta}_2(\bar{z}_1^*)$ or $|\check{\alpha}_k(z)|\ge\bar{\zeta}_2(\bar{z}_1^*)=\underline{\zeta}_2(\bar{z}_1^*)$. 

Let $z_s$ and $z_t$ be points in $\in\mathbb{C}(\underline{z}_1^*,\bar{z}_1^*]\setminus\calE_1$ and assume that if $|z_s|\ne\bar{z}_1^*$, then $|\check{\alpha}_k(z_s)|\le\underline{\zeta}_2(|z_s|),\,k=1,2,...,s_0$, and otherwise, $|\check{\alpha}_k(z_s)|<\underline{\zeta}_2(\bar{z}_1^*),\,k=1,2,...,s_0$. 
Consider an arbitrary path $\xi_{z_s,z_t}$ on $\mathbb{C}(\underline{z}_1^*,\bar{z}_1^*]\setminus\calE_1$, starting from $z_s$ and terminating at $z_t$. For each $k\in\{1,2,...,s_0\}$, analytically continue $\check{\alpha}_k(z_s)$ along the path $\xi_{z_s,z_t}$. Then, we obtain function $\check{\alpha}_k(z)$, which is analytic on $\xi_{z_s,z_t}$. 
We show that $|\check{\alpha}_k(z_t)|\le\underline{\zeta}_2(|z_t|)$ when $|z_t|\ne\bar{z}_1^*$ and that  $|\check{\alpha}_k(z_t)|<\underline{\zeta}_2(\bar{z}_1^*)$ when $|z_t|=\bar{z}_1^*$. 
Suppose it is not the case. Then, we have that $\underline{\zeta}_2(|z_t|)< \bar{\zeta}_2(|z_t|)\le |\check{\alpha}_k(z_t)|$ when $|z_t|\ne\bar{z}_1^*$ or that $\underline{\zeta}_2(\bar{z}_1^*)\le |\check{\alpha}_k(z_t)|$ when $|z_t|=\bar{z}_1^*$. 
In both the cases, since $|\check{\alpha}_k(z)|$, $\underline{\zeta}_2(|z|)$ and $\bar{\zeta}_2(|z|)$ are continuous on the path $\xi_{z_s,z_t}$, there exists a point $z'\in\xi_{z_s,z_t}$ such that $|z'|\ne\bar{z}_1^*$ and $\underline{\zeta}_2(|z'|)<|\check{\alpha}_k(z')|<\bar{\zeta}_2(|z'|)$ or that $|z'|=\bar{z}_1^*$ and $|\check{\alpha}_k(z')|=\underline{\zeta}_2(\bar{z}_1^*)$. 
This contradicts Lemma \ref{le:zeros_and_G1eigenvalues} since $z'\ne\bar{z}_1^*$ and $\check{\alpha}_k(z')$ is a solution to $\phi(z',w)=0$. Hence, we obtain the desired result. 

For $z_t\in\mathbb{C}(\underline{z}_1^*,\bar{z}_1^*]\setminus\calE_1$, consider an arbitrary path $\xi_{z_0,z_t}$ on $\mathbb{C}(\underline{z}_1^*,\bar{z}_1^*]\setminus\calE_1$. From the fact obtained above, we see that, for every $k\in\{1,2,...,s_0\}$, by analytically continuing $\alpha_k(z_0)$ along the path $\xi_{z_0,z_t}$, we obtain $\check{\alpha}_k(z_t)$ satisfying $|\check{\alpha}_k(z_t)| \le \underline{\zeta}_2(|z_t|)$. 
Each $\check{\alpha}_k(z_t)$ is a solution to $\phi(z_t,w)=0$ and, by Lemma \ref{le:zeros_and_G1eigenvalues}, the set $\{\check{\alpha}_1(z_t),\check{\alpha}_2(z_t),...,\check{\alpha}_{s_0}(z_t)\}$ is identical to the set of the eigenvalues of $G_1(z_t)$. This also implies that the set $\{\check{\alpha}_1(z_t),\check{\alpha}_2(z_t),...,\check{\alpha}_{s_0}(z_t)\}$ remains unchained as a set even if we use anther path connecting $z_0$ and $z_t$ for continuing each $\alpha_k(z_0),\,k=1,2,...,s_0$. 
This completes the proof.
\end{proof}

\begin{remark}
For some $k\in\{1,2,...,s_0\}$, there may exist a branch point of $\check{\alpha}_k(z)$ in $\mathbb{C}(\underline{z}_1^*,\bar{z}_1^*]$. 
Lemma \ref{le:tildealpha_continued} asserts that even in that case, the set $\{\check{\alpha}_1(z),\check{\alpha}_2(z),...,\check{\alpha}_{s_0}(z)\}$ takes a single value as a set, for any $z\in\mathbb{C}(\underline{z}_1^*,\bar{z}_1^*]\setminus\calE_1$. 
\end{remark}

\begin{remark}
From the proof of Lemma \ref{le:tildealpha_continued}, we see that, for every $k\in\{1,2,...,s_0\}$, $\check{\alpha}_k(z)$ is bounded in a neighborhood of every point in $\calE_1\cap\mathbb{C}(\underline{z}_1^*,\bar{z}_1^*)$ since, for every $z\in\mathbb{C}(\underline{z}_1^*,\bar{z}_1^*]\setminus\calE_1$, $|\check{\alpha}_k(z)|\le \sup_{z\in(\underline{z}_1^*,\bar{z}_1^*]} \underline{\zeta}_2(z)< \bar{z}_2^*$.
Hence, any point in $\calE_1\cap\mathbb{C}(\underline{z}_1^*,\bar{z}_1^*)$ is not a pole; it is a removable singularity or branch point. 

\end{remark}

Define a point set $\calE_2$ as
\begin{align*}
\calE_2 
&= \{z\in\mathbb{C}\setminus\calE_1: \mbox{$f_{q(k)}(z,w)=f_{q(k')}(z,w)=0$ } \cr
&\qquad\qquad \mbox{for some $k,k'\in\{1,2,...,s_0\}$ such that $q(k)\ne q(k')$ and for some $w\in\mathbb{C}$} \}.
\end{align*}
Since, for any $k,k'\in\{1,2,...,s_0\}$ such that $q(k)\ne q(k')$, $f_{q(k)}(z,w)$ and $ f_{q(k')}(z,w)$ are relatively prime, the point set $\calE_2$ is finite. At each $z\in\calE_2\cap\mathbb{C}(\underline{z}_1^*,\bar{z}_1^*]$, some eigenvalues of $G_1(z)$ may coalesce with each other but any point in $\calE_2$ is not a singularity of any eigenvalue of $G_1(z)$. 
From Lemma \ref{le:tildealpha_continued}, we immediately obtain the following. 
\begin{corollary} \label{co:eigenG1_distinct}
For $z\in\mathbb{C}(\underline{z}_1^*,\bar{z}_1^*]\setminus(\calE_1\cup\calE_2)$, all the eigenvalues of $G_1(z)$, $\alpha_k(z),\,k=1,2,...,s_0$, are distinct. Furthermore, for each $z\in\mathbb{C}(\underline{z}_1^*,\bar{z}_1^*]\setminus(\calE_1\cup\calE_2)$ and each $k\in\{1,2,...,s_0\}$, the multiplicity of $\alpha_k(z)$ as a zero of $L(z,w)$ is one. 
\end{corollary}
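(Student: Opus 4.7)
The plan is to deduce both assertions from Lemma \ref{le:tildealpha_continued} combined with the very definitions of $\calE_1$ and $\calE_2$, with the separation bounds of Proposition \ref{pr:G_eigenvalues} used to control the remaining irreducible factors of $\phi(z,w)$. Throughout, I fix $z\in\mathbb{C}(\underline{z}_1^*,\bar{z}_1^*]\setminus(\calE_1\cup\calE_2)$, and note that since $\bar{z}_1^*\in\calE_1$ (remarked in the text just before the corollary), in fact $z\ne\bar{z}_1^*$.

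For the distinctness assertion, I would argue by contradiction. Suppose $\alpha_k(z)=\alpha_{k'}(z)$ for some $k\ne k'$ in $\{1,\dots,s_0\}$. By Assumption \ref{as:eigenG1_distinct} the eigenvalues $\alpha_k(z_0)$ are distinct at the initial point $z_0$, and Lemma \ref{le:tildealpha_continued} produces $\alpha_k(z)$ as the analytic continuation of $\alpha_k(z_0)$ along a path in $\mathbb{C}(\underline{z}_1^*,\bar{z}_1^*]\setminus\calE_1$. Two cases arise. If $q(k)=q(k')$, then $\alpha_k$ and $\alpha_{k'}$ are two branches of the algebraic function defined by the irreducible polynomial $f_{q(k)}(z,w)=0$, and $z\notin\Xi(f_{q(k)})\subseteq\calE_1$ guarantees that $f_{q(k)}(z,\cdot)$ has $\deg f_{q(k)}$ distinct roots; branches that were distinct at $z_0$ cannot merge without passing through $\Xi(f_{q(k)})$, so $\alpha_k(z)\ne\alpha_{k'}(z)$, a contradiction. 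If $q(k)\ne q(k')$, then $\alpha_k(z)$ is a common root of $f_{q(k)}(z,\cdot)$ and $f_{q(k')}(z,\cdot)$, which places $z$ in $\calE_2$, again a contradiction.

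For the multiplicity assertion, I would exploit the factorization $\phi(z,w)=\prod_{j=1}^n f_j(z,w)^{l_j}$, with the previously noted fact that $l_{q(k)}=1$ for every $k\in\{1,\dots,s_0\}$. I plan to check that at $w=\alpha_k(z)$ exactly one irreducible factor vanishes, and does so simply. First, $f_{q(k)}(z,\alpha_k(z))=0$ with $w$-multiplicity one, because $z\notin\Xi(f_{q(k)})$ forces $(\partial/\partial w)f_{q(k)}$ to be nonzero there. Second, for any $k'\ne k$ with $q(k')\ne q(k)$ and $q(k')\in\{q(1),\dots,q(s_0)\}$, the non-vanishing $f_{q(k')}(z,\alpha_k(z))\ne 0$ is exactly the condition $z\notin\calE_2$. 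Third, for the remaining factors $f_j$ with $j\notin\{q(1),\dots,q(s_0)\}$, Lemma \ref{le:zeros_and_G1eigenvalues}(iii) identifies their zeros with reciprocals of the nonzero eigenvalues of $R_1(z)$; Proposition \ref{pr:G_eigenvalues}(i) when $|z|<\bar{z}_1^*$ and Proposition \ref{pr:G_eigenvalues}(ii) when $|z|=\bar{z}_1^*$ with $z\ne\bar{z}_1^*$ yield the strict separation $|\alpha_k(z)|<|w|$ for every such $w$, so $f_j(z,\alpha_k(z))\ne 0$. Multiplying these contributions, the local factorization of $\phi(z,\cdot)$ around $\alpha_k(z)$ is (a linear factor with a simple zero) times (a factor not vanishing at $\alpha_k(z)$), proving simplicity.

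The step I expect to require the most care is the third item of the multiplicity argument: one must verify that $|z|=\bar{z}_1^*$ is not a problem by invoking the strict inequality of Proposition \ref{pr:G_eigenvalues}(ii) (available precisely because $z\ne\bar{z}_1^*$), while the exceptional degenerate case $z=\bar{z}_1^*$, where $\underline{\zeta}_2(\bar{z}_1^*)=\bar{\zeta}_2(\bar{z}_1^*)$ would destroy the separation between the $G_1$-roots and the $R_1^{-1}$-roots, is automatically excluded by the inclusion $\bar{z}_1^*\in\calE_1$.
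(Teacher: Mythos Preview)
Your overall strategy is correct and matches what the paper has in mind (the paper offers no proof beyond ``immediately from Lemma~\ref{le:tildealpha_continued}''). In particular, your distinctness argument is exactly right: continuations of distinct branches of an irreducible factor $f_{q(k)}$ cannot merge off $\Xi(f_{q(k)})\subset\calE_1$, and coincidence of branches from different irreducible factors $f_{q(k)},f_{q(k')}$ is ruled out by $z\notin\calE_2$.

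For the multiplicity assertion, however, your third bullet is not quite justified as written. Lemma~\ref{le:zeros_and_G1eigenvalues}(iii) partitions the zeros of $\phi(z,\cdot)$ into $G_1$-eigenvalues and $R_1$-reciprocals, but it does \emph{not} say that the roots of an irreducible factor $f_j$ with $j\notin\{q(1),\dots,q(s_0)\}$ are all $R_1$-reciprocals at the given $z$; a priori such an $f_j$ could share a root with some $f_{q(k)}$ at $z$, and this is not excluded by $\calE_2$ (which only concerns pairs $f_{q(k)},f_{q(k')}$). The cleaner route, which avoids the irreducible factorization entirely, is to invoke equation~\eqref{eq:phiGR2}:
\[
\phi(z,w)=p_m(z)\prod_{l=1}^{s_0}\bigl(w-\lambda^{G_1}_l(z)\bigr)\prod_{l=1}^{m-s_0}\bigl(w-(\lambda^{R_1}_l(z))^{-1}\bigr).
\]
The multiplicity of $\alpha_k(z)$ as a zero of $\phi(z,\cdot)$ is then the number of $G_1$-eigenvalues equal to $\alpha_k(z)$ plus the number of $R_1$-reciprocals equal to $\alpha_k(z)$. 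The first count is $1$ by the distinctness part you just proved; the second is $0$ by the separation $|\alpha_k(z)|\le\underline{\zeta}_2(|z|)<\bar{\zeta}_2(|z|)\le|(\lambda^{R_1}_l(z))^{-1}|$ from Proposition~\ref{pr:G_eigenvalues}(i)--(ii) (using $z\ne\bar{z}_1^*$ exactly as you note). This yields simplicity in one line, and your irreducible-factor bookkeeping becomes unnecessary.
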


Let $z_0\in\mathbb{C}(\underline{z}_1^*,\bar{z}_1^*]\setminus\calE_1$ be the point given in Assumption \ref{as:eigenG1_distinct} and, for $z_t\in\mathbb{C}(\underline{z}_1^*,\bar{z}_1^*]\setminus\calE_1$, consider an arbitrary path $\xi_{z_0,z_t}$ on $\mathbb{C}(\underline{z}_1^*,\bar{z}_1^*]\setminus\calE_1$. 
For $z$ in a neighborhood of each point on $\xi_{z_0,z_t}$, define a diagonal matrix function $\check{J}_1(z)$ as
\[
\check{J}_1(z) = \diag(\check{\alpha}_1(z),\check{\alpha}_2(z),...,\check{\alpha}_{s_0}(z)), 
\]
which is a Jordan canonical form of $G_1(z)$ on $\xi_{z_0,z_t}\setminus\calE_2$. 
By Corollary \ref{co:eigenG1_distinct} and the discussion in Section 7.1 of Gohberg et al.\ \cite{Gohberg09}, if $z\in\xi_{z_0,z_t}\setminus\calE_2$, then for every $k\in\{1,2,...,s_0\}$, we have
\[
\dim \Ker\ L(z,\check{\alpha}_k(z)) =1, 
\]
and if $z\in\xi_{z_0,z_t}\cap\calE_2$, then we have $\dim \Ker\ L(z,\check{\alpha}_k(z)) \ge 1$. Furthermore, $L(z,\check{\alpha}_k(z))$ is entry-wise analytic on $\xi_{z_0,z_t}$. Hence, by Theorem S6.1 of Gohberg et al.\ \cite{Gohberg09}, there exists a vector function $\check{\bv}_k(z)$ that is entry-wise analytic on $\xi_{z_0,z_t}$ and satisfies 
\begin{equation}
L(z,\check{\alpha}_k(z)) \check{\bv}_k(z) = \bzero. 
\label{eq:Lv_zero}
\end{equation}
For each $z\in\xi_{z_0,z_t}\setminus\calE_2$, $\check{\bv}_k(z)$ is unique, up to multiplication by a constant, as a vector satisfying equation (\ref{eq:Lv_zero}). 
Define a matrix function $\check{V}_1(z)$ as 
\[
\check{V}_1(z) = \begin{pmatrix} \check{\bv}_1(z) & \check{\bv}_2(z) & \cdots & \check{\bv}_{s_0}(z) \end{pmatrix}.
\]
In terms of $\check{J}_1(z)$ and $\check{V}_1(z)$, define a matrix function $\check{G}_1(z)$ as
\begin{equation}
\check{G}_1(z) = \frac{\check{V}_1(z)\,\check{J}_1(z)\, \adj\ \check{V}_1(z)}{\det \check{V}_1(z)}. 
\label{eq:tildeG1_definition}
\end{equation}
Since every entry of $\check{J}_1(z)$ and $\check{V}_1(z)$ is an analytic function on $\xi_{z_0,z_t}$, every entry of $\check{G}_1(z)$ is a meromorphic function on $\xi_{z_0,z_t}$. We give the following proposition.
%
\begin{lemma} \label{le:checkG1_analytic}
For any path $\xi_{z_0,z_t}$ on $\mathbb{C}(\underline{z}_1^*,\bar{z}_1^*]\setminus\calE_1$, $\check{G}_1(z)$ is analytic and identical to $G_1(z)$ on $\xi_{z_0,z_t}$. Furthermore, for any $z\in\mathbb{C}(\underline{z}_1^*,\bar{z}_1^*]\setminus\calE_1$, $\check{G}_1(z)$ is unique, i.e., it does not depend on the path along which $\check{G}_1(z)$ is continued. 
\end{lemma}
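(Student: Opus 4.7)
The plan is to verify the identity $\check{G}_1(z_0) = G_1(z_0)$ at the base point by a direct eigendecomposition, then show that wherever $\det\check{V}_1(z)\ne 0$ the matrix $\check{G}_1(z)$ satisfies the same quadratic matrix equation (\ref{eq:G1equation}) as $G_1(z)$. Analytic continuation along $\xi_{z_0,z_t}$ together with a removable-singularity argument at the zeros of $\det\check{V}_1(z)$ will then give $\check{G}_1(z)=G_1(z)$ and analyticity. Path-independence will follow from Lemma \ref{le:tildealpha_continued} and the similarity-invariance of formula (\ref{eq:tildeG1_definition}).

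First I would verify the base case: by Assumption \ref{as:eigenG1_distinct} and Corollary \ref{co:eigenG1_distinct}, the eigenvectors $\check{\bv}_k(z_0)$ are linearly independent, so $\det\check{V}_1(z_0)\ne 0$ and (\ref{eq:tildeG1_definition}) reduces to the usual diagonalization $\check{V}_1(z_0)\check{J}_1(z_0)\check{V}_1(z_0)^{-1}$, which equals $G_1(z_0)$. Next, from $L(z,\check{\alpha}_k(z))\check{\bv}_k(z) = \bzero$ and the expression $L(z,w)=zA_{*,-1}(z)+z(A_{*,0}(z)-I)w+zA_{*,1}(z)w^2$, dividing by $z$ and assembling over $k=1,\dots,s_0$ yields
\begin{equation*}
A_{*,-1}(z)\check{V}_1(z) + A_{*,0}(z)\check{V}_1(z)\check{J}_1(z) + A_{*,1}(z)\check{V}_1(z)\check{J}_1(z)^2 = \check{V}_1(z)\check{J}_1(z).
\end{equation*}
Post-multiplication by $\check{V}_1(z)^{-1}$ where it exists gives $A_{*,-1}(z)+A_{*,0}(z)\check{G}_1(z)+A_{*,1}(z)\check{G}_1(z)^2=\check{G}_1(z)$, the same equation (\ref{eq:G1equation}) satisfied by $G_1(z)$.

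For the extension, let $S\subset\xi_{z_0,z_t}$ be the zero set of $\det\check{V}_1$. On a connected open neighborhood of $z_0$ disjoint from $S$, $\check{G}_1$ is analytic and agrees with $G_1$ at $z_0$, while by Lemma \ref{le:G_analytic1} $G_1$ is analytic on the open annulus $\mathbb{C}(\underline{z}_1^*,\bar{z}_1^*)$. Applying the identity theorem entrywise forces $\check{G}_1(z) = G_1(z)$ throughout the connected component of $\xi_{z_0,z_t}\setminus S$ containing $z_0$ in the open annulus. At any $z\in S$ in the open annulus, the remark following Lemma \ref{le:tildealpha_continued} bounds each $|\check{\alpha}_k(z)|$ locally, and the bound $|G_1(z)|\le G_1(|z|)$ from Lemma \ref{le:Gmatrix}(i) gives local boundedness of $\check{G}_1$ via its equality with $G_1$ on a punctured neighborhood, so Riemann's theorem removes the singularity. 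For points of the path on the boundary circle $|z|=\bar{z}_1^*$ (which by hypothesis lie outside $\calE_1$), entrywise analyticity of $\check{\bv}_k(z)$ and $\check{\alpha}_k(z)$ in a full complex neighborhood, together with the same boundedness argument, extends the analyticity across the boundary.

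Finally, for path-independence: Lemma \ref{le:tildealpha_continued} asserts that the set $\{\check{\alpha}_1(z_t),\dots,\check{\alpha}_{s_0}(z_t)\}$ is the same for every admissible continuation, and at any $z_t\notin\calE_2$ each $\check{\bv}_k(z_t)$ is determined up to a nonzero scalar by $\dim\Ker\, L(z_t,\check{\alpha}_k(z_t))=1$ (Corollary \ref{co:eigenG1_distinct}). The formula (\ref{eq:tildeG1_definition}) is invariant under column scalings of $\check{V}_1$ and under simultaneous permutations of the columns of $\check{V}_1$ and the diagonal of $\check{J}_1$; combined with the extension to points of $S$ by continuity, this gives uniqueness. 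The hard part will be the removable-singularity step at points of $S$ lying on the boundary circle $|z|=\bar{z}_1^*$, since $G_1$ is not a priori analytic there — one must combine the Laurent bound of Lemma \ref{le:Gmatrix}(i) with the local analyticity of the algebraic functions $\check{\alpha}_k$ obtained from $z\notin\calE_1$ to conclude that $\check{G}_1$ extends analytically across these points.
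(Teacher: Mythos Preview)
There is a genuine gap in your propagation step. You establish (or intend to establish) $\check{G}_1(z_0)=G_1(z_0)$ at the single point $z_0$ and then invoke the identity theorem to spread this equality over an open set. But the identity theorem needs agreement on a set with an accumulation point; agreement at one point together with the fact that both matrices solve the quadratic (\ref{eq:G1equation}) is not enough, because that equation admits several analytic solutions --- for instance $G_1^{co}(z)$ defined in (\ref{eq:define_Gmax}) is another one. So nothing in your argument forces $\check{G}_1=G_1$ beyond $z_0$.

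The base case itself is also not established. You assert that $\check{V}_1(z_0)\check{J}_1(z_0)\check{V}_1(z_0)^{-1}$ ``equals $G_1(z_0)$'', but $\check{\bv}_k(z_0)$ is by definition a null vector of $L(z_0,\check{\alpha}_k(z_0))$, not \emph{a priori} an eigenvector of $G_1(z_0)$; that coincidence is precisely what has to be shown. The paper handles both issues at once via the factorization (\ref{eq:WFfact}): for $z$ on $\xi_{z_0,z_t}$ and each $k$, Lemma \ref{le:zeros_and_G1eigenvalues} and Corollary \ref{co:N1_inverse} make $\check{\alpha}_k(z)^{-1}I-R_1(z)$ and $I-H_1(z)$ nonsingular, so $L(z,\check{\alpha}_k(z))\bv=0$ is \emph{equivalent} to $(\check{\alpha}_k(z)I-G_1(z))\bv=0$. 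This yields $\check{G}_1(z)=G_1(z)$ \emph{pointwise} on $\xi_{z_0,z_t}\setminus\calE_2$, making the identity theorem unnecessary and immediately giving the boundedness needed for removability at points of $\calE_2$. An elementary substitute for the Wiener--Hopf step is also available: if $\bw$ is an eigenvector of $G_1(z)$ at $\check{\alpha}_k(z)$, applying (\ref{eq:G1equation}) to $\bw$ gives $L(z,\check{\alpha}_k(z))\bw=0$, and by Corollary \ref{co:eigenG1_distinct} the kernel is one-dimensional, so $\check{\bv}_k(z)$ must be that eigenvector. Either route is pointwise; your one-point-plus-identity-theorem scheme cannot replace it.
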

\begin{proof}
First, we demonstrate that $\check{G}_1(z)$ is identical to $G_1(z)$ on $\xi_{z_0,z_t}\setminus\calE_2$. 
By Lemma \ref{le:zeros_and_G1eigenvalues} and Corollary \ref{co:N1_inverse}, both $\check{\alpha}_k(z)^{-1}I-R_1(z)$ and $I-H_1(z)$ are nonsingular for any $z\in\xi_{z_0,z_t}$ and $k\in\{1,2,...,s_0\}$, where $\bar{z}_1^*\notin\xi_{z_0,z_t}$. Hence, from equation (\ref{eq:WFfact}), we obtain 
\begin{equation}
\check{\alpha}_k(z) I-G_1(z) = (I-H_1(z))^{-1} (\check{\alpha}_k(z)^{-1}I-R_1(z))^{-1} (I-C(z,\check{\alpha}_k(z))). 
\label{eq:WF_inverse}
\end{equation}
For $k\in\{1,2,...,s_0\}$, since $L(z,\check{\alpha}_k(z))=z \check{\alpha}_k(z)(C(z,\check{\alpha}_k(z))-I)$, equation (\ref{eq:WF_inverse}) implies that $\check{\bv}_k(z)$ is the eigenvector of $G_1(z)$ with respect to the eigenvalue $\check{\alpha}_k(z)$. 
For any $z\in\xi_{z_0,z_t}\setminus\calE_2$, all the eigenvalues of $G_1(z)$ are distinct and the vectors $\check{\bv}_k(z),\,k=1,2,...,s_0$, are linearly independent. Hence, $\check{V}_1(z)$ is nonsingular and we have, for $z\in\xi_{z_0,z_t}\setminus\calE_2$, 
\[
G_1(z) = \check{V}_1(z) \check{J}_1(z) \check{V}_1(z)^{-1} = \check{G}_1(z). 
\]

For any $z_1\in\xi_{z_0,z_t}\cap\calE_2$, we have 
\[
\lim_{\xi_{z_0,z_t}\ni z\to z_1} \check{G}_1(z) = \lim_{\xi_{z_0,z_t}\ni z\to z_1} G_1(z) = G_1(z_0). 
\]
This implies that the point $z_1$ is not a pole of any entry of $\check{G}_1(z)$, and $\check{G}_1(z)$ is entry-wise analytic at $z=z_1$ since it is entry-wise meromorphic on $\xi_{z_0,z_t}$. 
By Lemma \ref{le:tildealpha_continued}, since $\check{G}_1(z)$ remains unchanged by any permutation of the eigenvalues and eigenvectors in expression (\ref{eq:tildeG1_definition}), it does not depend on the path along which $\check{G}_1(z)$ is continued. 
This completes the proof.
\end{proof}

By Lemma \ref{le:checkG1_analytic} and the identity theorem, we see that $\check{G}_1(z)$ is the analytic extension of $G_1(z)$. Denoting the extension by the same notation $G_1(z)$, we immediately obtain the following corollary. 
%
\begin{corollary} \label{co:G1_analytic_extension}
The extended $G_1(z)$ is analytic on $\partial\Delta_{\bar{z}_1^*}\setminus\calE_1$, where $\partial\Delta_{\bar{z}_1^*}=\partial\Delta(0,\bar{z}_1^*)$. 
\end{corollary}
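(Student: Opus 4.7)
The plan is to deduce the corollary from Lemma \ref{le:checkG1_analytic} combined with the finiteness of $\calE_1$. Since $\calE_1$ is finite, for any $z_1 \in \partial\Delta_{\bar{z}_1^*}\setminus\calE_1$ one can choose an open disc $U\subset\mathbb{C}$ centered at $z_1$ such that $U\cap\calE_1=\emptyset$. I would then show that the local construction underlying $\check{G}_1(z)$ extends entry-wise analytically to all of $U$; since $\check{G}_1(z)=G_1(z)$ on $U\cap\mathbb{C}(\underline{z}_1^*,\bar{z}_1^*]$ by Lemma \ref{le:checkG1_analytic}, this gives the desired analyticity of the extended $G_1(z)$ at $z_1$.

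The main step is the local construction on $U$. First, each branch $\check{\alpha}_k(z)$, $k=1,2,\dots,s_0$, is a branch of the algebraic function defined by $f_{q(k)}(z,w)=0$, whose exceptional points are contained in $\Xi(f_{q(k)})\subset\calE_1$; since $U\cap\calE_1=\emptyset$, each $\check{\alpha}_k(z)$ continues analytically to all of $U$. Next, exactly as in the proof of Lemma \ref{le:checkG1_analytic}, I would invoke Theorem S6.1 of Gohberg et al.\ \cite{Gohberg09} to obtain vector functions $\check{\bv}_k(z)$ analytic on $U$ solving $L(z,\check{\alpha}_k(z))\check{\bv}_k(z)=\bzero$. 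The formula \eqref{eq:tildeG1_definition} then produces an entry-wise meromorphic function on $U$ whose only possible poles lie on the zero set of $\det\check{V}_1(z)$, which by Corollary \ref{co:eigenG1_distinct} is contained in $\calE_2\cap U$.

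The last step is to remove these apparent singularities. Along any path in $U\cap(\mathbb{C}(\underline{z}_1^*,\bar{z}_1^*]\setminus\calE_2)$ approaching a point $z'\in\calE_2\cap U$, the value $\check{G}_1(z)$ coincides with the original $G_1(z)$ by Lemma \ref{le:checkG1_analytic} and converges, by the continuity of $G_1(z)$ on the annulus, to a finite limit; hence each entry of $\check{G}_1$ is bounded near $z'$, and Riemann's removable singularity theorem upgrades the meromorphic extension to an analytic one. Uniqueness of the extension on $U$ follows from the identity theorem applied on the one-dimensional overlap with $\mathbb{C}(\underline{z}_1^*,\bar{z}_1^*]$. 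I expect the only mildly delicate point to be bookkeeping at the $\calE_2$-points in $U$, but the removable-singularity argument is essentially verbatim the one already executed in the proof of Lemma \ref{le:checkG1_analytic}, so no genuinely new difficulty should arise.
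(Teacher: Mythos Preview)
Your proposal is correct and follows the paper's approach, which treats the corollary as immediate from Lemma~\ref{le:checkG1_analytic} (the local Jordan-form construction there already yields analyticity in a neighborhood of each point on the path, and $\partial\Delta_{\bar{z}_1^*}\setminus\calE_1\subset\mathbb{C}(\underline{z}_1^*,\bar{z}_1^*]\setminus\calE_1$). The one bookkeeping point you anticipate is real but harmless: shrink $U$ so that $U\cap\calE_2\subseteq\{z_1\}$, ensuring the only potential $\calE_2$-singularity lies on $\partial\Delta_{\bar{z}_1^*}$ itself and is therefore approachable from within the annulus as your removable-singularity argument requires.
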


%
%
\subsection{Singularity of $G_1(z)$ on $\partial\Delta_{\bar{z}_1^*}$} \label{sec:G1_extension}

By Lemma \ref{le:G_analytic1}, $G_1(z)$ is entry-wise analytic in $\mathbb{C}(\underline{z}_1^*,\bar{z}_1^*)$ and, by Corollary \ref{co:G1_analytic_extension}, the extended $G_1(z)$ is entry-wise analytic on $\partial\Delta_{\bar{z}_1^*}\setminus\calE_1$. 
In this subsection, we will demonstrate that the extended $G_1(z)$ is also entry-wise analytic on $\partial\Delta_{\bar{z}_1^*}\cap\calE_1$ except for the point $\bar{z}_1^*$. 

First, we consider property of the eigenvalues of $G_1(z)$ at $z=\bar{z}_1^*$. Let $\lambda_1^C(z,w)$, $\lambda_2^C(z,w)$, ..., $\lambda_{s_0}^C(z,w)$ be the eigenvalues of $C(z,w)$, counting multiplicity. Without loss of generality, we assume $\lambda_{s_0}^C(z,w)$ satisfies $\lambda_{s_0}^C(\bar{z}_1^*,\underline{\zeta}_2(\bar{z}_1^*))=\chi(\bar{z}_1^*,\underline{\zeta}_2(\bar{z}_1^*))=1$. 
Since $C(1,1)$ ($=A_{*,*}$) is irreducible and aperiodic, $C(\bar{z}_1^*,\underline{\zeta}_2(\bar{z}_1^*))$ is also irreducible and aperiodic. Hence, the eigenvalue $\lambda_{s_0}^C(\bar{z}_1^*,\underline{\zeta}_2(\bar{z}_1^*))$ is simple, i.e., the algebraic multiplicity is one, since it is the Perron-Frobenius eigenvalue of $C(\bar{z}_1^*,\underline{\zeta}_2(\bar{z}_1^*))$. 
By the implicit function theorem, this implies that $\lambda_{s_0}^C(z,w)$ is analytic as a function of two complex variables in a neighborhood $\mathbb{U}^2$ of the point $(\bar{z}_1^*,\underline{\zeta}_2(\bar{z}_1^*))$. $\lambda^C_{s_0}(z,w)$ is, therefore, the analytic extension of $\chi(z,w)$ in $\mathbb{U}^2$. 
Since $L(z,w)=z w(C(z,w)-I)$, the eigenvalues of $L(z,w)$ are given by $z w(\lambda^C_k(z,w)-1),\,k=1,2,...,s_0$, and they are the solutions to the characteristic equation $\det\,(x I-L(z,w))=0$. Hence, we have 
\begin{equation}
z^{s_0} w^{s_0} \prod_{k=1}^{s_0} (\lambda^C_k(z,w)-1) = \det L(z,w) = \phi(z,w). 
\label{eq:lambdaC}
\end{equation}
This implies that if $\lambda^C_k(z,h(z))=1$ for some $k$ and for some function $h(z)$, then $w=h(z)$ is a solution to the polynomial equation $\phi(z,w)=0$ and vice versa, where we consider only the case where $z\ne 0$ and $h(z)\ne 0$.

Define functions $g$, $g_w$ and $g_{w^2}$ as 
\[
g(z,w)=\lambda^C_{s_0}(z,w)-1,\quad 
g_w(z,w)=\frac{\partial}{\partial w} g(z,w),\quad 
g_{w_2}(z,w)=\frac{\partial^2}{\partial w^2} g(z,w), 
\]
where $g(z,w)$ is analytic in a neighborhood $\mathbb{U}^2$ of $(\bar{z}_1^*,\underline{\zeta}_2(\bar{z}_1^*))$ and satisfies $g(\bar{z}_1^*,\underline{\zeta}_2(\bar{z}_1^*))=0$. 
Since equation $\chi(\bar{z}_1^*,w)=1$ has the multiple real root $\underline{\zeta}_2(\bar{z}_1^*)=\bar{\zeta}_2(\bar{z}_1^*)$, we have $g_w(\bar{z}_1^*,\underline{\zeta}_2(\bar{z}_1^*))=0$. Let the neighborhood $\mathbb{U}^2$ be so small that $g(z,w)\ne 0$ and $g_w(z,w)\ne 0$ in $\mathbb{U}^2\setminus\{(\bar{z}_1^*,\underline{\zeta}_2(\bar{z}_1^*))\}$. It is possible since both $g(z,w)$ and $g_w(z,w)$ are analytic at $(z,w)=(\bar{z}_1^*,\underline{\zeta}_2(\bar{z}_1^*))$. 
By the implicit function theorem and the identity theorem, we, therefore, have a function $h(z)$ that is analytic and satisfies $g(z,h(z))=0$ for $z\in\mathbb{U}_{\bar{z}_1^*}\setminus\{\bar{z}_1^*\}$ and $h(z)=\underline{\zeta}_2(z)$ for $z\in(\underline{z}_1^*,\bar{z}_1^*)\cap\mathbb{U}_{\bar{z}_1^*}$, where $\mathbb{U}_{\bar{z}_1^*}$ is a neighborhood of the point $\bar{z}_1^*$. 
By the discussion above, $w=h(z)$ is a solution to $\phi(z,w)=0$ and hence we denote it by $\alpha_{s_0}(z)$. This $\alpha_{s_0}(z)$ is also an eigenvalue of $G_1(z)$, which is the maximum eigenvalue when $z\in(\underline{z}_1^*,\bar{z}_1^*]\cap\mathbb{U}_{\bar{z}_1^*}$. 
We give the following lemma. 
%
\begin{lemma} \label{le:singularity_alpha_s0}
The point $\bar{z}_1^*$ is a branch point of $\alpha_{s_0}(z)$ with order one. 
\end{lemma}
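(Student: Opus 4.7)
The strategy is to read off the local branching behavior of $\alpha_{s_0}(z)$ at $\bar{z}_1^*$ from the Puiseux expansion of the curve $g(z,w) = 0$, where $g(z,w) = \lambda^C_{s_0}(z,w) - 1$ is the analytic function introduced just above the lemma, with $g(\bar{z}_1^*, w^*) = 0$ and $g_w(\bar{z}_1^*, w^*) = 0$ (write $w^* := \underline{\zeta}_2(\bar{z}_1^*)$). Expanding $g$ jointly in a Taylor series around $(\bar{z}_1^*, w^*)$, one has $g(z,w) = a(z-\bar{z}_1^*) + c(w-w^*)^2 + (\text{higher order})$ with $a := g_z(\bar{z}_1^*, w^*)$ and $c := \tfrac{1}{2} g_{w^2}(\bar{z}_1^*, w^*)$. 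Once $a$ and $c$ are both shown to be nonzero, the Newton--Puiseux theorem immediately produces two conjugate branches $w - w^* = \pm \sqrt{-a/c}\,(z-\bar{z}_1^*)^{1/2}(1+o(1))$ swapped by a single loop around $\bar{z}_1^*$, i.e.\ a square-root (order-one) branch point, and since $\alpha_{s_0}(z)$ coincides with one of these branches on $\mathbb{U}_{\bar{z}_1^*}\setminus\{\bar{z}_1^*\}$ by the implicit-function construction of $h(z)$ preceding the lemma, the conclusion transfers to $\alpha_{s_0}$.

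The bulk of the work is thus verifying the two nondegeneracy conditions. For $c \neq 0$, I would invoke the hypothesis of Lemma~\ref{le:zeros_and_G1eigenvalues}(ii) that the multiplicity of $w^*$ as a zero of $\phi(\bar{z}_1^*, w)$ equals two; since $\chi(\bar{z}_1^*,w)$ achieves its minimum value $1$ at $w=w^*$ on the positive real axis, one also has $g_{w^2}\ge 0$ there, so in fact $c>0$.

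For $a \neq 0$ the cleanest route is a convexity-plus-sign-change argument. Because the point $(\bar{s}_1^*, \log w^*)$ is the unique rightmost extreme point of the bounded convex set $\bar{\Gamma}$, for real $s_1$ slightly less than $\bar{s}_1^*$ the point $(s_1, \log w^*)$ lies in the interior of $\bar{\Gamma}$, while for $s_1$ slightly greater it lies outside $\bar{\Gamma}$. Hence the real-analytic function $F(s_1) := \log \chi(e^{s_1}, w^*)$ satisfies $F(\bar{s}_1^*)=0$, $F(s_1)<0$ just below, and $F(s_1)>0$ just above. By Proposition~\ref{pr:chiconvex}, $F$ is convex, and a convex function that crosses zero at a point cannot have zero derivative there (otherwise that point would be a global minimum, contradicting the sign change). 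Consequently $F'(\bar{s}_1^*) > 0$, and translating back to the $(z,w)$ variables gives $g_z(\bar{z}_1^*, w^*) = F'(\bar{s}_1^*)/\bar{z}_1^* > 0$, hence $a>0$.

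With $a,c>0$ in hand, the Puiseux computation sketched in the first paragraph closes the argument. The subtlest step, in my view, is securing $g_{w^2} \neq 0$: this does not follow from superconvexity alone (which permits higher-order flat minima of $\chi(\bar{z}_1^*, \cdot)$), so the multiplicity-two hypothesis already in use in Lemma~\ref{le:zeros_and_G1eigenvalues}(ii) is essentially unavoidable here. Given that hypothesis, everything else -- convexity driving the sign-change argument for $a$, and the Newton--Puiseux machinery producing the square-root branch -- is standard.
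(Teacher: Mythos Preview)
Your Newton--Puiseux approach is sound and in fact more direct than the paper's route, which first excludes a removable singularity (by showing $|\alpha_{s_0}'(z)|\to\infty$ via $\alpha_{s_0}'=-g_z/g_w$) and a pole (by boundedness) before concluding ``branch point, order one from the multiplicity-two zero of $\phi$.'' Your sign-change argument for $a=g_z(\bar z_1^*,w^*)>0$ is also more carefully spelled out than the paper's one-line invocation of convexity.

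The gap is in your justification of $c\ne 0$. The multiplicity-two statement in Lemma~\ref{le:zeros_and_G1eigenvalues}(ii) is not a standing fact of the paper---it is a \emph{conditional hypothesis} of that part of the lemma, and it is precisely what the proof of Lemma~\ref{le:singularity_alpha_s0} establishes. Invoking it here is circular. The paper runs the implication in the opposite direction: it first asserts $g_{w^2}(\bar z_1^*,w^*)\ne 0$ directly from convexity of $t\mapsto\chi(\bar z_1^*,e^t)$ (computing $\frac{d^2}{dt^2}\chi(\bar z_1^*,e^t)\big|_{t=\log w^*}=(w^*)^2\, g_{w^2}(\bar z_1^*,w^*)$), so that $w^*$ is a zero of $g(\bar z_1^*,\cdot)$ of multiplicity exactly two; it then uses Perron--Frobenius simplicity of $\lambda_{s_0}^C(\bar z_1^*,w^*)=1$ to rule out any other factor $\lambda_k^C-1$ vanishing at $w^*$, and thereby \emph{deduces} the multiplicity-two condition for $\phi(\bar z_1^*,\cdot)$. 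To make your argument self-contained you need an independent reason for $g_{w^2}\ne 0$; once you have that, the rest of your plan goes through cleanly.
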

\begin{proof}
By Proposition \ref{pr:chiconvex}, $\chi(\bar{z}_1^*,e^t)$ is convex in $t\in\mathbb{R}$ and we obtain 
\begin{align*}
0 &\ne \frac{d^2}{d t^2} g(\bar{z}_1^*,e^t) \Big|_{t=\log \underline{\zeta}_2(\bar{z}_1^*)} 
= \underline{\zeta}_2(\bar{z}_1^*)^2 g_{w^2}(\bar{z}_1^*,\underline{\zeta}_2(\bar{z}_1^*)). 
\end{align*}
where we use the fact that $g_w(\bar{z}_1^*,\underline{\zeta}_2(\bar{z}_1^*))=0$. 
This implies that $g_{w^2}(\bar{z}_1^*,\underline{\zeta}_2(\bar{z}_1^*))\ne 0$ and we know that the multiplicity of $\underline{\zeta}_2(\bar{z}_1^*)$ as a zero of $g(\bar{z}_1^*,w)$ is just two. 
We show that the multiplicity of $\underline{\zeta}_2(\bar{z}_1^*)$ as a zero of $\phi(\bar{z}_1^*,w)$ is also just two. Suppose it is not the case. Then, there are at least three identical solutions to $\phi(\bar{z}_1^*,w)=0$, including $\underline{\zeta}_2(\bar{z}_1^*)$ and $\bar{\zeta}_2(\bar{z}_1^*)$. 
Denote the other solution by $\alpha(\bar{z}_1^*)$, which equals $\underline{\zeta}_2(\bar{z}_1^*)$. Then, by equation (\ref{eq:lambdaC}), the solution must satisfies, for some $k\in\{1,2,...,s_0-1\}$, $\lambda^C_k(\bar{z}_1^*,\alpha(\bar{z}_1^*))=1$. This contradicts that the eigenvalue $\lambda^C_{s_0}(\bar{z}_1^*,\underline{\zeta}_2(\bar{z}_1^*))$, which equals one, is simple, and we obtain the desired result. 
By the implicit function theorem, for $z$ in a neighborhood of $\bar{z}_1^*$ such that $z\ne\bar{z}_1^*$, we have 
\[
\alpha_{s_0}'(z) = - \frac{g_z(z,\alpha_{s_0}(z))}{g_w(z,\alpha_{s_0}(z))},
\]
where $\alpha_{s_0}'(z)=(d/d z)\alpha_{s_0}(z)$ and $g_z(z,w)=(\partial/\partial z)g(z,w)$. 
Since $\chi(e^s,e^t)$ is convex in $(s,t)\in\mathbb{R}^2$, we have $g_z(\bar{z}_1^*,\underline{\zeta}_2(\bar{z}_1^*))\ne 0$. Hence, 
\[
\lim_{\mathbb{R}\ni z\to\bar{z}_1^*-} |\alpha_{s_0}'(z)| = \infty, 
\]
and $\bar{z}_1^*$ is a singularity not being removable. Since $\alpha_{s_0}(z)$ is bounded in a neighborhood of $\bar{z}_1^*$, the point $\bar{z}_1^*$ is not a pole of $\alpha_{s_0}(z)$. Hence, it is a branch point of $\alpha_{s_0}(z)$. 
Since the multiplicity of $\alpha_{s_0}(\bar{z}_1^*)$ as a zero of $\phi(\bar{z}_1^*,w)$ is two, the order of the branch point is one. 
\end{proof}

Denote by $\alpha_{s_0+1}(z)$ the other branch that coalesces with $\alpha_{s_0}(z)$ at $z=\bar{z}_1^*$. $\alpha_{s_0+1}(z)$ is a solution to $f_{q(s_0)}(z,w)=0$ and satisfies $\alpha_{s_0+1}(z)=\bar{\zeta}_2(z)$ for every $z\in(\underline{z}_1^*,\bar{z}_1^*]\cap\mathbb{U}_{\bar{z}_1^*}$, where $\mathbb{U}_{\bar{z}_1^*}$ is a neighborhood of $\bar{z}_1^*$. It is also the reciprocal of an eigenvalue of $R_1(z)$. 
From Lemma \ref{le:singularity_alpha_s0}, it is expected that the point $\bar{z}_1^*$ is also a branch point of $G_1(z)$. We state this point in the next lemma. Before doing it, we define a matrix function $G_1^{co}(z)$ that coalesces with $G_1(z)$ at $z=\bar{z}_1^*$, according to Lemma 4 of Li and Zhao \cite{Li03}. 
We use the same notations in the previous subsection. 
Let $\bu_{s_0}(z)$ be the left eigenvector of $G_1(z)$ with respect to the eigenvalue $\alpha_{s_0}(z)$. Let $\bv_{s_0+1}(z)$ be the column vector function that satisfies $L(z,\alpha_{s_0+1}) \bv_{s_0+1}(z)=\bzero$ and $\bu_{s_0}(z) \bv_{s_0+1}(z)=1$. Define $G_1^{co}(z)$ as 
\begin{equation}
G_1^{co}(z) = G_1(z) + \bigl( \alpha_{s_0+1}(z) I-G_1(z) \bigr) \bv_{s_0+1}(z) \bu_{s_0}(z). 
\label{eq:define_Gmax}
\end{equation}
Then, in a manner similar to that used in the proof of Lemma 4 of Li and Zhao \cite{Li03}, we see that $G_1^{co}(z)$ satisfies the following matrix quadratic equation:
\[
A_{*,-1}(z)+(I-A_{*,0}(z)) G_1^{co}(z)+A_{*,1}(z) G_1^{co}(z)^2 =O.
\]
The eigenvalues of $G_1^{co}(z)$ are given by $\alpha_1(z),\alpha_2(z),...,\alpha_{s_0-1}(z)$ and $\alpha_{s_0+1}(z)$, and the corresponding right eigenvectors are given by $\bv_1(z),\bv_2(z),...,\bv_{s_0-1}(z)$ and $\bv_{s_0+1}(z)$, respectively. 
We present the following lemma. 
%
\begin{lemma} \label{le:G1_singularity} 
$G_1(z)$ is analytic on the circle $\partial\Delta_{\bar{z}_1^*}$ except for $\bar{z}_1^*$. The point $\bar{z}_1^*$ is a branch pint of $G_1(z)$ with order one, and $G_1(z)$ coalesces with $G_1^{co}(z)$ at the point. 
\end{lemma}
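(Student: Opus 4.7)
The plan is to separately treat the circle $\partial\Delta_{\bar z_1^*}$ away from $\bar z_1^*$ and the point $\bar z_1^*$ itself, and then identify the second branch with $G_1^{co}(z)$.

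First I would show analyticity on $\partial\Delta_{\bar z_1^*}\setminus\{\bar z_1^*\}$. By Corollary \ref{co:G1_analytic_extension}, only points $z_1\in\calE_1\cap\partial\Delta_{\bar z_1^*}$ with $z_1\ne\bar z_1^*$ need attention. At any such $z_1$, Proposition \ref{pr:G_eigenvalues}(ii) gives the strict separation $|\lambda_l^{G_1}(z_1)|<\underline\zeta_2(\bar z_1^*)$ and $|\lambda_l^{R_1}(z_1)|^{-1}>\bar\zeta_2(\bar z_1^*)=\underline\zeta_2(\bar z_1^*)$, so by continuity of the zero sets of $\phi(z,w)$ the $s_0$ ``small'' zeros stay strictly inside a circle $|w|=\underline\zeta_2(\bar z_1^*)$ on a whole neighborhood $\mathbb{U}_{z_1}$. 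Lemma \ref{le:tildealpha_continued} shows that $\{\check\alpha_1(z),\dots,\check\alpha_{s_0}(z)\}$ is a single-valued set on $\mathbb{U}_{z_1}\setminus\calE_1$, so the matrix $\check G_1(z)$ defined in \eqref{eq:tildeG1_definition} has trivial monodromy around $z_1$. It is also entry-wise bounded (the eigenvalues are bounded and the eigenvector construction in Theorem S6.1 of Gohberg et al.\ \cite{Gohberg09} produces bounded vectors because $z_1\in\calE_1$ is only an algebraic singularity of the individual branches). Riemann's removable singularity theorem then extends $G_1(z)$ analytically across $z_1$.

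Next, at $\bar z_1^*$, Lemma \ref{le:singularity_alpha_s0} shows that $\alpha_{s_0}(z)$ has an order-one branch point there coalescing with $\alpha_{s_0+1}(z)$, while the remaining branches $\alpha_1(z),\dots,\alpha_{s_0-1}(z)$ and $\alpha_{s_0+2}(z),\dots,\alpha_m(z)$ are analytic and well separated from the coalescing pair (the former by Proposition \ref{pr:G_eigenvalues}(iii) are bounded strictly below $\underline\zeta_2(\bar z_1^*)$, the latter strictly above $\bar\zeta_2(\bar z_1^*)$). Hence as $z$ traverses a small loop around $\bar z_1^*$, only $\alpha_{s_0}$ and $\alpha_{s_0+1}$ get swapped. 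After one loop, the eigenvalue spectrum of the continued $G_1(z)$ becomes $\{\alpha_1(z),\dots,\alpha_{s_0-1}(z),\alpha_{s_0+1}(z)\}$, which is exactly the spectrum of $G_1^{co}(z)$ by construction \eqref{eq:define_Gmax}. Since $\alpha_{s_0}(\bar z_1^*)=\alpha_{s_0+1}(\bar z_1^*)$, the two matrix branches take the same value at $\bar z_1^*$, i.e.\ they coalesce; in particular $\bar z_1^*$ is a branch point of $G_1(z)$ of order one, matching the order of the branch point of $\alpha_{s_0}(z)$.

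It remains to upgrade the spectral coincidence to the matrix identity with $G_1^{co}(z)$. The analytically continued eigenvector associated with the branch $\alpha_{s_0+1}(z)$ is, up to a scalar, the same nullvector $\bv_{s_0+1}(z)$ of $L(z,\alpha_{s_0+1}(z))$ used in \eqref{eq:define_Gmax}, because both vectors span $\Ker\,L(z,\alpha_{s_0+1}(z))$, which is one-dimensional off $\calE_2$ by Corollary \ref{co:eigenG1_distinct}. The remaining $s_0-1$ eigenvectors are continued via the same analytic branches $\check\bv_1(z),\dots,\check\bv_{s_0-1}(z)$. Substituting these spectral data into \eqref{eq:tildeG1_definition} for the continued branch yields precisely the eigenstructure of $G_1^{co}(z)$, which together with the fact that both matrices satisfy the same matrix quadratic equation \eqref{eq:G1equation} and have distinct eigenvalues off $\calE_1\cup\calE_2$ forces the two matrices to agree.

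The main obstacle is Step 3: verifying that the analytically continued matrix really is $G_1^{co}(z)$ and not some other matrix sharing the same spectrum, so that the coalescence statement has the precise meaning claimed. This rests on the uniqueness of the eigenvectors (Theorem S6.1 of Gohberg et al.\ \cite{Gohberg09}) and the normalization $\bu_{s_0}(z)\bv_{s_0+1}(z)=1$ in \eqref{eq:define_Gmax}, which together pin down the rank-one correction appearing in the definition of $G_1^{co}(z)$.
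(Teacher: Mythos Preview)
Your overall strategy matches the paper's proof, but there is a genuine gap in the boundedness step. You argue that $\check G_1(z)$ is bounded near $z_1$ because the eigenvalues and the eigenvectors produced by Theorem~S6.1 of \cite{Gohberg09} are bounded; however $\check G_1(z)=V_1(z)J_1(z)V_1(z)^{-1}$, and boundedness of $V_1(z)$ says nothing about $V_1(z)^{-1}$. What is actually needed is that $\det V_1(z)\ne 0$ on a punctured neighborhood of $z_1$, and this is not automatic: at $z_1\in\calE_1$ several of the small branches $\alpha_k$ may coalesce, so the corresponding eigenvectors can become linearly dependent. The paper closes this gap by passing to a local uniformizer $\zeta$ with $z=z_1-\zeta^\nu$ (where $\nu$ is the least common multiple of the $\nu_k+1$, the $\nu_k$ being the branch orders of the $\alpha_k$ at $z_1$). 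In the $\zeta$-variable the eigenvector matrix $V_1^*(\zeta)=V_1(z_1-\zeta^\nu)$ becomes entry-wise analytic at $\zeta=0$, so $\det V_1^*(\zeta)\not\equiv 0$ combined with the identity theorem shows its zeros are isolated; hence $z_1$ is not an accumulation point of $\{\det V_1=0\}$ and the neighborhood can be shrunk to avoid such zeros. The same uniformization gives an analytic $G_1^*(\zeta)=\check G_1(z_1-\zeta^\nu)$, from which boundedness of $\check G_1$ near $z_1$ follows. This argument is needed both at $z_1\ne\bar z_1^*$ and at $\bar z_1^*$.

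Two smaller points. First, Lemma~\ref{le:tildealpha_continued} is stated only for paths in $\mathbb{C}(\underline z_1^*,\bar z_1^*]\setminus\calE_1$, so it does not literally apply on a full neighborhood $\mathbb{U}_{z_1}$ that extends outside the closed disk; the paper re-runs the separation-by-modulus argument locally, using the strict gap $|\alpha_k(z_1)|<\underline\zeta_2(\bar z_1^*)\le|\alpha_l(z_1)|$ from Lemma~\ref{le:zeros_and_G1eigenvalues} together with continuity to conclude that monodromy around $z_1$ can only permute the $s_0$ small zeros among themselves. Second, at $\bar z_1^*$ you assert that $\alpha_1,\dots,\alpha_{s_0-1}$ are analytic there; under Assumption~\ref{as:eigenG1_distinct} alone this need not hold, and they may be permuted by the monodromy. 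What matters, and what the paper uses, is that as a \emph{set} $\{\alpha_1,\dots,\alpha_{s_0-1}\}$ is invariant, so that the continued eigenvalue set is either $\{\alpha_1,\dots,\alpha_{s_0}\}$ or $\{\alpha_1,\dots,\alpha_{s_0-1},\alpha_{s_0+1}\}$, giving exactly the two sheets $G_1(z)$ and $G_1^{co}(z)$.
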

%
\begin{proof}
Let $z_1$ be a point on $\partial\Delta_{\bar{z}_1^*}\cap\calE_1$. First, assuming $(\partial\Delta_{\bar{z}_1^*}\setminus\{\bar{z}_1^*\})\cap\calE_1\ne\emptyset$, we consider the case where $z_1\ne\bar{z}_1^*$. 
Let $\mathbb{U}_{z_1}$ be a neighborhood of $z_1$ satisfying $(\mathbb{U}_{z_1}\setminus\{z_1\})\cap\calE_1=\emptyset$. We set $\mathbb{U}_{z_1}$ being so small that there exists a positive number $\varepsilon$ such that for every $z\in\mathbb{U}_{z_1}$, $|\alpha_k(z)|<\underline{\zeta}_2(\bar{z}_1^*)-\varepsilon$ for $k\in\{1,2,...,s_0\}$ and $|\alpha_k(z)|\ge\underline{\zeta}_2(\bar{z}_1^*)-\varepsilon$ for $k\in\{s_0+1,s_0+2,...,m\}$. 
By Lemma \ref{le:zeros_and_G1eigenvalues}, it is possible since $|\alpha_k(z_1)|<\underline{\zeta}_2(\bar{z}_1^*)$ for $k\in\{1,2,...,s_0\}$ and $|\alpha_k(z_1)|\ge\underline{\zeta}_2(\bar{z}_1^*)$ for $k\in\{s_0+1,s_0+2,...,m\}$.
%
For $k\in\{1,2,...,s_0\}$ and for $z\in\mathbb{U}_{z_1}$, let $\bv_k(z)$ be a vector function satisfies 
\begin{equation}
L(z,\alpha_k(z)) \bv_k(z) = \bzero. 
\end{equation}
Like $\check{\bv}_k(z)$, $\bv_k(z)$ is unique, up to multiplication by a constant, and analytic in $\mathbb{U}_{z_1}\setminus\{z_1\}$. For $z\in\mathbb{U}_{z_1}$, define a matrix function $V_1(z)$ as
\[
V_1(z) = \begin{pmatrix} \bv_1(z) & \bv_2(z) & \cdots & \bv_{s_0}(z) \end{pmatrix}.
\]
Define a point set $\calE_3(\mathbb{U}_{z_1})$ as
\[
\calE_3(\mathbb{U}_{z_1}) = \{z\in\mathbb{U}_{z_1}\setminus\{z_1\}: \det V_1(z)= 0\}.
\]
Since $\det V_1(z)$ is analytic in $\mathbb{U}_{z_1}\setminus\{z_1\}$ and $\det V_1(z) \not\equiv 0$, we know by the identity theorem that there are no accumulation points of $\calE_3(\mathbb{U}_{z_1})$ in $\mathbb{U}_{z_1}\setminus\{z_1\}$. 
We show that $z_1$ is not an accumulation point of $\calE_3(\mathbb{U}_{z_1})$. 
Note that, by Lemma \ref{le:zeros_and_G1eigenvalues}, for every $k\in\{1,2,...,s_0\}$, $\alpha_k(z_1)$ is bounded and the point $z_1$ is a branch point or removable singularity of $\alpha_k(z)$. 
For $k\in\{1,2,...,s_0\}$, if $z_1$ is a branch point of $\alpha_k(z)$, let $\nu_k$ be the order of the branch point; if $z_1$ is a removable singularity, let $\nu_k=0$. Furthermore, let $\nu$ be the least common multiple of $\{\nu_1+1,\nu_2+1,...,\nu_{s_0}+1\}$. 
For $k\in\{1,2,...,s_0\}$, define a function $\tilde{\alpha}_k(\zeta)$ analytic in a neighborhood of the origin, as follows: if $z_1$ is a branch point of $\alpha_k(z)$, let $\tilde{\alpha}_k(\zeta)$ be the function that is analytic in a neighborhood of the origin and satisfies
\[
\alpha_k(z) = \tilde{\alpha}_k((z_1-z)^{\frac{1}{\nu_k+1}}); 
\]
if $z_1$ is a removable singularity, define $\tilde{\alpha}_k(\zeta)$ as $\tilde{\alpha}_k(\zeta) = \alpha_k(z_1-\zeta)$. In the former case, considering the Puiseux series expansion of $\alpha_k(z)$ around $z_1$, it can be seen that such an analytic function exists. 
For $k\in\{1,2,...,s_0\}$, define a vector function $\tilde{\bv}_k(\zeta)$ entry-wise analytic in a neighborhood of the origin, as follows: if $z_1$ is a branch point of $\alpha_k(z)$, let $\tilde{\bv}_k(\zeta)$ be the vector function that is entry-wise analytic in a neighborhood of the origin and satisfies
\begin{equation}
L(z_1-\zeta^{\nu_k+1},\tilde{\alpha}_k(\zeta)) \tilde{\bv}_k(\zeta) = \bzero; 
\label{eq:Ltildev_zero}
\end{equation}
if $z_1$ is a removable singularity, define $\tilde{\bv}_k(\zeta)$ as $\tilde{\bv}_k(\zeta) = \bv_k(z_1-\zeta)$. 
In the former case, since every entry of $L(z_1-\zeta^{\nu_k+1},\tilde{\alpha}_k(\zeta))$ is analytic in a neighborhood of the origin as a function of $\zeta$, it can be seen, by Theorem S6.1 of Gohberg et al.\ \cite{Gohberg09}, that such an analytic vector function exists. 
In terms of $\tilde{\bv}_k(\zeta)$, $V_1(z)$ is represented as 
\begin{equation}
V_1(z) = \begin{pmatrix} 
\tilde{\bv}_1((z_1-z)^{\frac{1}{\nu_1+1}}) & \tilde{\bv}_2((z_1-z)^{\frac{1}{\nu_2+1}}) & \cdots &\tilde{\bv}_{s_0}((z_1-z)^{\frac{1}{\nu_{s_0}+1}})
\end{pmatrix}. 
\end{equation}
Define $V_1^*(\zeta)$ as 
\[
V_1^*(\zeta) 
= V_1(z_1-\zeta^\nu)
= \begin{pmatrix} 
\tilde{\bv}_1(\zeta^{\frac{\nu}{\nu_1+1}}) & \tilde{\bv}_2(\zeta^{\frac{\nu}{\nu_2+1}}) & \cdots &\tilde{\bv}_{s_0}(\zeta^{\frac{\nu}{\nu_{s_0}+1}})
\end{pmatrix},  
\]
which is entry-wise analytic in a neighborhood $\mathbb{U}_0$ of the origin, and a point set $\calE_3^*(\mathbb{U}_0)$ as 
\[
\calE_3^*(\mathbb{U}_0) = \{ \zeta\in\mathbb{U}_0: z_1-\zeta^\nu\in\calE_3(\mathbb{U}_{z_1}) \}.
\]
Suppose $z_1$ is an accumulation point of $\calE_3(\mathbb{U}_{z_1})$. By the definition of $\calE_3^*(\mathbb{U}_0)$, this implies that the origin (point $0$) is an accumulation point of $\calE_3^*(\mathbb{U}_0)$. 
Note that $\det V_1^*(\zeta)=0$ for every $\zeta\in\calE_3^*(\mathbb{U}_0)$. Since $\det V_1^*(\zeta)$ is analytic in $\mathbb{U}_0$ and $\det V_1^*(\zeta)\not\equiv 0$, there are no accumulation points of $\calE_3^*(\mathbb{U}_0)$ in $\mathbb{U}_0$. 
This is a contradiction. Therefore, $z_1$ is not an accumulation point of $\calE_3(\mathbb{U}_{z_1})$. 
By this result, we can set the neighborhood $\mathbb{U}_{z_1}$ so small that $\det V_1(z)\ne 0$ for every $z\in\mathbb{U}_{z_1}\setminus\{z_1\}$. Hereafter, we assume $\mathbb{U}_{z_1}$ satisfies this property. 

Let $z_0$ be a point in $\mathbb{U}_{z_1}\cap\mathbb{C}(\underline{z}_1^*,\bar{z}_1^*]$ satisfying the condition of Assumption \ref{as:eigenG1_distinct} and $z_t$ an arbitrary point in $\mathbb{U}_{z_1}\setminus\{z_1\}$.
Consider an arbitrary path $\xi_{z_0,z_t}$ on $\mathbb{U}_{z_1}\setminus\{z_1\}$ and analytically continue $\check{\alpha}_k(z_0)$ ($=\alpha_k(z_0)$), $k=1,2,...,s_0$, along the path $\xi_{z_0,z_t}$. 
Then, because of the fact explained at the beginning of the proof, the same result of Lemma \ref{le:tildealpha_continued} holds, i.e., for any $z\in\mathbb{U}_{z_1}\setminus\{z_1\}$, the set $\{\check{\alpha}_1(z),\check{\alpha}_2(z),...,\check{\alpha}_{s_0}(z)\}$ is unique and it is identical to the set $\{\alpha_1(z),\alpha_2(z),...,\alpha_{s_0}(z)\}$. 
Consider $\check{G}_1(z)$ define by formula (\ref{eq:tildeG1_definition}). Since $\det V_1(z)\ne 0$ for every $z\in\mathbb{U}_{z_1}\setminus\{z_1\}$, we have $\det \check{V}_1(z)\ne 0$ for every $z\in\xi_{z_0,z_t}$ and $\check{G}_1(z)$ is entry-wise analytic on $\xi_{z_0,z_t}$. 
For any $z\in\mathbb{U}_{z_1}\setminus\{z_1\}$, since the set $\{\check{\alpha}_1(z),\check{\alpha}_2(z),...,\check{\alpha}_{s_0}(z)\}$ is unique, this $\check{G}_1(z)$ is also unique, i.e., it does not depend on the path along which $\check{G}_1(z)$ is continued. 
Hence, every entry of $\check{G}_1(z)$ is a single valued function in $\mathbb{U}_{z_1}\setminus\{z_1\}$ and the point $z_1$ is not a branch point. 
Consider a matrix function $G_1^*(\zeta)$ defined as 
\[
G_1^*(\zeta) = \check{G}_1(z_1-\zeta^\nu).
\]
By the discussion about $V_1^*(\zeta)$, we see that $G_1^*(\zeta)$ is entry-wise analytic in a neighborhood of the origin and entry-wise finite on the boundary of the neighborhood. Hence, $G_1^*(\zeta)$ is bounded around the origin and this implies that $\check{G}_1(z)$ is bounded around the point $z_1$. As a result, we see that the point $z_1$ is a removable singularity for any entry of $\check{G}_1(z)$.

Next, consider the case where $z_1=\bar{z}_1^*$. 
Let $\mathbb{U}_{\bar{z}_1^*}$ be a neighborhood of $\bar{z}_1^*$ satisfying $(\mathbb{U}_{\bar{z}_1^*}\setminus\{\bar{z}_1^*\})\cap\calE_1=\emptyset$. We set $\mathbb{U}_{\bar{z}_1^*}$ being so small that there exists a positive number $\varepsilon$ such that for every $z\in\mathbb{U}_{\bar{z}_1^*}$, $|\alpha_k(z)|<\underline{\zeta}_2(\bar{z}_1^*)-\varepsilon$ for $k\in\{1,2,...,s_0-1\}$ and $|\alpha_k(z)|\ge\underline{\zeta}_2(\bar{z}_1^*)-\varepsilon$ for $k\in\{s_0+2,s_0+3,...,m\}$. It is possible by Lemma \ref{le:zeros_and_G1eigenvalues}. 
The point $\bar{z}_1^*$ is a branch point of $\alpha_{s_0}(z)$ with order one and $\alpha_{s_0}(z)$ coalesces with $\alpha_{s_0+1}(z)$ at $z=\bar{z}_1^*$. 
Therefore, we focus on two sets of solutions to $\phi(z,w)=0$: $\{\alpha_1(z), ..., \alpha_{s_0-1}(z),\alpha_{s_0}(z)\}$ and $\{\alpha_1(z), ..., \alpha_{s_0-1}(z),\alpha_{s_0+1}(z)\}$. 
Further, let the neighborhood $\mathbb{U}_{\bar{z}_1^*}$ be so small that $\det V_1(z)\ne 0$ for every $z\in\mathbb{U}_{\bar{z}_1^*}\setminus\{\bar{z}_1^*\}$. It is possible because of the same reason as that used in the case of $z_1\ne\bar{z}_1^*$. 
Note that, in this case, $V_1(z)$ may be generated from the set $\{\alpha_1(z), ..., \alpha_{s_0-1}(z),\alpha_{s_0+1}(z)\}$. 
Let $z_0$ be a point in $\mathbb{U}_{\bar{z}_1^*}\cap\mathbb{C}(\underline{z}_1^*,\bar{z}_1^*]$ satisfying the condition of Assumption \ref{as:eigenG1_distinct} and $z_t$ an arbitrary point in $\mathbb{U}_{\bar{z}_1^*
}\setminus\{\bar{z}_1^*\}$.
Consider an arbitrary path $\xi_{z_0,z_t}$ on $\mathbb{U}_{\bar{z}_1^*}\setminus\{\bar{z}_1^*\}$ and analytically continue $\check{\alpha}_k(z_0)$ ($=\alpha_k(z_0)$), $k=1,2,...,s_0$, along the path $\xi_{z_0,z_t}$. 
Then, for any $z\in\xi_{z_0,z_t}$, the set $\{\check{\alpha}_1(z),\check{\alpha}_2(z),...,\check{\alpha}_{s_0}(z)\}$ is given by $\{\alpha_1(z), ..., \alpha_{s_0-1}(z), \alpha_{s_0}(z)\}$ or $\{\alpha_1(z), ..., \alpha_{s_0-1}(z), \alpha_{s_0+1}(z)\}$. 
Consider $\check{G}_1(z)$ define by formula (\ref{eq:tildeG1_definition}). We have $\det \check{V}_1(z)\ne 0$ for every $z\in\xi_{z_0,z_t}$ and $\check{G}_1(z)$ is entry-wise analytic on $\xi_{z_0,z_t}$. Because of the same reason as that used in the case of $z_1\ne\bar{z}_1^*$, $\check{G}_1(z)$ is bounded around $\bar{z}_1^*$. 
For any $z\in\mathbb{U}_{\bar{z}_1^*}\setminus\{\bar{z}_1^*\}$, if $\{\check{\alpha}_1(z),\check{\alpha}_2(z),...,\check{\alpha}_{s_0}(z)\}$ is given by $\{\alpha_1(z), ..., \alpha_{s_0-1}(z), \alpha_{s_0}(z)\}$, $\check{G}_1(z)$ is given by $G_1(z)$; if $\{\check{\alpha}_1(z),\check{\alpha}_2(z),...,\check{\alpha}_{s_0}(z)\}$ is given by $\{\alpha_1(z), ..., \alpha_{s_0-1}(z), \alpha_{s_0+1}(z)\}$, it is given by $G_1^{co}(z)$. 
Hence, every entry of $\check{G}_1(z)$ is a two valued function in $\mathbb{U}_{\bar{z}_1^*}\setminus\{\bar{z}_1^*\}$ and the point $\bar{z}_1^*$ is a branch point of $\check{G}_1(z)$ with order one. 
\end{proof}

\begin{remark} \label{re:alphas0_analytic}
From the proof of Lemma \ref{le:G1_singularity}, we see that, in a neighborhood of $\bar{z}_1^*$, the eigenvalue $\alpha_{s_0}(z)$ of $G_1(z)$ and the corresponding eigenvector $\bv_{s_0}(z)$ are given as
\begin{align}
\alpha_{s_0}(z) = \tilde{\alpha}_{s_0}\big((\bar{z}_1^*-z)^{\frac{1}{2}}\big),\quad 
\bv_{s_0}(z) = \tilde{\bv}_{s_0}\big((\bar{z}_1^*-z)^{\frac{1}{2}}\big),
\end{align}
where $\tilde{\alpha}_{s_0}(\zeta)$ and $\tilde{\bv}_{s_0}(\zeta)$ are analytic in a neighborhood of the origin.  We will use this point in the following section. 
\end{remark}

%
%
\section{Asymptotics} \label{sec:asymptotics}

%
\subsection{Way to obtain the asymptotic formulae}

We introduce the following notation. For $r>0$, $\varepsilon>0$ and $\theta\in[0,\pi/2)$, define 
\[
\tilde{\Delta}_r(\varepsilon,\theta) = \{z\in\mathbb{C}: |z|<r+\varepsilon,\ z\ne r,\ |\arg(z-r)|>\theta \}. 
\]
For $r>0$, we denote by ``$\tilde{\Delta}_r\ni z\to r$" that $\tilde{\Delta}_r(\varepsilon,\theta)\ni z\to r$ for some $\varepsilon>0$ and $\theta\in [0,\pi/2)$. 
We use the following lemma for obtaining the directional exact asymptotic formulae of the stationary distribution of the 2D-QBD process, described in Theorem \ref{th:mainresults}. 
\begin{lemma}[Theorem VI.4 of Flajolet and Sedgewick \cite{Flajolet09}] \label{le:asympto_formula_ss}
Let $f$ be a generating function of a sequence of real numbers $\{a_n,\, n\in\mathbb{Z}_+\}$, i.e., $f(z)=\sum_{n=0}^\infty a_n z^n$. If $f(z)$ is singular at $z=z_0>0$ and analytic on the set $\tilde{\Delta}_{z_0}(\varepsilon,\theta)$ for some $\varepsilon>0$ and $\theta\in[0,\pi/2)$ and if it satisfies 
\begin{align}
\lim_{\tilde{\Delta}_{z_0}\ni z\to z_0} (z_0-z)^\alpha f(z) = c_0
\label{eq:lim_fz_alpha}
\end{align}
for $\alpha\in\mathbb{R}\setminus\{0,-1,-2,...\}$ and some nonzero constant $c_0\in\mathbb{R}$, then 
\begin{align}
\lim_{n\to\infty} \left( \frac{n^{\alpha-1}}{\Gamma(\alpha)} z_0^{-n} \right)^{-1} a_n = c
\label{eq:asymp_an}
\end{align}
for some real number $c$, where $\Gamma(z)$ is the gamma function. This means that the exact asymptotic formula of the sequence $\{a_n\}$ is given by $n^{\alpha-1} z_0^{-n}$. 
\end{lemma}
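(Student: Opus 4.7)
The plan is to recover the asymptotics of $a_n$ from the local behavior of $f(z)$ near its dominant singularity $z_0$ via Cauchy's coefficient formula combined with a Hankel-type contour; this is the transfer principle at the core of singularity analysis.

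First I would start from Cauchy's integral formula
\[
a_n = \frac{1}{2\pi i}\oint f(z)\,z^{-n-1}\,dz,
\]
with the contour enclosing the origin inside the region of analyticity of $f$. Using the fact that $f$ is analytic on $\tilde{\Delta}_{z_0}(\varepsilon,\theta)$, I would deform this contour to a ``keyhole'' (or ``camembert'') contour $\gamma$ consisting of (i) a small circle of radius $1/n$ around $z_0$, (ii) two line segments emanating from this circle along directions making an angle $\theta'\in(\theta,\pi/2)$ with the real ray through $z_0$, and (iii) an arc of the circle $|z|=z_0+\varepsilon'$ for some $0<\varepsilon'<\varepsilon$ closing the contour away from $z_0$. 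The cone condition on $\tilde{\Delta}_{z_0}(\varepsilon,\theta)$ is exactly what makes such a contour admissible.

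Next I would estimate the two pieces separately. The contribution from the outer arc is bounded by a constant times $(z_0+\varepsilon')^{-n}$, which decays geometrically faster than $z_0^{-n}$ and is therefore negligible compared to (\ref{eq:asymp_an}). On the inner pieces, I would substitute $z=z_0(1-t/n)$, so that $dz=-(z_0/n)\,dt$ and $z^{-n-1}\to z_0^{-n-1}e^{t}$ as $n\to\infty$. Under this rescaling the inner part of $\gamma$ approximates a Hankel contour $H$ in the $t$-plane winding around the origin from $+\infty$, encircling $0$, and back to $+\infty$. Plugging the hypothesis $f(z)\sim c_0(z_0-z)^{-\alpha}=c_0 z_0^{-\alpha}(t/n)^{-\alpha}$ into the integrand and collecting powers of $n$ yields
\[
a_n \;\sim\; \frac{c_0\, z_0^{-n}\, n^{\alpha-1}}{2\pi i}\int_H (-t)^{-\alpha}\,e^{t}\,dt,
\]
and Hankel's classical representation $1/\Gamma(\alpha)=(2\pi i)^{-1}\int_H (-t)^{-\alpha}e^{t}\,dt$ then produces (\ref{eq:asymp_an}) with $c=c_0$.

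The main obstacle is converting the informal ``$\sim$'' into rigorous error estimates. One must bound $|f(z)-c_0(z_0-z)^{-\alpha}|$ uniformly on the keyhole contour using only the pointwise hypothesis (\ref{eq:lim_fz_alpha}) together with the cone condition, and show that the resulting remainder contributes only $o(n^{\alpha-1}z_0^{-n})$ after integration; the standard trick is to split the inner contour into a portion close enough to $z_0$ where (\ref{eq:lim_fz_alpha}) gives uniform control, and a portion where the factor $z^{-n-1}$ is already geometrically small. The exclusion $\alpha\notin\{0,-1,-2,\ldots\}$ enters precisely here: at those values the reference $(z_0-z)^{-\alpha}$ degenerates into a polynomial and $1/\Gamma(\alpha)$ vanishes, so the Hankel integral must be amended by logarithmic factors — this is the well-known caveat of the transfer theorem and is not needed for the applications in Theorem \ref{th:mainresults}.
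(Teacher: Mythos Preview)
Your proposal is correct and follows the standard singularity-analysis argument; note, however, that the paper does not give its own proof of this lemma but simply cites it as Theorem~VI.4 of Flajolet and Sedgewick~\cite{Flajolet09}, so there is no in-paper proof to compare against. What you have sketched is precisely the Hankel-contour transfer method underlying that cited theorem.
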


\begin{remark} \label{le:asympto_formula_ss_sub}
In Lemma \ref{le:asympto_formula_ss}, if the value of $\alpha$ satisfying equation (\ref{eq:lim_fz_alpha}) is 0, then replace $f(z)$ in equation (\ref{eq:lim_fz_alpha}) with $f_1(z)=(f(z)-f(z_0))$ and seek $\alpha$ satisfying equation (\ref{eq:lim_fz_alpha}). 
Furthermore, if the obtained value of $\alpha$ is a negative integer, say $\alpha=k\in\{-1,-2, ...\}$, then replace $f_1(z)$ with $f_2(z)=(f_1(z)-\hat{f}_1(z_0))$ and seek $\alpha$ satisfying equation (\ref{eq:lim_fz_alpha}) again, where $\hat{f}_1(z_0)=\lim_{\tilde{\Delta}_{z_0}\ni z\to z_0} (z_0-z)^k f_1(z)$. 
Repeat this procedure until the value of $\alpha$ not being a negative integer is obtained. Then, the obtained value of $\alpha$ satisfies equation (\ref{eq:asymp_an}). 
For example, if $z_0$ is a branch point of $f(z)$ with order one, the Puiseux series expansion for $f(z)$ around $z=z_0$ is represented as $f(z) = \sum_{n=0}^\infty \tilde{a}_n\, (z_0-z)^{\frac{n}{2}}$ and $\alpha$ is given by 
$\alpha = -\mbox{$\frac{1}{2}$}\min\{k\in\mathbb{Z}_+: \mbox{$\tilde{a}_k\ne 0$ and $k$ is an odd number}\}$. 
\end{remark}

In order to apply Lemma \ref{le:asympto_formula_ss} to the vector generating function $\bvarphi_1(z)$ (resp.\ $\bvarphi_2(z)$), we analytically extend $\bvarphi_1(z)$ (resp.\ $\bvarphi_2(z)$) over $\tilde{\Delta}_{r_1}(\varepsilon,\theta)$ (resp.\ $\tilde{\Delta}_{r_2}(\varepsilon,\theta)$) for some $\varepsilon>0$ and $\theta\in[0,\pi/2)$ in Lemma \ref{le:varphi1_extended} of Subsection \ref{sec:extend_varphi1and2}. 
It is also clarified that the point $z=r_1$ (resp.\ $z=r_2$) is the unique singularity of $\bvarphi_1(z)$ (resp.\ $\bvarphi_2(z)$) on the circle $\partial\Delta(0,r_1)$ (resp.\ $\partial\Delta(0,r_1)$). 
For the case of Type I ($\psi_1(\bar{z}_1^*)>1$) and that of Type III, we reveal that the point $z=r_1$ is a pole of $\bvarphi_1(z)$  with order one and give its Laurent series expansion in Lemma \ref{le:varphi1_limit_typeIa} of Subsection \ref{sec:expansion_typeIa}. The corresponding result for $\bvarphi_2(z)$ is stated in Corollary \ref{co:varphi2_limit_typeIa}. 
In Lemma \ref{le:varphi1_limit_typeIbc} of Subsection \ref{sec:expansion_varpshi_typeI_II_III}, we give the Puiseux series expansions for $\bvarphi_1(z)$ in the case of Type I. In that case, if $\psi_1(\bar{z}_1^*)= 1$, then the point $z=r_1$ is a pole of $\bvarphi_1(z)$ with order one and also a branch point with order one; if $\psi_1(\bar{z}_1^*)< 1$, then it is just a branch point of $\bvarphi_1(z)$ with order one. 
In the case of Type I, if $\psi_2(\bar{z}_1^*)\le 1$, then analogous results hold for $\bvarphi_2(z)$. 
In Lemma \ref{le:varphi1_limit_typeII} of Subsection \ref{sec:expansion_varpshi_typeI_II_III}, we give the Laurent and Puiseux series expansions of $\bvarphi_1(z)$ for the case of Type II. 
In that case, if $\eta_2^{(c)}<\theta_2^{(c)}$, then the point $z=r_1$ is a pole with order one; if $\eta_2^{(c)}=\theta_2^{(c)}$ and $\psi_1(\bar{z}_1^*)>1$, then it is a pole with order two; if $\eta_2^{(c)}=\theta_2^{(c)}$ and $\psi_1(\bar{z}_1^*)=1$, then it is a pole with order two and also a branch point with order one; otherwise ($\eta_2^{(c)}=\theta_2^{(c)}$ and $\psi_1(\bar{z}_1^*)<1$), it is a pole with order one and also a branch point with order one. 
With respect to $\bvarphi_2(z)$ in the case of Type III, results analogous to those for $\bvarphi_1(z)$ in the case of Type II hold. 

As a result, from Lemmas \ref{le:asympto_formula_ss} through \ref{le:varphi1_limit_typeII} and Remark \ref{le:asympto_formula_ss_sub}, the exact asymptotic formulae in Theorem \ref{th:mainresults} (main theorem) are automatically obtained.

%
\subsection{Analytic extension of $\bvarphi_1(z)$ and $\bvarphi_2(z)$} \label{sec:extend_varphi1and2}

Recall that the vector generating function $\bvarphi_1(z)$ is entry-wise analytic in the open disk $\Delta_{r_1}=\Delta(0,r_1)$, where $r_1$ is the radius of convergence of $\bvarphi_1(z)$. We analytically extend it over $\tilde{\Delta}_{r_1}(\varepsilon,\theta)$ for some $\varepsilon>0$ and $\theta\in[0,\pi/2)$, by using equation (\ref{eq:varphi1}) of Lemma \ref{le:varphi1}.  
$\bvarphi_2(z)$ is also extended by using equation (\ref{eq:varphi2}). The following lemma states those points. 

\begin{lemma} \label{le:varphi1_extended}
$\bvarphi_1(z)$ (resp.\ $\bvarphi_2(z)$) can analytically be extended over $\tilde{\Delta}_{r_1}(\varepsilon,\theta)$ (resp.\ $\tilde{\Delta}_{r_2}(\varepsilon,\theta)$) for some $\varepsilon>0$ and $\theta\in[0,\pi/2)$. The extended $\bvarphi_1(z)$ (resp.\ $\bvarphi_2(z)$) is entry-wise analytic on $\partial\Delta_{r_1}\setminus\{r_1\}$ (resp.\ $\partial\Delta_{r_2}\setminus\{r_2\}$).
\end{lemma}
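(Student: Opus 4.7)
The plan is to use the key expression \eqref{eq:varphi1} of Lemma \ref{le:varphi1} as the defining formula of the extension, and verify that each factor on its right-hand side is analytic on an open set containing $\tilde{\Delta}_{r_1}(\varepsilon,\theta)$ for suitable $\varepsilon>0$ and $\theta\in[0,\pi/2)$, except possibly at $z=r_1$. Once this is done, uniqueness of analytic continuation forces the extension to coincide with the original $\bvarphi_1(z)$ on $\Delta_{r_1}$. Concretely, the right-hand side reads
\[
\Bigl\{\bvarphi_2^{\hat{C}_2}(z,G_1(z)) - \bvarphi_2(G_1(z)) + \bnu_{0,0}\bigl(C_0(z,G_1(z))-I\bigr)\Bigr\}\bigl(I - C_1(z,G_1(z))\bigr)^{-1},
\]
so I will examine $G_1(z)$, the matrix series $\bvarphi_2(G_1(z))$ and $\bvarphi_2^{\hat{C}_2}(z,G_1(z))$, the polynomial factors $C_0(z,G_1(z))$ and $C_1(z,G_1(z))$, and the inverse $(I-C_1(z,G_1(z)))^{-1}$ in turn.

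First, by Lemma \ref{le:G_analytic1} and Lemma \ref{le:G1_singularity}, $G_1(z)$ is entry-wise analytic on the open annulus $\mathbb{C}(\underline{z}_1^*,\bar{z}_1^*)$ and on $\partial\Delta_{\bar{z}_1^*}\setminus\{\bar{z}_1^*\}$. Since $r_1\le\bar{z}_1^*$ and the only possible singularity of $G_1$ on $\partial\Delta_{\bar{z}_1^*}$ is $\bar{z}_1^*$, I can choose $\varepsilon>0$ and $\theta\in[0,\pi/2)$ so that $G_1(z)$ is analytic on a neighborhood of $\tilde{\Delta}_{r_1}(\varepsilon,\theta)\setminus\{r_1\}$, the point $\bar{z}_1^*$ being avoided by the cut $|\arg(z-r_1)|>\theta$ when $r_1=\bar{z}_1^*$ (the Type II case) and automatically when $r_1<\bar{z}_1^*$. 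The polynomial factors $C_0(z,G_1(z))$ and $C_1(z,G_1(z))$ inherit analyticity from $G_1(z)$ since their entries depend polynomially on $z$ and on the entries of $G_1(z)$.

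Next I address the matrix-valued series. By Lemma \ref{le:mgf_convergence} and Proposition \ref{pr:mgf_domain}, $\bvarphi_2(G_1(z))$ and $\bvarphi_2^{\hat{C}_2}(z,G_1(z))$ converge absolutely (and entry-wise define analytic functions of $z$) provided that $(|z|,\spr(G_1(z)))\in\calD_1$. Using Proposition \ref{pr:sprG1} together with the inequality $|G_1(z)|\le G_1(|z|)$ of Lemma \ref{le:Gmatrix}(i), one has $\spr(G_1(z))\le \underline{\zeta}_2(|z|)$. For each of the three types, the characterization of $r_1$ in Lemma \ref{le:decay_rate} combined with the definitions of $\theta_1^{(c)}$, $\eta_1^{(c)}$ and the convexity of $\bar{\Gamma}$ shows that the pair $(r_1,\underline{\zeta}_2(r_1))$ lies in $\bar{\calD}_0\subset\bar{\calD}_1$, and that $(|z|,\underline{\zeta}_2(|z|))$ can be brought strictly inside $\calD_1$ by shrinking $\varepsilon$ and $\theta$ suitably. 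Finally, $I-C_1(z,G_1(z))$ is nonsingular whenever $1\notin\sigma(C_1(z,G_1(z)))$; since $|C_1(z,G_1(z))|\le C_1(|z|,G_1(|z|))$, we have $\spr(C_1(z,G_1(z)))\le \psi_1(|z|)$, and $\psi_1(|z|)<1$ for $|z|$ strictly inside the convergence region, while at $|z|=r_1$ with $z\ne r_1$ a strict-inequality argument analogous to Proposition \ref{pr:sprCzw} excludes the eigenvalue $1$ except at the single point $z=r_1$.

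The main obstacle will be carrying out the strict-inequality arguments on the circle $|z|=r_1$, namely showing that $(|z|,\spr(G_1(z)))$ stays in $\calD_1$ and that $1$ is not an eigenvalue of $C_1(z,G_1(z))$ for $z\in\partial\Delta_{r_1}\setminus\{r_1\}$. Both rest on the nonnegative-matrix comparison $\spr(C(z,w))<\spr(C(|z|,|w|))$ when $z$ or $w$ is not on the positive real axis (Proposition \ref{pr:sprCzw}), transferred from $C$ to $G_1$ and $C_1$; the case $r_1=\bar{z}_1^*$ (Type II) is more delicate because $G_1$ itself acquires a branch point at $\bar{z}_1^*$, but the truncated disk $\tilde{\Delta}_{r_1}(\varepsilon,\theta)$ is precisely tailored to exclude that branch cut. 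Once these two strict inequalities are established, each factor in the key expression is analytic on $\tilde{\Delta}_{r_1}(\varepsilon,\theta)$ and on $\partial\Delta_{r_1}\setminus\{r_1\}$, and the lemma follows.
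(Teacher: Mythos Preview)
Your overall strategy matches the paper's: both use the key expression \eqref{eq:varphi1} as the definition of the extension and check that each of the four factors $\bnu_{0,0}(C_0(z,G_1(z))-I)$, $\bvarphi_2(G_1(z))$, $\bvarphi_2^{\hat{C}_2}(z,G_1(z))$ and $(I-C_1(z,G_1(z)))^{-1}$ is analytic on $\partial\Delta_{r_1}\setminus\{r_1\}$. The paper also relies on the strict inequality for $C_1$ on the circle (Proposition~\ref{pr:C1_inequality}, which is exactly the ``argument analogous to Proposition~\ref{pr:sprCzw}'' you anticipate).

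There are, however, two concrete slips you should fix. First, your route to convergence of the matrix series is the wrong one: you invoke Proposition~\ref{pr:mgf_domain} and require $(|z|,\spr(G_1(z)))\in\calD_1$, but $\calD_1\subset\{(z,w):z<r_1\}$, so this condition \emph{cannot} hold anywhere on or beyond $\partial\Delta_{r_1}$, and Proposition~\ref{pr:mgf_domain} concerns $\bvarphi(z,X)$, not $\bvarphi_2(X)$. The paper instead uses Proposition~\ref{pr:varphi2_analytic}: $\bvarphi_2(G_1(z))$ and $\bvarphi_2^{\hat{C}_2}(z,G_1(z))$ are analytic wherever $G_1$ is analytic and $\spr(G_1(z))<r_2$. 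This single scalar condition is what you actually need, and it is verified on $\partial\Delta_{r_1}\setminus\{r_1\}$ via Lemma~\ref{le:zeros_and_G1eigenvalues} (the eigenvalues of $G_1(z)$ are strictly below $\underline{\zeta}_2(|z|)$ off the positive axis). Second, your case identification is off: $r_1=\bar z_1^*$ is not ``the Type~II case''. It occurs in Type~I when $\psi_1(\bar z_1^*)\le 1$ and in the sub-case of Type~II where $\eta_2^{(c)}=\theta_2^{(c)}$ and $\psi_1(\bar z_1^*)\le 1$; in the remaining Type~II sub-cases $r_1<\bar z_1^*$. The paper therefore runs a genuine case split, because \emph{which} factor becomes singular at $r_1$ depends on the type: in Type~I\,($\psi_1(\bar z_1^*)>1$) and Type~III it is only $(I-C_1)^{-1}$; in Type~II\,($\eta_2^{(c)}<\theta_2^{(c)}$) it is only the series terms (since there $\psi_1(r_1)<1$ and $(I-C_1)^{-1}$ is analytic on all of $\partial\Delta_{r_1}$); in the mixed sub-cases both can be singular. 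Your proposal blurs these distinctions.
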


Since discussion for $\bvarphi_2(z)$ proceeds in parallel to that for $\bvarphi_1(z)$, we prove the lemma only for $\bvarphi_1(z)$. Before doing it, we present the following propositions. Their proofs are given in Appendix \ref{sec:varphis2_C1_proofs}. 

\begin{proposition} \label{pr:varphi2_analytic}
If $G_1(z)$ is entry-wise analytic and satisfies $\spr(G_1(z))<r_2$ in a region $\Omega\subset\mathbb{C}$, then $\bvarphi_2(G_1(z))$ and $\bvarphi_2^{\hat{C}_2}(z,G_1(z))$ are also entry-wise analytic in the same region $\Omega$. 
\end{proposition}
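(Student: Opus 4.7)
The plan is to establish uniform convergence of the defining series on compact subsets of $\Omega$ and then invoke Weierstrass' theorem to pass analyticity from the partial sums to the limit. Fix a compact $K\subset\Omega$. Since the spectral radius is a continuous function of the matrix entries and $G_1(z)$ is entry-wise continuous, $z\mapsto\spr(G_1(z))$ is continuous on $\Omega$; by compactness of $K$ together with the hypothesis, one can choose $\rho$ with $\sup_{z\in K}\spr(G_1(z))<\rho<r_2$. For each fixed $z$, Lemma \ref{le:mgf_convergence} already guarantees pointwise absolute convergence of $\bvarphi_2(G_1(z))$; the real work is to upgrade this to uniform convergence on $K$.

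The crux of the argument is a uniform geometric estimate
\[
\sup_{z\in K}\|G_1(z)^j\|\le M\rho^j,\qquad j=0,1,2,\ldots,
\]
for some $M=M(K,\rho)<\infty$. I would derive this by a standard compactness argument based on Gelfand's formula: for each $z_0\in K$ pick $N_{z_0}$ with $\|G_1(z_0)^{N_{z_0}}\|<\rho^{N_{z_0}}$; this inequality persists in an open neighborhood of $z_0$ by continuity; a finite subcover yields exponents $N_1,\ldots,N_m$; writing $j=qN_i+s$ with $0\le s<N_i$ and absorbing the finitely many factors $\sup_{z\in K}\|G_1(z)^s\|$ (finite by compactness and continuity) into the constant delivers the claimed uniform geometric bound. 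This uniform spectral-radius step is where I expect the main difficulty.

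With the bound in hand the remainder is routine. Since $\bvarphi_2(w)=\sum_j\bnu_{0,j}w^j$ has radius of convergence $r_2$ by Proposition \ref{pr:radius_conv} and its coefficients are entry-wise nonnegative, $\sum_j\|\bnu_{0,j}\|\rho^j<\infty$; combining with the uniform bound gives
\[
\sum_{j=1}^\infty\|\bnu_{0,j}\|\,\|G_1(z)^j\|\le M\sum_{j=1}^\infty\|\bnu_{0,j}\|\rho^j<\infty\qquad(z\in K),
\]
so $\bvarphi_2(G_1(z))=\sum_{j\ge 1}\bnu_{0,j}G_1(z)^j$ converges uniformly on $K$. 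Every partial sum is entry-wise analytic on $\Omega$, being a finite combination of products of analytic matrix functions, so Weierstrass' theorem yields entry-wise analyticity of the limit on $\Omega$.

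For $\bvarphi_2^{\hat{C}_2}(z,G_1(z))$ the plan is identical. The matrix $\hat{C}_2(z,G_1(z))=A_{*,-1}^{(2)}(z)+A_{*,0}^{(2)}(z)G_1(z)+A_{*,1}^{(2)}(z)G_1(z)^2$ is entry-wise analytic on $\Omega$ as a finite sum of products of analytic matrix functions, and is therefore entry-wise bounded on $K$; inserting this factor into the general term of $\sum_j\bnu_{0,j}\hat{C}_2(z,G_1(z))G_1(z)^{j-1}$ preserves the dominated geometric estimate, gives uniform convergence on $K$, and produces analyticity via Weierstrass.
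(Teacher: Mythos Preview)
Your argument is correct and complete; the uniform geometric bound via Gelfand's formula plus a finite subcover is exactly the right tool, and the Weierstrass step is routine once that bound is in place.

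The paper takes a somewhat different route: it regards $\bvarphi_2(X)=\sum_{j\ge 1}\bnu_{0,j}X^j$ as a function of the $s_0^2$ complex entries of $X$, asserts (via Lemma~\ref{le:mgf_convergence}) that this multivariable power series converges absolutely on the open set $\{X\in\mathbb{C}^{s_0\times s_0}:\spr(X)<r_2\}$ and is therefore analytic there, and then obtains analyticity of $\bvarphi_2(G_1(z))$ as a composition of analytic maps. The paper's presentation is terse---the passage from ``absolutely convergent on $\{\spr(X)<r_2\}$'' to ``analytic in the matrix entries'' is stated without proof, and making it rigorous ultimately requires a uniform-on-compacta estimate of the same flavor as your Gelfand argument. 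So your approach is more self-contained and makes explicit the step the paper leaves implicit; on the other hand, the paper's viewpoint, once that step is granted, has the advantage that the analyticity of $X\mapsto\bvarphi_2(X)$ is established once and for all, and applies to any analytic matrix-valued input, not just $G_1(z)$. For $\bvarphi_2^{\hat{C}_2}$ the two treatments are essentially parallel.
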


\begin{proposition} \label{pr:C1_inequality}
Under Assumption \ref{as:Akl_irreducible}, for $z\in\mathbb{C}(\underline{z}_1^*,\bar{z}_1^*]$ such that $z\ne |z|$, we have 
\begin{equation}
\spr(C_1(z,G_1(z)))<\spr(C_1(|z|,G_1(|z|))). 
\end{equation}
\end{proposition}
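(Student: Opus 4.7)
The plan is to invoke the following classical consequence of Perron--Frobenius theory: if $B\ge 0$ is irreducible and $|A|\le B$ entry-wise with $|A|\ne B$, then $\spr(A)\le\spr(|A|)<\spr(B)$. Accordingly, the proof will reduce to three ingredients: (i) the entry-wise bound $|C_1(z,G_1(z))|\le C_1(|z|,G_1(|z|))$; (ii) irreducibility of the dominating matrix $C_1(|z|,G_1(|z|))$; and (iii) strict inequality in at least one entry of (i).

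Ingredients (i) and (ii) will be essentially routine. For (i), each block $A^{(1)}_{*,j}(z)=\sum_{i\in\mathbb H}A^{(1)}_{i,j}\,z^i$ is a Laurent polynomial in $z$ with nonnegative matrix coefficients, so $|A^{(1)}_{*,j}(z)|\le A^{(1)}_{*,j}(|z|)$ entry-wise; combined with $|G_1(z)|\le G_1(|z|)$ from Lemma~\ref{le:Gmatrix}(i) and the triangle inequality applied to the matrix product, (i) follows at once. For (ii), Assumption~\ref{as:Akl_irreducible} says that the block-tridiagonal $A^{(1)}_*$ driving the one-dimensional QBD $\mathcal L^{\{1\}}$ is irreducible, and $C_1(1,G_1(1))=A^{(1)}_{*,0}+A^{(1)}_{*,1}\,G_1(1)$ is precisely the transition kernel of $\mathcal L^{\{1\}}$ censored to level $0$; since censoring preserves irreducibility, $C_1(1,G_1(1))$ is irreducible. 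Replacing $1$ by $|z|>0$ does not alter the zero/nonzero pattern of $C_1(|z|,G_1(|z|))$, because $A^{(1)}_{*,j}(|z|)$ and $G_1(|z|)=\sum_{k\in\mathbb Z}E_k|z|^k$ (with $E_k\ge 0$, from the Laurent expansion obtained in Section~\ref{sec:Gmatrix}) merely rescale each nonzero entry by a positive factor.

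The main obstacle is (iii). The plan is to expand $C_1(z,G_1(z))=\sum_{m\in\mathbb Z}F_m\,z^m$ with $F_m\ge 0$, obtained by convolving the finite Laurent expansions of $A^{(1)}_{*,0}(z)$ and $A^{(1)}_{*,1}(z)$ with that of $G_1(z)$, so that $C_1(|z|,G_1(|z|))=\sum_m F_m|z|^m$. Since $z\ne |z|$, the arguments of $\{z^m\}_{m\in\mathbb Z}$ are pairwise distinct, and the strict triangle inequality gives $\bigl|\sum_m(F_m)_{pq}\,z^m\bigr|<\sum_m(F_m)_{pq}\,|z|^m$ at every position $(p,q)$ for which at least two coefficients $(F_m)_{pq}$ are positive. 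It therefore suffices to exhibit one such ``mixed-power'' position, and this is where the genuinely two-dimensional nature of the model enters: the irreducibility of $\{A_{k,l},\,k,l\in\mathbb H\}$ and $A^{(1)}_*$ implies that from some boundary phase the first passage back to the boundary has positive probability under at least two distinct net $x_1$-displacements, so that $G_1(|z|)$ has some entry whose Laurent support has size at least two, and this property propagates through the product $A^{(1)}_{*,1}(|z|)\,G_1(|z|)$ to furnish the required entry of $C_1(|z|,G_1(|z|))$. Combining (i)--(iii) with the Perron--Frobenius inequality recalled at the outset then yields $\spr(C_1(z,G_1(z)))<\spr(C_1(|z|,G_1(|z|)))$, as claimed.
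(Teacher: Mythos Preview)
Your strategy is the same as the paper's in spirit---dominate by the nonnegative matrix and force a strict entry-wise inequality via the Laurent structure---but step~(iii) contains a genuine error, and the remedy the paper uses is precisely what you are missing.

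The false claim is that for $z\ne|z|$ the arguments of $\{z^m\}_{m\in\mathbb Z}$ are pairwise distinct. Write $z=re^{i\theta}$ with $\theta\in(0,2\pi)$; then $\arg(z^m)=m\theta\pmod{2\pi}$, and whenever $\theta$ is a rational multiple of $2\pi$ there are infinitely many collisions. For instance if $\theta=\pi$ (so $z<0$) and an entry $(p,q)$ has Laurent support $\{0,2\}$ only, then $(F_0)_{pq}+ (F_2)_{pq}z^2$ is a positive real number and $|\cdot|$ equals $(F_0)_{pq}+(F_2)_{pq}|z|^2$; no strict inequality. Hence ``at least two nonzero coefficients'' is not enough: you need two exponents $m,m'$ in the support with $(m-m')\theta\notin 2\pi\mathbb Z$, and your sketch does not supply them. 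The related hand-wave that irreducibility forces some entry of $G_1(|z|)$ to have Laurent support of size at least two is plausible, but even granting it, it does not control \emph{which} exponents occur and so does not close the gap above.

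The paper closes this gap by passing to a power. It interprets $C_1(z,G_1(z))^{n_0}$ probabilistically: the $(j,j')$-entry equals $\sum_{k\in\mathbb Z}[\tilde D_k]_{jj'}\,r^k e^{i\theta k}$, where $[\tilde D_k]_{jj'}=\mathbb P(\tilde{\bY}^{(1)}_{\tau_{n_0}}=(k,0,j')\mid \tilde{\bY}^{(1)}_0=(0,0,j))$ and $\tau_{n_0}$ is the $n_0$-th return of $\tilde X^{(1)}_{2,\cdot}$ to level $0$. Irreducibility \emph{and aperiodicity} of $\{\tilde{\bY}^{(1)}_n\}$ then give some $n_0$ and some $(j,j')$ with $[\tilde D_0]_{jj'}>0$ and $[\tilde D_1]_{jj'}>0$, i.e.\ the support contains the \emph{consecutive} exponents $0$ and $1$. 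Since $e^{i\cdot 0\cdot\theta}=1$ and $e^{i\cdot 1\cdot\theta}=e^{i\theta}$ differ whenever $\theta\ne 0$, the strict triangle inequality now gives $|[C_1(z,G_1(z))^{n_0}]_{jj'}|<[C_1(|z|,G_1(|z|))^{n_0}]_{jj'}$, and one concludes $\spr(C_1(z,G_1(z)))^{n_0}\le\spr(|C_1(z,G_1(z))^{n_0}|)<\spr(C_1(|z|,G_1(|z|))^{n_0})$ via the irreducibility of $C_1(|z|,G_1(|z|))$. Your ingredients (i) and (ii) are fine; what you need to add is exactly this passage to $C_1^{n_0}$ together with the aperiodicity argument that produces exponents $0$ and $1$.
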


\begin{proof}[Proof of Lemma \ref{le:varphi1_extended}]
Since $\bvarphi_1(z)$ is entry-wise analytic in $\Delta_{r_1}$, it suffices by the identity theorem to show that the right hand side of equation (\ref{eq:varphi1}) is entry-wise analytic on $\partial\Delta_{r_1}\setminus\{r_1\}$.
The right hand side of equation (\ref{eq:varphi1}) is composed of four terms: $\bnu_{0,0}(C_0(z,G_1(z))-I)$, $\bvarphi_2(G_1(z))$, $\bvarphi_2^{\hat{C}_2}(z,G_1(z))$ and $(I-C_1(z,G_1(z)))^{-1}$. By Lemmas \ref{le:G_analytic1} and \ref{le:G1_singularity}, $G_1(z)$ is entry-wise analytic in $\mathbb{C}(\underline{z}_1^*,\bar{z}_1^*)$ and on $\partial\Delta_{\bar{z}_1^*}\setminus\{\bar{z}_1^*\}$. 
Hence, if $r_1<\bar{z}_1^*$, $\bnu_{0,0}(C_0(z,G_1(z))-I)$ is entry-wise analytic on $\partial\Delta_{r_1}$; if $r_1=\bar{z}_1^*$, it is entry-wise analytic on $\partial\Delta\bar{z}_1^*\setminus\{\bar{z}_1^*\}$. 
Analytic properties of the other terms are given as follows.

{\it In the case of Type I ($\psi_1(\bar{z}_1^*)>1$) and that of Type III}.\quad 
Since $r_1=e^{\theta_1^{(c)}}<\bar{z}_1^*$, $G_1(z)$ is entry-wise analytic on $\partial\Delta_{r_1}$ and by Proposition \ref{pr:sprG1} and Lemma \ref{le:Gmatrix}, for $z\in\partial\Delta_{r_1}$, 
\[
\spr(G_1(z))\le\spr(G_1(|z|))=\underline{\zeta}_2(|z|)<\eta_2^{(c)}=r_2.  
\]
Hence, by Proposition \ref{pr:varphi2_analytic}, both $\bvarphi_2(G_1(z))$ and $\bvarphi_2^{\hat{C}_2}(z,G_1(z))$ are entry-wise analytic on $\partial\Delta_{r_1}$. 
By Proposition \ref{pr:C1_inequality}, for $z\in\partial\Delta_{r_1}\setminus\{r_1\}$, 
\begin{align}
&\spr(C_1(z,G_1(z)))\le\spr(|C_1(z,G_1(z))|)<\spr(C_1(r_1,G_1(r_1))=1, 
\end{align}
and we know that $\det(I-C_1(z,G_1(z)))\ne 0$ at any point on $\partial\Delta_{r_1}\setminus\{r_1\}$. Since we have 
\[
(I-C_1(z,G_1(z)))^{-1} = \frac{\adj\ (I-C_1(z,G_1(z)))}{\det (I-C_1(z,G_1(z)))}, 
\]
where $C_1(z,G_1(z))$ is entry-wise analytic on $\partial\Delta_{r_1}$, this implies that each entry of $(I-C_1(z,G_1(z)))^{-1}$ is analytic on $\partial\Delta_{r_1}\setminus\{r_1\}$. 

{\it In the case of Type I ($\psi_1(\bar{z}_1^*)\le 1$)}.\quad 
In this case, we have $r_1=\bar{z}_1^*$ and $G_1(z)$ is entry-wise analytic on $\partial\Delta_{\bar{z}_1^*}\setminus\{\bar{z}_1^*\}$. 
In a manner similar to that used in the case of Type I ($\psi_1(\bar{z}_1^*)>1$), we obtain $\spr(G_1(z))<r_2$ for every $z\in\partial\Delta_{\bar{z}_1^*}$, and by Proposition \ref{pr:varphi2_analytic}, $\bvarphi_2(G_1(z))$ and $\bvarphi_2^{\hat{C}_2}(z,G_1(z))$ are entry-wise analytic on $\partial\Delta_{\bar{z}_1^*}\setminus\{\bar{z}_1^*\}$. 
Furthermore, $(I-C_1(z,G_1(z)))^{-1}$ is also entry-wise analytic on $\partial\Delta_{\bar{z}_1^*}\setminus\{\bar{z}_1^*\}$.

{\it In the case of Type II ($\eta_2^{(c)}<\theta_2^{(c)}$)}.\quad 
We have $r_1=e^{\bar{\eta}_1^{(c)}}<e^{\theta_1^{(c)}}\le \bar{z}_1^*$ and $G_1(z)$ is entry-wise analytic on $\partial\Delta_{r_1}$. 
By Lemma \ref{le:zeros_and_G1eigenvalues}, if $z\in\partial\Delta_{r_1}\setminus\{r_1\}$, the modulus of every eigenvalue of $G_1(z)$ is less than $\underline{\zeta}_2(|z|)$ ($=\underline{\zeta}_2(r_1)$). 
Hence, we have, for $z\in\partial\Delta_{r_1}\setminus\{r_1\}$,  
\[
\spr(G_1(z)) < \underline{\zeta}_2(e^{\bar{\eta}_1^{(c)}}) = e^{\eta_2^{(c)}} = r_2, 
\]
and by Proposition \ref{pr:varphi2_analytic}, $\bvarphi_2(G_1(z))$ and $\bvarphi_2^{\hat{C}_2}(z,G_1(z))$ are entry-wise analytic on $z\in\partial\Delta_{r_1}\setminus\{r_1\}$.
We know that $\psi_1(z)=\spr(C_1(z,G_1(z)))$ is convex in $z\in\mathbb{R}_+\setminus\{0\}$ and $\psi_1(1)=\psi_1(e^{\theta_1^{(c)}})=1$. Therefore, we have, for $z\in\partial\Delta_{r_1}$,  
\[
\spr(C_1(z,G_1(z)))\le \spr(C_1(r_1,G_1(r_1)))<\spr(C_1(e^{\theta_1^{(c)}},G_1(e^{\theta_1^{(c)}})))=1, 
\]
and $(I-C_1(z,G_1(z)))^{-1}$ is entry-wise analytic on $\partial\Delta_{r_1}$. 

{\it In the case of Type II ($\eta_2^{(c)}=\theta_2^{(c)}$, $\psi_1(\bar{z}_1^*)>1$)}.\quad 
In a manner similar to that used in the case of Type II ($\eta_2^{(c)}<\theta_2^{(c)}$), we see that $G_1(z)$ is entry-wise analytic on $\partial\Delta_{r_1}$ and that $\bvarphi_2(G_1(z))$ and $\bvarphi_2^{\hat{C}_2}(z,G_1(z))$ are entry-wise analytic on $\partial\Delta_{r_1}\setminus\{r_1\}$. 
Furthermore, in a manner similar to that used in the case of Type I ($\psi_1(\bar{z}_1^*)>1$), we see that $(I-C_1(z,G_1(z)))^{-1}$ is entry-wise analytic on $\partial\Delta_{r_1}\setminus\{r_1\}$.

{\it In the case of Type II ($\eta_2^{(c)}=\theta_2^{(c)}$, $\psi_1(\bar{z}_1^*)\le 1$)}.\quad 
In this case, we have $r_1=e^{\bar{\eta}_1^{(c)}}=e^{\theta_1^{(c)}}=\bar{z}_1^*$. Hence, $G_1(z)$ is entry-wise analytic on $\partial\Delta_{\bar{z}_1^*}\setminus\{\bar{z}_1^*\}$. 
In a manner similar to that used in the case of Type II ($\eta_2^{(c)}<\theta_2^{(c)}$), we see that $\bvarphi_2(G_1(z))$ and $\bvarphi_2^{\hat{C}_2}(z,G_1(z))$ are entry-wise analytic on $\partial\Delta_{\bar{z}_1^*}\setminus\{\bar{z}_1^*\}$.  
Furthermore, we see that $(I-C_1(z,G_1(z)))^{-1}$ is entry-wise analytic on $\partial\Delta_{\bar{z}_1^*}\setminus\{\bar{z}_1^*\}$. 
\end{proof}

\bigskip
As mentioned before, since discussion for $\bvarphi_2(z)$ proceeds in parallel to that for $\bvarphi_1(z)$, we explain only about $\bvarphi_1(z)$ in the following subsection.

%
\subsection{Singularity of $\bvarphi_1(z)$: Type I ($\psi_1(\bar{z}_1^*)>1$) and Type III} \label{sec:expansion_typeIa}

By formula (\ref{eq:varphi1}), $\bvarphi_1(z)$ is represented as 
\begin{equation}
\bvarphi_1(z) 
= \bg_1(z)\,(I-C_1(z,G_1(z)))^{-1}
= \frac{\bg_1(z)\,\adj(I-C_1(z,G_1(z)))}{f_1(1,z)}, 
\label{eq:varphi1_gf1}
\end{equation}
where
\begin{align*}
&\bg_1(z) = \bvarphi_2^{\hat{C}_2}(z,G_1(z)) - \bvarphi_2(G_1(z)) + \bnu_{0,0} l(C_0(z,G_1(z))-Ir), \\
&f_1(\lambda,z) = \det(\lambda I-C_1(z,G_1(z))). 
\end{align*}
In the case of Type I ($\psi_1(\bar{z}_1^*)>1$), we have $r_1<\bar{z}_1^*$ and $G_1(z)$ is entry-wise analytic at $z=r_1$. By Proposition \ref{pr:varphi2_analytic}, since $\spr(G_1(r_1))=\underline{\zeta}_2(r_1)<r_2$, both $\bvarphi_2(G_1(z))$ and $\bvarphi_2^{\hat{C}_2}(z,,G_1(z))$ are also entry-wise analytic at $z=r_1$. 
Therefore, $\bg_1(z)$, $\adj(I-C_1(z,G_1(z)))$ and $f_1(1,z)$ are analytic in a neighborhood of $z=r_1$. Furthermore, we know $f_1(1,r_1)=0$, and each entry of $\bvarphi_1(z)$ is a meromorphic function in a neighborhood of $z=r_1$. 
For $f_1(1,z)$, we give the following proposition. 
\begin{proposition} \label{pr:f1_limit}
For $z_0\in(\underline{z}_1^*,\bar{z}_1^*)$ such that $\psi_1(z_0)=1$, the point $z=z_0$ is a zero of $f_1(1,z)$ with multiplicity one and we have
\begin{align}
\lim_{z\to z_0} (z_0-z)^{-1} f_1(1,z) = \psi_{1,z}(z_0) f_{1,\lambda}(1,z_0) \ne 0, 
\label{eq:f1_limit}
\end{align}
where $\psi_{1,z}(z)=(d/dz) \psi_1(z)$ and $f_{1,\lambda}(\lambda,z)=(\partial/\partial\lambda) f_1(\lambda,z)$. 
\end{proposition}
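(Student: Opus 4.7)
The plan is to exploit the identity
\[
f_1(\psi_1(z), z) \equiv 0
\]
in a real neighborhood of $z_0$, which holds because $\psi_1(z) = \spr(C_1(z, G_1(z)))$ is by construction an eigenvalue of $C_1(z, G_1(z))$. For any real $z > 0$ the matrix $C_1(z, G_1(z))$ is nonnegative and, as $G_1(z)$ is defined by a series with nonnegative coefficients of $z$, $z^{-1}$, it shares its zero/nonzero support pattern with $C_1(1, G_1(1))$. The latter is the first-return probability kernel to level $0$ of the one-dimensional QBD $\calL^{\{1\}}$, which is irreducible under Assumption \ref{as:Akl_irreducible}. Hence $C_1(z_0, G_1(z_0))$ is irreducible nonnegative, so by the Perron-Frobenius theorem $\psi_1(z_0) = 1$ is a \emph{simple} eigenvalue. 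This immediately gives $f_{1,\lambda}(1, z_0) \ne 0$, and via analytic perturbation theory it also guarantees that $\psi_1$ is real-analytic at $z_0$.

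Given this analyticity, differentiating the identity by the chain rule yields
\[
f_{1,\lambda}(\psi_1(z), z)\,\psi_{1,z}(z) + f_{1,z}(\psi_1(z), z) = 0,
\]
so at $z = z_0$ one has $f_{1,z}(1, z_0) = -f_{1,\lambda}(1, z_0)\,\psi_{1,z}(z_0)$. Combined with $f_1(1, z_0) = 0$ and the analyticity of $f_1(1, z)$ near $z_0$, this produces
\[
\lim_{z \to z_0} (z_0 - z)^{-1} f_1(1, z) \;=\; -f_{1,z}(1, z_0) \;=\; f_{1,\lambda}(1, z_0)\,\psi_{1,z}(z_0),
\]
which is the formula (\ref{eq:f1_limit}). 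It remains only to rule out $\psi_{1,z}(z_0) = 0$.

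The main obstacle is precisely this last nonvanishing. The plan is to invoke the convexity of $\psi_1(e^s)$ in $s$ together with a nontriviality argument: if $\psi_{1,z}(z_0) = 0$ then $s_0 = \log z_0$ is a critical point of $s \mapsto \psi_1(e^s)$ and, by convexity, a global minimum, forcing $\psi_1 \ge 1$ everywhere on $(\underline{z}_1^*, \bar{z}_1^*)$. Under Assumption \ref{as:stability_cond} the pertinent induced one-dimensional QBD is positive recurrent, whence $C_1(1, G_1(1)) = A^{(1)}_{*,0} + A^{(1)}_{*,1} G_1(1)$ is stochastic and $\psi_1(1) = 1$; the minimum is therefore also attained at $s = 0$, so convexity forces $\psi_1 \equiv 1$ on the interval with endpoints $0$ and $s_0$, and real-analyticity propagates this identity to all of $(\underline{s}_1^*, \bar{s}_1^*)$. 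This contradicts the configuration in which the proposition is needed (Type I with $\psi_1(\bar{z}_1^*) > 1$, and its Type III analogue), where $\psi_1$ is manifestly nonconstant. With both factors $f_{1,\lambda}(1, z_0)$ and $\psi_{1,z}(z_0)$ nonzero, their product is nonzero and $z = z_0$ is a simple zero of $f_1(1, z)$, as claimed.
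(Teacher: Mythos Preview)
Your approach is essentially the paper's: both invoke irreducibility of $C_1(z_0,G_1(z_0))$ to obtain simplicity of the Perron--Frobenius eigenvalue (hence $f_{1,\lambda}(1,z_0)\ne 0$ and analyticity of $\psi_1$ near $z_0$), then use convexity of $s\mapsto\psi_1(e^s)$ together with the value at $z=1$ to rule out $\psi_{1,z}(z_0)=0$. Your chain-rule derivation of the limit is equivalent to the paper's factorization $f_1(1,z)=(1-\lambda^{C_1}(z))\cdot(\text{remaining factors})$.

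One inaccuracy to flag: you assert that Assumption~\ref{as:stability_cond} forces $\calL^{\{1\}}$ to be positive recurrent, whence $C_1(1,G_1(1))$ is stochastic and $\psi_1(1)=1$. This is not guaranteed. Assumption~\ref{as:stability_cond} admits the case $a_1^{\{1,2\}}<0$, $a_2^{\{1,2\}}>0$ (with $a_2^{\{2\}}<0$), in which $\calL^{\{1\}}$ is not positive recurrent and $G_1(1)$ is strictly substochastic. What always holds is the weaker inequality $\psi_1(1)\le 1$, since $C_1(1,G_1(1))\bone=A^{(1)}_{*,0}\bone+A^{(1)}_{*,1}G_1(1)\bone\le A^{(1)}_{*,0}\bone+A^{(1)}_{*,1}\bone=\bone$. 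This weaker bound is exactly what the paper uses, and it suffices for your argument without change: if $\psi_{1,z}(z_0)=0$ then convexity gives $\psi_1\ge 1$ globally, which combined with $\psi_1(1)\le 1$ forces $\psi_1(1)=1$, and your constancy-by-analyticity step then goes through as written. Your justification of $\psi_1\not\equiv 1$ via the Type~I/Type~III context (where $\psi_1(\bar z_1^*)>1$) is a legitimate way to close the argument; the paper simply asserts non-constancy at the same point.
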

\begin{proof}
$f_1(\lambda,z)$ is the characteristic polynomial of $C_1(z,G_1(z))$. 
Under Assumption \ref{as:Akl_irreducible}, $C_1(1,G_1(1))$ is irreducible and hence, for $z\in[\underline{z}_1^*,\bar{z}_1^*]$, $C_1(z,G_1(z))$ is also irreducible. This implies that for $z\in[\underline{z}_1^*,\bar{z}_1^*]$, the Perron-Frobenius eigenvalue of $C_1(z,G_1(z))$, given by $\psi_1(z)=\spr(C_1(z,G_1(z))$, is simple, i.e., the algebraic multiplicity of $\psi_1(z)$ is one. 
Denote by $\lambda^{C_1}(z)$ the eigenvalue of $C_1(z,G_1(z))$ corresponding to $\psi_1(z)$ when $z\in[\underline{z}_1^*,\bar{z}_1^*]$. Then, $\lambda^{C_1}(z)$ is analytic on $(\underline{z}_1^*,\bar{z}_1^*)$, and since $\lambda^{C_1}(z_0)=\psi_1(z_0)=1$, we have
\begin{align}
\lim_{z\to z_0} (1-\lambda^{C_1}(z))^{-1} f_1(1,z) = f_{C_1,\lambda}(1,z_0) \ne 0. 
\end{align}
Combining this with the fact that $\lim_{z\to z_0} (1-\lambda^{C_1}(z))/(z_0-z) = \psi_{1,z}(z)$, we obtain equation (\ref{eq:f1_limit}). Since $\psi_1(1)\le 1$, $\psi_1(z_0)= 1$, $\psi_1(z)\not\equiv 1$ and $\psi_1(e^s)$ is convex in $s\in[\log\underline{z}_1^*,\log\bar{z}_1^*]$, we have $\psi_{1,z}(z_0)> 0$, and this completes the proof.
\end{proof}

Denote by $\bu^{C_1}(z)$ and $\bv^{C_1}(z)$ the left and right eigenvectors of $C_1(z,G_1(z))$ with respect to the eigenvalue $\lambda^{C_1}(z)$, satisfying $\bu^{C_1}(z) \bv^{C_1}(z)=1$. Then, the following proposition holds. Its proof is given in Appendix \ref{sec:adjC1}. 
\begin{proposition} \label{pr:adjC1}
For $z_0\in(\underline{z}_1^*,\bar{z}_1^*)$ such that $\psi_1(z_0)=1$, 
\begin{align}
&\adj\big(I-C_1(z_0,G_1(z_0))\big) 
= f_{1,\lambda}(1,z_0) \bv^{C_1}(z_0) \bu^{C_1}(z_0), 
\label{eq:adjC1}
\end{align}
where $\bv^{C_1}(z_0)$ and $\bu^{C_1}(z_0)$ are positive. 
\end{proposition}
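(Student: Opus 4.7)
The plan rests on the standard spectral identification of the adjugate of $\lambda_0 I-B$ when $\lambda_0$ is a simple eigenvalue of $B$. Setting $B=C_1(z_0,G_1(z_0))$ and $\lambda_0=1$, the basic identity $(I-B)\,\adj(I-B)=\adj(I-B)\,(I-B)=\det(I-B)\,I$ together with $\det(I-B)=f_1(1,z_0)=0$ forces every column of $\adj(I-B)$ to lie in $\Ker(I-B)$ and every row to lie in the left null space of $I-B$. From the proof of Proposition \ref{pr:f1_limit} we already know that $\lambda=1$ is a simple (algebraic multiplicity one) eigenvalue of $B$, so both null spaces are one-dimensional and spanned respectively by $\bv^{C_1}(z_0)$ and $\bu^{C_1}(z_0)$. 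Consequently $\adj(I-B)=\beta\,\bv^{C_1}(z_0)\,\bu^{C_1}(z_0)$ for some scalar $\beta$.

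To pin down $\beta$ I would invoke Jacobi's formula $\frac{d}{d\lambda}\det A(\lambda)=\tr\bigl(\adj(A(\lambda))\,A'(\lambda)\bigr)$ applied to $A(\lambda)=\lambda I-B$, which yields the well-known identity $f_{1,\lambda}(\lambda,z_0)=\tr(\adj(\lambda I-B))$. Evaluating this at $\lambda=1$ and using the rank-one representation above together with the normalization $\bu^{C_1}(z_0)\,\bv^{C_1}(z_0)=1$, the trace equals $\beta\,\tr\bigl(\bv^{C_1}(z_0)\,\bu^{C_1}(z_0)\bigr)=\beta$. Hence $\beta=f_{1,\lambda}(1,z_0)$, which is formula (\ref{eq:adjC1}).

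For the positivity statement I would observe that $C_1(z_0,G_1(z_0))=A_{*,0}^{(1)}(z_0)+A_{*,1}^{(1)}(z_0)\,G_1(z_0)$ is entry-wise nonnegative: for the real $z_0>0$ under consideration each matrix polynomial $A_{*,j}^{(1)}(z_0)$ is nonnegative (its coefficients are nonnegative blocks and all powers of $z_0$ are positive) and $G_1(z_0)$ is nonnegative by Lemma \ref{le:existenceG1}. The proof of Proposition \ref{pr:f1_limit} also records that $C_1(z_0,G_1(z_0))$ is irreducible. By Perron--Frobenius theory, the Perron eigenvalue $\psi_1(z_0)=1$ therefore admits strictly positive left and right eigenvectors, and the rescaling that enforces $\bu^{C_1}(z_0)\,\bv^{C_1}(z_0)=1$ preserves positivity of both vectors. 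No substantive obstacle is anticipated; the argument is essentially Jacobi's formula plus Perron--Frobenius, both already used earlier in the paper.
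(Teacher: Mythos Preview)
Your argument is correct. The rank-one identification of $\adj(I-B)$ via $(I-B)\adj(I-B)=\adj(I-B)(I-B)=O$ together with simplicity of the Perron eigenvalue is exactly what the paper does, and your positivity argument via Perron--Frobenius is the same as well.

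Where you genuinely diverge is in pinning down the scalar $\beta$. The paper does not use Jacobi's formula; instead it computes the limit
\[
\lim_{z\uparrow z_0}(z_0-z)\bigl(I-C_1(z,G_1(z))\bigr)^{-1}\bv^{C_1}(z)
\]
in two ways: once via the eigenvector relation $(I-C_1(z,G_1(z)))^{-1}\bv^{C_1}(z)=(1-\psi_1(z))^{-1}\bv^{C_1}(z)$, which gives $\psi_{1,z}(z_0)^{-1}\bv^{C_1}(z_0)$, and once via Proposition~\ref{pr:f1_limit}, which gives $(\psi_{1,z}(z_0)f_{1,\lambda}(1,z_0))^{-1}\adj(I-B)\bv^{C_1}(z_0)$. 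Matching the two yields $c=f_{1,\lambda}(1,z_0)$. Your route through $f_{1,\lambda}(1,z_0)=\tr\bigl(\adj(I-B)\bigr)=\beta\,\bu^{C_1}(z_0)\bv^{C_1}(z_0)=\beta$ is shorter and entirely self-contained in $\lambda$, avoiding any appeal to the $z$-analyticity encoded in Proposition~\ref{pr:f1_limit}. The paper's approach, by contrast, reuses the machinery already set up for the asymptotic analysis and so fits the narrative flow, but as a stand-alone computation yours is the cleaner one.
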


Since $\bvarphi_1(z)$ is entry-wise meromorphic in a neighborhood of $z=r_1$, we obtain the following lemma. 
\begin{lemma}[Type I ($\psi_1(\bar{z}_1^*)>1$) and Type III] \label{le:varphi1_limit_typeIa}
In the case of Type I, if $\psi_1(\bar{z}_1^*)>1$, the pint $z=r_1$ is a pole of $\bvarphi_1(z)$ with order one and its Laurent series expansion is represented as 
\begin{align}
\bvarphi_1(z) = \sum_{k=-1}^\infty \bvarphi_{1,k}^I (r_1-z)^k, 
\label{eq:varphi1_expansion1}
\end{align}
where $\bvarphi_{1,-1}^I = c_{pole}^{\bvarphi_1}\, \bu^{C_1}(r_1)> \bzero^\top$ and $c_{pole}^{\bvarphi_1} = \psi_{1,z}(r_1)^{-1} \bg_1(r_1) \bv^{C_1}(r_1) >0$. 
In the case of Type III, the same result also holds for $\bvarphi_1(z)$. 
\end{lemma}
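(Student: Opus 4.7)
The strategy is to exploit the rational representation (\ref{eq:varphi1_gf1}), combine it with Propositions \ref{pr:f1_limit} and \ref{pr:adjC1}, and finally invoke a Pringsheim-type argument to secure the positivity of the residue. In both cases of interest we have $r_1=e^{\theta_1^{(c)}}<\bar{z}_1^*$ (see the discussion in the proof of Lemma \ref{le:varphi1_extended}), so by Lemmas \ref{le:G_analytic1} and \ref{le:G1_singularity} the matrix $G_1(z)$ is entry-wise analytic in a neighborhood of $r_1$, and $\spr(G_1(r_1))=\underline{\zeta}_2(r_1)<r_2$. Proposition \ref{pr:varphi2_analytic} then makes $\bvarphi_2(G_1(z))$ and $\bvarphi_2^{\hat C_2}(z,G_1(z))$ entry-wise analytic at $r_1$, so the numerator $\bg_1(z)\,\adj(I-C_1(z,G_1(z)))$ of (\ref{eq:varphi1_gf1}) is entry-wise analytic there.

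Since $\psi_1(r_1)=1$ by the definition of $\theta_1^{(c)}$ together with $r_1<\bar{z}_1^*$, Proposition \ref{pr:f1_limit} shows that $z=r_1$ is a simple zero of $f_1(1,z)$ with
\[
\lim_{z\to r_1}(r_1-z)^{-1}f_1(1,z)=\psi_{1,z}(r_1)\,f_{1,\lambda}(1,r_1)\ne 0,
\]
and $\psi_{1,z}(r_1)>0$. Hence $\bvarphi_1(z)$ is meromorphic at $r_1$ with pole order at most one. Combining this with the rank-one identity of Proposition \ref{pr:adjC1}, I multiply (\ref{eq:varphi1_gf1}) by $(r_1-z)$ and pass to the limit; the factor $f_{1,\lambda}(1,r_1)$ cancels between numerator and denominator, yielding
\[
\lim_{z\to r_1}(r_1-z)\bvarphi_1(z)=\frac{\bg_1(r_1)\bv^{C_1}(r_1)}{\psi_{1,z}(r_1)}\,\bu^{C_1}(r_1)=c_{pole}^{\bvarphi_1}\,\bu^{C_1}(r_1).
\]
Together with the analyticity of the regular part, this produces the Laurent expansion (\ref{eq:varphi1_expansion1}) with the asserted coefficient $\bvarphi_{1,-1}^I$.

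The main obstacle is to promote $c_{pole}^{\bvarphi_1}$ from a formally computed scalar to a strictly positive one. Since $\bu^{C_1}(r_1)>\bzero^\top$ by Proposition \ref{pr:adjC1}, it suffices to rule out $c_{pole}^{\bvarphi_1}\le 0$. The plan is a Pringsheim-style argument: $\bvarphi_1(z)=\sum_{k\ge 1}\bnu_{k,0}z^k$ has entry-wise nonnegative coefficients and, by Proposition \ref{pr:radius_conv}, radius of convergence exactly $r_1$, so $z=r_1$ must be a singular point of at least one component. This forbids $c_{pole}^{\bvarphi_1}=0$, since otherwise the full expression (\ref{eq:varphi1_gf1}) would extend analytically across $r_1$. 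Moreover, for $z\in(0,r_1)$ the left-hand side $\bvarphi_1(z)$ is entry-wise nonnegative, while on the right-hand side the pole term $c_{pole}^{\bvarphi_1}\bu^{C_1}(r_1)/(r_1-z)$ dominates the bounded analytic remainder as $z\to r_1^-$; hence entry-wise $c_{pole}^{\bvarphi_1}\bu^{C_1}(r_1)\ge\bzero^\top$, which combined with $\bu^{C_1}(r_1)>\bzero^\top$ and $c_{pole}^{\bvarphi_1}\ne 0$ forces $c_{pole}^{\bvarphi_1}>0$. The Type III case is identical, since the same inequality $r_1<\bar z_1^*$ and the bound $\spr(G_1(r_1))<r_2$ (now from $\theta_2^{(c)}<\eta_2^{(c)}$) are the only properties used above.
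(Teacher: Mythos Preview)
Your proof is correct and follows essentially the same approach as the paper: both use representation (\ref{eq:varphi1_gf1}) together with Propositions \ref{pr:f1_limit} and \ref{pr:adjC1} to compute the residue, then argue that the residue cannot vanish (since $r_1$ is the radius of convergence and hence a genuine singularity) and must be nonnegative (as a limit of nonnegative quantities along $(0,r_1)$), forcing $c_{pole}^{\bvarphi_1}>0$. Your Pringsheim-style justification of the positivity step is more explicit than the paper's, which simply notes that $\bvarphi_{1,-1}^I$ is ``nonzero and nonnegative'' without spelling out why; one small slip is the parenthetical reason given for $\spr(G_1(r_1))<r_2$ in Type III, where $r_2=e^{\bar\theta_2^{(c)}}$ and the relevant inequality is $\theta_2^{(c)}<\bar\theta_2^{(c)}$ (a consequence of $r_1<\bar z_1^*$), not $\theta_2^{(c)}<\eta_2^{(c)}$---but the conclusion is unaffected.
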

\begin{proof}
By Proposition \ref{pr:f1_limit}, in both the cases, the point $z=r_1$ is probably a pole of $\bvarphi_1(z)$ with order one and its Laurent series expansion is given by formula (\ref{eq:varphi1_expansion1}). If we have
\[
\bvarphi_{1,-1}^I = \lim_{z\to r_1} (r_1-z) \bvarphi_1(z) = \bzero^\top, 
\]
then $z=r_1$ is a removable singularity and $\bvarphi_1(z)$ is analytic at $z=r_1$. This contradicts that $\bvarphi_1(z)$ is singular at $z=r_1$. Therefore, $\bvarphi_{1,-1}^I \ne \bzero^\top$ and $z=r_1$ is a pole of $\bvarphi_1(z)$ with order one. 
By Propositions \ref{pr:f1_limit} and \ref{pr:adjC1}, we obtain the expression of $\bvarphi_{1,-1}^I$. Note that $\bvarphi_{1,-1}^I$ is nonzero and nonnegative and $\bu^{C_1}(r_1)$ is positive. This implies that $c_{pole}^{\bvarphi_1}$ is positive. 
\end{proof}

Since the corresponding result for $\bvarphi_2(z)$ will be used later, we state it as the following corollary. 
%
\begin{corollary}[Type I ($\psi_2(\bar{z}_2^*)>1$) and Type II] \label{co:varphi2_limit_typeIa}
In the case of Type I, if $\psi_1(\bar{z}_2^*)>1$, then the point $z=r_2$ is a pole of $\bvarphi_2(z)$ with order one and its Laurent series expansion is represented as 
\begin{equation}
\bvarphi_2(z) = \sum_{k=-1}^\infty \bvarphi_{2,k}^I (r_2-z)^k, 
\end{equation}
where $\bvarphi_{2,-1}^I = c_{pole}^{\bvarphi_2}\, \bu^{C_2}(r_2)> \bzero^\top$ and $c_{pole}^{\bvarphi_2} = \psi_{2,z}(r_2)^{-1} \bg_2(r_2) \bv^{C_2}(r_2) >0$; %
$\bg_2(z)$ is given as $\bg_2(z) = \bvarphi_1^{\hat{C}_1}(G_2(z),z) - \bvarphi_1(G_2(z)) + \bnu_{0,0} \bigl(C_0(G_2(z),z)-I\bigr)$, $\bu^{C_2}(z)$ and $\bv^{C_2}(z)$ are the left and right eigenvectors of $C_2(G_2(z),z)$ with respect to the eigenvalue $\psi_2(z)$, and  $\psi_{2,z}(z)=(d/dz)\psi_2(z)$. 
In the case of Type II, the same result also holds for $\bvarphi_2(z)$. 
\end{corollary}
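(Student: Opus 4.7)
The plan is to mirror the proof of Lemma \ref{le:varphi1_limit_typeIa} by invoking Proposition \ref{pr:varphi2} (the $\bvarphi_2$ analog of Lemma \ref{le:varphi1}) and rerunning the same singularity analysis with the roles of the two coordinates swapped: $z \leftrightarrow w$, $G_1 \leftrightarrow G_2$, $C_1 \leftrightarrow C_2$, $\psi_1 \leftrightarrow \psi_2$, $\bg_1 \leftrightarrow \bg_2$, $\bar{z}_1^* \leftrightarrow \bar{z}_2^*$, $\underline{\zeta}_2 \leftrightarrow \underline{\zeta}_1$, and Type III $\leftrightarrow$ Type II. Concretely, from Proposition \ref{pr:varphi2} I would write
\[
\bvarphi_2(w) = \bg_2(w)\,\bigl(I-C_2(G_2(w),w)\bigr)^{-1} = \frac{\bg_2(w)\,\adj\bigl(I-C_2(G_2(w),w)\bigr)}{f_2(1,w)},
\]
where $f_2(\lambda,w) = \det\bigl(\lambda I - C_2(G_2(w),w)\bigr)$, and analyze each factor at $w = r_2$.

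For the numerator, I would note that in Type I with $\psi_2(\bar{z}_2^*) > 1$ we have $r_2 < \bar{z}_2^*$, so the $G_2$-counterparts of Lemmas \ref{le:G_analytic1} and \ref{le:G1_singularity} (referred to in Remark \ref{re:Gmatrix2}) give that $G_2(w)$ is entry-wise analytic at $w = r_2$, and by the type condition $\spr(G_2(r_2)) = \underline{\zeta}_1(r_2) < e^{\theta_1^{(c)}} = r_1$; hence the $\bvarphi_1$-analog of Proposition \ref{pr:varphi2_analytic} makes $\bvarphi_1(G_2(w))$ and $\bvarphi_1^{\hat{C}_1}(G_2(w),w)$ analytic at $r_2$, so $\bg_2(w)$ and $\adj(I-C_2(G_2(w),w))$ are analytic there. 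In Type II, the same conclusions follow from the case analysis carried out in Lemma \ref{le:varphi1_extended} applied to $\bvarphi_2$ instead of $\bvarphi_1$, using the Type II defining inequalities $\eta_1^{(c)} < \theta_1^{(c)}$ and $\eta_2^{(c)} \le \theta_2^{(c)}$.

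Next I would establish the $C_2$-analogs of Propositions \ref{pr:f1_limit} and \ref{pr:adjC1}. Since $A_{*,*}$ is irreducible and aperiodic (Proposition \ref{pr:Ass_irreducible}), $C_2(G_2(w),w)$ is irreducible for $w \in [\underline{z}_2^*, \bar{z}_2^*]$, so its Perron--Frobenius eigenvalue $\psi_2(w)$ is simple and analytic near $r_2$, with positive left and right eigenvectors $\bu^{C_2}(w), \bv^{C_2}(w)$ normalized by $\bu^{C_2}(w)\bv^{C_2}(w) = 1$. Convexity of $\psi_2(e^s)$ on $[\underline{s}_2^*,\bar{s}_2^*]$ together with $\psi_2(1) \le 1 = \psi_2(r_2)$ and $\psi_2 \not\equiv 1$ gives $\psi_{2,z}(r_2) > 0$, so
\[
\lim_{w \to r_2} (r_2-w)^{-1} f_2(1,w) = \psi_{2,z}(r_2)\,f_{2,\lambda}(1,r_2) \ne 0,
\qquad
\adj\bigl(I-C_2(G_2(r_2),r_2)\bigr) = f_{2,\lambda}(1,r_2)\,\bv^{C_2}(r_2)\,\bu^{C_2}(r_2).
\]
Combining these, $\bvarphi_2(w)$ has at worst a simple pole at $w = r_2$ with residue coefficient $\bvarphi_{2,-1}^I = c_{pole}^{\bvarphi_2}\,\bu^{C_2}(r_2)$, where $c_{pole}^{\bvarphi_2} = \psi_{2,z}(r_2)^{-1}\bg_2(r_2)\bv^{C_2}(r_2)$. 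This coefficient cannot vanish, for otherwise $\bvarphi_2$ would be analytic at $r_2$ and its radius of convergence would exceed $r_2$, contradicting Proposition \ref{pr:radius_conv}; positivity of $\bu^{C_2}(r_2)$ together with nonnegativity of $\bvarphi_{2,-1}^I$ then forces $c_{pole}^{\bvarphi_2} > 0$.

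The main obstacle is really only bookkeeping: one must verify that each ingredient used in the proof of Lemma \ref{le:varphi1_limit_typeIa}, in particular the analytic extension of $G_2(w)$ beyond its convergence annulus, the $C_2$-version of Proposition \ref{pr:C1_inequality}, and the Wiener--Hopf factorization (\ref{eq:WFfact2}) of Remark \ref{re:Gmatrix2}, has either been stated for the swapped coordinate or is immediate from the symmetry of Section \ref{sec:Gmatrix}, so that the transfer of every step is rigorous. No new analytic input beyond this symmetry is required.
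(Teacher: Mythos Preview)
Your proposal is correct and matches the paper's approach: the paper states this result as a corollary with no separate proof, relying on the remark that ``discussion for $\bvarphi_2(z)$ proceeds in parallel to that for $\bvarphi_1(z)$,'' and your argument is precisely the coordinate-swapped rerun of Lemma~\ref{le:varphi1_limit_typeIa} that this parallelism entails (including the correct identification Type~III~$\leftrightarrow$~Type~II under the swap).
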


\bigskip 
In Subsection \ref{sec:expansion_varpshi_typeI_II_III}, we consider a series expansion for the vector function $\bvarphi_1(z)$ around the point $z=\bar{z}_1^*$. To this end, we obtain the Puiseux series expansions for $G_1(z)$, $\bvarphi_2(G_1(z))$ and $\bvarphi_2^{\hat{C}_2}(z,G_1(z))$ around $z=\bar{z}_1^*$, in the following two subsections.

%
\subsection{Puiseux series expansion for $G_1(z)$}

In Assumption \ref{as:G1_eigen_z1max}, we assumed that all the eigenvalues of $G_1(r_1)$ are distinct.
This assumption seems to be rather strong, but under it, if $r_1<\bar{z}_1^*$, all the eigenvalues of $G_1(z)$ are analytic at $z=r_1$; if $r_1=\bar{z}_1^*$, those of $G_1(z)$ except for one corresponding to $\underline{\zeta}_2(z)$ are analytic at $z=\bar{z}_1^*$. Furthermore, all the eigenvectors of $G_1(z)$ are linearly independent in a neighborhood of $z=r_1$. 

%
In this subsection, we obtain the Puiseux series expansion for $G_1(z)$ around $z=\bar{z}_1^*$. Let $\alpha_j(z),\,j=1,2,...,s_0,$ be the eigenvalues of $G_1(z)$, counting multiplicity, and without loss of generality, let $\alpha_{s_0}(z)$ be the eigenvalue corresponding to $\underline{\zeta}_2(z)$ when $z\in[\underline{z}_1^*,\bar{z}_1^*]$. For $j\in\{1,2,...,s_0\}$, let $\bv_j(z)$ be the right eigenvector of $G_1(z)$ with respect to the eigenvalue $\alpha_j(z)$. 
Define matrices $J_1(z)$ and $V_1(z)$ as $J_1(z) = \diag(\alpha_1(z),\alpha_2(z),...,\alpha_{s_0}(z))$ and $V_1(z)=\begin{pmatrix} \bv_1(z) & \bv_2(z) & \cdots & \bv_{s_0}(z) \end{pmatrix}$, respectively. As mentioned above, under Assumption \ref{as:G1_eigen_z1max}, $V_1(z)$ is nonsingular in a neighborhood of $z=r_1$, and $G_1(z)$ is factorized as $G_1(z)=V_1(z) J_1(z) V_1(z)^{-1}$ (Jordan decomposition).  
We represent $V_1(z)^{-1}$ as 
\[
V_1(z)^{-1} = \begin{pmatrix} \bu_1(z) \cr \bu_2(z) \cr \vdots \cr \bu_{s_0}(z) \end{pmatrix}, 
\]
where each $\bu_j(z)$ is a $1\times s_0$ vector. For $j\in\{1,2,...,s_0\}$, $\bu_j(z)$ is the left eigenvector of $G_1(z)$ with respect to the eigenvalue $\alpha_j(z)$ and satisfies $\bu_j(z) \bv_k(z)=\delta_{jk}$ for $k\in\{1,2,...,s_0\}$, where $\delta_{jk}$ is the Kronecker delta. 
Furthermore, from the results of Section \ref{sec:Gmatrix}, we see that, under Assumption \ref{as:G1_eigen_z1max}, if $r_1<\bar{z}_1^*$, $J_1(z)$ and $V_1(z)$ are entry-wise analytic in a neighborhood of $z=r_1$; if $r_1=\bar{z}_1^*$, $J_1(z)$ and $V_1(z)$ except for $\alpha_{s_0}(z)$ and $\bv_{s_0}(z)$ are also entry-wise analytic in a neighborhood of $z=\bar{z}_1^*$, where $\alpha_{s_0}(z)$ and $\bv_{s_0}(z)$ are given in terms of a function $\tilde{\alpha}_{s_0}(\zeta)$ and vector function $\tilde{\bv}_{s_0}(\zeta)$ being analytic in a neighborhood of the origin as $\alpha_{s_0}(z)=\tilde{\alpha}_{s_0}((\bar{z}_1^*-z)^{\frac{1}{2}})$ and $\bv_{s_0}(z)=\tilde{\bv}_{s_0}((\bar{z}_1^*-z)^{\frac{1}{2}})$ (see Remark \ref{re:alphas0_analytic}). 
Here, we give another representation of $G_1(z)$ around $z=\bar{z}_1^*$. Define $\tilde{J}_1(\zeta)$ and $\tilde{V}_1(\zeta)$ as 
\begin{align*}
&\tilde{J}_1(\zeta) = \diag\big(\alpha_1(\bar{z}_1^*-\zeta^2),...,\alpha_{s_0-1}(\bar{z}_1^*-\zeta^2),\tilde{\alpha}_{s_0}(\zeta) \big), \\
&\tilde{V}_1(\zeta) = \begin{pmatrix} \bv_1(\bar{z}_1^*-\zeta^2) & \cdots & \bv_{s_0-1}(\bar{z}_1^*-\zeta^2) & \tilde{\bv}_{s_0}(\zeta) \end{pmatrix}.
\end{align*}
Under Assumption \ref{as:G1_eigen_z1max}, $\tilde{J}_1(\zeta)$ and $\tilde{V}_1(\zeta)$ are entry-wise analytic in a neighborhood of the origin and $\tilde{V}_1(\zeta)$ is nonsingular. Therefore, $\tilde{G}_1(\zeta)$ defined as $\tilde{G}_1(\zeta) = \tilde{V}_1(\zeta) \tilde{J}_1(\zeta) \tilde{V}_1(\zeta)^{-1}$ is entry-wise analytic in a neighborhood of the origin and satisfies 
\begin{equation}
G_1(z) = \tilde{G}_1((\bar{z}_1^*-z)^{\frac{1}{2}}). 
\label{eq:tildeG1}
\end{equation}
Since $\tilde{\alpha}_{s_0}(\zeta)$ and $\tilde{G}_1(\zeta)$ are analytic in a neighborhood of the origin, their Taylor expansions are represented as 
\begin{align}
\tilde{\alpha}_{s_0}(\zeta) = \sum_{k=0}^\infty \tilde{\alpha}_{s_0,k}\, \zeta^k, \quad 
\tilde{G}_1(\zeta) = \sum_{k=0}^\infty \tilde{G}_{1,k}\, \zeta^k, 
\end{align}
where $\tilde{\alpha}_{s_0,0}=\underline{\zeta}_2(\bar{z}_1^*)$ and $\tilde{G}_{1,0}=G_1(\bar{z}_1^*)$. The Puiseux series expansion for $G_1(z)$ around $z=\bar{z}_1^*$ is represented as 
\[
G_1(z) = \sum_{k=0}^\infty \tilde{G}_{1,k}\, (\bar{z}_1^*-z)^{\frac{k}{2}}. 
\]

Considering relation between $\bar{\zeta}_1(w)$ and $\underline{\zeta}_2(z)$, we obtain the following result in a manner similar to that used in the proof of Lemma 10 of Kobayashi and Miyazawa \cite{Kobayashi13}. 
\begin{proposition} \label{pr:limit_eigenG1}
We have 
\begin{align}
\lim_{\tilde{\Delta}_{\bar{z}_1^*}\ni z\to \bar{z}_1^*} \frac{\alpha_{s_0}(\bar{z}_1^*)-\alpha_{s_0}(z)}{(\bar{z}_1^*-z)^{\frac{1}{2}}} 
&= -\tilde{\alpha}_{s_0,1}
= \frac{\sqrt{2}}{\sqrt{-\bar{\zeta}_{1,w^2}(\underline{\zeta}_2(\bar{z}_1^*))}} > 0, 
\label{eq:limit_eigen_G1}
\end{align}
where $\bar{\zeta}_{1,w^2}(w)=(d^2/d w^2)\, \bar{\zeta}_1(w)$.
\end{proposition}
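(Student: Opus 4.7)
My plan is as follows. The first equality in (5.42) I would derive directly from Remark \ref{re:alphas0_analytic}: writing the Taylor expansion $\tilde{\alpha}_{s_0}(\zeta) = \underline{\zeta}_2(\bar{z}_1^*) + \tilde{\alpha}_{s_0,1}\zeta + O(\zeta^2)$ and substituting $\zeta = (\bar{z}_1^*-z)^{1/2}$ yields $\alpha_{s_0}(\bar{z}_1^*) - \alpha_{s_0}(z) = -\tilde{\alpha}_{s_0,1}(\bar{z}_1^*-z)^{1/2} + O(\bar{z}_1^*-z)$, so the ratio tends to $-\tilde{\alpha}_{s_0,1}$. All of the work then lies in identifying $\tilde{\alpha}_{s_0,1}$ with the claimed expression, which I will do by inverting a local expansion of $\bar{\zeta}_1$ around $w_0 := \underline{\zeta}_2(\bar{z}_1^*) = \bar{\zeta}_2(\bar{z}_1^*)$.

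To obtain that local expansion, I observe that $(\bar{z}_1^*, w_0)$ is the rightmost point of $\bar{\Gamma}$, so the boundary curve $\chi(z,w) = 1$ has vertical tangent there: $\partial_w\chi(\bar{z}_1^*, w_0) = 0$ while $\partial_z\chi(\bar{z}_1^*, w_0) \ne 0$, the latter following from the same non-degeneracy argument used in the proof of Lemma \ref{le:singularity_alpha_s0} after interchanging the roles of $z$ and $w$. The implicit function theorem then produces an analytic branch $z = \bar{\zeta}_1(w)$ in a neighborhood of $w_0$, coinciding locally with the larger real root of $\chi(z,w) = 1$, satisfying $\bar{\zeta}_1(w_0) = \bar{z}_1^*$ and $\bar{\zeta}_1'(w_0) = 0$, and admitting the Taylor expansion
\[
\bar{\zeta}_1(w) = \bar{z}_1^* + \tfrac{1}{2}\bar{\zeta}_{1,w^2}(w_0)(w-w_0)^2 + O((w-w_0)^3).
\]
The convexity of $\chi(e^s, w_0)$ in $s$ from Proposition \ref{pr:chiconvex}, together with Assumption \ref{as:Akl_irreducible}, will give $\bar{\zeta}_{1,w^2}(w_0) < 0$ by the Hessian computation of Lemma \ref{le:singularity_alpha_s0} transferred to the variable $w$.

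With this expansion in hand, I will exploit the fact that for real $z$ close to $\bar{z}_1^*$ from below one has $\alpha_{s_0}(z) = \underline{\zeta}_2(z) < w_0$ and, since $\underline{\zeta}_2(z)$ lies in the domain of the local branch $\bar{\zeta}_1$, the identity $\bar{\zeta}_1(\underline{\zeta}_2(z)) = z$ holds. Substituting into the quadratic expansion and solving for the positive quantity $w_0 - \underline{\zeta}_2(z)$ gives
\[
w_0 - \underline{\zeta}_2(z) = \frac{\sqrt{2}}{\sqrt{-\bar{\zeta}_{1,w^2}(w_0)}}\,(\bar{z}_1^*-z)^{1/2} + O(\bar{z}_1^*-z).
\]
Matching this against the Puiseux expansion $\underline{\zeta}_2(z) = w_0 + \tilde{\alpha}_{s_0,1}(\bar{z}_1^*-z)^{1/2} + O(\bar{z}_1^*-z)$ at leading order then yields $\tilde{\alpha}_{s_0,1} = -\sqrt{2}/\sqrt{-\bar{\zeta}_{1,w^2}(w_0)}$, and in particular $-\tilde{\alpha}_{s_0,1} > 0$.

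The main obstacle is the verification that $\bar{\zeta}_1$ is analytic at $w_0$ with strictly negative second derivative there; once this is established, the rest is routine Taylor inversion and bookkeeping of signs. Both statements, however, are the exact $w$-variable analogues of what is proved for $\alpha_{s_0}$ in Lemma \ref{le:singularity_alpha_s0}, and the argument transfers essentially verbatim because the Perron--Frobenius structure of $\chi(z,w) = \spr(C(z,w))$ is symmetric under interchange of $z$ and $w$.
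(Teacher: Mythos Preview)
Your proposal is correct and follows precisely the approach the paper indicates: the paper does not give a self-contained proof but merely says the result is obtained ``considering relation between $\bar{\zeta}_1(w)$ and $\underline{\zeta}_2(z)$ \ldots\ in a manner similar to that used in the proof of Lemma 10 of Kobayashi and Miyazawa \cite{Kobayashi13}'', and what you have written is exactly that argument spelled out. The key identity $\bar{\zeta}_1(\underline{\zeta}_2(z))=z$ near $\bar{z}_1^*$, the analyticity and vanishing first derivative of $\bar{\zeta}_1$ at $w_0$, and the strict negativity of $\bar{\zeta}_{1,w^2}(w_0)$ (which you correctly obtain from the implicit-function computation $\bar{\zeta}_1''(w_0)=-g_{ww}/g_z$ together with $g_{ww}\ne 0$ from the proof of Lemma \ref{le:singularity_alpha_s0} and the sign constraint from $w_0$ being a maximum of $\bar{\zeta}_1$) are all handled properly.
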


For $G_1(z)$, we give the following proposition, where $\tilde{\alpha}_{s_0,1}$ and $\bu_{s_0}(z)$ are denoted by $\tilde{\alpha}_{s_0,1}^{G_1}$ and $\bu_{s_0}^{G_1}(z)$, respectively, in order to explicitly indicate that they are the coefficient and vector with respect to $G_1(z)$. The proof of the proposition is given in Appendix \ref{sec:limit_G1}.
\begin{proposition} \label{pr:limit_G1}
\begin{align}
\lim_{\tilde{\Delta}_{\bar{z}_1^*}\ni z\to \bar{z}_1^*} \frac{G_1(\bar{z}_1^*)-G_1(z)}{(\bar{z}_1^*-z)^{\frac{1}{2}}} 
&= -\tilde{G}_{1,1} 
=  -\tilde{\alpha}_{s_0,1}^{G_1} N_1(\bar{z}_1^*) \bv^{R_1}(\bar{z}_1^*) \bu_{s_0}^{G_1}(\bar{z}_1^*) \ge \bzero^\top,\ \ne\bzero^\top, 
\label{eq:limit_G1}
\end{align}
where $\bv^{R_1}(\bar{z}_1^*)$ is the right eigenvector of $R_1(\bar{z}_1^*)$ with respect to the eigenvalue $\bar{\zeta}_2(\bar{z}_1^*)^{-1}$, satisfying $\bu_{s_0}^{G_1}(\bar{z}_1^*) N_1(\bar{z}_1^*) \bv^{R_1}(\bar{z}_1^*) =1$. 
\end{proposition}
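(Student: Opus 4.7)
The plan is to identify the limit as $-\tilde{G}_{1,1}$, the first Puiseux coefficient of $G_1$ around $\bar{z}_1^*$, and then compute $\tilde{G}_{1,1}$ explicitly by combining the three Lemma~\ref{le:Gmatrix} identities $G_1(z)=N_1(z)A_{*,-1}(z)$, $N_1(z)(I-H_1(z))=I$ and $H_1(z)=A_{*,0}(z)+A_{*,1}(z)G_1(z)$. Writing $G_1(z)=\tilde{G}_1((\bar{z}_1^*-z)^{1/2})$ with $\tilde{G}_1$ entry-wise analytic at $0$, term-by-term division of the Taylor series gives
\[
\frac{G_1(\bar{z}_1^*)-G_1(z)}{(\bar{z}_1^*-z)^{1/2}}
=-\tilde{G}_{1,1}-\tilde{G}_{1,2}(\bar{z}_1^*-z)^{1/2}-\cdots
\]
uniformly on $\tilde{\Delta}_{\bar{z}_1^*}$, so the limit equals $-\tilde{G}_{1,1}$. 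Setting $\zeta=(\bar{z}_1^*-z)^{1/2}$ and noting that every entry of $A_{*,j}(\bar{z}_1^*-\zeta^2)$ has vanishing $\zeta$-derivative at $\zeta=0$ (since $A_{*,j}$ is polynomial in $z$), successive $\zeta$-differentiation of the three identities at $\zeta=0$ yields $\tilde{G}_{1,1}=\tilde{N}_{1,1}A_{*,-1}(\bar{z}_1^*)$, $\tilde{N}_{1,1}=N_1(\bar{z}_1^*)\tilde{H}_{1,1}N_1(\bar{z}_1^*)$ and $\tilde{H}_{1,1}=A_{*,1}(\bar{z}_1^*)\tilde{G}_{1,1}$; composing them and using $N_1(\bar{z}_1^*)A_{*,-1}(\bar{z}_1^*)=G_1(\bar{z}_1^*)$ produces the fixed-point relation
\[
\tilde{G}_{1,1}=N_1(\bar{z}_1^*)\,A_{*,1}(\bar{z}_1^*)\,\tilde{G}_{1,1}\,G_1(\bar{z}_1^*).
\]

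To exploit this equation I would extract a rank-one structure for $\tilde{G}_{1,1}$. Under Assumption~\ref{as:G1_eigen_z1max} every eigenvector $\bv_j^{G_1}(z)$ with $j<s_0$ is analytic in $z$ at $\bar{z}_1^*$ (only $\bv_{s_0}^{G_1}$ and $\alpha_{s_0}$ acquire the square-root branch, by Remark~\ref{re:alphas0_analytic}), hence $\bv_j^{G_1}(\bar{z}_1^*-\zeta^2)$ has vanishing $\zeta$-derivative at $0$. Differentiating $G_1(z)\bv_j^{G_1}(z)=\alpha_j(z)\bv_j^{G_1}(z)$ forces $\tilde{G}_{1,1}\bv_j^{G_1}(\bar{z}_1^*)=\bzero$ for every $j<s_0$; since $\{\bv_j^{G_1}(\bar{z}_1^*)\}$ is a basis dual to $\{\bu_j^{G_1}(\bar{z}_1^*)\}$, this compels $\tilde{G}_{1,1}=\bx\,\bu_{s_0}^{G_1}(\bar{z}_1^*)$ with $\bx:=\tilde{G}_{1,1}\bv_{s_0}^{G_1}(\bar{z}_1^*)$. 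Substituting this form into the fixed-point equation and using $\bu_{s_0}^{G_1}G_1(\bar{z}_1^*)=\alpha_{s_0}(\bar{z}_1^*)\bu_{s_0}^{G_1}$ reduces it to
\[
N_1(\bar{z}_1^*)\,A_{*,1}(\bar{z}_1^*)\,\bx=\alpha_{s_0}(\bar{z}_1^*)^{-1}\bx=\bar{\zeta}_2(\bar{z}_1^*)^{-1}\bx.
\]
Because the nonzero spectra of $N_1A_{*,1}$ and of $R_1(\bar{z}_1^*)=A_{*,1}N_1$ coincide via conjugation by $N_1$, and by Assumption~\ref{as:irreducibleR1R2} the Perron eigenvalue $\bar{\zeta}_2(\bar{z}_1^*)^{-1}$ of $R_1(\bar{z}_1^*)$ is simple with right eigenvector $\bv^{R_1}(\bar{z}_1^*)$, it follows that $\bx=c\,N_1(\bar{z}_1^*)\bv^{R_1}(\bar{z}_1^*)$ for some scalar $c$.

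To pin down $c$ I would differentiate the eigenvector identity $\tilde{G}_1(\zeta)\tilde{\bv}_{s_0}^{G_1}(\zeta)=\tilde{\alpha}_{s_0}(\zeta)\tilde{\bv}_{s_0}^{G_1}(\zeta)$ at $\zeta=0$ and left-multiply by $\bu_{s_0}^{G_1}(\bar{z}_1^*)$. The contributions of the unknown coefficient $\tilde{\bv}_{s_0,1}^{G_1}$ cancel because $\bu_{s_0}^{G_1}(G_1(\bar{z}_1^*)-\alpha_{s_0}(\bar{z}_1^*)I)=\bzero^\top$, leaving $\bu_{s_0}^{G_1}(\bar{z}_1^*)\tilde{G}_{1,1}\bv_{s_0}^{G_1}(\bar{z}_1^*)=\tilde{\alpha}_{s_0,1}^{G_1}$; combined with the stated normalization $\bu_{s_0}^{G_1}(\bar{z}_1^*)N_1(\bar{z}_1^*)\bv^{R_1}(\bar{z}_1^*)=1$, this forces $c=\tilde{\alpha}_{s_0,1}^{G_1}$ and the claimed formula follows. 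For the sign: $-\tilde{\alpha}_{s_0,1}^{G_1}>0$ by Proposition~\ref{pr:limit_eigenG1}, Perron--Frobenius (with the irreducibility built into Assumption~\ref{as:irreducibleR1R2}) ensures $\bv^{R_1}(\bar{z}_1^*)\ge\bzero$ and $\bu_{s_0}^{G_1}(\bar{z}_1^*)\ge\bzero^\top$, and $N_1(\bar{z}_1^*)$ is nonnegative and nonsingular (Lemma~\ref{le:Gmatrix} and Corollary~\ref{co:N1_inverse}), so $N_1\bv^{R_1}\ne\bzero$ and the outer product $N_1(\bar{z}_1^*)\bv^{R_1}(\bar{z}_1^*)\bu_{s_0}^{G_1}(\bar{z}_1^*)$ is entry-wise nonnegative and not identically zero. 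The main obstacle will be the rank-one reduction together with the identification of $\bx$ with $N_1\bv^{R_1}$; once these are in hand, the cancellation that removes the unknown $\tilde{\bv}_{s_0,1}^{G_1}$ forces the scalar and the rest is bookkeeping.
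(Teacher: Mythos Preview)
Your proof is correct and follows the same overall architecture as the paper's: establish the rank-one form $\tilde{G}_{1,1}=\bx\,\bu_{s_0}^{G_1}(\bar{z}_1^*)$, identify $\bx$ via an eigenvalue equation tied to $R_1(\bar{z}_1^*)$, and fix the scalar through the eigenvector identity and the normalization $\bu_{s_0}^{G_1}N_1\bv^{R_1}=1$. The technical routes differ in two places. First, the paper obtains the rank-one form directly by differentiating the Jordan decomposition $\tilde{G}_1=\tilde{V}_1\tilde{J}_1\tilde{V}_1^{-1}$, arriving at $\tilde{G}_{1,1}=\tilde{\alpha}_{s_0,1}^{G_1}\bv^\dagger\bu_{s_0}^{G_1}$ with $\bv^\dagger=\bv_{s_0}^{G_1}+(\tilde{\alpha}_{s_0,1}^{G_1})^{-1}(\underline{\zeta}_2(\bar{z}_1^*)I-G_1(\bar{z}_1^*))\tilde{\bv}_{s_0,1}$; you get there by the annihilation argument $\tilde{G}_{1,1}\bv_j^{G_1}=\bzero$ for $j<s_0$, which is equivalent. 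Second, to pin down $\bx$ the paper differentiates the matrix quadratic equation (\ref{eq:G1equation}) to obtain $A^\dagger\tilde{G}_{1,1}=\tilde{G}_{1,1}$ with $A^\dagger=A_{*,0}+\underline{\zeta}_2 A_{*,1}+A_{*,1}G_1$, and then uses the Wiener--Hopf factorization $I-A^\dagger=(\bar{\zeta}_2^{-1}I-R_1)(I-H_1)$ to read off $\bv^\dagger=N_1\bv^{R_1}$; you instead differentiate the $N_1$/$H_1$/$G_1$ identities and use the $AB\leftrightarrow BA$ spectral correspondence between $N_1A_{*,1}$ and $R_1=A_{*,1}N_1$. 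These produce the same eigenvector equation (indeed $A^\dagger=H_1+\underline{\zeta}_2 A_{*,1}$, so $(I-A^\dagger)\bx=\bzero$ is exactly your $\bx=\underline{\zeta}_2 N_1A_{*,1}\bx$).

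One correction: you invoke Assumption~\ref{as:irreducibleR1R2} for the simplicity of the Perron eigenvalue of $R_1(\bar{z}_1^*)$, but that assumption concerns the infinite-dimensional rate matrices $R^{(1)},R^{(2)}$, not the $s_0\times s_0$ matrix $R_1(z)$. The simplicity you need follows from Lemma~\ref{le:singularity_alpha_s0} (the multiplicity of $\underline{\zeta}_2(\bar{z}_1^*)$ as a zero of $\phi(\bar{z}_1^*,w)$ is exactly two) together with Lemma~\ref{le:zeros_and_G1eigenvalues}(iii), which forces $\bar{\zeta}_2(\bar{z}_1^*)^{-1}$ to be an algebraically simple eigenvalue of $R_1(\bar{z}_1^*)$.
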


%
\subsection{Puiseux series expansions for $\bvarphi_2(G_1(z))$ and $\bvarphi_2^{\hat{C}_2}(z,,G_1(z))$}

First, we analytically extend $\bvarphi_2(G_1(z))$ and $\bvarphi_2^{\hat{C_2}}(z,G_1(z))$ around $z=r_1$ when $\spr(G_1(r_1))=\underline{\zeta}_2(r_1)=r_2<\bar{z}_2^*$. 
Recall that, for $z\in\mathbb{C}(\underline{z}_1^*,\bar{z}_1^*)$ such that $\spr(G_1(z))<r_2$, $\bvarphi_2(G_1(z))$ and $\bvarphi_2^{\hat{C}_2}(z,G_1(z))$ are entry-wise analytic (see Proposition \ref{pr:varphi2_analytic}). 
For any $z$ in such a region, $\bvarphi_2(G_1(z))$ is represented as 
\begin{align}
\bvarphi_2(G_1(z))^\top
&= \sum_{k=1}^\infty (V_1(z)^\top)^{-1} J_1(z)^k\,V_1(z)^\top \bnu_{0,k}^\top \cr
&= \sum_{k=1}^\infty \left( \bnu_{0,k}\otimes (V_1(z)^\top)^{-1} J_1(z)^k \right) \vecM(V_1(z)^\top) \cr
&= (V_1(z)^\top)^{-1} \sum_{k=1}^\infty \left( \bnu_{0,k}\otimes J_1(z)^k \right) \vecM(V_1(z)^\top), 
\label{eq:varphi2G1_extension0}
\end{align}
where we use the identity $\vecM(A B C)=(C^\top \otimes A)\, \vecM(B)$ for matrices $A$, $B$ and $C$. This equation leads us to 
\begin{align}
\bvarphi_2(G_1(z)) = \begin{pmatrix} \bvarphi_2(\alpha_1(z))\bv_1(z) & \bvarphi_2(\alpha_2(z))\bv_2(z) & \cdots & \bvarphi_2(\alpha_{s_0}(z))\bv_{s_0}(z) \end{pmatrix} V(z)^{-1}. 
\label{eq:varphi2G1_extension1}
\end{align} 
By expression (\ref{eq:varphi2G1_extension1}), we analytically extend $\bvarphi_2(G_1(z))$ around $z=r_1$. 
Assume $r_1<\bar{z}_1^*$ and $\spr(G_1(r_1))=r_2<\bar{z}_2^*$. Then, under Assumption \ref{as:G1_eigen_z1max}, for $j\in\{1,2,...,s_0\}$, $\alpha_j(z)$ and $\bv_j(z)$ as well as $V(z)^{-1}$ are analytic at $z=r_1$. Furthermore, for $j\in\{1,2,...,s_0-1\}$, $|\alpha_j(r_1)|<r_2$ and $\bvarphi_2(\alpha_j(z))$ is analytic at $z=r_1$. 
By Corollary \ref{co:varphi2_limit_typeIa}, since $\alpha_{s_0}(r_1)=r_2<\bar{z}_2^*$, $\bvarphi_2(\alpha_{s_0}(z))$ is analytic in a neighborhood of $z=r_1$ except for the point $z=r_1$, which is a pole of $\bvarphi_2(\alpha_{s_0}(z))$ with order one. Hence, the Laurent series expansion for $\bvarphi_2(G_1(z))$ around $z=r_1$ is represented as
\begin{equation}
\bvarphi_2(G_1(z)) = \sum_{k=-1}^\infty \bvarphi_{2,k}^{G_1}\, (r_1-z)^k. 
\label{eq:varphi2G1_1}
\end{equation}

Next, assume $r_1=\bar{z}_1^*$ and define $\tilde{\bvarphi}_2(\tilde{G}_1(\zeta))$ as 
\begin{align}
\tilde{\bvarphi}_2(\tilde{G}_1(\zeta)) 
&= \Bigl( \bvarphi_2(\alpha_1(\bar{z}_1^*-\zeta^2))\bv_1(\bar{z}_1^*-\zeta^2)\ \  \cdots \cr
&\qquad \bvarphi_2(\alpha_{s_0-1}(\bar{z}_1^*-\zeta^2))\bv_{s_0-1}(\bar{z}_1^*-\zeta^2)\ \ \bvarphi_2(\tilde{\alpha}_{s_0}(\zeta))\tilde{\bv}_{s_0}(\zeta) \Bigr) \tilde{V}(\zeta)^{-1}. 
\label{eq:varphi2G1_extension2}
\end{align} 
If $\spr(\tilde{G}_1(0))=\spr(G_1(\bar{z}_1^*))<r_2$, then $\tilde{\bvarphi}_2(\tilde{G}_1(\zeta))$ is entry-wise analytic at $\zeta=0$ and the Puiseux series expansion for $\bvarphi_2(G_1(z))$ around $z=\bar{z}_1^*$ is represented as
\begin{align}
\bvarphi_2(G_1(z)) 
= \tilde{\bvarphi}_2(\tilde{G}_1((\bar{z}_1^*-z)^{\frac{1}{2}})) 
= \sum_{k=0}^\infty \tilde{\bvarphi}_{2,k}^{\tilde{G}_1}\, (\bar{z}_1^*-z)^{\frac{k}{2}}, 
\label{eq:varphi2G1_2}
\end{align}
where $\tilde{\bvarphi}_{2,0}^{\tilde{G}_1}=\bvarphi_2(G_1(\bar{z}_1^*))$. If $\spr(\tilde{G}_1(0))=\spr(G_1(\bar{z}_1^*))=r_2$, then $\zeta=0$ is a pole of $\tilde{\bvarphi}_2(\tilde{G}_1(\zeta))$ with order one and the Puiseux series expansion for $\bvarphi_2(G_1(z))$ around $z=\bar{z}_1^*$ is represented as
\begin{align}
\bvarphi_2(G_1(z)) 
= \tilde{\bvarphi}_2(\tilde{G}_1((\bar{z}_1^*-z)^{\frac{1}{2}})) 
= \sum_{k=-1}^\infty \tilde{\bvarphi}_{2,k}^{\tilde{G}_1}\, (\bar{z}_1^*-z)^{\frac{k}{2}}. 
\label{eq:varphi2G1_3}
\end{align}
From the formulae above, we obtain the following proposition, where $\bu_{s_0}(z)$ and $\bv_{s_0}(z)$ are denoted by $\bu_{s_0}^{G_1}(z)$ and $\bv_{s_0}^{G_1}(z)$, respectively (the proof is given in Appendix \ref{sec:varphis2_limit_proofs}). 
\begin{proposition} \label{pr:limit_varphi2G1} 
In the case of Type I, if $r_1=\bar{z}_1^*$, then $\spr(G_1(\bar{z}_1^*))<r_2$ and $\tilde{\bvarphi}_{2,1}^{\tilde{G}_1}$ in expansion (\ref{eq:varphi2G1_2}) is given by 
\begin{align}
\tilde{\bvarphi}_{2,1}^{\tilde{G}_1} 
&= \sum_{k=1}^\infty \bnu_{0,k} \sum_{l=1}^k \underline{\zeta}_2(\bar{z}_1^*)^{k-l} G_1(\bar{z}_1^*)^{l-1} \tilde{G}_{1,1} \le \bzero^\top,\ \ne \bzero^\top. 
\label{eq:limit_varphi2G1_z1max_1}
\end{align}
In the case of Type II, if $r_1<\bar{z}_1^*$, then $\spr(G_1(r_1))=r_2<\bar{z}_2^*$ and $\bvarphi_{2,-1}^{G_1}$ in expansion (\ref{eq:varphi2G1_1}) is given by 
\begin{align}
\bvarphi_{2,-1}^{G_1} 
&= \underline{\zeta}_{2,z}(r_1)^{-1}\,c_{pole}^{\bvarphi_2} \bu^{C_2}(r_2)\,\bv_{s_0}^{G_1}(r_1) \bu_{s_0}^{G_1}(r_1) \ge \bzero^\top,\ \ne \bzero^\top;  
\label{eq:limit_varphi2G1_r1_II_1}
\end{align}
if $r_1=\bar{z}_1^*$, then $\spr(G_1(\bar{z}_1^*))=r_2<\bar{z}_2^*$ and $\tilde{\bvarphi}_{2,-1}^{\tilde{G}_1}$ in expansion (\ref{eq:varphi2G1_3}) is given by 
\begin{align}
\tilde{\bvarphi}_{2,-1}^{\tilde{G}_1} 
&= (-\tilde{\alpha}_{s_0,1}^{G_1})^{-1}\,c_{pole}^{\bvarphi_2} \bu^{C_2}(r_2)\,\bv_{s_0}^{G_1}(\bar{z}_1^*) \bu_{s_0}^{G_1}(\bar{z}_1^*) \ge \bzero^\top,\ \ne \bzero^\top. 
\label{eq:limit_varphi2G1_z1max_II_2}
\end{align}
\end{proposition}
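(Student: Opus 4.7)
The plan is to compute each coefficient by direct limit analysis of the spectral representations (\ref{eq:varphi2G1_extension1}) and (\ref{eq:varphi2G1_extension2}). In all three situations Proposition \ref{pr:varphi2_analytic} together with Lemma \ref{le:zeros_and_G1eigenvalues} shows that only the $s_0$-th summand, namely the one attached to the Perron--Frobenius eigenvalue $\alpha_{s_0}(z)=\underline\zeta_2(z)$, can contribute leading-order behaviour at $z=r_1$ (resp.\ at $\zeta=0$); the remaining $s_0-1$ summands are entry-wise analytic there because $|\alpha_j(r_1)|<r_2$ keeps $\bvarphi_2(\alpha_j(\cdot))\bv_j(\cdot)$ bounded. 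Under Assumption \ref{as:G1_eigen_z1max} the inverse $V_1(z)^{-1}$, respectively $\tilde V_1(\zeta)^{-1}$, is entry-wise analytic at the relevant point, so taking its $s_0$-th row produces $\bu_{s_0}^{G_1}(r_1)$ (resp.\ $\bu_{s_0}^{G_1}(\bar z_1^*)$) as the right factor in the claimed formulae.

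For the two Type II scenarios I would multiply by $(r_1-z)$ (resp.\ by $\zeta$) and pass to the limit. Substituting the simple-pole expansion $\bvarphi_2(w)=c_{pole}^{\bvarphi_2}\bu^{C_2}(r_2)/(r_2-w)+O(1)$ from Corollary \ref{co:varphi2_limit_typeIa}, together with the first-order expansion $r_2-\alpha_{s_0}(z)=\underline\zeta_{2,z}(r_1)(r_1-z)+O((r_1-z)^2)$ when $r_1<\bar z_1^*$ or $r_2-\tilde\alpha_{s_0}(\zeta)=-\tilde\alpha_{s_0,1}^{G_1}\zeta+O(\zeta^2)$ (Proposition \ref{pr:limit_eigenG1}) when $r_1=\bar z_1^*$, reduces the $s_0$-th column to the rank-one expressions on the right-hand sides of (\ref{eq:limit_varphi2G1_r1_II_1}) and (\ref{eq:limit_varphi2G1_z1max_II_2}). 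Positivity and non-vanishing then follow from $c_{pole}^{\bvarphi_2}>0$, Perron positivity of $\bu^{C_2}(r_2)$, $\bv_{s_0}^{G_1}$ and $\bu_{s_0}^{G_1}$, the sign $\underline\zeta_{2,z}(r_1)>0$ read off the convex geometry of $\chi(e^{s_1},e^{s_2})=1$ at the Type II exit $\bar\eta_1^{(c)}$, and $-\tilde\alpha_{s_0,1}^{G_1}>0$ from Proposition \ref{pr:limit_eigenG1}.

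For the Type I case with $r_1=\bar z_1^*$ one has $\spr(G_1(\bar z_1^*))<r_2$, so $\tilde{\bvarphi}_2(\tilde G_1(\zeta))=\sum_{k\ge1}\bnu_{0,k}\tilde G_1(\zeta)^k$ is entry-wise analytic at $\zeta=0$ and its order-one Taylor coefficient equals $\sum_{k=1}^\infty\bnu_{0,k}\sum_{l=1}^k G_1(\bar z_1^*)^{l-1}\tilde G_{1,1}G_1(\bar z_1^*)^{k-l}$. The key algebraic move is the identity $\tilde G_{1,1}G_1(\bar z_1^*)=\underline\zeta_2(\bar z_1^*)\tilde G_{1,1}$, which is immediate from the rank-one factorisation $\tilde G_{1,1}=\tilde\alpha_{s_0,1}^{G_1}N_1(\bar z_1^*)\bv^{R_1}(\bar z_1^*)\bu_{s_0}^{G_1}(\bar z_1^*)$ supplied by Proposition \ref{pr:limit_G1} combined with the left-eigenvector relation $\bu_{s_0}^{G_1}(\bar z_1^*)G_1(\bar z_1^*)=\underline\zeta_2(\bar z_1^*)\bu_{s_0}^{G_1}(\bar z_1^*)$; iterating collapses the right-hand factor $G_1(\bar z_1^*)^{k-l}$ into the scalar $\underline\zeta_2(\bar z_1^*)^{k-l}$, delivering (\ref{eq:limit_varphi2G1_z1max_1}). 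Monotonicity of $G_1(z)$ on $(0,\bar z_1^*]$ (nonnegative coefficients in its Laurent expansion) forces $\tilde G_{1,1}\le O$ and hence $\tilde{\bvarphi}_{2,1}^{\tilde G_1}\le\bzero^\top$.

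The main obstacle is the non-vanishing statement in the Type I case. After the rank-one substitution it reduces to showing that the scalar $\sum_{k\ge1}\bnu_{0,k}\sum_{l=1}^k\underline\zeta_2(\bar z_1^*)^{k-l}G_1(\bar z_1^*)^{l-1}N_1(\bar z_1^*)\bv^{R_1}(\bar z_1^*)$ is nonzero; this is the one step that cannot be read off the spectral algebra alone, and it must be argued from Perron positivity of $\bv^{R_1}(\bar z_1^*)$ and $\bu_{s_0}^{G_1}(\bar z_1^*)$ (furnished by Assumptions \ref{as:Akl_irreducible} and \ref{as:irreducibleR1R2}), nonnegativity and irreducibility of $N_1(\bar z_1^*)$, and the fact that $\bnu_{0,k}$ is not eventually zero, which together block any sign cancellation and deliver strict inequality on at least one coordinate.
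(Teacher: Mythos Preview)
Your argument is essentially the paper's: for Type II you isolate the $s_0$-th column of the spectral representation (\ref{eq:varphi2G1_extension1})/(\ref{eq:varphi2G1_extension2}), feed in the simple-pole expansion of $\bvarphi_2$ from Corollary \ref{co:varphi2_limit_typeIa} and the first-order behaviour of $\alpha_{s_0}$, and read off the rank-one limit; for Type I you differentiate the absolutely convergent series $\sum_k\bnu_{0,k}\tilde G_1(\zeta)^k$ term by term and collapse the right factor using $\tilde G_{1,1}G_1(\bar z_1^*)=\underline\zeta_2(\bar z_1^*)\tilde G_{1,1}$. This is exactly what the paper does in Appendix \ref{sec:varphis2_limit_proofs}, via the auxiliary Proposition \ref{pr:limit_varphi2}.

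One step does not work as written. Your claim that ``monotonicity of $G_1(z)$ on $(0,\bar z_1^*]$ (nonnegative coefficients in its Laurent expansion) forces $\tilde G_{1,1}\le O$'' is not justified: a Laurent series $\sum_{k\in\mathbb{Z}}c_kz^k$ with $c_k\ge 0$ has derivative $\sum_k kc_kz^{k-1}$, which need not be nonnegative because of the negative-index terms, so entry-wise monotonicity of $G_1$ on $(0,\bar z_1^*]$ is not available from that observation. You do not need it anyway. Proposition \ref{pr:limit_G1}, which you already invoke for the rank-one factorisation $\tilde G_{1,1}=\tilde\alpha_{s_0,1}^{G_1}N_1(\bar z_1^*)\bv^{R_1}(\bar z_1^*)\bu_{s_0}^{G_1}(\bar z_1^*)$, states directly that $-\tilde G_{1,1}\ge O$ and $\tilde G_{1,1}\ne O$; this is what the paper uses, and with it your non-vanishing argument for $\tilde{\bvarphi}_{2,1}^{\tilde G_1}$ goes through (the $l=1$ contribution alone is $\underline\zeta_2(\bar z_1^*)^{-1}\bvarphi_2(\underline\zeta_2(\bar z_1^*))\tilde G_{1,1}$, and $\bvarphi_2(\underline\zeta_2(\bar z_1^*))>\bzero^\top$ by irreducibility and positive recurrence of $\{\bY_n\}$).
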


Discussion about $\bvarphi_2^{\hat{C}_2}(z,G_1(z))$ proceeds in parallel to that about $\bvarphi_2(G_1(z))$. 
In a manner similar to that used in deriving expression (\ref{eq:varphi2G1_extension0}), we obtain 
\begin{align}
\bvarphi_2^{\hat{C}_2}(z,G_1(z))^\top
&= (V_1(z)^\top)^{-1} \sum_{k=1}^\infty \left( \bnu_{0,k}\otimes J_1(z)^k \right) \vecM((\hat{C}_2(z,G_1(z))V_1(z))^\top),  
\end{align}
and this leads us to 
\begin{align}
\bvarphi_2^{\hat{C}_2}(z,G_1(z)) 
&= \Bigl( \bvarphi_2(\alpha_1(z)) C_2(z,\alpha_1(z)) \bv_1(z)\ \ \bvarphi_2(\alpha_2(z)) C_2(z,\alpha_2(z)) \bv_2(z) \cr
&\qquad\quad \cdots\ \ \bvarphi_2(\alpha_{s_0}(z)) C_2(z,\alpha_{s_0}(z)) \bv_{s_0}(z) \Bigr)  V(z)^{-1}. 
\label{eq:varphi2tildeCG1_extension}
\end{align} 
Assume $r_1<\bar{z}_1^*$ and $\spr(G_1(r_1))=r_2<\bar{z}_1^*$. By Corollary \ref{co:varphi2_limit_typeIa}, since $\alpha_{s_0}(z)$ is analytic at $z=r_1$ and $\alpha_{s_0}(r_1)=r_2$, the point $z=r_1$ is a pole of $\bvarphi_2(\alpha_{s_0}(z))$ with order one. Hence, the Laurent series expansion for $\bvarphi_2^{\hat{C}_2}(z,G_1(z))$ around $z=r_1$ is represented as
\begin{equation}
\bvarphi_2^{\hat{C}_2}(z,G_1(z)) = \sum_{k=-1}^\infty \bvarphi_{2,k}^{\hat{C}_2}\, (r_1-z)^k. 
\label{eq:varphi2C2G1_expansion1}
\end{equation}
Assume $r_1=\bar{z}_1^*$ and define $\tilde{\bvarphi}_2^{\hat{C}_2}(\zeta,\tilde{G}_1(\zeta))$ as 
\begin{align}
\tilde{\bvarphi}_2^{\hat{C}_2}(\zeta,\tilde{G}_1(\zeta)) 
&= \Bigl( \bvarphi_2(\alpha_1(\bar{z}_1^*-\zeta^2)) C_2(\bar{z}_1^*-\zeta^2,\alpha_1(\bar{z}_1^*-\zeta^2)) \bv_1(\bar{z}_1^*-\zeta^2)\ \ \cdots \cr
&\qquad\quad \bvarphi_2(\alpha_{s_0-1}(\bar{z}_1^*-\zeta^2)) C_2(\bar{z}_1^*-\zeta^2,\alpha_{s_0-1}(\bar{z}_1^*-\zeta^2)) \bv_{s_0-1}(\bar{z}_1^*-\zeta^2)\cr
&\qquad\quad\quad \bvarphi_2(\tilde{\alpha}_{s_0}(\zeta)) C_2(\bar{z}_1^*-\zeta^2,\tilde{\alpha}_{s_0}(\zeta)) \tilde{\bv}_{s_0}(\zeta) \Bigr)  \tilde{V}(\zeta)^{-1}. 
\label{eq:varphi2G1_extension3}
\end{align} 
If $\spr(\tilde{G}_1(0))=\spr(G_1(\bar{z}_1^*))<r_2$, then $\tilde{\bvarphi}_2^{\hat{C}_2}(\zeta,\tilde{G}_1(\zeta))$ is entry-wise analytic at $\zeta=0$ and the Puiseux series expansion for $\bvarphi_2^{\hat{C}_2}(z,G_1(z))$ around $z=\bar{z}_1^*$ is represented as
\begin{align}
\bvarphi_2^{\hat{C}_2}(z,G_1(z)) 
= \tilde{\bvarphi}_2^{\hat{C}_2}((\bar{z}_1^*-z)^{\frac{1}{2}},\tilde{G}_1((\bar{z}_1^*-z)^{\frac{1}{2}})) 
= \sum_{k=0}^\infty \tilde{\bvarphi}_{2,k}^{\hat{C}_2}\, (\bar{z}_1^*-z)^{\frac{k}{2}}.
\label{eq:varphi2C2G1_expansion2}
\end{align}
If $\spr(\tilde{G}_1(0))=r_2<\bar{z}_1^*$, then the point $\zeta=0$ is a pole of $\bvarphi_2^{\hat{C}_2}(\zeta,\tilde{G}_1(\zeta))$ with order one and the Puiseux series expansion for $\bvarphi_2^{\hat{C}_2}(z,G_1(z))$ around $z=\bar{z}_1^*$ is represented as
\begin{align}
\bvarphi_2^{\hat{C}_2}(z,G_1(z)) 
= \tilde{\bvarphi}_2^{\hat{C}_2}((\bar{z}_1^*-z)^{\frac{1}{2}},\tilde{G}_1((\bar{z}_1^*-z)^{\frac{1}{2}})) 
= \sum_{k=-1}^\infty \tilde{\bvarphi}_{2,k}^{\hat{C}_2}\, (\bar{z}_1^*-z)^{\frac{k}{2}}. 
\label{eq:varphi2C2G1_expansion3}
\end{align}
For $\bvarphi_2^{\hat{C}_2}(z,G_1(z))$, we give the following proposition. Since this proposition is proved in a manner similar to that used for proving Proposition \ref{pr:limit_varphi2G1}, we omit the proof. 
\begin{proposition} \label{pr:limit_varphi2C2G1}
In the case of Type I, if $r_1=\bar{z}_1^*$, then $\spr(G_1(\bar{z}_1^*))<r_2$ and $\tilde{\bvarphi}^{\hat{C}_2}_{2,1}$ in expansion (\ref{eq:varphi2C2G1_expansion2}) is given by 
\begin{align}
\tilde{\bvarphi}^{\hat{C}_2}_{2,1} 
&= \underline{\zeta}_2(\bar{z}_1^*)^{-1} \bvarphi_2(\underline{\zeta}_2(\bar{z}_1^*)) \left( A_{*,0}^{(2)}(\bar{z}_1^*) + A_{*,1}^{(2)}(\bar{z}_1^*) \bigl( \underline{\zeta}_2(\bar{z}_1^*) I + G_1(\bar{z}_1^*) \bigr) \right) \tilde{G}_{1,1} \cr
&\quad + \sum_{k=1}^\infty \bnu_{0,k} \hat{C}_2(\bar{z}_1^*,G_1(\bar{z}_1^*)) \sum_{l=1}^{k-1} \underline{\zeta}_2(\bar{z}_1^*)^{k-l-1} G_1(\bar{z}_1^*)^{l-1} \tilde{G}_{1,1} \le \bzero^\top,\ \ne\bzero^\top, 
\label{eq:limit_varphi2C2G1_z1max_1}
\end{align}
In the case of Type II, if $r_1<\bar{z}_1^*$, then $\spr(G_1(r_1))=r_2<\bar{z}_2^*$ and $\bvarphi^{\hat{C}_2}_{2,-1}$ in expansion (\ref{eq:varphi2C2G1_expansion1}) is given by 
\begin{align}
\bvarphi^{\hat{C}_2}_{2,-1} 
&= \underline{\zeta}_{2,z}(r_1)^{-1}\,c_{pole}^{\bvarphi_2} \bu^{C_2}(r_2)\,C_2(r_1,r_2)\,\bv_{s_0}^{G_1}(r_1) \bu_{s_0}^{G_1}(r_1) \ge \bzero^\top,\ \ne \bzero^\top; 
\label{eq:limit_varphi2C2G1_r1_II_1}
\end{align}
if $r_1=\bar{z}_1^*$, then $\spr(G_1(\bar{z}_1^*))=r_2<\bar{z}_2^*$ and $\tilde{\bvarphi}^{\hat{C}_2}_{2,-1}$ in expansion (\ref{eq:varphi2C2G1_expansion3}) is given by 
\begin{align}
\tilde{\bvarphi}^{\hat{C}_2}_{2,-1} 
&= (-\tilde{\alpha}_{s_0,1}^{G_1})^{-1}\,c_{pole}^{\bvarphi_2} \bu^{C_2}(r_2)\,C_2(\bar{z}_1^*,r_2)\,\bv_{s_0}^{G_1}(\bar{z}_1^*) \bu_{s_0}^{G_1}(\bar{z}_1^*) \ge \bzero^\top,\ \ne \bzero^\top. 
\label{eq:limit_varphi2C2G1_z1max_II_2}
\end{align}
\end{proposition}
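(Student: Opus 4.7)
The plan is to mimic the proof of Proposition \ref{pr:limit_varphi2G1} almost verbatim, using the matrix-function representation (\ref{eq:varphi2tildeCG1_extension}) in place of (\ref{eq:varphi2G1_extension1}) and inserting the extra factor $C_2(z,\alpha_j(z))$ into each column. All the required analytic input is already available: Proposition \ref{pr:limit_eigenG1} gives $\alpha_{s_0}(z)-\underline{\zeta}_2(\bar{z}_1^*)\sim\tilde{\alpha}_{s_0,1}^{G_1}(\bar{z}_1^*-z)^{1/2}$ near $\bar{z}_1^*$; Proposition \ref{pr:limit_G1} gives the rank-one leading Puiseux coefficient $\tilde{G}_{1,1}=-\tilde{\alpha}_{s_0,1}^{G_1}N_1(\bar{z}_1^*)\bv^{R_1}(\bar{z}_1^*)\bu_{s_0}^{G_1}(\bar{z}_1^*)$; and Corollary \ref{co:varphi2_limit_typeIa} supplies the asymptotic $\bvarphi_2(w)\sim c_{pole}^{\bvarphi_2}\bu^{C_2}(r_2)/(r_2-w)$ near $w=r_2$.

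For the Type~I sub-case ($r_1=\bar{z}_1^*$ with $\spr(G_1(\bar{z}_1^*))<r_2$), every $\bvarphi_2(\alpha_j(z))$ is analytic at $z=\bar{z}_1^*$, so the only source of the $\zeta^{1}$-coefficient in the Puiseux expansion of $\bvarphi_2^{\hat{C}_2}(z,G_1(z))$ is $\tilde{G}_{1,1}$, entering through two channels: the $G_1$-powers in the series $\sum_{k\ge1}\bnu_{0,k}\hat{C}_2(z,G_1(z))G_1(z)^{k-1}$, and the $X$-dependence of $\hat{C}_2(z,X)=A_{*,-1}^{(2)}(z)+A_{*,0}^{(2)}(z)X+A_{*,1}^{(2)}(z)X^2$ at $X=G_1(z)$. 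Expanding each power $G_1(z)^{k-1}$ via $G_1(z)=G_1(\bar{z}_1^*)+\zeta\,\tilde{G}_{1,1}+O(\zeta^2)$ and collecting first-order terms yields (\ref{eq:limit_varphi2C2G1_z1max_1}): the second sum collects the $G_1^{l-1}\tilde{G}_{1,1}G_1^{k-l-1}$ contributions, which after multiplication on the left by $\bnu_{0,k}\hat{C}_2$ and by the spectral projection along the $\alpha_{s_0}$-eigenspace reduce to weights $\underline{\zeta}_2(\bar{z}_1^*)^{k-l}$, while the first sum collects the $\hat{C}_2$-dependence, compactly written via $\bvarphi_2(\underline{\zeta}_2(\bar{z}_1^*))$.

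For the Type~II sub-cases, only the $s_0$-th column of (\ref{eq:varphi2tildeCG1_extension}) is singular, since $\alpha_{s_0}(r_1)=r_2$ while $|\alpha_j(r_1)|<r_2$ for $j<s_0$. When $r_1<\bar{z}_1^*$, $\alpha_{s_0}(z)=\underline{\zeta}_2(z)$ is analytic at $r_1$, and $\bvarphi_2(\alpha_{s_0}(z))$ has a simple pole with residue $\underline{\zeta}_{2,z}(r_1)^{-1}c_{pole}^{\bvarphi_2}\bu^{C_2}(r_2)$; multiplying by the analytic factors $C_2(r_1,r_2)\bv_{s_0}^{G_1}(r_1)$ and by the $s_0$-th row $\bu_{s_0}^{G_1}(r_1)$ of $V_1(r_1)^{-1}$ gives (\ref{eq:limit_varphi2C2G1_r1_II_1}). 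When $r_1=\bar{z}_1^*$, one works instead in the $\zeta$-parameter and uses $r_2-\tilde{\alpha}_{s_0}(\zeta)\sim-\tilde{\alpha}_{s_0,1}^{G_1}\zeta$; the same product structure then delivers (\ref{eq:limit_varphi2C2G1_z1max_II_2}). The nonnegativity and nonvanishing claims follow from the nonnegativity of $G_1(r_1)$, $C_2$, and the Perron eigenvectors $\bu^{C_2}(r_2),\bv^{R_1}(\bar{z}_1^*)$, together with the irreducibility hypotheses in Assumption \ref{as:Akl_irreducible}.

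The main obstacle will be the book-keeping in the Type~I sub-case: since $z=\bar{z}_1^*-\zeta^{2}$, the $\hat{C}_2(z,X)$ factor contributes $\zeta^{2}$-terms from its $z$-dependence but $\zeta^{1}$-terms from its $X$-dependence through $G_1(z)$, and one must check that only the latter contributes at first order while simultaneously confirming that the resulting vector is $\le\bzero^\top$ and nonzero. This sign and nonvanishing check will rely on the same irreducibility/nonnegativity argument already invoked in the proof of Proposition \ref{pr:limit_varphi2G1}, adapted to account for the additional factor $\hat{C}_2(\bar{z}_1^*,G_1(\bar{z}_1^*))$, which is itself a nonnegative matrix with no identically zero row.
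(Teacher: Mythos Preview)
Your proposal is correct and follows exactly the approach the paper intends: the paper omits the proof of Proposition \ref{pr:limit_varphi2C2G1} precisely because it parallels that of Proposition \ref{pr:limit_varphi2G1}, and your plan---differentiate the series $\sum_{k\ge1}\bnu_{0,k}\hat{C}_2(z,G_1(z))G_1(z)^{k-1}$ in $\zeta$ for Type~I (using $\tilde{G}_{1,1}G_1(\bar{z}_1^*)=\underline{\zeta}_2(\bar{z}_1^*)\tilde{G}_{1,1}$ to collapse powers), and isolate the singular $s_0$-th column of (\ref{eq:varphi2tildeCG1_extension}) via Proposition \ref{pr:limit_varphi2} for Type~II---is exactly that parallel. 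One minor slip: the exponent in your Type~I second sum should be $\underline{\zeta}_2(\bar{z}_1^*)^{k-l-1}$, not $\underline{\zeta}_2(\bar{z}_1^*)^{k-l}$, and the simplification comes from the rank-one form of $\tilde{G}_{1,1}$ (ending in $\bu_{s_0}^{G_1}(\bar{z}_1^*)$) rather than a ``spectral projection'' per se.
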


\begin{table}[htp]
\caption{Singularity of each terms in formula (\ref{eq:varphi1}) at $z=r_1$ (Types I and III)}
\begin{center}
\begin{tabular}{l|ccc|c} 
& \multicolumn{3}{c|}{Type I} & Type III \cr
& $\psi_1(\bar{z}_1^*)>1$ & $\psi_1(\bar{z}_1^*)=1$ &  $\psi_1(\bar{z}_1^*)<1$ \cr \hline
$\bnu_{0,0}(C_0(z,G_1(z))-I)$ & analytic & branch & branch & analytic \cr
$\bvarphi_2(G_1(z))$ & analytic & branch & branch & analytic \cr
$\bvarphi_2^{\hat{C}_2}(z,,G_1(z))$ & analytic & branch & branch & analytic \cr
$(I-C_1(z,G_1(z)))^{-1}$ & pole & pole and branch & branch & pole
\end{tabular}
\end{center}
\label{tab:varphi1_singularity_IandIII}
\end{table}%

\begin{table}[htp]
\caption{Singularity of each terms in formula (\ref{eq:varphi1}) at $z=r_1$ (Type II)}
\begin{center}
\begin{tabular}{l|c|ccc} 
& $\eta_2^{(c)}<\theta_2^{(c)}$ &  \multicolumn{3}{c}{$\eta_2^{(c)}=\theta_2^{(c)}$}  \cr
& & $\psi_1(\bar{z}_1^*)>1$ & $\psi_1(\bar{z}_1^*)=1$ & $\psi_1(\bar{z}_1^*)<1$ \cr \hline
$\bnu_{0,0}(C_0(z,G_1(z))-I)$ & analytic & analytic & branch & branch \cr
$\bvarphi_2(G_1(z))$ & pole & pole & pole and branch & pole and branch \cr
$\bvarphi_2^{\hat{C}_2}(z,G_1(z))$ & pole & pole & pole and branch & pole and branch \cr
$(I-C_1(z,G_1(z)))^{-1}$ & analytic & pole & pole and branch & branch
\end{tabular}
\end{center}
\label{tab:varphi1_singularity_II}
\end{table}%

%
\subsection{Singularity of $\bvarphi_1(z)$ in the other cases} \label{sec:expansion_varpshi_typeI_II_III}

Define $\tilde{\bvarphi}_1(\zeta)$ in a neighborhood of $\zeta=0$ as 
\begin{equation}
\tilde{\bvarphi}_1(\zeta) 
= \tilde{\bg}_1(\zeta)\,(I-C_1(\bar{z}_1^*-\zeta^2,\tilde{G}_1(\zeta)))^{-1} 
= \frac{\tilde{\bg}_1(\zeta)\,\adj(I-C_1(\bar{z}_1^*-\zeta^2,\tilde{G}_1(\zeta)))}{\tilde{f}_1(1,\zeta)}, 
\label{eq:varphi1_gf2}
\end{equation}
where
\begin{align*}
\tilde{\bg}_1(\zeta) &= \tilde{\bvarphi}_2^{\hat{C}_2}(\zeta,\tilde{G}_1(\zeta)) - \tilde{\bvarphi}_2(\tilde{G}_1(\zeta)) + \bnu_{0,0} (C_0(\bar{z}_1^*-\zeta^2,\tilde{G}_1(\zeta))-I ), \\
\tilde{f}_1(\lambda,\zeta) &= \det(\lambda I-C_1(\bar{z}_1^*-\zeta^2,\tilde{G}_1(\zeta))). 
\end{align*}
$\tilde{f}_1(\lambda,\zeta)$ is the characteristic function of $C_1(\bar{z}_1^*-\zeta^2,\tilde{G}_1(\zeta))$. Let $\tilde{\lambda}^{C_1}(\zeta)$ be the eigenvalue of $C_1(\bar{z}_1^*-\zeta^2,\tilde{G}_1(\zeta))$ satisfying $\tilde{\lambda}^{C_1}(0)=\psi_1(\bar{z}_1^*)$. We have $\lambda^{C_1}(z)=\tilde{\lambda}^{C_1}((\bar{z}_1^*-z)^{\frac{1}{2}})$ in a neighborhood of $z=\bar{z}_1^*$, where $\lambda^{C_1}(z)$ is the eigenvalue of $C_1(z,G_1(z))$ corresponding to $\psi_1(z)$ when $z\in[\underline{z}_1^*,\bar{z}_1^*]$. 
For $\tilde{f}_1(\zeta)$, we give the following proposition, which corresponds to Proposition \ref{pr:f1_limit}. 
\begin{proposition} \label{pr:tildef1_limit}
If $\psi_1(\bar{z}_1^*)=1$, then the point $\zeta=0$ is a zero of $\tilde{f}_1(1,\zeta)$ with multiplicity one and we have 
\begin{align}
\lim_{\zeta\to 0} -\zeta^{-1} \tilde{f}_1(1,\zeta) = \tilde{\lambda}^{C_1}_\zeta(0) f_{1,\lambda}(1,\bar{z}_1^*) \ne 0, 
\label{eq:tildef1_limit}
\end{align}
where $\tilde{\lambda}^{C_1}_\zeta(\zeta)=(\partial/\partial\,\zeta) \tilde{\lambda}^{C_1}(\zeta)$ and $\tilde{\lambda}^{C_1}_\zeta(0)=\bu^{C_1}(\bar{z}_1^*) A_{*,1}^{(1)}(\bar{z}_1^*) \tilde{G}_{1,1} \bv^{C_1}(\bar{z}_1^*)<0$. 
\end{proposition}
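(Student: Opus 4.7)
The plan is to mirror the proof of Proposition \ref{pr:f1_limit}, but now in the $\zeta$-coordinate, exploiting the fact that the branch behavior of $G_1(z)$ at $\bar z_1^*$ has been absorbed into the substitution $z=\bar z_1^*-\zeta^2$. First I will note that $\tilde G_1(\zeta)$ is entry-wise analytic at $\zeta=0$ (this is precisely the content of formula (\ref{eq:tildeG1}) and Remark \ref{re:alphas0_analytic}), so $C_1(\bar z_1^*-\zeta^2,\tilde G_1(\zeta))$ and its characteristic polynomial $\tilde f_1(\lambda,\zeta)=\det(\lambda I-C_1(\bar z_1^*-\zeta^2,\tilde G_1(\zeta)))$ are both entry-wise analytic at $\zeta=0$. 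Under Assumption \ref{as:Akl_irreducible} the matrix $C_1(\bar z_1^*,G_1(\bar z_1^*))$ is irreducible, so its Perron-Frobenius eigenvalue $\tilde\lambda^{C_1}(0)=\psi_1(\bar z_1^*)=1$ is algebraically simple. Standard analytic perturbation theory then yields analytic branches $\tilde\lambda^{C_1}(\zeta)$, $\bv^{C_1}(\zeta)$ and $\bu^{C_1}(\zeta)$ in a neighborhood of $\zeta=0$, normalized by $\bu^{C_1}(0)\bv^{C_1}(0)=1$.

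Next I factor
\[
\tilde f_1(1,\zeta)=\prod_{j}\bigl(1-\tilde\lambda_j^{C_1}(\zeta)\bigr),
\]
separating out the PF branch from the rest. Because $1$ is a simple eigenvalue, every other eigenvalue satisfies $\tilde\lambda_j^{C_1}(0)\ne 1$, hence the product over $j\ne\mathrm{PF}$ evaluated at $\zeta=0$ is nonzero and equals $f_{1,\lambda}(1,\bar z_1^*)$ (this matches the expression appearing in Proposition \ref{pr:f1_limit} by continuity of the characteristic polynomial and its $\lambda$-derivative). Therefore the order of vanishing of $\tilde f_1(1,\zeta)$ at $\zeta=0$ coincides with that of $1-\tilde\lambda^{C_1}(\zeta)$, so once I show $\tilde\lambda^{C_1}_\zeta(0)\ne 0$ both the ``multiplicity one'' claim and the limit (\ref{eq:tildef1_limit}) follow at once.

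For the derivative formula, I will differentiate the eigenvector identity $C_1(\bar z_1^*-\zeta^2,\tilde G_1(\zeta))\,\bv^{C_1}(\zeta)=\tilde\lambda^{C_1}(\zeta)\bv^{C_1}(\zeta)$ at $\zeta=0$ and left-multiply by $\bu^{C_1}(\bar z_1^*)$. The contribution $-2\zeta\,(\partial/\partial z)C_1$ vanishes at $\zeta=0$, and since $C_1(z,X)=A_{*,0}^{(1)}(z)+A_{*,1}^{(1)}(z)X$ is linear in $X$, the only surviving term gives
\[
\tilde\lambda^{C_1}_\zeta(0)=\bu^{C_1}(\bar z_1^*)\,A_{*,1}^{(1)}(\bar z_1^*)\,\tilde G_{1,1}\,\bv^{C_1}(\bar z_1^*).
\]
Substituting the explicit form $\tilde G_{1,1}=\tilde\alpha_{s_0,1}^{G_1}\,N_1(\bar z_1^*)\bv^{R_1}(\bar z_1^*)\bu_{s_0}^{G_1}(\bar z_1^*)$ from Proposition \ref{pr:limit_G1} rewrites $\tilde\lambda^{C_1}_\zeta(0)$ as $\tilde\alpha_{s_0,1}^{G_1}$ times a product of nonnegative scalars (each factor positive by Perron-Frobenius and by the fact that $\bu^{C_1}(\bar z_1^*)$ and $\bv^{C_1}(\bar z_1^*)$ are strictly positive). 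By Proposition \ref{pr:limit_eigenG1}, $\tilde\alpha_{s_0,1}^{G_1}<0$, which yields the desired sign $\tilde\lambda^{C_1}_\zeta(0)<0$.

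The main obstacle is the strict positivity of the remaining scalar factor $\bu^{C_1}(\bar z_1^*)A_{*,1}^{(1)}(\bar z_1^*)N_1(\bar z_1^*)\bv^{R_1}(\bar z_1^*)\cdot\bu_{s_0}^{G_1}(\bar z_1^*)\bv^{C_1}(\bar z_1^*)$. Pairing $\bu_{s_0}^{G_1}(\bar z_1^*)$ (nonnegative, nonzero) with the strictly positive $\bv^{C_1}(\bar z_1^*)$ gives a strictly positive scalar; the more delicate product $\bu^{C_1}(\bar z_1^*)A_{*,1}^{(1)}(\bar z_1^*)N_1(\bar z_1^*)\bv^{R_1}(\bar z_1^*)$ can be handled either by exploiting the irreducibility/aperiodicity of the set $\{\{A_{k,l}\},\{A_{k,l}^{(1)}\}\}$ from Assumption \ref{as:Akl_irreducible} (which precludes $A_{*,1}^{(1)}(\bar z_1^*)$ from annihilating the strictly positive left eigenvector $\bu^{C_1}(\bar z_1^*)$) or, as a cleaner route, by combining the convexity of $\psi_1$ guaranteed by Proposition \ref{pr:chiconvex} with $\psi_1(1)<1=\psi_1(\bar z_1^*)$ to force $\psi_{1,z}(z)\to+\infty$ as $z\uparrow\bar z_1^*$; via the chain rule $\psi_{1,z}(z)=-\tfrac{1}{2}(\bar z_1^*-z)^{-1/2}\tilde\lambda^{C_1}_\zeta((\bar z_1^*-z)^{1/2})$ this divergence is precisely equivalent to $\tilde\lambda^{C_1}_\zeta(0)\ne 0$ and, given the nonpositivity already established, to $\tilde\lambda^{C_1}_\zeta(0)<0$.
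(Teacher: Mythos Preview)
Your overall approach matches the paper's proof closely: analyticity of $\tilde G_1$ at $\zeta=0$, simplicity of the Perron--Frobenius eigenvalue, the factorization of $\tilde f_1(1,\zeta)$ isolating the branch $1-\tilde\lambda^{C_1}(\zeta)$, and the computation of $\tilde\lambda^{C_1}_\zeta(0)$ by differentiating the eigenvector relation. All of that is correct and essentially identical to what the paper does.

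The gap is in your justification of strict negativity. Your ``cleaner route'' via convexity does not work: convexity of $s\mapsto\psi_1(e^s)$ on $[\underline s_1^*,\bar s_1^*]$ together with $\psi_1(1)\le 1=\psi_1(\bar z_1^*)$ does \emph{not} force $\psi_{1,z}(z)\to+\infty$ as $z\uparrow\bar z_1^*$. A convex function on a closed interval can perfectly well have a finite one-sided derivative at the endpoint; nothing in the hypotheses excludes $\tilde\lambda^{C_1}_\zeta(0)=0$ with $\psi_1(z)=1+c(\bar z_1^*-z)+o(\bar z_1^*-z)$ and finite left derivative $-c$. So the chain-rule equivalence you wrote is correct, but you have not supplied an independent reason for the divergence.

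Your route (a) is in the right spirit but incomplete as stated: showing $\bu^{C_1}(\bar z_1^*)A_{*,1}^{(1)}(\bar z_1^*)\ne\bzero^\top$ is easy (otherwise $A_{*,1}^{(1)}=O$ and the chain $\{\tilde{\bY}^{(1)}_n\}$ would be reducible), but you still need the inner product with the merely nonnegative, nonzero vector $N_1(\bar z_1^*)\bv^{R_1}(\bar z_1^*)$ to be strictly positive. The paper closes this by a separate result (Proposition~\ref{pr:uC1_positivity}): the row vector $\bu^{C_1}(\bar z_1^*)A_{*,1}^{(1)}(\bar z_1^*)N_1(\bar z_1^*)$ is itself strictly positive, because it is a block of the (positive) invariant measure of the irreducible block tri-diagonal matrix $A_*^{(1)}(\bar z_1^*)$. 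Once you invoke that, pairing with the nonnegative nonzero $\bv^{R_1}(\bar z_1^*)$ gives the required strict positivity and hence $\tilde\lambda^{C_1}_\zeta(0)<0$.
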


In the proof of Proposition \ref{pr:tildef1_limit}, we use the following proposition (its proof is given in Appendix \ref{sec:uC1_positivity}). 
\begin{proposition} \label{pr:uC1_positivity}
For any $z_0\in[\underline{z}_1^*,\bar{z}_1^*]$ such that $\psi_1(z_0)=1$ and for any $k\in\mathbb{Z}_+$, $\bu^{C_1}(z_0)$ and $\bu^{C_1}(z_0) A_{*,1}^{(1)}(z_0) N_1(z_0) R_1(z_0)^k$ are positive. 
Analogously, for any $z_0\in[\underline{z}_2^*,\bar{z}_2^*]$ such that $\psi_2(z_0)=1$ and for any $k\in\mathbb{Z}_+$, $\bu^{C_2}(z_0)$ and $\bu^{C_2}(z_0) A_{1,*}^{(2)}(z_0) N_2(z_0) R_2(z_0)^k$ are also positive.
\end{proposition}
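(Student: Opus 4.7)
My plan is to derive both positivity statements from the irreducibility of the QBD transition matrix $A^{(1)}_*$, guaranteed by Proposition \ref{pr:A1sA2s_irreducible} under Assumption \ref{as:Akl_irreducible}, together with a sparsity-preservation reduction to $z_0=1$. The reduction rests on the fact that every entry of each block $A_{*,i}(z_0)$ and $A^{(1)}_{*,i}(z_0)$ is a Laurent polynomial in $z_0$ with nonnegative coefficients, so for any $z_0>0$ these blocks, and therefore also $N_1(z_0)=\sum_{n=0}^\infty Q_{11}^{(n)}(z_0)$, $G_1(z_0)=N_1(z_0)A_{*,-1}(z_0)$, $R_1(z_0)=A_{*,1}(z_0)N_1(z_0)$, and $C_1(z_0,G_1(z_0))$ share exactly the zero-pattern they have at $z_0=1$. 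Positivity questions therefore transfer automatically between $z_0=1$ and arbitrary $z_0\in[\underline{z}_1^*,\bar{z}_1^*]$ (noting $\underline{z}_1^*>0$).

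To obtain $\bu^{C_1}(z_0)>\bzero^\top$, I will show that $C_1(z_0,G_1(z_0))$ is an irreducible nonnegative matrix and invoke Perron--Frobenius. At $z_0=1$, $C_1(1,G_1(1))=A^{(1)}_{*,0}+A^{(1)}_{*,1}G_1(1)$ is the (possibly sub-stochastic) kernel of the level-0 censored chain of the QBD $A^{(1)}_*$: $A^{(1)}_{*,0}$ records direct level-0-to-level-0 transitions, while $A^{(1)}_{*,1}G_1(1)$ records excursions through levels $\ge 1$ that return to level 0. Irreducibility of $A^{(1)}_*$ gives, for any two phases $i,j\in S_0$, a positive-probability path from $(0,i)$ to $(0,j)$; extracting its level-0 visits yields a positive entry of some power of $C_1(1,G_1(1))$. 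Hence $C_1(z_0,G_1(z_0))$ is irreducible for every $z_0\in[\underline{z}_1^*,\bar{z}_1^*]$, and the left Perron eigenvector $\bu^{C_1}(z_0)$ associated with $\psi_1(z_0)=\spr(C_1(z_0,G_1(z_0)))$ is strictly positive.

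For the second claim, strict positivity of $\bu^{C_1}(z_0)$ reduces $\bu^{C_1}(z_0)A^{(1)}_{*,1}(z_0)N_1(z_0)R_1(z_0)^k>\bzero^\top$ to the statement that the matrix $M_k:=A^{(1)}_{*,1}(z_0)N_1(z_0)R_1(z_0)^k$ has no identically zero column; by sparsity preservation this need only be checked at $z_0=1$. I will identify $[M_k(1)]_{i,j}$ probabilistically as the expected number of visits to the state $(k+1,j)$ during a single excursion of $A^{(1)}_*$ starting at $(0,i)$: the factor $A^{(1)}_{*,1}$ encodes the initial jump into level 1, and $[N_1 R_1^k]_{m,j}$ equals---by the standard matrix-analytic identity based on $N_1=(I-A_{*,0}-A_{*,1}G_1)^{-1}$, $R_1=A_{*,1}N_1$, and translation invariance of the inner QBD---the expected visits to $(k+1,j)$ from $(1,m)$ before the chain first returns to level 0. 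Fixing $j\in S_0$ and $k\ge 0$, irreducibility of $A^{(1)}_*$ gives a positive-probability path from any $(0,i_0)$ to $(k+1,j)$; decomposing this path at its last visit to level 0 produces an excursion from some $(0,i)$ that reaches $(k+1,j)$, so $[M_k(1)]_{i,j}>0$ and column $j$ of $M_k$ is nonzero.

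The statements for $\bu^{C_2}(z_0)$ and $\bu^{C_2}(z_0)A^{(2)}_{1,*}(z_0)N_2(z_0)R_2(z_0)^k$ follow by the symmetric argument applied to the irreducible QBD $A^{(2)}_*$ together with the matrices $G_2,R_2,N_2$ from Remark \ref{re:Gmatrix2}. The main technical obstacle is the clean probabilistic identification of $[M_k(1)]_{i,j}$ with the expected excursion-visit count: although it is a textbook fact of matrix-analytic QBD theory, a rigorous justification would either induct on $k$ using $R_1=A_{*,1}+R_1 A_{*,0}+R_1^2 A_{*,-1}$, $N_1 A_{*,-1}=G_1$, and first-step conditioning, or extract the appropriate block of the Green's kernel $(I-Q(1))^{-1}$ in block-Wiener--Hopf form. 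If one prefers to avoid probabilistic language, the no-zero-column property of $M_k$ can instead be proved combinatorially, by showing that irreducibility of $A^{(1)}_*$ furnishes, for each target column $j$, at least one admissible sequence of block multiplications with a strictly positive contribution to $[M_k(1)]_{i,j}$.
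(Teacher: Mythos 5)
Your argument is correct, but it takes a genuinely different route from the paper. The paper's proof is a two-line appeal to external results: it observes that the row vector $\bigl(\bu^{C_1}(z_0),\ \bu^{C_1}(z_0)A_{*,1}^{(1)}(z_0)N_1(z_0),\ \bu^{C_1}(z_0)A_{*,1}^{(1)}(z_0)N_1(z_0)R_1(z_0),\ \dots\bigr)$ is precisely the invariant measure of the irreducible countable-state matrix $A^{(1)}_*(z_0)$ (quoting Theorem 3.1 of Ozawa \cite{Ozawa13} for this identity) and then invokes Seneta's theorem that a nontrivial nonnegative invariant measure of an irreducible matrix is strictly positive. You instead prove positivity directly: the zero-pattern reduction to $z_0=1$ is valid (all blocks are Laurent polynomials with nonnegative matrix coefficients and $\underline{z}_1^*>0$, and supports of convergent nonnegative series are unions of supports of the terms); the censored-chain identification of $C_1(1,G_1(1))$ gives irreducibility and hence $\bu^{C_1}(z_0)>\bzero^\top$, exactly as the paper itself asserts elsewhere (proof of Proposition \ref{pr:f1_limit}); and reducing the second claim to "no zero column of $A_{*,1}^{(1)}N_1R_1^k$" is sound once one knows that $N_1R_1^k$ is the $(1,1+k)$ block of $\tilde{Q}(1)$. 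The one step you defer — that identity — is indeed only a routine verification: it follows by checking that $N_1R_1^k$ satisfies the block equations $\tilde{Q}(I-Q)=I$, using $R_1^2A_{*,-1}+R_1A_{*,0}+A_{*,1}=R_1$ and $N_1(I-H_1)=I$ from Lemma \ref{le:Gmatrix}, so your proof closes. What each approach buys: the paper's is shorter but imports the invariant-measure formula from \cite{Ozawa13} and Seneta's positivity theorem; yours is self-contained and makes the probabilistic content (taboo Green's function, last-exit decomposition) explicit, at the cost of more bookkeeping.
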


\begin{proof}[Proof of Proposition \ref{pr:tildef1_limit}]
Under Assumption \ref{as:Akl_irreducible}, $C_1(\bar{z}_1^*,G_1(\bar{z}_1^*))$ is irreducible and the Perron-Frobenius eigenvalue $\psi_1(\bar{z}_1^*)$ is simple. Hence, the eigenvalue $\tilde{\lambda}^{C_1}(0)=\psi_1(\bar{z}_1^*)$ is also simple and $\tilde{\lambda}^{C_1}(\zeta)$ is analytic at $\zeta=0$. 
Since $\tilde{\lambda}^{C_1}(0)=\psi_1(\bar{z}_1^*)=1$, we have
\begin{align}
\lim_{\zeta\to 0} (1-\tilde{\lambda}^{C_1}_{s_0}(\zeta))^{-1} \tilde{f}_1(1,\zeta) = f_{1,\lambda}(1,\bar{z}_1^*) \ne 0. 
\end{align}
Combining this with the fact that $\lim_{\zeta\to 0} (1-\tilde{\lambda}^{C_1}(\zeta))/(0-\zeta) = \tilde{\lambda}^{C_1}_\zeta(0)$, we obtain equation (\ref{eq:tildef1_limit}). 
Let $\tilde{\bu}^{C_1}(\zeta)$ and $\tilde{\bv}^{C_1}(\zeta)$ be the left and right eigenvectors of $C_1(\bar{z}_1^*-\zeta^2,\tilde{G}_1(\zeta))$ with respect to the eigenvalue $\tilde{\lambda}^{C_1}(\zeta)$, satisfying $\tilde{\bu}^{C_1}(\zeta) \tilde{\bv}^{C_1}(\zeta) =1$. Since $\tilde{\lambda}^{C_1}(0)$ is simple, $\tilde{\bu}^{C_1}(\zeta)$ and $\tilde{\bv}^{C_1}(\zeta)$ are analytic at $\zeta=0$. 
Through some manipulation, we obtain 
\begin{equation}
\tilde{\lambda}^{C_1}_\zeta(\zeta) = \tilde{\bu}^{C_1}(\zeta) \left(\frac{d}{d \zeta} C_1(\bar{z}_1^*-\zeta^2,\tilde{G}_1(\zeta)) \right)  \tilde{\bv}^{C_1}(\zeta), 
\end{equation}
and this leads us to 
\begin{equation}
\tilde{\lambda}^{C_1}_\zeta(0) 
= \tilde{\bu}^{C_1}(0) A_{*,1}^{(1)}(\bar{z}_1^*) \tilde{G}_{1,1} \tilde{\bv}^{C_1}(0)
= \bu^{C_1}(\bar{z}_1^*) A_{*,1}^{(1)}(\bar{z}_1^*) \tilde{G}_{1,1} \bv^{C_1}(\bar{z}_1^*). 
\end{equation}
Recall that $\tilde{G}_{1,1}=\tilde{\alpha}_{s_0,1}^{G_1} N_1(\bar{z}_1^*) \bv^{R_1}(\bar{z}_1^*) \bu_{s_0}^{G_1}(\bar{z}_1^*)$, where $\tilde{\alpha}_{s_0,1}^{G_1}<0$ and $\bv^{R_1}(\bar{z}_1^*)$ and $\bu_{s_0}^{G_1}(\bar{z}_1^*)$ are nonzero and nonnegative. 
Since $C_1(\bar{z}_1^*,G_1(\bar{z}_1^*))$ is irreducible, $\bv^{C_1}(\bar{z}_1^*)$ is positive and $\bu_{s_0}^{G_1}(\bar{z}_1^*) \bv^{C_1}(\bar{z}_1^*)>0$. 
By Proposition \ref{pr:uC1_positivity}, $\bu^{C_1}(\bar{z}_1^*) A_{*,1}^{(1)}(\bar{z}_1^*)N_1(\bar{z}_1^*) \bv^{R_1}(\bar{z}_1^*)>0$. 
Hence, we have $\tilde{\lambda}^{C_1}_\zeta(0)<0$. 
\end{proof}

The following proposition corresponds to Proposition \ref{pr:adjC1}. Since it can analogously be proved, we omit its proof. 

\begin{proposition} \label{pr:adjC1_zs}
If $\psi_1(\bar{z}_1^*)=1$, then 
\begin{align}
&\adj\big(I-C_1(\bar{z}_1^*,G_1(\bar{z}_1^*))\big) 
= f_{1,\lambda}(1,\bar{z}_1^*) \bv^{C_1}(\bar{z}_1^*) \bu^{C_1}(\bar{z}_1^*), 
\label{eq:adjC1_zs}
\end{align}
where $\bv^{C_1}(\bar{z}_1^*)$ and $\bu^{C_1}(\bar{z}_1^*)$ are positive. 
\end{proposition}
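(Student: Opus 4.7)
The plan is to mimic the proof of Proposition~\ref{pr:adjC1} verbatim; the only thing to verify is that every ingredient used there at an interior point $z_0\in(\underline{z}_1^*,\bar{z}_1^*)$ remains available at the boundary point $\bar{z}_1^*$. The first step is to confirm that $C_1(\bar{z}_1^*,G_1(\bar{z}_1^*))$ is nonnegative and irreducible, so that its Perron–Frobenius eigenvalue $\psi_1(\bar{z}_1^*)=1$ is algebraically simple and the corresponding left and right eigenvectors $\bu^{C_1}(\bar{z}_1^*)$ and $\bv^{C_1}(\bar{z}_1^*)$ are strictly positive. Under Assumption~\ref{as:Akl_irreducible}, this is exactly the property that was used (and justified) in the proof of Proposition~\ref{pr:tildef1_limit} just above, so it can be cited directly; in particular I do not need to reprove irreducibility at the endpoint.

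The second step is the standard spectral identity for the adjugate at a simple eigenvalue: for any square matrix $A$ with a simple eigenvalue $\lambda_0$, characteristic polynomial $f(\lambda)=\det(\lambda I - A)$, and left/right eigenvectors $\bu,\bv$ normalized by $\bu\bv=1$, one has
\[
\adj(\lambda_0 I - A) \;=\; f'(\lambda_0)\,\bv\,\bu.
\]
The derivation is elementary: differentiating $(\lambda I - A)\,\adj(\lambda I - A) = f(\lambda) I$ at $\lambda_0$ yields $\adj(\lambda_0 I - A) = f'(\lambda_0) I - (\lambda_0 I - A) \tfrac{d}{d\lambda}\adj(\lambda I - A)|_{\lambda_0}$; since $\lambda_0$ is simple, $\adj(\lambda_0 I - A)$ has rank one and must be of the form $c\,\bv\,\bu$, and left-multiplying by $\bu$ (using $\bu(\lambda_0 I - A)=\bzero$) pins down $c = f'(\lambda_0)$.

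Applying this identity with $A = C_1(\bar{z}_1^*,G_1(\bar{z}_1^*))$, $\lambda_0 = 1$, $f(\lambda) = f_1(\lambda,\bar{z}_1^*)$, $\bu = \bu^{C_1}(\bar{z}_1^*)$, and $\bv = \bv^{C_1}(\bar{z}_1^*)$ yields equation~(\ref{eq:adjC1_zs}). The positivity of $\bv^{C_1}(\bar{z}_1^*)$ and $\bu^{C_1}(\bar{z}_1^*)$ is immediate from Perron–Frobenius applied in the first step. I foresee no substantial obstacle: the only delicate point — whether the irreducibility and simplicity arguments survive at the boundary $z=\bar{z}_1^*$, where $G_1(z)$ itself has a branch point of order one — is already settled by the discussion surrounding Proposition~\ref{pr:tildef1_limit}, because $G_1(\bar{z}_1^*)$ is well defined and nonnegative even though $G_1(z)$ is not analytic there.
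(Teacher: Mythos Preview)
Your proposal is correct. It follows the same overall shape as the paper's argument for Proposition~\ref{pr:adjC1} (first establish that the adjugate is a rank-one matrix of the form $c\,\bv^{C_1}\bu^{C_1}$, then pin down the constant $c$), and the paper explicitly says the present proposition ``can analogously be proved.''

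The one genuine difference is in how you determine $c$. In the proof of Proposition~\ref{pr:adjC1} the paper fixes $c$ by taking a limit in the \emph{spatial} variable $z\uparrow z_0$, computing $(z_0-z)\,(I-C_1(z,G_1(z)))^{-1}\bv^{C_1}(z)$ and invoking Proposition~\ref{pr:f1_limit}; the factor $\psi_{1,z}(z_0)$ appears and then cancels, leaving $c=f_{1,\lambda}(1,z_0)$. Adapting that argument to $z_0=\bar{z}_1^*$ would require passing to the variable $\zeta=(\bar{z}_1^*-z)^{1/2}$ and invoking Proposition~\ref{pr:tildef1_limit} instead. Your route---differentiating $(\lambda I-A)\,\adj(\lambda I-A)=f(\lambda)I$ in the \emph{spectral} variable $\lambda$ and left-multiplying by $\bu$---is purely algebraic and yields $c=f_{1,\lambda}(1,\bar{z}_1^*)$ directly, without touching the branch-point behavior of $G_1(z)$ at $\bar{z}_1^*$. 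This is a cleaner and more self-contained way to get the constant, and it makes the passage from Proposition~\ref{pr:adjC1} to Proposition~\ref{pr:adjC1_zs} entirely trivial.
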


Using the results in the previous subsections, we obtain from expressions (\ref{eq:varphi1_gf1}) and (\ref{eq:varphi1_gf2}) the following two lemmas (see Tables \ref{tab:varphi1_singularity_IandIII} and \ref{tab:varphi1_singularity_II}).

%
\begin{lemma}[Type I] \label{le:varphi1_limit_typeIbc}
(1) Type I ($\psi_1(\bar{z}_1^*)=1$).\quad The point $z=\bar{z}_1^*$ is a branch point of $\bvarphi_1(z)$ with order one and it is also a pole with order one. The Puiseux series expansion for $\bvarphi_1(z)$ around $z=\bar{z}_1^*$ is represented as 
\begin{equation}
\bvarphi_1(z) 
= \tilde{\bvarphi}_1((\bar{z}_1^*-z)^{\frac{1}{2}})) 
= \sum_{k=-1}^\infty \tilde{\bvarphi}_{1,k}^I\,(\bar{z}_1^*-z)^{\frac{k}{2}}, 
\label{eq:varphi1_expansion_Ib}
\end{equation}
where 
\begin{equation}
\tilde{\bvarphi}_{1,-1}^I 
=(-\tilde{\lambda}^{C_1}_\zeta(0))^{-1} \bg_1(\bar{z}_1^*)\bv^{C_1}(\bar{z}_1^*)\,\bu^{C_1}(\bar{z}_1^*) > \bzero^\top. 
\label{eq:tildevarphiI1m1}
\end{equation}

(2) Type I ($\psi_1(\bar{z}_1^*)<1$).\quad The point $z=\bar{z}_1^*$ is a branch point of $\bvarphi_1(z)$ with order one and its Puiseux series expansion is represented as 
\begin{equation}
\bvarphi_1(z) 
= \tilde{\bvarphi}_1((\bar{z}_1^*-z)^{\frac{1}{2}})) 
= \sum_{k=0}^\infty \tilde{\bvarphi}_{1,k}^I\,(\bar{z}_1^*-z)^{\frac{k}{2}}, 
\label{eq:varphi1_expansion_Ic}
\end{equation}
where $\tilde{\bvarphi}_{1,0}^I=\bvarphi_1(\bar{z}_1^*)$ and 
\begin{align}
\tilde{\bvarphi}_{1,1}^I &= 
\Bigl( \underline{\zeta}_2(\bar{z}_1^*)^{-1} \bvarphi_2(\underline{\zeta}_2(\bar{z}_1^*)) \bigl( A_{*,0}^{(2)}(\bar{z}_1^*) + A_{*,1}^{(2)}(\bar{z}_1^*) ( \underline{\zeta}_2(\bar{z}_1^*) I + G_1(\bar{z}_1^*) ) \bigr) \cr
&\quad + \sum_{k=1}^\infty \bnu_{0,k} \hat{C}_2(\bar{z}_1^*,G_1(\bar{z}_1^*)) \sum_{l=1}^{k-1} \underline{\zeta}_2(\bar{z}_1^*)^{k-l-1} G_1(\bar{z}_1^*)^{l-1}  \cr
&\quad -\sum_{k=1}^\infty \bnu_{0,k} \sum_{l=1}^k \underline{\zeta}_2(\bar{z}_1^*)^{k-l} G_1(\bar{z}_1^*)^{l-1} 
+ \bnu_{0,0} A_{*,1}^{(0)}(\bar{z}_1^*) \cr
&\quad + \bvarphi_1(\bar{z}_1^*) A_{*,1}^{(1)}(\bar{z}_1^*) \Bigr) \tilde{G}_{1,1} \bigl(I-C_1(\bar{z}_1^*,G_1(\bar{z}_1^*) \bigr)^{-1}. 
\label{eq:tildevarphiI11}
\end{align}
\end{lemma}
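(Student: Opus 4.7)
The plan is to start from the expression~(\ref{eq:varphi1_gf2}) for $\tilde{\bvarphi}_1(\zeta) = \bvarphi_1(\bar{z}_1^*-\zeta^2)$ and analyze each factor as $\zeta\to 0$. In the Type~I regime we have $r_1=\bar{z}_1^*$ and the defining inequality $\theta_2^{(c)}<\eta_2^{(c)}$ gives $\spr(G_1(\bar{z}_1^*)) = \underline{\zeta}_2(\bar{z}_1^*) < e^{\eta_2^{(c)}} = r_2$. This strict inequality places us in the analytic-at-$\zeta=0$ branch of both Propositions~\ref{pr:limit_varphi2G1} and~\ref{pr:limit_varphi2C2G1}, so $\tilde{\bvarphi}_2(\tilde{G}_1(\zeta))$ and $\tilde{\bvarphi}_2^{\hat{C}_2}(\zeta,\tilde{G}_1(\zeta))$ are entry-wise analytic at $\zeta=0$; combined with the entry-wise analyticity of $\tilde{G}_1(\zeta)$ and $C_0(\bar{z}_1^*-\zeta^2,\tilde{G}_1(\zeta))$, this makes the numerator $\tilde{\bg}_1(\zeta)$ in~(\ref{eq:varphi1_gf2}) entry-wise analytic at $\zeta=0$. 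All singular behavior of $\tilde{\bvarphi}_1(\zeta)$ therefore comes from the denominator $\tilde{f}_1(1,\zeta)$.

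For part~(1), where $\psi_1(\bar{z}_1^*)=1$, Proposition~\ref{pr:tildef1_limit} gives a simple zero of $\tilde{f}_1(1,\zeta)$ at $\zeta=0$, so $\tilde{\bvarphi}_1(\zeta)$ has at most a simple pole there, producing the Puiseux form~(\ref{eq:varphi1_expansion_Ib}). The residue is extracted from~(\ref{eq:varphi1_gf2}) via
\[
\tilde{\bvarphi}_{1,-1}^I \;=\; \lim_{\zeta\to 0}\zeta\,\tilde{\bvarphi}_1(\zeta)
\;=\; \frac{\bg_1(\bar{z}_1^*)\,\adj\bigl(I-C_1(\bar{z}_1^*,G_1(\bar{z}_1^*))\bigr)}{\lim_{\zeta\to 0}\zeta^{-1}\tilde{f}_1(1,\zeta)},
\]
and substituting the value of the denominator from Proposition~\ref{pr:tildef1_limit} together with the rank-one factorization of the adjugate from Proposition~\ref{pr:adjC1_zs} makes the common factor $f_{1,\lambda}(1,\bar{z}_1^*)$ cancel, leaving exactly~(\ref{eq:tildevarphiI1m1}). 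For part~(2), where $\psi_1(\bar{z}_1^*)<1$, one has $\tilde{f}_1(1,0)\ne 0$, so $\tilde{\bvarphi}_1(\zeta)$ is itself entry-wise analytic at $\zeta=0$ and admits the Taylor form~(\ref{eq:varphi1_expansion_Ic}). Its zeroth coefficient equals $\bvarphi_1(\bar{z}_1^*)$ by continuity, and for the first coefficient I differentiate~(\ref{eq:varphi1_gf2}) at $\zeta=0$. Since $\tfrac{d}{d\zeta}(\bar{z}_1^*-\zeta^2)|_{\zeta=0}=0$, all $z$-partial contributions through the first slot of $C_0$, $C_1$, and $\hat{C}_2$ drop out, leaving only the chain-rule terms through $\tilde{G}_1'(0)=\tilde{G}_{1,1}$. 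Plugging in the coefficients from Propositions~\ref{pr:limit_varphi2G1} and~\ref{pr:limit_varphi2C2G1} for $\partial_\zeta\tilde{\bvarphi}_2(\tilde{G}_1(\zeta))|_0$ and $\partial_\zeta\tilde{\bvarphi}_2^{\hat{C}_2}(\zeta,\tilde{G}_1(\zeta))|_0$, together with $\partial_\zeta C_0|_0 = A_{*,1}^{(0)}(\bar{z}_1^*)\tilde{G}_{1,1}$ and $\partial_\zeta C_1|_0 = A_{*,1}^{(1)}(\bar{z}_1^*)\tilde{G}_{1,1}$, the product rule applied to $\tilde{\bg}_1(\zeta)(I-C_1(\bar{z}_1^*-\zeta^2,\tilde{G}_1(\zeta)))^{-1}$ reproduces exactly~(\ref{eq:tildevarphiI11}).

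The main obstacle is not the algebraic manipulation above but promoting $z=\bar{z}_1^*$ to a genuine branch point (and pole, in case~(1)), which requires ruling out the degenerate cancellations $\tilde{\bvarphi}_{1,-1}^I=\bzero^\top$ in~(1) and $\tilde{\bvarphi}_{1,1}^I=\bzero^\top$ in~(2), and then upgrading nonnegativity to strict positivity. My strategy for non-vanishing is a Vivanti--Pringsheim argument: since $\bvarphi_1(z)=\sum_{k\ge 1}\bnu_{k,0}z^k$ is a vector power series with nonnegative coefficients and exact radius of convergence $r_1=\bar{z}_1^*$ (Proposition~\ref{pr:radius_conv}), the point $z=\bar{z}_1^*$ must be a singular point, so $\tilde{\bvarphi}_1(\zeta)$ cannot be both analytic at $\zeta=0$ and even in $\zeta$. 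In case~(1), vanishing of the simple-pole residue would make $\tilde{\bvarphi}_1(\zeta)$ analytic, and one then argues via the relation $\bg_1(\bar{z}_1^*)=\lim_{z\to \bar{z}_1^*}\bvarphi_1(z)(I-C_1(z,G_1(z)))$ combined with the Perron--Frobenius structure of $C_1(\bar{z}_1^*,G_1(\bar{z}_1^*))$ (positivity of $\bu^{C_1}(\bar{z}_1^*)$ and $\bv^{C_1}(\bar{z}_1^*)$ from Propositions~\ref{pr:adjC1_zs} and~\ref{pr:uC1_positivity}) that this forces an odd-power term elsewhere, contradicting analyticity. Once nonvanishing is established, strict positivity $\tilde{\bvarphi}_{1,-1}^I>\bzero^\top$ follows from positivity of $\bu^{C_1}(\bar{z}_1^*)$ together with the sign $-\tilde{\lambda}^{C_1}_\zeta(0)>0$ supplied by Proposition~\ref{pr:tildef1_limit} and the componentwise non-negativity of the limit of $\bvarphi_1(z)$ along the real axis from the left. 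The analogous dichotomy rules out $\tilde{\bvarphi}_{1,1}^I=\bzero^\top$ in case~(2).
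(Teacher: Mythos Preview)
Your derivation of the expansion structure and the coefficient formulas is correct and matches the paper's approach exactly: analyticity of $\tilde{\bg}_1(\zeta)$ at $\zeta=0$ via $\spr(G_1(\bar{z}_1^*))<r_2$, the simple zero of $\tilde{f}_1(1,\zeta)$ from Proposition~\ref{pr:tildef1_limit}, the adjugate factorization from Proposition~\ref{pr:adjC1_zs}, and the product-rule computation for $\tilde{\bvarphi}_{1,1}^I$ are all as in the paper (the latter appearing in Appendix~\ref{sec:get_coefficients}).

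The gap is in your non-vanishing argument for case~(1). Your Vivanti--Pringsheim step shows only that $z=\bar{z}_1^*$ is \emph{some} singularity of $\bvarphi_1$; it cannot distinguish a pole from a pure branch point. Concretely, if $\tilde{\bvarphi}_{1,-1}^I=\bzero^\top$ then $\tilde{\bvarphi}_1(\zeta)$ is analytic at $\zeta=0$, and odd-$\zeta$ terms are perfectly compatible with that analyticity --- they would still make $\bar{z}_1^*$ a branch point of $\bvarphi_1(z)$, so no contradiction arises. Your phrase ``forces an odd-power term elsewhere, contradicting analyticity'' is therefore circular. The paper closes this directly: assume $\bg_1(\bar{z}_1^*)\bv^{C_1}(\bar{z}_1^*)=0$; since $\bv^{C_1}(\bar{z}_1^*)$ is the eigenvector for eigenvalue $1$, one has $\bg_1(\bar{z}_1^*)C_1(\bar{z}_1^*,G_1(\bar{z}_1^*))^k\bv^{C_1}(\bar{z}_1^*)=0$ for all $k\ge 0$, and summing the partial Neumann series (which is exactly the relation $\bvarphi_1=\bg_1(I-C_1)^{-1}$ you already cite, taken as $z\uparrow\bar{z}_1^*$) yields $\bvarphi_1(\bar{z}_1^*)\bv^{C_1}(\bar{z}_1^*)=0$. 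This contradicts entrywise positivity of both factors, allowing $\bvarphi_1(\bar{z}_1^*)$ to be $+\infty$ componentwise. You have all the ingredients; drop the Vivanti--Pringsheim wrapper and run this contradiction instead. Once $\bg_1(\bar{z}_1^*)\bv^{C_1}(\bar{z}_1^*)\ne 0$ is secured, your positivity upgrade via $\bu^{C_1}(\bar{z}_1^*)>\bzero$ and the nonnegative real-axis limit is fine.

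For case~(2) you overclaim. The lemma gives the formula~(\ref{eq:tildevarphiI11}) for $\tilde{\bvarphi}_{1,1}^I$ but does \emph{not} assert it is nonzero; the paper explicitly leaves this open in the remark following Lemma~\ref{le:varphi1_limit_typeII}, and Theorem~\ref{th:mainresults} absorbs the ambiguity into the unspecified integer $l_1$. Your Vivanti--Pringsheim argument is adequate here only for the weaker statement actually needed, namely that \emph{some} odd-index coefficient survives (so that $\bar{z}_1^*$ is a genuine branch point); it cannot, and you should not claim it can, single out $k=1$.
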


\begin{proof}
(1) In the case of Type I ($\psi_1(\bar{z}_1^*)=1$), since $r_1=\bar{z}_1^*$ and $\spr(\tilde{G}_1(0))=\spr(G_1(\bar{z}_1^*))<r_2$, $\tilde{\bg}_1(\zeta)$ in formula (\ref{eq:varphi1_gf2}) is analytic at $\zeta=0$. $\tilde{f}_1(1,\zeta)$ in formula (\ref{eq:varphi1_gf2}) is also analytic at $\zeta=0$. 
By Proposition \ref{pr:tildef1_limit}, since $\psi_1(\bar{z}_1^*)=1$, the point $\zeta=0$ is a zero of $\tilde{f}_1(1,\zeta)$ with multiplicity one. 
Hence, $\zeta=0$ is probably a pole of $\tilde{\bvarphi}_1(\zeta)$ with order one and the Puiseux series expansion for $\bvarphi_1(z)$ is given by formula (\ref{eq:varphi1_expansion_Ib}). Derivation of the expression for $\tilde{\bvarphi}_{1,-1}^I$ is given in Appendix \ref{sec:get_coefficients}. 
Suppose $\bg_1(\bar{z}_1^*) \bv^{C_1}(\bar{z}_1^*) = 0$. Then, since $\bv^{C_1}(\bar{z}_1^*)$ is the right eigenvalue of $C_1(\bar{z}_1^*,G_1(\bar{z}_1^*))$ with respect to the eigenvalue $\psi_1(\bar{z}_1^*)=1$, we have $\bg_1(\bar{z}_1^*) C_1(\bar{z}_1^*,G_1(\bar{z}_1^*))^k \bv^{C_1}(\bar{z}_1^*) = 0$ for any $k\ge 1$, and this leads us to
\[
0 = \lim_{K\to\infty} \sum_{k=0}^K \bg_1(\bar{z}_1^*) C_1(\bar{z}_1^*,G_1(\bar{z}_1^*))^k \bv^{C_1}(\bar{z}_1^*) 
= \bvarphi_1(\bar{z}_1^*) \bv^{C_1}(\bar{z}_1^*). 
\]
Since $\bv^{C_1}(\bar{z}_1^*)>\bzero^\top$, this contradicts to positivity of $\bvarphi_1(\bar{z}_1^*)$, where we include the case where some entries of $\bvarphi_1(\bar{z}_1^*)$ diverge to positive infinity. Hence, we have $\bg_1(\bar{z}_1^*) \bv^{C_1}(\bar{z}_1^*) \ne 0$, and $\tilde{\bvarphi}_{1,-1}^I$ is positive. This implies that $\zeta=0$ is definitely a pole of $\tilde{\bvarphi}_1(\zeta)$ with order one. 

(2) In the case of Type I ($\psi_1(\bar{z}_1^*)<1$), because of the same reason, $\tilde{\bg}_1(\zeta)$ in formula (\ref{eq:varphi1_gf2}) is analytic at $\zeta=0$. Since $\psi_1(\bar{z}_1^*)<1$, we have $\tilde{f}_1(1,0)\ne 0$. 
Hence, $\tilde{\bvarphi}_1(\zeta)$ is analytic at $\zeta=0$ and the Puiseux series expansion for $\bvarphi_1(z)$ is given by formula (\ref{eq:varphi1_expansion_Ic}). Derivation of the expression for $\tilde{\bvarphi}_{1,1}^I$ is given in Appendix \ref{sec:get_coefficients}. 
\end{proof}

%
\begin{lemma}[Type II] \label{le:varphi1_limit_typeII}
(1) Type II ($\eta_2^{(c)}<\theta_2^{(c)}$).\quad The point $z=r_1$ is a pole of $\bvarphi_1(z)$ with order one and its Laurent series expansion is represented as 
\begin{equation}
\bvarphi_1(z) 
= \sum_{k=-1}^\infty \bvarphi_{1,k}^{II}\,(r_1-z)^k, 
\label{eq:varphi1_expansion_IIa}
\end{equation}
where 
\begin{align}
\bvarphi_{1,-1}^{II} 
&= \frac{c_{pole}^{\bvarphi_2} \bu^{C_2}(r_2) A_{1,*}^{(2)}(r_2) N_2(r_2) \bv^{R_2}(r_2)}{\underline{\zeta}_{2,z}(r_1)}\, \bu_{s_0}^{G_1}(r_1) \bigl( I-C_1(r_1,G_1(r_1)) \bigr)^{-1} > \bzero^\top 
\label{eq:varphiII1m1}
\end{align}
and $\bv^{R_2}(r_2)$ is the right eigenvector of $R_2(r_2)$ with respect to the eigenvalue $r_1^{-1}$.

(2) Type II ($\eta_2^{(c)}=\theta_2^{(c)}$ and $\psi_1(\bar{z}_1^*)>1$).\quad The point $z=\bar{z}_1^*$ is a pole of $\bvarphi_1(z)$ with order two and its Laurent series expansion is represented as 
\begin{equation}
\bvarphi_1(z) 
= \sum_{k=-2}^\infty \bvarphi_{1,k}^{II}\,(r_1-z)^k, 
\label{eq:varphi1_expansion_IIb}
\end{equation}
where 
\begin{align}
\bvarphi_{1,-2}^{II} &=
\frac{c_{pole}^{\bvarphi_2} \bu^{C_2}(r_2) A_{1,*}^{(2)}(r_2) N_2(r_2) \bv^{R_2}(r_2) \bu_{s_0}^{G_1}(r_1) \bv^{C_1}(r_1)}{\underline{\zeta}_{2,z}(r_1)\, \psi_{1,z}(r_1)}\, \bu^{C_1}(r_1) > \bzero^\top. 
\label{eq:varphiII1m2}
\end{align}

(3) Type II ($\eta_2^{(c)}=\theta_2^{(c)}$ and $\psi_1(\bar{z}_1^*)=1$).\quad The point $z=\bar{z}_1^*$ is a branch point of $\bvarphi_1(z)$ with order one and it is also a pole with order two. The Puiseux series expansion for $\bvarphi_1(z)$ around $z=\bar{z}_1^*$ is represented as 
\begin{equation}
\bvarphi_1(z) 
= \tilde{\bvarphi}_1((\bar{z}_1^*-z)^{\frac{1}{2}})) 
= \sum_{k=-2}^\infty \tilde{\bvarphi}_{1,k}^{II}\,(\bar{z}_1^*-z)^{\frac{k}{2}}, 
\label{eq:varphi1_expansion_IIc}
\end{equation}
where 
\begin{align}
\tilde{\bvarphi}_{1,-2}^{II} &=
\frac{c_{pole}^{\bvarphi_2} \bu^{C_2}(r_2) A_{1,*}^{(2)}(r_2) N_2(r_2) \bv^{R_2}(r_2) \bu_{s_0}^{G_1}(\bar{z}_1^*) \bv^{C_1}(\bar{z}_1^*)}{\tilde{\alpha}_{s_0,1}^{G_1}\, \tilde{\lambda}_\zeta^{C_1}(0)}\, \bu^{C_1}(\bar{z}_1^*) > \bzero^\top. 
\label{eq:tildevarphiII1m2}
\end{align}

(4) Type II ($\eta_2^{(c)}=\theta_2^{(c)}$ and $\psi_1(\bar{z}_1^*)<1$).\quad The point $z=\bar{z}_1^*$ is a branch point of $\bvarphi_1(z)$ with order one and it is also a pole with order one. The Puiseux series expansion for $\bvarphi_1(z)$   $z=\bar{z}_1^*$ is represented as 
\begin{equation}
\bvarphi_1(z) 
= \tilde{\bvarphi}_1((\bar{z}_1^*-z)^{\frac{1}{2}})) 
= \sum_{k=-1}^\infty \tilde{\bvarphi}_{1,k}^{II}\,(\bar{z}_1^*-z)^{\frac{k}{2}}, 
\label{eq:varphi1_expansion_IId}
\end{equation}
where 
\begin{align}
\tilde{\bvarphi}_{1,-1}^{II} 
&= \frac{c_{pole}^{\bvarphi_2} \bu^{C_2}(r_2) A_{1,*}^{(2)}(r_2) N_2(r_2) \bv^{R_2}(r_2)}{-\tilde{\alpha}_{s_0,1}^{G_1}}\, \bu_{s_0}^{G_1}(\bar{z}_1^*) \bigl( I-C_1(\bar{z}_1^*,G_1(\bar{z}_1^*)) \bigr)^{-1} > \bzero^\top. 
\label{eq:tildevarphiII1m1}
\end{align}
\end{lemma}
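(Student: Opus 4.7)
The plan is to substitute the local series expansions for $\bvarphi_2(G_1(z))$ and $\bvarphi_2^{\hat{C}_2}(z,G_1(z))$ provided by Propositions \ref{pr:limit_varphi2G1} and \ref{pr:limit_varphi2C2G1} into the key expression (\ref{eq:varphi1_gf1}) (resp.\ its $\zeta$-rescaled form (\ref{eq:varphi1_gf2}) when $r_1=\bar{z}_1^*$), and to combine them with the scalar singularity analysis of the denominator $f_1(1,z)$ (resp.\ $\tilde{f}_1(1,\zeta)$) supplied by Propositions \ref{pr:f1_limit} and \ref{pr:tildef1_limit} together with the rank-one representation of $\adj(I-C_1(r_1,G_1(r_1)))$ from Propositions \ref{pr:adjC1} and \ref{pr:adjC1_zs}. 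The four subcases correspond, in a transparent way, to the four rows of Table \ref{tab:varphi1_singularity_II}: for each subcase I classify which of the factors in (\ref{eq:varphi1_gf1})/(\ref{eq:varphi1_gf2}) is analytic, a simple pole, a branch point, or a combination, and then multiply the leading parts.

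In subcase (1), $\eta_2^{(c)}<\theta_2^{(c)}$ forces $r_1=e^{\bar{\eta}_1^{(c)}}<e^{\theta_1^{(c)}}\le\bar{z}_1^*$, so by Lemma \ref{le:varphi1_extended} and Assumption \ref{as:G1_eigen_z1max}, $G_1(z)$ and $(I-C_1(z,G_1(z)))^{-1}$ are entry-wise analytic at $z=r_1$. The only singular contribution is the simple pole of $\bg_1(z)$, coming from the eigenvalue $\alpha_{s_0}(r_1)=r_2$ of $G_1(r_1)$. Collecting $(r_1-z)^{-1}$-coefficients of $\bvarphi_2^{\hat{C}_2}(z,G_1(z))-\bvarphi_2(G_1(z))$ via Propositions \ref{pr:limit_varphi2G1} and \ref{pr:limit_varphi2C2G1} and multiplying by the analytic factor $(I-C_1(r_1,G_1(r_1)))^{-1}$ gives a row vector proportional to $\bu_{s_0}^{G_1}(r_1)(I-C_1(r_1,G_1(r_1)))^{-1}$. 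To bring this into the compact form (\ref{eq:varphiII1m1}) I will apply the Wiener--Hopf factorization (\ref{eq:WFfact2}) of $I-C(z,w)$ at $(z,w)=(r_1,r_2)$, using that $r_1^{-1}$ is then an eigenvalue of $R_2(r_2)$ with right eigenvector $\bv^{R_2}(r_2)$ (the $x_2$-analogue of Proposition \ref{pr:sprG1}), which converts $\bu^{C_2}(r_2)(C_2(r_1,r_2)-I)\bv_{s_0}^{G_1}(r_1)$ into $-\bu^{C_2}(r_2)A_{1,*}^{(2)}(r_2)N_2(r_2)\bv^{R_2}(r_2)$ up to the scalar $\bu_{s_0}^{G_1}(r_1)\bv^{C_1}(r_1)$-type factor.

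Subcase (2) differs from (1) only by the additional vanishing $f_1(1,r_1)=0$ at the same point $r_1$, so the simple pole of $\bg_1(z)$ compounds with the simple pole $(I-C_1)^{-1}\sim (\psi_{1,z}(r_1)(r_1-z))^{-1}\bv^{C_1}(r_1)\bu^{C_1}(r_1)$ obtained from Propositions \ref{pr:f1_limit} and \ref{pr:adjC1}, producing a genuine double pole whose leading vector is the rank-one projection onto $\bu^{C_1}(r_1)$ given by (\ref{eq:varphiII1m2}). Subcases (3) and (4) are the analogues of (2) and (1) at $r_1=\bar{z}_1^*$: I will work in $\zeta=(\bar{z}_1^*-z)^{1/2}$, use (\ref{eq:varphi1_gf2}), the Puiseux expansion (\ref{eq:varphi2G1_3})--(\ref{eq:varphi2C2G1_expansion3}), and Propositions \ref{pr:tildef1_limit}, \ref{pr:adjC1_zs}. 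Subcase (3) doubles $\tilde{\bvarphi}_1(\zeta)$ to a $\zeta^{-2}$ (i.e.\ $(\bar{z}_1^*-z)^{-1}$) singularity, while subcase (4) has $\tilde{f}_1(1,0)\ne 0$ so only the simple $\zeta^{-1}$ pole from $\tilde{\bvarphi}_2$ survives.

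The main obstacle will be the algebraic simplification that converts the raw leading coefficient, which is expressed as a difference $\bvarphi_{2,-1}^{\hat{C}_2}-\bvarphi_{2,-1}^{G_1}$ paired against $\bv^{C_1}$ or against $(I-C_1)^{-1}$, into the compact Perron-vector form appearing in (\ref{eq:varphiII1m1})--(\ref{eq:tildevarphiII1m1}); this relies on the Wiener--Hopf factorization (\ref{eq:WFfact2}) at the critical point. A secondary but essential task is verifying strict positivity of every leading coefficient: the scalars $c_{pole}^{\bvarphi_2}$, $\psi_{1,z}(r_1)$, $-\tilde{\alpha}_{s_0,1}^{G_1}$, $-\tilde{\lambda}_\zeta^{C_1}(0)$ are already positive by Lemmas \ref{le:varphi1_limit_typeIa}--\ref{le:varphi1_limit_typeIbc} and Propositions \ref{pr:limit_eigenG1}, \ref{pr:tildef1_limit}; the vectors $\bu^{C_1}$, $\bv^{C_1}$, $\bu^{C_2}$, $\bv^{R_2}$ are Perron-positive by irreducibility (Assumption \ref{as:irreducibleR1R2} and Proposition \ref{pr:uC1_positivity}), and non-degeneracy of the remaining inner products $\bu_{s_0}^{G_1}\bv^{C_1}$, $\bu^{C_2}A_{1,*}^{(2)}N_2\bv^{R_2}$ follows from the same positivity-of-$\bvarphi_1$ argument used at the end of the proof of Lemma \ref{le:varphi1_limit_typeIbc}.
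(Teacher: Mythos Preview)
Your proposal is correct and follows essentially the same route as the paper: for each subcase you identify which factor of (\ref{eq:varphi1_gf1}) or (\ref{eq:varphi1_gf2}) contributes the singularity, read off the leading coefficient from Propositions \ref{pr:limit_varphi2G1}--\ref{pr:limit_varphi2C2G1} and \ref{pr:f1_limit}/\ref{pr:tildef1_limit} together with the rank-one adjugate formulae, and then simplify via the Wiener--Hopf factorization. Two small remarks: the simplification actually splits into two steps, namely the eigen-identity $\bu^{C_2}(r_2)(C_2(r_1,r_2)-I)=\bu^{C_2}(r_2)A_{1,*}^{(2)}(r_2)(r_1 I-G_2(r_2))$ followed by the WH-based relation $\bv_{s_0}^{G_1}(r_1)=(r_1 I-G_2(r_2))^{-1}N_2(r_2)\bv^{R_2}(r_2)$ (so there is no extraneous minus sign); and the positivity of $\bu^{C_2}(r_2)A_{1,*}^{(2)}(r_2)N_2(r_2)\bv^{R_2}(r_2)$ and of $\bu_{s_0}^{G_1}\bv^{C_1}$ comes directly from Proposition \ref{pr:uC1_positivity} and Perron--Frobenius, not from the $\bvarphi_1$-positivity argument.
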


\begin{proof}
(1) In the case of Type II ($\eta_2^{(c)}<\theta_2^{(c)}$), since $r_1=e^{\bar{\eta}_1^{(c)}}<e^{\theta_1^{(c)}}$, we have $f_1(1,r_1)\ne 0$. Since $\spr(G_1(r_1))=\underline{\zeta}_2(r_1)=e^{\eta_2^{(c)}}=r_2$, the point $z=r_1$ is probably a pole of $\bg_1(z)$ with order one. 
Hence, $z=r_1$ is probably a pole of $\bvarphi_1(z)$ with order one and the Laurent series expansion for $\bvarphi_1(z)$ is given by formula (\ref{eq:varphi1_expansion_IIa}). Derivation of the expression for $\bvarphi_{1,-1}^{II}$ is given in Appendix \ref{sec:get_coefficients}. 
By Proposition \ref{pr:uC1_positivity}, $\bu^{C_2}(r_2) A_{1,*}^{(2)}(r_2) N_2(r_2) \bv^{R_2}(r_2)>0$, and we know that $(I-C_1(r_1,G_1(r_1)))^{-1}$ is positive. Hence, $\bvarphi_{1,-1}^{II}$ is positive, and this implies that $z=r_1$ is definitely a pole of $\bvarphi_1(z)$ with order one. 

(2) In the case of Type II ($\eta_2^{(c)}=\theta_2^{(c)}$ and $\psi_1(\bar{z}_1^*)>1$), since $r_1=e^{\bar{\eta}_1^{(c)}}=e^{\theta_1^{(c)}}<\bar{z}_1^*$, we have $\psi_1(r_1)=1$ and, by Proposition \ref{pr:f1_limit}, the point $z=r_1$ is a zero of $f_1(1,z)$ with multiplicity one. Because of the same reason as that used in (1), $z=r_1$ is probably a pole of $\bg_1(z)$ with order one. 
Hence, $z=r_1$ is probably a pole of $\bvarphi_1(z)$ with order two and the Laurent series expansion for $\bvarphi_1(z)$ is given by formula (\ref{eq:varphi1_expansion_IIb}). Derivation of the expression for $\bvarphi_{1,-2}^{II}$ is given in Appendix \ref{sec:get_coefficients}. 
We have $\bu^{C_2}(r_2) A_{1,*}^{(2)}(r_2) N_2(r_2) \bv^{R_2}(r_2)>0$ and $\bu_{s_0}^{G_1}(r_1) \bv^{C_1}(r_1)>0$. Hence, $\bvarphi_{1,-1}^{II}$ is positive, and this implies that $z=r_1$ is definitely a pole of $\bvarphi_1(z)$ with order two.

(3) In the case of Type II ($\eta_2^{(c)}=\theta_2^{(c)}$ and $\psi_1(\bar{z}_1^*)=1$), we have $r_1=e^{\bar{\eta}_1^{(c)}}=e^{\theta_1^{(c)}}=\bar{z}_1^*$. Since $\spr(\tilde{G}_1(0))=\spr(G_1(\bar{z}_1^*))=r_2$, the point $\zeta=0$ is probably a pole of $\tilde{\bg}_1(\zeta)$ with order one. 
By Proposition \ref{pr:tildef1_limit}, since $\psi_1(\bar{z}_1^*)=1$, $\zeta=0$ is a zero of $\tilde{f}_1(1,\zeta)$ with multiplicity one. 
Hence, $\zeta=0$ is probably a pole of $\tilde{\bvarphi}_1(\zeta)$ with at most order two and the Puiseux series expansion for $\bvarphi_1(z)$ is given by formula (\ref{eq:varphi1_expansion_IIc}). Derivation of the expression for $\tilde{\bvarphi}_{1,-2}^{II}$ is given in Appendix \ref{sec:get_coefficients}. 
In the same reason as that used in (2), $\tilde{\bvarphi}_{1,-2}^{II}$ is positive, and this implies that $\zeta=0$ is definitely a pole of $\tilde{\bvarphi}_1(\zeta)$ with order two.

(4) In the case of Type II ($\eta_2^{(c)}=\theta_2^{(c)}$ and $\psi_1(\bar{z}_1^*)<1$), we have $r_1=\bar{z}_1^*$ and the point $\zeta=0$ is probably a pole of $\tilde{\bg}_1(\zeta)$ with order one. Since $\psi_1(\bar{z}_1^*)<1$, we have $\tilde{f}_1(1,0)\ne 0$. 
Hence, $\zeta=0$ is probably a pole of $\tilde{\bvarphi}_1(\zeta)$ with order one and the Puiseux series expansion for $\bvarphi_1(z)$ is given by formula (\ref{eq:varphi1_expansion_IId}). Derivation of the expression for $\tilde{\bvarphi}_{1,-1}^{II}$ is given in Appendix \ref{sec:get_coefficients}. 
In the same reason as that used in (1), $\tilde{\bvarphi}_{1,-1}^{II}$ is positive, and this implies that $\zeta=0$ is definitely a pole of $\tilde{\bvarphi}_1(\zeta)$ with order one. 
\end{proof}

\begin{remark}
We have not yet clarified whether the coefficient vector $\tilde{\bvarphi}_{1,1}^I$ in Lemma \ref{le:varphi1_limit_typeIbc} is nonzero or not. We leave this point as a further study. 
\end{remark}

%
%

\appendix

%
%
\section{Proof of Lemma \ref{le:barGamma_bounded}} \label{sec:proof_barGamma_bounded}

\begin{proof}[Proof of Lemma \ref{le:barGamma_bounded}]
First we note that, for $n\ge 1$, $j\in S_0$ and $(s_1,s_2)\in\mathbb{R}^2$, $C(e^{s_1},e^{s_2})^n$ satisfies 
\begin{align}
&\big[ C(e^{s_1},e^{s_2})^n \big]_{jj} 
= \sum_{\bk_{(n)}\in\mathbb{H}^n} \sum_{\bl_{(n)}\in\mathbb{H}^n} \big[ A_{k_1,l_1} A_{k_2,l_2} \cdots A_{k_n,l_n} \big]_{jj}\,e^{s_1\sum_{p=1}^n k_p+s_2\sum_{p=1}^n l_p},
\label{eq:Cs1s2n}
\end{align}
where $\bk_{(n)}=(k_1,k_2,...,k_n)$ and $\bl_{(n)}=(l_1,l_2,...,l_n)$.
Consider the Markov chain $\{\tilde{\bY}_n\}=\{(\tilde{X}_{1,n},\tilde{X}_{2,n},\tilde{J}_n)\}$ generated by $\{A_{k,l},k,l\in\mathbb{H}\}$ (see Definition \ref{def:inducedMC_Akl}) and assume that $\{\tilde{\bY}_n\}$ starts from the state $(0,0,j)$. Since $\{A_{k,l},k,l\in\mathbb{H}\}$ is irreducible and aperiodic, there exists $n_0\ge 1$ such that, for every $j\in S_0$, $\mathbb{P}(\tilde{\bY}_{n_0}=(1,0,j)\,|\,\tilde{\bY}_0=(0,0,j))>0$, $\mathbb{P}(\tilde{\bY}_{n_0}=(0,1,j)\,|\,\tilde{\bY}_0=(0,0,j))>0$, $\mathbb{P}(\tilde{\bY}_{n_0}=(-1,0,j)\,|\,\tilde{\bY}_0=(0,0,j))>0$ and $\mathbb{P}(\tilde{\bY}_{n_0}=(0,-1,j)\,|\,\tilde{\bY}_0=(0,0,j))>0$. 
This implies that, for some $\bk_{1,(n_0)}^{(j)}$, $\bl_{1,(n_0)}^{(j)}$, $\bk_{2,(n_0)}^{(j)}$, $\bl_{2,(n_0)}^{(j)}$, $\bk_{3,(n_0)}^{(j)}$, $\bl_{3,(n_0)}^{(j)}$, $\bk_{4,(n_0)}^{(j)}$ and $\bl_{4,(n_0)}^{(j)}$ in $\mathbb{H}^{n_0}$, we have 
\begin{align*}
&b_{1,j}=\big[ A_{k_{1,1}^{(j)},l_{1,1}^{(j)}} A_{k_{1,2}^{(j)},l_{1,2}^{(j)}} \cdots A_{k_{1,n_0}^{(j)},l_{1,n_0}^{(j)}} \big]_{jj}>0,\quad \sum_{p=1}^{n_0} k_{1,p}^{(j)}=1,\quad \sum_{p=1}^{n_0} l_{1,p}^{(j)} =0,\\
&b_{2,j}=\big[ A_{k_{2,1}^{(j)},l_{2,1}^{(j)}} A_{k_{2,2}^{(j)},l_{2,2}^{(j)}} \cdots A_{k_{2,n_0}^{(j)},l_{2,n_0}^{(j)}} \big]_{jj}>0,\quad \sum_{p=1}^{n_0} k_{2,p}^{(j)}=0,\quad \sum_{p=1}^{n_0} l_{2,p}^{(j)} =1,\\
&b_{3,j}=\big[ A_{k_{3,1}^{(j)},l_{3,1}^{(j)}} A_{k_{3,2}^{(j)},l_{3,2}^{(j)}} \cdots A_{k_{3,n_0}^{(j)},l_{3,n_0}^{(j)}} \big]_{jj}>0,\quad \sum_{p=1}^{n_0} k_{3,p}^{(j)}=-1,\quad \sum_{p=1}^{n_0} l_{3,p}^{(j)} =0,\\
&b_{4,j}=\big[ A_{k_{4,1}^{(j)},l_{4,1}^{(j)}} A_{k_{4,2}^{(j)},l_{4,2}^{(j)}} \cdots A_{k_{4,n_0}^{(j)},l_{4,n_0}^{(j)}} \big]_{jj}>0,\quad \sum_{p=1}^{n_0} k_{4,p}^{(j)}=0,\quad \sum_{p=1}^{n_0} l_{4,p}^{(j)} =-1.
\end{align*} 
Hence, the sum of any row of $C(e^{s_1},e^{s_2})^{n_0}$ is greater than or equal to $b^*(e^{s_1}+e^{s_2}+e^{-s_1}+e^{-s_2})$, where
\[
b^* = \min_{j\in S_0} \min_{1\le i\le 4} b_{i,j}>0, 
\]
and we obtain 
\begin{align}
\chi(e^{s_1},e^{s_2})
=\spr(C(e^{s_1},e^{s_2})) 
= \spr(C(e^{s_1},e^{s_2})^{n_0})^{\frac{1}{n_0}}
\ge \left( b^*(e^{s_1}+e^{s_2}+e^{-s_1}+e^{-s_2}) \right)^{\frac{1}{n_0}}, 
\end{align}
where we use the fact that, for a nonnegative square matrix $A=(a_{i,j})$, $\spr(A)\ge \min_i \sum_j a_{i,j}$ (see, for example, Theorem 8.1.22 of Horn and Johnson \cite{Horn85}). This means that $\chi(e^{s_1},e^{s_2})$ is unbounded in any direction, and since $\chi(e^{s_1},e^{s_2})$ is convex in $(s_1,s_2)$, we see that $\bar{\Gamma}$ is a bounded set. 
\end{proof}

%
%
\section{Proof of Lemma \ref{le:mgf_convergence}} \label{sec:proof_lemma_mgf_convergence}

\begin{proof}
We denote by $J_k(\lambda)$ the $k$-dimensional Jordan block of eigenvalue $\lambda$. 
Note that the $n$-th power of $J_k(\lambda)$ is given by
\begin{equation}
J_k(\lambda)^n = \begin{pmatrix}
{}_nC_0\lambda^n & {}_nC_1\lambda^{n-1} & \cdots & {}_nC_{k-1}\lambda^{n-k+1} \cr
&  \ddots & \ddots & & \cr
&  & {}_nC_0\lambda^n & {}_nC_1\lambda^{n-1} \cr
& & & {}_nC_0\lambda^n 
\end{pmatrix}. 
\end{equation}
Without loss of generality, the Jordan canonical form $J$ of $X$ is represented as
\begin{equation}
J 
= T^{-1} X T 
= \begin{pmatrix}
J_{k_1}(\lambda_1) & & & \cr
 & J_{k_2}(\lambda_2) & & \cr
 &  & \ddots & \cr
 & & & J_{k_l}(\lambda_l) 
\end{pmatrix},
\end{equation}
where $T$ is a nonsingular matrix, $\lambda_1,\lambda_2,...,\lambda_l$ are the eigenvalues of $X$ and $k_1,k_2,...,k_l$ are positive integers satisfying $k_1+\cdots+k_l=m$. 
We have 
\begin{align}
\Big( \sum_{n=0}^\infty |\ba_n X^n| \Big)^\top 
&\le \sum_{n=0}^\infty |T^{-1}|^\top |J|^n |T|^\top |\ba_n|^\top \cr
&= \sum_{n=0}^\infty \left(\bone^\top\otimes |T^{-1}|^\top \right) \left( \diag(|\ba_n|)\otimes |J|^n \right) {\rm vec}(|T|^\top) \cr
&= \left(\bone^\top\otimes (|T^{-1}|)^\top \right) \diag\Big( \sum_{n=0}^\infty [|\ba_n|]_j |J|^n,\,j=1,2,...,s_0 \Big) {\rm vec}(|T|^\top), 
\end{align}
where we use the identity $\vecM(ABC)=(C^\top\otimes A) \vecM(B)$ for matrices $A$, $B$ and $C$. Note that 
\begin{align}
&\sum_{n=0}^\infty [|\ba_n|]_j |J|^n = \diag\Big( \sum_{n=0}^\infty [|\ba_n|]_j J_{k_s}(|\lambda_s|)^n,\,s=1,2,...,l \Big). 
\end{align}
and we have, for $t,u\in\{1,2,...,k_s\}$ such that $t\le u$,  
\begin{align}
&\sum_{n=0}^\infty \bigl[ [|\ba_n|]_j J_{k_s}(|\lambda_s|)^n \bigr]_{t,u} \cr
=&\ \sum_{n=0}^{k_s-2} \bigl[ [|\ba_n|]_j J_{k_s}(|\lambda_s|)^n \bigr]_{t,u} + \sum_{n=k_s-1}^\infty \frac{n!}{(u-t)! (n-u+t)!} [|\ba_n|]_j\,|\lambda_s|^{n-u+t} \cr
\le&\ \sum_{n=0}^{k_s-2} \bigl[ [|\ba_n|]_j J_{k_s}(|\lambda_s|)^n \bigr]_{t,u} + \sum_{n=u-t}^\infty \frac{n!}{(n-u+t)!} [|\ba_n|]_j\,|\lambda_s|^{n-u+t} \cr
=& \sum_{n=0}^{k_s-2} \bigl[ [|\ba_n|]_j J_{k_s}(|\lambda_s|)^n \bigr]_{t,u} + \frac{d^{u-t}}{d\,w^{u-t}}\,[\bphi_{abs}(w)]_j \Big|_{w=|\lambda_s|}, 
\label{eq:anJlambda}
\end{align}
where $\bphi_{abs}(w)=\sum_{n=0}^\infty |\ba_n|\,w^n$. For $w\in\mathbb{C}$ such that $|w|<r$, since $\bphi(w)$ is absolutely convergent, $\bphi_{abs}(w)$ is also absolutely convergent and analytic. Hence, $\bphi_{abs}(w)$ is differentiable any times and we know that the second term on the last line of formula (\ref{eq:anJlambda}) is finite since $|\lambda_s|\le \spr(X)<r$; this completes the proof.
\end{proof}

%
%
\section{Proof of Lemma \ref{le:Gmatrix}} \label{sec:proof_prop_Gmatrix}

\begin{proof}
{\it (i)}\quad 
First, we note that if $z$ is a positive real number in $[z_1^{min},z_1^{max}]$, then $N_1(z)$, $R_1(z)$ and $G_1(z)$ are finite. 
Let $z$ be a complex number satisfying $|z|\in[z_1^{min},z_1^{max}]$. Since $A_{i,j},\,i,j\in\mathbb{H}$, are nonnegative, we have for $j\in\mathbb{H}$, 
\[
|A_{*,j}(z)| = \biggl| \sum_{i\in\mathbb{H}} A_{i,j} z^{i} \biggr| 
\le \sum_{i\in\mathbb{H}} A_{i,j} |z|^i = A_{*,j}(|z|), 
\]
and for $n\ge 0$,  
\[
|Q_{11}^{(n)}(z)| = \biggl| \sum_{\bi_{(n)}\in\scrI_n} A_{*,i_1}(z) A_{*,i_2}(z) \cdots A_{*,i_n}(z) \biggr|
\le Q_{11}^{(n)}(|z|). 
\]
From this and the fact that $N_1(|z|)=\sum_{n=0}^\infty Q_{11}^{(n)}(|z|)$ is finite (convergent), we see that $N_1(z)=\sum_{n=0}^\infty Q_{11}^{(n)}(z)$ converges absolutely and obtain $|N_1(z)|\le N_1(|z|)$. 
Analogously, we see that both $G_1(z)$ and $R_1(z)$ also converge absolutely and satisfy expression (\ref{eq:absNRG}).

{\it (ii)}\quad 
Since, for $n\ge 1$, $\scrI_{U,1,n}$ and $\scrI_{U,1,n}$ satisfy 
\begin{align*}
\scrI_{D,1,n} &= \biggl\{\bi_{(n)}\in\mathbb{H}^n:\ \sum_{l=1}^k i_l\ge 0\ \mbox{for $k\in\{1,2,...,n-2\}$},\ \sum_{l=1}^{n-1} i_l=0\ \mbox{and}\ i_n=-1 \biggr\} \cr
&= \bigl\{(\bi_{(n-1)},-1):\ \bi_{(n-1)}\in\scrI_{n-1} \bigr\}, \\
\scrI_{U,1,n} &= \biggl\{\bi_{(n)}\in\mathbb{H}^n:\ i_1=1,\,\sum_{l=2}^k i_l\ge 0\ \mbox{for $k\in\{2,...,n-1\}$}\ \mbox{and} \sum_{l=2}^n i_l=0 \biggr\} \cr 
&= \bigl\{(1,\bi_{(n-1)}):\ \bi_{(n-1)}\in\scrI_{n-1} \bigr\},
\end{align*}
where $\bi_{(n)}=(i_1,i_2,...,i_n)$, we have 
\begin{align*}
&G_1(z) = \sum_{n=1}^\infty Q_{11}^{(n-1)}(z) A_{*,-1}(z)  = N_1(z) A_{*,-1}(z), \\
&R_1(z) = \sum_{n=1}^\infty A_{*,1}(z) Q_{11}^{(n-1)}(z) = A_{*,1}(z) N_1(z). 
\end{align*}. 

{\it (iii)}\quad 
We prove only equation (\ref{eq:G1equation}) since equation (\ref{eq:R1equation}) can analogously be proved. 
For $n\ge 3$, $\scrI_{D,1,n}$ satisfies 
\begin{align*}
\scrI_{D,1,n} 
&= \biggl\{\bi_{(n)}\in\mathbb{H}^n:\ i_1=0,\ \sum_{l=2}^k i_l\ge 0\ \mbox{for $k\in\{2,...,n-1\}$},\ \sum_{l=2}^n i_l=-1 \biggr\} \cr
&\qquad \bigcup\,\biggl\{\bi_{(n)}\in\mathbb{H}^n:\ i_1=1,\ \sum_{l=2}^k i_l\ge -1\ \mbox{for $k\in\{2,...,n-1\}$},\ \sum_{l=2}^n i_l=-2 \biggr\} \cr
&= \bigl\{(0,\bi_{(n-1)}):\ \bi_{(n-1)}\in\scrI_{D,1,n-1} \bigr\} \cup \bigl\{(1,\bi_{(n-1)}):\ \bi_{(n-1)}\in\scrI_{D,2,n-1} \bigr\}, 
\end{align*}
and $\scrI_{D,2,n}$ satisfies 
\begin{align*}
\scrI_{D,2,n} 
&= \bigcup_{m=1}^{n-1} \biggl\{\bi_{(n)}\in\mathbb{H}^n:\ \sum_{l=1}^k i_l\ge 0\ \mbox{for $k\in\{1,2,...,m-1\}$},\ \sum_{l=1}^{m} i_l=-1,\cr
&\qquad\qquad\qquad \sum_{l=m+1}^k i_l\ge 0\ \mbox{for $k\in\{m+1,m+2,...,n-1\}$}\ \mbox{and} \sum_{l=m+1}^{n} i_l=-1\biggr\} \cr
&= \bigcup_{m=1}^{n-1} \bigl\{(\bi_{(m)},\bi_{(n-m)}):\ \bi_{(m)}\in\scrI_{D,1,m}\ \mbox{and}\ \bi_{(n-m)}\in\scrI_{D,1,m-n} \bigr\}. 
\end{align*}
Hence, we have, for $n\ge 3$, 
\begin{align*}
D_1^{(n)}(z) 
&= A_{*,0}(z) D_1^{(n-1)}(z) + A_{*,1}(z) \sum_{\bi_{(n-1)}\in\scrI_{D,2,n-1}} A_{*,i_1}(z) A_{*,i_2}(z) \cdots A_{*,i_{n-1}}(z) \cr
&= A_{*,0}(z) D_1^{(n-1)}(z) + A_{*,1}(z) \sum_{m=1}^{n-2} D_1^{(m)}(z) D_1^{(n-m-1)}(z), 
\end{align*} 
and obtain
\begin{align*}
G_1(z) 
&= D_1^{(1)}(z) + \sum_{n=2}^\infty A_{*,0}(z) D_1^{(n-1)}(z) + A_{*,1}(z) \sum_{n=3}^\infty \sum_{m=1}^{n-2} D_1^{(m)}(z) D_1^{(n-m-1)}(z) \cr
&= A_{*,-1}(z) + A_{*,0}(z) G_1(z) + A_{*,1}(z) G_1(z)^2, 
\end{align*}
where we use the fact that $D_1^{(1)}(z)=A_{*,-1}(z)$ and $D_1^{(2)}(z)=A_{*,0}(z) D_1^{(1)}(z)=A_{*,0}(z) A_{*,-1}(z)$. 

{\it (iv)}\quad 
For $n\ge 1$, $\scrI_{n}$ satisfies 
\begin{align*}
\scrI_{n} 
&= \biggl\{\bi_{(n)}\in\mathbb{H}^n:\ i_1=0,\ \sum_{l=2}^k i_l\ge 0\ \mbox{for $k\in\{2,...,n-1\}$},\ \sum_{l=2}^n i_l=0 \biggr\} \cr
&\qquad \bigcup\,\biggl\{\bi_{(n)}\in\mathbb{H}^n:\ i_1=1,\ \sum_{l=2}^k i_l\ge -1\ \mbox{for $k\in\{2,...,n-1\}$},\ \sum_{l=2}^n i_l=-1 \biggr\} \cr
&= \{(0,\bi_{(n-1)}):\ \bi_{(n-1)}\in\scrI_{n-1} \} \cr
&\qquad \bigcup_{m=2}^n\,\biggl\{\bi_{(n)}\in\mathbb{H}^n:\ i_1=1,\ \sum_{l=2}^k i_l\ge 0\ \mbox{for $k\in\{2,...,m-1\}$},\ \sum_{l=2}^m i_l=-1, \cr
&\qquad\qquad\quad \sum_{l=m+1}^k i_l\ge 0\ \mbox{for $k\in\{m+1,...,n-1\}$},\ \sum_{l=m+1}^n i_l=0 \biggr\} \cr
&= \{(0,\bi_{(n-1)}):\ \bi_{(n-1)}\in\scrI_{n-1} \} \cr
&\qquad \cup \left(\cup_{m=2}^n \{(1,\bi_{(m-1)},\bi_{(n-m)}):\ \bi_{(m-1)}\in\scrI_{D,1,m-1}, \bi_{(n-m)}\in\scrI_{n-m} \} \right). 
\end{align*}
Hence, we have, for $n\ge 1$, 
\[
Q_{1,1}^{(n)}(z) = A_{*,0}(z) Q_{1,1}^{(n-1)} + \sum_{m=2}^n A_{*,1}(z) D_1^{(m-1)}(z) Q_{1,1}^{(n-m)}(z), 
\] 
and this leads us to 
\begin{align*}
N_1(z) = I + A_{*,0}(z) N_1(z) + A_{*,1}(z) G_1(z) N_1(z).
\end{align*}
From this equation, we immediately obtain equation (\ref{eq:H1N1_relation}).

{\it (v)}\quad 
Substituting $N_1(z) A_{*,-1}(z)$, $A_{*,1}(z) N_1(z)$ and $A_{*,0}(z)+A_{*,1}(z) N_1(z) A_{*,-1}(z)$ for $G_1(z)$, $R_1(z)$ and $H_1(z)$, respectively, in the right hand side of equation (\ref{eq:WFfact}), we obtain the left hand side of the equation via straightforward calculation.
\end{proof}

%
%
\section{Proof of Proposition \ref{pr:sprCzw}} \label{sec:sprCzw}
\begin{proof}
Set $z=r_1 e^{i \theta_1}$ and $w=r_2 e^{i \theta_2}$, where $r_1,r_2>0$, $\theta_1, \theta_2\in[0,2\pi)$ and $i=\sqrt{-1}$. 
For $n\ge 1$ and $j\in S_0$, $C(z,w)^n$ satisfies 
\begin{align}
\Bigl| \big[ C(z,w)^n \big]_{jj} \Bigr|
&= \biggl| \sum_{\bk_{(n)}\in\mathbb{H}^n} \sum_{\bl_{(n)}\in\mathbb{H}^n} \big[ A_{k_1,l_1} A_{k_2,l_2} \cdots A_{k_n,l_n} \big]_{jj} \cr  
&\qquad\qquad\qquad\qquad \cdot r_1^{\sum_{p=1}^n k_p} r_2^{\sum_{p=1}^n l_p} e^{i (\theta_1 \sum_{p=1}^n k_p + \theta_2 \sum_{p=1}^n l_p)} \biggr| \cr 
&\le \sum_{\bk_{(n)}\in\mathbb{H}^n} \sum_{\bl_{(n)}\in\mathbb{H}^n} \big[ A_{k_1,l_1} A_{k_2,l_2} \cdots A_{k_n,l_n} \big]_{jj}\,r_1^{\sum_{p=1}^n k_p} r_2^{\sum_{p=1}^n l_p} \cr
&= \big[ C(|z|,|w|) \big]_{jj}, 
\label{eq:Czwn}
\end{align}
where $\bk_{(n)}=(k_1,k_2,...,k_n)$ and $\bl_{(n)}=(l_1,l_2,...,l_n)$. In this formula, equality holds only when, for every $\bk_{(n)},\bl_{(n)}\in\mathbb{H}^n$ such that $\big[ A_{k_1,l_1} A_{k_2,l_2} \cdots A_{k_n,l_n} \big]_{jj}\ne 0$, $e^{i (\theta_1 \sum_{p=1}^n k_p + \theta_2 \sum_{p=1}^n l_p)}$ takes some common value. 
Consider the Markov chain $\{\tilde{\bY}_n\}=\{(\tilde{X}_{1,n},\tilde{X}_{2,n},\tilde{J}_n)\}$ generated by $\{A_{k,l},k,l\in\mathbb{H}\}$ (see Definition \ref{def:inducedMC_Akl}) and assume that $\{\tilde{\bY}_n\}$ starts from the state $(0,0,j)$. Since $\{A_{k,l},k,l\in\mathbb{H}\}$ is irreducible and aperiodic, there exists $n_0\ge 1$ such that $\mathbb{P}(\tilde{\bY}_{n_0}=(0,0,j)\,|\,\tilde{\bY}_0=(0,0,j))>0$, $\mathbb{P}(\tilde{\bY}_{n_0}=(1,0,j)\,|\,\tilde{\bY}_0=(0,0,j))>0$ and $\mathbb{P}(\tilde{\bY}_{n_0}=(0,1,j)\,|\,\tilde{\bY}_0=(0,0,j))>0$. 
This implies that, for some $\bk_{(n_0)}$, $\bl_{(n_0)}$, $\bk_{(n_0)}'$, $\bl_{(n_0)}'$, $\bk_{(n_0)}''$ and $\bl_{(n_0)}''$ in $\mathbb{H}^{n_0}$, we have 
\begin{align*}
&\big[ A_{k_1,l_1} A_{k_2,l_2} \cdots A_{k_{n_0},l_{n_0}} \big]_{jj}>0,\quad \sum_{p=1}^{n_0} k_p=0,\quad \sum_{p=1}^{n_0} l_p =0,\\
&\big[ A_{k_1',l_1'} A_{k_2',l_2'} \cdots A_{k_{n_0}',l_{n_0}'} \big]_{jj}>0,\quad \sum_{p=1}^{n_0} k_p'=1,\quad \sum_{p=1}^{n_0} l_p' =0,\\
&\big[ A_{k_1'',l_1''} A_{k_2'',l_2''} \cdots A_{k_{n_0}'',l_{n_0}''} \big]_{jj}>0,\quad \sum_{p=1}^{n_0} k_p''=0,\quad \sum_{p=1}^{n_0} l_p'' =1. 
\end{align*} 
Hence, we see that if equality holds in formula (\ref{eq:Czwn}), then $e^{i \theta_1}=e^{i \theta_2}=1$ and both $\theta_1$ and $\theta_2$ must be zero. This implies that if $\theta_1\ne 0$ or $\theta_2\ne 0$, then we have 
\[
\big| [C(z,w)^{n_0}]_{jj} \big| = \Bigl| \big[ C(r_1 e^{i\theta_1},r_2 e^{i\theta_2})^{n_0} \big]_{jj} \Bigr| < \big[ C(r_1,r_2)^{n_0} \big]_{jj} = \big[ C(|z|,|w|)^{n_0} \big]_{jj}, 
\]
and obtain 
\begin{align}
\spr(C(z,w))^{n_0} &\le \spr(|C(z,w)^{n_0}|) < \spr(C(|z|,|w|)^{n_0}) = \spr(C(|z|,|w|))^{n_0},  
\end{align}
where we use Theorem 1.5 of Seneta \cite{Seneta06} and the fact that $C(|z|,|w|))$ is irreducible. Obviously, this implies that $\spr(C(z,w))<\spr(C(|z|,|w|))$. 
\end{proof}

%
%
\section{Proofs of Propositions \ref{pr:varphi2_analytic} and \ref{pr:C1_inequality}} \label{sec:varphis2_C1_proofs}

%
\begin{proof}[Proof of Proposition \ref{pr:varphi2_analytic}]
Let $X=(x_{k,l})$ be an $s_0\times s_0$ complex matrix. By Lemma \ref{le:mgf_convergence}, if $\spr(X)<r_2$, then $\bvarphi_2(X)$ converges absolutely. This means that each element of $\bvarphi_2(X)$ is an absolutely convergent series with $s_0^2$ valuables $x_{11},x_{12},...,x_{s_0,s_0}$ and it is an analytic function with $s_0^2$ valuables in the region $\{X=(x_{k,l})\in\mathbb{C}^{s_0^2}: \spr(X)<r_2\}$.
Therefore, if $G_1(z)$ is entry-wise analytic on $\Omega$ and $\spr(G_1(z))<r_2$, we see that $\bvarphi_2(G_1(z))=(\bvarphi_2\circ G_1)(z)$ is also entry-wise analytic on $\Omega$. 
For $z\in\mathbb{C}$ such that $|z|<r_2$, we have 
\[
\sum_{j=1}^\infty \Bigl| \bnu_{0,j} \hat{C}_2(z,G_1(z))\, z^{j-1} \Bigr|
\le |z|^{-1} \sum_{j=1}^\infty \bnu_{0,j} |z|^j\, \hat{C}_2(|z|,G_1(|z|))  
< \infty. 
\]
Hence, by Lemma \ref{le:mgf_convergence}, if $\spr(X)<r_2$, $\sum_{j=1}^\infty \bnu_{0,j} \hat{C}_2(z,G_1(z))\, X^{j-1}$ converges absolutely, and we see that if $G_1(z)$ is entry-wise analytic on $\Omega$ and $\spr(G_1(z))<r_2$, then $\bvarphi_2^{\hat{C}_2}(z,G_1(z))$ is entry-wise analytic on $\Omega$. 
\end{proof}

%
\begin{proof}[Proof of Proposition \ref{pr:C1_inequality}]
Set $z=r e^{i\theta}$, where $r>0$, $\theta\in[0,2\pi)$ and $i=\sqrt{-1}$. By the definition of $G_1(z)$, we have
\begin{align}
C_1(z,G_1(z)) &= \sum_{n=1}^\infty\,\sum_{\bk_{(n+1)}\in\mathbb{H}^{n+1}}\,\sum_{\bl_{(n)}\in\scrI_{D,1,n}} A^{(1)}_{k_1,1} A_{k_2,l_1} A_{k_3,l_2} \cdots A_{k_{n+1},l_{n}} \cr
&\qquad\qquad\qquad\qquad\qquad \cdot r^{\sum_{p=1}^{n+1} k_p} e^{i (\theta \sum_{p=1}^{n+1} k_p)} + \sum_{k\in\mathbb{H}} A^{(1)}_{k,0} r^{k} e^{i\theta k}, 
\label{eq:C1zG1_Y1n}
\end{align}
where $\bk_{(n+1)}=(k_1,k_2,...,k_{n+1})$ and $\bl_{(n)}=(l_1,l_2,...,l_n)$.
Consider the Markov chain $\{\tilde{\bY}^{(1)}_n\}=\{(\tilde{X}^{(1)}_{1,n},\tilde{X}^{(1)}_{2,n},\tilde{J}^{(1)}_n)\}$ generated by $\{ \{A_{k,l},k,l\in\mathbb{H}\}, \{A^{(1)}_{k,l},k\in\mathbb{H}, l\in\mathbb{H}_+\} \}$ (see Definition \ref{def:inducedMC_Akl12}), and assume that $\{\tilde{\bY}^{(1)}_n\}$ starts from a state in $\{0\}\times\{0\}\times S_0$. 
For $n_0\in\mathbb{N}$, let $\tau_{n_0}$ be the time when the Markov chain enters a state in $\mathbb{Z}\times\{0\}\times S_0$ for the $n_0$-th time. Then, the term $\sum_{p=1}^{n+1} k_p$ (resp.\ $k$) in expression (\ref{eq:C1zG1_Y1n}) indicates that $\tilde{X}^{(1)}_{1,\tau_1}=\sum_{p=1}^{n+1} k_p$ (resp.\ $\tilde{X}^{(1)}_{1,\tau_1}=k$). 
This point analogously holds for $\tilde{X}^{(1)}_{1,\tau_{n_0}}$ when $n_0>1$. Hence, $C_1(z,G_1(z))^{n_0}$ can be represented as 
\begin{align}
C_1(z,G_1(z))^{n_0} = \sum_{k\in\mathbb{Z}} \tilde{D}_k r^k e^{i\theta k}, 
\label{eq:C1zG1_Dre1}
\end{align}
where $k$ indicates that $\tilde{X}^{(1)}_{1,\tau_{n_0}}=k$ and the $(j,j')$-entry of nonnegative square matrix $\tilde{D}_k$ is given as $[\tilde{D}_k]_{j,j'}=\mathbb{P}(\tilde{\bY}^{(1)}_{\tau_{n_0}}=(k,0,j')\,|\,\tilde{\bY}^{(1)}_0=(0,0,j))$. 
We have 
\begin{align}
|C_1(z,G_1(z))^{n_0}| = \left|\sum_{k\in\mathbb{Z}} \tilde{D}_k r^k e^{i\theta k}\right| \le C_1(|z|,G_1(|z|))^{n_0}, 
\label{eq:C1zG1_Dre2}
\end{align}
and equality holds only when, for each $j,j'\in S_0$ and for every $k\in\mathbb{Z}$ such that $[\tilde{D}_k]_{j,j'}>0$, $e^{i\theta k}$ takes some common value. 
Under Assumption \ref{as:Akl_irreducible}, $\{\tilde{\bY}^{(1)}_n\}$ is irreducible and aperiodic, and for any $j,j'\in S_0$, there exists $n_0\ge 1$ such that $\mathbb{P}(\tilde{\bY}^{(1)}_{\tau_{n_0}}=(0,0,j')\,|\,\tilde{\bY}^{(1)}_0=(0,0,j))>0$ and $\mathbb{P}(\tilde{\bY}^{(1)}_{\tau_{n_0}}=(1,0,j')\,|\,\tilde{\bY}^{(1)}_0=(0,0,j))>0$. 
This implies that $[\tilde{D}_0]_{jj'}>0$ and $[\tilde{D}_1]_{jj'}>0$. Hence, if $\theta\ne 0$ ($z\ne |z|$), then we have 
\[
|[C_1(z,G_1(z))^{n_0}]_{j,j'}| < [C_1(|z|,G_1(|z|))^{n_0}]_{j,j'}, 
\label{eq:C1zG1_Dre3}
\]
and obtain 
\begin{align}
\spr(C_1(z,G_1(z))^{n_0}) \le \spr(|C_1(z,G_1(z))^{n_0}|) < \spr(C_1(|z|,G_1(|z|))^{n_0}),  
\end{align}
where we use Theorem 1.5 of Seneta \cite{Seneta06} and the fact that $C_1(|z|,G_1(|z|))$ is irreducible, which is an immediate consequence from Assumption \ref{as:Akl_irreducible}. Obviously, this implies that $\spr(C_1(z,G_1(z))) < \spr(C_1(|z|,G_1(|z|)))$ when $z\ne |z|$. 
\end{proof}

%
%
\section{Proof of Proposition \ref{pr:adjC1}} \label{sec:adjC1}

\begin{proof}[Proof of Proposition \ref{pr:adjC1}]
Since $\psi_1(z_0)=1$, we have
\begin{align}
O &=(I-C_1(z_0,G_1(z_0)))\, \adj(I-C_1(z_0,G_1(z_0))) \cr
&= \adj(I-C_1(z_0,G_1(z_0)))\, (I-C_1(z_0,G_1(z_0))). 
\end{align}
Furthermore, since ${\rm rank}(I-C_1(z_0,G_1(z_0)))=s_0-1$, we have $\adj(I-C_1(z_0,G_1(z_0)))\ne O$. 
Hence, for $i,j\in\{1,2,...,s_0\}$, the $i$-th row vector $\bu_i$  and $j$-th column vector $\bv_j$ composing $\adj(I-C_1(z_0,G_1(z_0)))$ are nonzero and satisfy
\begin{equation}
\bu_i C_1(z_0,G_1(z_0)) = \bu_i, \quad  C_1(z_0,G_1(z_0)) \bv_j= \bv_j. 
\end{equation}
Since $C_1(z_0,G_1(z_0))$ is irreducible and the algebraic multiplicity of the eigenvalue $\psi_1(z_0)$ is one, we see that, for some $s_0\times 1$ vector $\ba=(a_i)$ and $1\times s_0$ vector $\bb=(b_j)$ and for every $i,j\in\{1,2,...,s_0\}$, $\bu_i=a_i \bu^{C_1}(z_0)$ and $\bv_j=\bv^{C_1}(z_0) b_j$.
This implies that $\adj(I-C_1(z_0,G_1(z_0)))=\ba \bu^{C_1}(z_0)=\bv^{C_1}(z_0) \bb$. From this, we obtain $\ba = \ba \bu^{C_1}(z_0) \bv^{C_1}(z_0) = \bv^{C_1}(z_0) \bb \bu^{C_1}(z_0)$, and $\adj(I-C_1(z_0,G_1(z_0)))$ is represented as
\begin{equation}
\adj(I-C_1(z_0,G_1(z_0))) = c\, \bv^{C_1}(z_0) \bu^{C_1}(z_0), 
\label{eq:adjIC1_cvu}
\end{equation}
where $c=\bb \bu^{C_1}(z_0)$. Note that both $\bu^{C_1}(z_0)$ and $\bv^{C_1}(z_0)$ are positive since $C_1(z_0,G_1(z_0))$ is irreducible. 
Next, we determine the coefficient $c$. For $z<z_0$, we have 
\[
\big(I-C_1(z,G_1(z))\big)^{-1} \bv^{C_1}(z) 
= \sum_{k=0}^\infty C_1(z,G_1(z))^k \bv^{C_1}(z) 
= (1-\psi_1(z))^{-1} \bv^{C_1}(z).
\]
From this and Proposition \ref{pr:f1_limit}, we obtain 
\begin{align*}
\frac{\adj(I-C_1(z_0,G_1(z_0)))\bv^{C_1}(z)}{\psi_{1,z}(z_0) f_{1,\lambda}(1,z_0)} 
&= \lim_{z\uparrow z_0} (z_0-z) \big(I-C_1(z,G_1(z))\big)^{-1} \bv^{C_1}(z)
= \frac{\bv^{C_1}(z)}{\psi_{1,z}(z_0)}, 
\end{align*}
where we use the fact that $\psi_1(z)$ is differentiable on $(\underline{z}_1^*,\bar{z}_1^*)$. 
Hence, from equation (\ref{eq:adjIC1_cvu}), we obtain $c=f_{1,\lambda}(1,z_0)$ and this completes the proof. 
\end{proof}

%
%
\section{Proof of Proposition \ref{pr:limit_G1}} \label{sec:limit_G1}

\begin{proof}[Proof of Proposition \ref{pr:limit_G1}]　
$\tilde{G}_{1,1}$ is given by $\tilde{G}_{1,1}=(d/d\zeta)\,\tilde{G}_1(\zeta)\,|_{\zeta=0}$.
Differentiating the both sides of $\tilde{G}_1(\zeta)=\tilde{V}_1(\zeta)\tilde{J}_1(\zeta)\tilde{V}_1(\zeta)^{-1}$ and setting $\zeta=0$, we obtain 
\begin{equation}
\tilde{G}_{1,1} = \tilde{\alpha}_{s_0,1}^{G_1}\, \bv^\dagger\, \bu_{s_0}^{G_1}(\bar{z}_1^*), 
\label{eq:tildeG1_diff1}
\end{equation}
where 
$\bv^\dagger = \bv_{s_0}^{G_1}(\bar{z}_1^*) + (\tilde{\alpha}_{s_0,1}^{G_1} )^{-1} ( \underline{\zeta}_2(\bar{z}_1^*) I - G_1(\bar{z}_1^*) ) \tilde{\bv}_{s_0,1}$ and 
$\tilde{\bv}_{s_0,1} = (d/d\zeta)\,\tilde{\bv}_{s_0}(\zeta)|_{\zeta=0}$. In derivation of equation(\ref{eq:tildeG1_diff1}), we use the following identity:
\[
\frac{d}{d\,\zeta} \tilde{V}_1(\zeta)^{-1} = -\tilde{V}_1(\zeta)^{-1} \left(\frac{d}{d\,\zeta} \tilde{V}_1(\zeta)\right) \tilde{V}_1(\zeta)^{-1}. 
\]
Note that $\bu_{s_0}^{G_1}(\bar{z}_1^*) \bv^\dagger = 1$ and $\tilde{\alpha}_{s_0,1}^{G_1}$ is an eigenvalue of $\tilde{G}_{1,1}$. 
By equation (\ref{eq:G1equation}), $\tilde{G}_1(\zeta)$ satisfies 
\begin{equation}
\tilde{G}_1(\zeta) = A_{*,-1}(\bar{z}_1^*-\zeta^2)+A_{*,0}(\bar{z}_1^*-\zeta^2) \tilde{G}_1(\zeta)+A_{*,1}(\bar{z}_1^*-\zeta^2) \tilde{G}_1(\zeta)^2. 
\label{eq:tildeG1_equation}
\end{equation}
Differentiating both the sides of equation (\ref{eq:tildeG1_equation}) and setting $\zeta=0$, we obtain 
\begin{equation}
\tilde{G}_{1,1} = A^\dagger\, \tilde{G}_{1,1}, 
\label{eq:tildeG1_diff2}
\end{equation}
where $A^\dagger=A_{*,0}(\bar{z}_1^*) + \underline{\zeta}_2(\bar{z}_1^*) A_{*,1}(\bar{z}_1^*) + A_{*,1}(\bar{z}_1^*) G_1(\bar{z}_1^*)$. In derivation of equation (\ref{eq:tildeG1_diff2}), we use the fact that $\tilde{G}_{1,1} G_1(\bar{z}_1^*) = \underline{\zeta}_2(\bar{z}_1^*) \tilde{G}_{1,1}$.
Multiplying the both sides of equation (\ref{eq:tildeG1_diff2}) by $\bv_{s_0}^{G_1}(\bar{z}_1^*)$ from the right, we obtain $A^\dagger \bv^\dagger = \bv^\dagger$. Hence, $\bv^\dagger$ is the right eigenvector of $A^\dagger$ with respect to the eigenvalue 1. 
From equation (\ref{eq:WFfact}) in Lemma \ref{le:Gmatrix}, we obtain 
\begin{equation}
I-A^\dagger = (\bar{\zeta}_2(\bar{z}_1^*)^{-1}I-R_1(\bar{z}_1^*)) (I-H_1(\bar{z}_1^*)),
\end{equation}
and from equation (\ref{eq:H1N1_relation}), we know that $N_1(\bar{z}_1^*)=(I-H_1(\bar{z}_1^*))^{-1}$. Hence, $N_1(\bar{z}_1^*) \bv^{R_1}(\bar{z}_1^*)$ is the right eigenvector of $A^\dagger$ with respect to the eigenvalue 1, and we see that $\bv^\dagger$ can be given by $\bv^\dagger = N_1(\bar{z}_1^*) \bv^{R_1}(\bar{z}_1^*)$. 
Since $\bu_{s_0}^{G_1}(\bar{z}_1^*)$ is the left eigenvector of $\tilde{G}_{1,1}$ with respect to the eigenvalue $\tilde{\alpha}_{s_0,1}^{G_1}$, $\bv^{R_1}(\bar{z}_1^*)$ must satisfy $\bu_{s_0}^{G_1}(\bar{z}_1^*) N_1(\bar{z}_1^*) \bv^{R_1}(\bar{z}_1^*)=1$. 
Since $\tilde{\alpha}_{s_0,1}^{G_1}$ is negative, positivity of $-\tilde{G}_{1,1}$ is obvious. Since $N_1(\bar{z}_1^*)\ge I$, we have $N_1(\bar{z}_1^*) \bv^{R_1}(\bar{z}_1^*)\ge \bv^{R_1}(\bar{z}_1^*)\ne \bzero$ and $-\tilde{G}_{1,1}$ is nonzero. 
\end{proof}

%
%
\section{Proof of Proposition \ref{pr:limit_varphi2G1} } \label{sec:varphis2_limit_proofs}

Before proving Proposition \ref{pr:limit_varphi2G1}, we give the following proposition. 
\begin{proposition} \label{pr:limit_varphi2} 
In the case of Type II, if $r_1<\bar{z}_1^*$, then $\underline{\zeta}_2(r_1)=r_2<\bar{z}_2^*$ and 
\begin{align}
\lim_{z\to r_1} (r_1-z) \bvarphi_2(\alpha_{s_0}(z)) 
&= \underline{\zeta}_{2,z}(r_1)^{-1}\,c_{pole}^{\bvarphi_2}\, \bu^{C_2}(r_2) > \bzero^\top, 
\label{eq:limit_varphi2_r1}
\end{align}
where $\underline{\zeta}_{2,z}(z)=(d/d z) \underline{\zeta}_2(z)$; if $r_1=\bar{z}_1^*$, then $\underline{\zeta}_2(\bar{z}_1^*)=r_2<\bar{z}_2^*$ and 
\begin{align}
\lim_{\tilde{\Delta}\ni z\to \bar{z}_1^*} (\bar{z}_1^*-z)^{\frac{1}{2}} \bvarphi_2(\alpha_{s_0}(z)) 
&= (-\tilde{\alpha}_{s_0,1}^{G_1})^{-1}\,c_{pole}^{\bvarphi_2}\, \bu^{C_2}(r_2) > \bzero^\top. 
\label{eq:limit_varphi2_z1max}
\end{align}
\end{proposition}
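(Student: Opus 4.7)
\medskip
\noindent\textbf{Proof plan for Proposition \ref{pr:limit_varphi2}.}

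The plan is to reduce both limits to a composition of two pieces that are already under control: the simple-pole structure of $\bvarphi_2(w)$ at $w=r_2$, given by Corollary \ref{co:varphi2_limit_typeIa}, and the local expansion of the eigenvalue $\alpha_{s_0}(z)$ of $G_1(z)$ near $z=r_1$, provided by Remark \ref{re:alphas0_analytic} and Proposition \ref{pr:limit_eigenG1}. The starting observation, valid in both subcases of Type II, is that $\alpha_{s_0}(r_1)=\underline{\zeta}_2(r_1)=r_2$; indeed, by Proposition \ref{pr:sprG1} we have $\spr(G_1(z))=\underline{\zeta}_2(z)$ on the real axis, and by Lemma \ref{le:decay_rate} the Type II configuration forces $(r_1,r_2)=(e^{\bar\eta_1^{(c)}},e^{\eta_2^{(c)}})$ to lie on the lower branch $\chi(z,\underline{\zeta}_2(z))=1$. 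Moreover $r_2<\bar{z}_2^*$ in Type II, so Corollary \ref{co:varphi2_limit_typeIa} applies and gives
\[
\bvarphi_2(w) \;=\; \frac{c_{pole}^{\bvarphi_2}\,\bu^{C_2}(r_2)}{r_2-w} \;+\; \bh(w),
\]
where $\bh$ is entry-wise analytic at $w=r_2$.

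For the first subcase $r_1<\bar{z}_1^*$, Assumption \ref{as:G1_eigen_z1max} and the discussion in Section \ref{sec:Gmatrix} imply that $\alpha_{s_0}(z)$ is analytic at $z=r_1$, with $\alpha_{s_0}(r_1)=r_2$ and derivative $\alpha_{s_0}'(r_1)=\underline{\zeta}_{2,z}(r_1)$ (obtained by differentiating the real-axis identity $\alpha_{s_0}(z)=\underline{\zeta}_2(z)$). Substituting $w=\alpha_{s_0}(z)$ in the pole expansion for $\bvarphi_2(w)$, the singular part gives
\[
(r_1-z)\,\bvarphi_2(\alpha_{s_0}(z))
= \frac{(r_1-z)\,c_{pole}^{\bvarphi_2}\,\bu^{C_2}(r_2)}{\underline{\zeta}_{2,z}(r_1)(r_1-z)+O((r_1-z)^2)} + (r_1-z)\bh(\alpha_{s_0}(z)),
\]
and letting $z\to r_1$ yields \eqref{eq:limit_varphi2_r1}. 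For the second subcase $r_1=\bar{z}_1^*$, Remark \ref{re:alphas0_analytic} gives $\alpha_{s_0}(z)=\tilde{\alpha}_{s_0}((\bar{z}_1^*-z)^{1/2})$ with $\tilde{\alpha}_{s_0}(0)=r_2$, so $r_2-\alpha_{s_0}(z)=-\tilde{\alpha}_{s_0,1}^{G_1}(\bar{z}_1^*-z)^{1/2}+O(\bar{z}_1^*-z)$. Multiplying by $(\bar{z}_1^*-z)^{1/2}$ and taking $\tilde{\Delta}_{\bar{z}_1^*}\ni z\to\bar{z}_1^*$ gives \eqref{eq:limit_varphi2_z1max}.

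For the strict positivity of the two limit vectors, $c_{pole}^{\bvarphi_2}>0$ and $\bu^{C_2}(r_2)>\bzero^\top$ are already supplied by Corollary \ref{co:varphi2_limit_typeIa}, and $-\tilde{\alpha}_{s_0,1}^{G_1}>0$ by Proposition \ref{pr:limit_eigenG1}. The remaining point is $\underline{\zeta}_{2,z}(r_1)>0$. I plan to derive this from convexity of $\bar\Gamma$ and the definition of Type II: the point $(\bar\eta_1^{(c)},\eta_2^{(c)})$ lies on the lower boundary arc $s_2=\log\underline{\zeta}_2(e^{s_1})$, and the condition $\bar\eta_1^{(c)}<\bar s_1^*$ together with $\eta_2^{(c)}\le\theta_2^{(c)}$ places it strictly to the right of the minimum of this convex arc, where the derivative is strictly positive (an alternative, soft argument is that otherwise $\spr(G_1(z))$ would exceed $r_2$ for $z$ slightly below $r_1$, contradicting convergence of $\bvarphi_2(G_1(z))$ on the real radius of convergence).

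The main obstacle is not any one calculation but pinning down these geometric facts about $\bar\Gamma$ precisely enough to guarantee both $\underline{\zeta}_2(r_1)=r_2<\bar{z}_2^*$ and $\underline{\zeta}_{2,z}(r_1)>0$ under the Type II hypotheses; once these are secured, the limits reduce to the elementary composition argument sketched above.
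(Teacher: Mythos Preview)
Your proposal is correct and follows essentially the same route as the paper: both arguments factor the limit as $(r_2-\alpha_{s_0}(z))\,\bvarphi_2(\alpha_{s_0}(z))$ times the reciprocal of the local rate at which $\alpha_{s_0}(z)$ approaches $r_2$, invoking Corollary~\ref{co:varphi2_limit_typeIa} for the first factor and either analyticity of $\alpha_{s_0}$ (when $r_1<\bar z_1^*$) or Proposition~\ref{pr:limit_eigenG1} (when $r_1=\bar z_1^*$) for the second. Your explicit discussion of why $\underline{\zeta}_{2,z}(r_1)>0$ via the convex geometry of $\bar\Gamma$ is a point the paper's proof passes over in silence, so that extra care is welcome rather than superfluous.
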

%
\begin{proof}
In the case of Type II, we have $\alpha_{s_0}(r_1)=\underline{\zeta}_2(e^{\bar{\eta}_1^{(c)}})=e^{\eta_2^{(c)}}=r_2<\bar{z}_2^*$, and the point $w=r_2$ is a pole of $\bvarphi_2(w)$ with order one. 
If $r_1<\bar{z}_1^*$, $\alpha_{s_0}(z)$ is analytic at $z=r_1$ and, by Corollary \ref{co:varphi2_limit_typeIa}, we have 
\begin{align}
\lim_{z\to r_1} (r_1-z)\,\bvarphi_2(\alpha_{s_0}(z)) 
&= \lim_{z\to r_1} (r_2-\alpha_{s_0}(z))\,\bvarphi_2(\alpha_{s_0}(z))\, \frac{r_1-z}{r_2-\alpha_{s_0}(z)} \cr
&= c_{pole}^{\bvarphi_2}\, \bu^{C_2}(r_2)/\underline{\zeta}_{2,z}(r_1), 
\end{align}
and if $r_1=\bar{z}_1^*$, 
\begin{align}
\lim_{\tilde{\Delta}\ni z\to \bar{z}_1^*} (\bar{z}_1^*-z)^{\frac{1}{2}}\,\bvarphi_2(\alpha_{s_0}(z)) 
&= \lim_{\tilde{\Delta}\ni z\to \bar{z}_1^*} (r_2-\alpha_{s_0}(z))\,\bvarphi_2(\alpha_{s_0}(z))\, \biggl( - \frac{(\bar{z}_1^*-z)^\frac{1}{2}}{\alpha_{s_0}(z)-r_2} \biggr) \cr
&= c_{pole}^{\bvarphi_2}\, \bu^{C_2}(r_2) /(-\tilde{\alpha}_{s_0,1}^{G_1}), 
\end{align}
where we use Proposition \ref{pr:limit_eigenG1}. 
\end{proof}

%
\begin{proof}[Proof of Proposition \ref{pr:limit_varphi2G1}]
In the case of Type I, we always have $\spr(G_1(r_1))=\underline{\zeta}_2(r_1)<r_2$. Assume $r_1=\bar{z}_1^*$. By Proposition \ref{pr:varphi2_analytic}, since $\tilde{G}_1(0)=\spr(G_1(\bar{z}_1^*))<r_2$, $\tilde{\bvarphi}_2(\zeta)$ is given in a form of absolutely convergent series as
\begin{equation}
\tilde{\bvarphi}_2(\tilde{G}_1(\zeta)) = \sum_{k=0}^\infty \bnu_{0,k} \tilde{G}_1(\zeta)^k.
\end{equation}
This $\tilde{\bvarphi}_2(\tilde{G}_1(\zeta))$ is entry-wise analytic at $\zeta=0$ and we have
\begin{align}
\tilde{\bvarphi}_{2,1}^{\tilde{G}_1} 
= \frac{d}{d \zeta} \tilde{\bvarphi}_2(\tilde{G}_1(\zeta)) \Big|_{\zeta=0}
&= \sum_{k=1}^\infty \bnu_{0,k} \sum_{l=1}^k G_1(\bar{z}_1^*)^{l-1} \tilde{G}_{1,1} G_1(\bar{z}_1^*)^{k-l} \cr
&= \sum_{k=1}^\infty \bnu_{0,k} \sum_{l=1}^k \underline{\zeta}_2(\bar{z}_1^*)^{k-l} G_1(\bar{z}_1^*)^{l-1} \tilde{G}_{1,1},
\end{align}
where we use the fact that 
\[
\tilde{G}_{1,1} G_1(\bar{z}_1^*) = \tilde{\alpha}_{s_0,1}^{G_1} N_1(\bar{z}_1^*) \bv^{R_1}(\bar{z}_1^*) \bu_{s_0}^{G_1}(\bar{z}_1^*) G_1(\bar{z}_1^*) = \underline{\zeta}_2(\bar{z}_1^*) \tilde{G}_{1,1}. 
\]
Since $\tilde{G}_{1,1}$ is nonzero and nonpositive, $\tilde{\bvarphi}_{2,1}^{\tilde{G}_1}$ is also nonzero and nonpositive. 

In the case of Type II, we have $\spr(G_1(r_1))=\underline{\zeta}_2(r_1)=r_2<\bar{z}_2^*$ and $\bvarphi_2(w)$ has a pole at $w=\underline{\zeta}_2(r_1)$. Hence, if $r_1<\bar{z}_1^*$, we obtain from equation (\ref{eq:varphi2G1_extension1}) and Proposition \ref{pr:limit_varphi2} that 
\begin{align}
\bvarphi_{2,-1}^{G_1}
&=\lim_{z\to r_1} (r_1-z)\,\bvarphi_2(G_1(z)) \cr
&= \begin{pmatrix} \bzero & \cdots & \bzero & \underline{\zeta}_{2,z}(r_1)^{-1}\,c_{pole}^{\bvarphi_2}\, \bu^{C_2}(r_2)\,\bv_{s_0}^{G_1}(r_1) \end{pmatrix} V_1(r_1)^{-1} \cr
&= \underline{\zeta}_{2,z}(r_1)^{-1}\,c_{pole}^{\bvarphi_2}\, \bu^{C_2}(r_2)\,\bv_{s_0}^{G_1}(r_1) \bu_{s_0}^{G_1}(r_1), 
\label{eq:limit_varphisG1_II_1b}
\end{align}
where $\bu^{C_2}(r_2)\,\bv_{s_0}^{G_1}(r_1)>0$; if $r_1=\bar{z}_1^*$, we also obtain from equation (\ref{eq:varphi2G1_extension2}) and Proposition \ref{pr:limit_varphi2} that 
\begin{align}
\tilde{\bvarphi}_{2,-1}^{\tilde{G}_1}
&= \lim_{\tilde{\Delta}\ni z\to r_1} (\bar{z}_1^*-z)^{\frac{1}{2}}\,\bvarphi_2(G_1(z)) \cr
&= \begin{pmatrix} \bzero & \cdots & \bzero & (-\tilde{\alpha}_{s_0}^{G_1})^{-1}\,c_{pole}^{\bvarphi_2}\, \bu^{C_2}(r_2)\,\bv_{s_0}^{G_1}(\bar{z}_1^*) \end{pmatrix} V_1(\bar{z}_1^*)^{-1} \cr
&= (-\tilde{\alpha}_{s_0}^{G_1})^{-1}\,c_{pole}^{\bvarphi_2}\, \bu^{C_2}(r_2)\,\bv_{s_0}^{G_1}(\bar{z}_1^*) \bu_{s_0}^{G_1}(\bar{z}_1^*), 
\label{eq:limit_varphisG1_II_1c}
\end{align}
where $\bu^{C_2}(r_2)\,\bv_{s_0}^{G_1}(\bar{z}_1^*) >0$.
\end{proof}

%
%
\section{Proof of Proposition \ref{pr:uC1_positivity}} \label{sec:uC1_positivity}

\begin{proof}[Proof of Proposition \ref{pr:uC1_positivity}]
We prove only the first half of the proposition. 
For $z\in[\underline{z}_1^*,\bar{z}_1^*]$, define a nonnegative block tri-diagonal matrix $A^{(1)}_*(z)$ as 
\[
A^{(1)}_*(z) = 
\begin{pmatrix}
A^{(1)}_{*,0}(z) & A^{(1)}_{*,1}(z) & & & \cr
A_{*,-1}(z) & A_{*,0}(z) & A_{*,1}(z) & & \cr
& A_{*,-1}(z) & A_{*,0}(z) & A_{*,1}(z) & \cr
& & \ddots & \ddots & \ddots 
\end{pmatrix}.
\]
Under Assumption \ref{as:Akl_irreducible}, $A^{(1)}_*(z)$ is irreducible and aperiodic. 
Furthermore, for any $z_0\in[\underline{z}_1^*,\bar{z}_1^*]$ such that $\psi_1(z_0)=1$, the invariant measure $\bu_*^{(1)}(z_0)$ satisfying $\bu_*^{(1)}(z_0) A^{(1)}_*(z_0) = \bu_*^{(1)}(z_0)$ is given as follows (see Theorem 3.1 of Ozawa \cite{Ozawa13}): 
\begin{align}
\bu_*^{(1)}(z_0) &= \big( \bu^{C_1}(z_0)\ \ \bu^{C_1}(z_0) A_{*,1}^{(1)}(z_0) N_1(z_0)\ \ \bu^{C_1}(z_0) A_{*,1}^{(1)}(z_0) N_1(z_0) R_1(z_0) \cr
&\qquad\qquad \bu^{C_1}(z_0) A_{*,1}^{(1)}(z_0) N_1(z_0) R_1(z_0)^2\ \ \cdots\  \big).
\end{align}
By Theorem 6.3 of Seneta \cite{Seneta06}, this $\bu_*^{(1)}(z_0)$ is positive and hence the results of the proposition holds. 
\end{proof}

%
%
\section{Derivations of the coefficient vectors} \label{sec:get_coefficients}

\noindent\textit{In the case of Type I}
\smallskip

(1) $\tilde{\bvarphi}_{1,-1}^{I}$ of formula (\ref{eq:tildevarphiI1m1}).\quad 
From Propositions \ref{pr:tildef1_limit} and \ref{pr:adjC1_zs}, we obtain 
\begin{align*}
\tilde{\bvarphi}_{1,-1}^{I} 
&= \lim_{\tilde{\Delta}_{\bar{z}_1^*}\ni z\to \bar{z}_1^*} (\bar{z}_1^*-z)^{\frac{1}{2}}\,\frac{\bg_1(z)\,\adj(I-C_1(z,G_1(z))}{\tilde{f}_1(1,(\bar{z}_1^*-z)^{\frac{1}{2}})} 
=\frac{\bg_1(\bar{z}_1^*)\,\bv^{C_1}(\bar{z}_1^*)\,\bu^{C_1}(\bar{z}_1^*)}{-\tilde{\lambda}^{C_1}_\zeta(0)}.
\end{align*}

\medskip
(2) $\tilde{\bvarphi}_{1,1}^{I}$ of formula (\ref{eq:tildevarphiI11}).\quad 
From Propositions \ref{pr:limit_varphi2G1}  and \ref{pr:limit_varphi2C2G1}, we obtain
\begin{align*}
\tilde{\bvarphi}_{1,1}^I 
&= \frac{d}{d\zeta}\, \tilde{\bg}_1(\zeta)\,(I-C_1(\bar{z}_1^*-\zeta^2,\tilde{G}_1(\zeta))^{-1}\,\Big|_{\zeta=0} \cr
&= \bigl(\tilde{\bvarphi}^{\hat{C}_2}_{2,1} -\tilde{\bvarphi}_{2,1}^{\tilde{G}_1} +\bnu_{0,0} A_{*,1}^{(0)}(\bar{z}_1^*)\, \tilde{G}_{1,1} \bigr) (I-C_1(\bar{z}_1^*,G_1(\bar{z}_1^*)))^{-1} \cr
&\qquad + \bg_1(\bar{z}_1^*) (I-C_1(\bar{z}_1^*,G_1(\bar{z}_1^*)))^{-1} A_{*,1}^{(1)}(\bar{z}_1^*)\, \tilde{G}_{1,1}\, (I-C_1(\bar{z}_1^*,G_1(\bar{z}_1^*)))^{-1}, 
\end{align*}
and this leads us to formula (\ref{eq:tildevarphiI11}). 

\bigskip
\noindent\textit{In the case of Type II}

\smallskip
Before deriving expressions for the coefficient vectors, we give the following proposition. 
\begin{proposition} \label{pr:vR2}
In the case of Type II, we have $\spr(G_1(r_1))=r_2$, $\spr(R_2(r_2))=r_1^{-1}$ and 
\begin{equation}
\bv_{s_0}^{G_1}(r_1) = (r_1 I-G_2(r_2))^{-1} N_2(r_2)\, \bv^{R_2}(r_2), 
\label{eq:vR2}
\end{equation}
where $\bv^{R_2}(r_2)$ is the right eigenvector of $R_2(r_2)$ with respect to the eigenvalue $r_1^{-1}$.
\end{proposition}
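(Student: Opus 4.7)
\noindent\textbf{Proof proposal for Proposition \ref{pr:vR2}.}
My plan is to derive both spectral-radius identities directly from Proposition \ref{pr:sprG1} and the Type II characterization of the decay rates, and then establish the eigenvector formula by combining the two Wiener--Hopf factorizations of $I-C(z,w)$ (from Lemma \ref{le:Gmatrix}(v) and Remark \ref{re:Gmatrix2}) evaluated at the point $(z,w)=(r_1,r_2)$.

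For the spectral radii, by Lemma \ref{le:decay_rate} we have $r_1=e^{\bar{\eta}_1^{(c)}}=\bar{\zeta}_1(e^{\eta_2^{(c)}})=\bar{\zeta}_1(r_2)$ and $r_2=e^{\eta_2^{(c)}}$. Hence $\chi(r_1,r_2)=1$, so $r_2\in\{\underline{\zeta}_2(r_1),\bar{\zeta}_2(r_1)\}$. I will verify from the Type II configuration that in fact $r_2=\underline{\zeta}_2(r_1)$ (this is consistent with $\alpha_{s_0}(r_1)=\spr(G_1(r_1))$ being the eigenvalue that produces the dominant singularity of $\bvarphi_1$ via the $\bvarphi_2(G_1(z))$ branch of expression (\ref{eq:varphi1})). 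Then Proposition \ref{pr:sprG1} gives $\spr(G_1(r_1))=\underline{\zeta}_2(r_1)=r_2$ and $\spr(R_2(r_2))=\bar{\zeta}_1(r_2)^{-1}=r_1^{-1}$.

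For the eigenvector formula, set
\[
\bv=(r_1 I-G_2(r_2))^{-1}N_2(r_2)\,\bv^{R_2}(r_2).
\]
The factor $r_1 I-G_2(r_2)$ is invertible because $\spr(G_2(r_2))=\underline{\zeta}_1(r_2)=e^{\eta_1^{(c)}}<e^{\bar{\eta}_1^{(c)}}=r_1$ in Type II, so $\bv$ is well defined. I will compute, using $(I-H_2(r_2))N_2(r_2)=I$ (Corollary \ref{co:N1_inverse} via Remark \ref{re:Gmatrix2}) and $R_2(r_2)\bv^{R_2}(r_2)=r_1^{-1}\bv^{R_2}(r_2)$, that
\[
\bigl(r_1^{-1}I-R_2(r_2)\bigr)\bigl(I-H_2(r_2)\bigr)\bigl(r_1 I-G_2(r_2)\bigr)\bv=\bzero.
\]
Equating the two Wiener--Hopf factorizations of $I-C(r_1,r_2)$ then yields
\[
\bigl(r_2^{-1}I-R_1(r_1)\bigr)\bigl(I-H_1(r_1)\bigr)\bigl(r_2 I-G_1(r_1)\bigr)\bv=\bzero.
\]
Finally I will cancel the first two factors: $I-H_1(r_1)$ is invertible by Corollary \ref{co:N1_inverse}, and $r_2^{-1}I-R_1(r_1)$ is invertible because $\spr(R_1(r_1))=\bar{\zeta}_2(r_1)^{-1}<\underline{\zeta}_2(r_1)^{-1}=r_2^{-1}$ (the strict inequality holds since in Type II $r_1<\bar{z}_1^*$, so $\underline{\zeta}_2(r_1)<\bar{\zeta}_2(r_1)$). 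Consequently $(r_2I-G_1(r_1))\bv=\bzero$, identifying $\bv$ as a right eigenvector of $G_1(r_1)$ for the eigenvalue $r_2=\spr(G_1(r_1))$; nondegeneracy of $\bv$ follows from $N_2(r_2)\bv^{R_2}(r_2)\ge\bv^{R_2}(r_2)\ne\bzero$ (using $N_2(r_2)\ge I$) and invertibility of $r_1 I-G_2(r_2)$.

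The only real obstacle is the strict inequality $\spr(R_1(r_1))<r_2^{-1}$, which relies on ruling out $r_1=\bar{z}_1^*$ in Type II; this boundary case should be excluded from the definition of Type II (where $\eta_1^{(c)}<\theta_1^{(c)}$) or handled by a separate limiting argument using Remark \ref{re:alphas0_analytic}. Everything else reduces to straightforward linear algebra once the two Wiener--Hopf factorizations are aligned.
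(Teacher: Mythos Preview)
Your overall strategy---equating the two Wiener--Hopf factorizations of $I-C(r_1,r_2)$ from Lemma \ref{le:Gmatrix}(v) and Remark \ref{re:Gmatrix2}---is exactly the paper's. The difference is the direction in which you run the argument, and that difference creates a genuine gap.

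The paper multiplies both factorizations on the right by $\bv_{s_0}^{G_1}(r_1)$. The factor $r_2 I-G_1(r_1)$ kills it, so the second factorization gives
\[
\bigl(r_1^{-1}I-R_2(r_2)\bigr)\bigl(I-H_2(r_2)\bigr)\bigl(r_1 I-G_2(r_2)\bigr)\,\bv_{s_0}^{G_1}(r_1)=\bzero,
\]
and one only needs to invert $I-H_2(r_2)$ and $r_1 I-G_2(r_2)$. Both are nonsingular throughout Type II ($\spr(G_2(r_2))=\underline{\zeta}_1(r_2)<r_1$ and Corollary \ref{co:N1_inverse}), and the paper records $\spr(R_1(r_1))=\bar{\zeta}_2(r_1)^{-1}\le r_2^{-1}$ with a weak inequality precisely because equality can occur.

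You instead start from $\bv^{R_2}(r_2)$, build $\bv$, and try to cancel $r_2^{-1}I-R_1(r_1)$. Your claim that ``in Type II $r_1<\bar{z}_1^*$'' is false: when $\eta_2^{(c)}=\theta_2^{(c)}$ and $\psi_1(\bar{z}_1^*)\le 1$ (cases (3) and (4) of Lemma \ref{le:varphi1_limit_typeII}) one has $r_1=e^{\bar{\eta}_1^{(c)}}=e^{\theta_1^{(c)}}=\bar{z}_1^*$, hence $\underline{\zeta}_2(r_1)=\bar{\zeta}_2(r_1)$ and $r_2^{-1}I-R_1(r_1)$ is singular. This case is not excluded from Type II, and a limiting argument via Remark \ref{re:alphas0_analytic} is not what is needed here. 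Your argument can be rescued by a different observation: since $\chi(r_1,r_2)=1$ is the simple Perron--Frobenius eigenvalue of the irreducible matrix $C(r_1,r_2)$, the kernel of $I-C(r_1,r_2)$ is one-dimensional, and both your $\bv$ and $\bv_{s_0}^{G_1}(r_1)$ lie in it, so they are proportional. But the paper's direction avoids the issue altogether.
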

\begin{proof}
In the case of Type II, we have $\spr(G_1(r_1))=\underline{\zeta}_2(r_1)=r_2$, $\spr(R_1(r_1))=\bar{\zeta}_2(r_1)^{-1} \le r_2^{-1}$, $\spr(G_2(r_2))=\underline{\zeta}_1(r_2)<r_1$ and $\spr(R_2(r_2))=\bar{\zeta}_1(r_2)^{-1}=r_1^{-1}$. 
Furthermore, by Lemma \ref{le:Gmatrix} and Remark \ref{re:Gmatrix2}, we have 
\begin{align}
I-C(r_1,r_2)
&= \bigl(r_2^{-1}I-R_1(r_1)\bigr) \bigl(I-H_1(r_1)\bigr) \bigl(r_2 I-G_1(r_1)\bigr) \cr
&= \bigl(r_1^{-1}I-R_2(r_2)\bigr) \bigl(I-H_2(r_2)\bigr) \bigl(r_1 I-G_2(r_2)\bigr). 
\end{align}
Multiplying both the sides of this equation by $\bv_{s_0}^{G_1}(r_1)$ from the right, we obtain 
\begin{equation}
\bigl(r_1^{-1}I-R_2(r_2)\bigr) \bigl(I-H_2(r_2)\bigr) \bigl(r_1 I-G_2(r_2)\bigr) \bv_{s_0}^{G_1}(r_1) = \bzero.
\end{equation}
Since both $\bigl(I-H_2(r_2)\bigr)$ and $\bigl(r_1 I-G_2(r_2)\bigr)$ are nonsingular, we obtain 
\[
\bv^{R_2}(r_2) = \bigl(I-H_2(r_2)\bigr) \bigl(r_1 I-G_2(r_2)\bigr) \bv_{s_0}^{G_1}(r_1)\ne \bzero, 
\]
and this leads us to expression (\ref{eq:vR2}), where we use the fact that $N_2(r_2)=(I-H_2(r_2))^{-1}$. 
\end{proof}

\bigskip
(1) $\bvarphi_{1,-1}^{II}$ of formula (\ref{eq:varphiII1m1}).\quad 
Note that we have 
\begin{align}
\bu^{C_2}(r_2) (C_2(r_1,r_2)-I) 
&= \bu^{C_2}(r_2) (C_2(r_1,r_2)-C_2(G_2(r_2),r_2)) \cr
&= \bu^{C_2}(r_2) A_{1,*}^{(2)}(r_2)(r_1 I-G_2(r_2)). 
\label{eq:uC2_C2_G2}
\end{align}
Hence, from Propositions \ref{pr:limit_varphi2G1}  and \ref{pr:limit_varphi2C2G1}, we obtain
\begin{align*}
\bvarphi_{1,-1}^{II} 
&= \lim_{z\to r_1} (r_1-z)\, \bg_1(z)\,(I-C_1(z,G_1(z))^{-1} \cr
&= \bigl(\bvarphi^{\hat{C}_2}_{2,-1} -\bvarphi_{2,-1}^{\tilde{G}_1}\bigr) (I-C_1(r_1,G_1(r_1)))^{-1} \cr
&= \frac{c_{pole}^{\bvarphi_2} \bu^{C_2}(r_2) A_{1,*}^{(2)}(r_2)(r_1 I-G_2(r_2)) \bv_{s_0}^{G_1}(r_1) \bu_{s_0}^{G_1}(r_1) \bigl( I-C_1(r_1,G_1(r_1)) \bigr)^{-1}}{\underline{\zeta}_{2,z}(r_1)}.
\end{align*}
This and Proposition \ref{pr:vR2} lead us to formula (\ref{eq:varphiII1m1}). 

\medskip
(2) $\bvarphi_{1,-2}^{II}$ of formula (\ref{eq:varphiII1m2}).\quad 
From Propositions \ref{pr:f1_limit}, \ref{pr:adjC1}, \ref{pr:limit_varphi2G1} and \ref{pr:limit_varphi2C2G1}, we obtain
\begin{align*}
\bvarphi_{1,-2}^{II} 
&= \lim_{z\to r_1} (r_1-z)^2\, \frac{\bg_1(z)\,\adj\big(I-C_1(z,G_1(z))\big)}{f_1(1,z)} \cr
&= \frac{ (\bvarphi^{\hat{C}_2}_{2,-1} -\bvarphi_{2,-1}^{\tilde{G}_1} )\, \bu^{C_1}(r_1) \bv^{C_1}(r_1)}{\psi_{1,z}(r_1)} \cr
&= \frac{c_{pole}^{\bvarphi_2} \bu^{C_2}(r_2) A_{1,*}^{(2)}(r_2)(r_1 I-G_2(r_2)) \bv_{s_0}^{G_1}(r_1)\bu_{s_0}^{G_1}(r_1) \bv^{C_1}(r_1) \bu^{C_1}(r_1)}{\underline{\zeta}_{2,z}(r_1) \psi_{1,z}(r_1)}.
\end{align*}
This and Proposition \ref{pr:vR2} lead us to formula (\ref{eq:varphiII1m2}). 

\medskip
(3) $\tilde{\bvarphi}_{1,-2}^{II}$ of formula (\ref{eq:tildevarphiII1m2}).\quad
From Propositions \ref{pr:limit_varphi2G1}, \ref{pr:limit_varphi2C2G1}, \ref{pr:tildef1_limit} and \ref{pr:adjC1_zs}, we obtain
\begin{align*}
\tilde{\bvarphi}_{1,-2}^{II} 
&= \lim_{\tilde{\Delta}_{\bar{z}_1^*}\ni z\to \bar{z}_1^*} (\bar{z}_1^*-z)\, \frac{\tilde{\bg}_1((\bar{z}_1^*-z)^{\frac{1}{2}})\,\adj\big(I-C_1(z,\tilde{G}_1((\bar{z}_1^*-z)^{\frac{1}{2}}))\big)}{\tilde{f}_1(1,(\bar{z}_1^*-z)^{\frac{1}{2}})} \cr
&= \frac{(\tilde{\bvarphi}^{\hat{C}_2}_{2,-1} -\tilde{\bvarphi}_{2,-1}^{\tilde{G}_1} )\,\bu^{C_1}(\bar{z}_1^*) \bv^{C_1}(\bar{z}_1^*)}{-\tilde{\lambda}_\zeta^{C_1}(0)} \cr
&= \frac{c_{pole}^{\bvarphi_2} \bu^{C_2}(r_2) A_{1,*}^{(2)}(r_2)(\bar{z}_1^* I-G_2(r_2)) \bv_{s_0}^{G_1}(\bar{z}_1^*)\bu_{s_0}^{G_1}(\bar{z}_1^*)\bv^{C_1}(\bar{z}_1^*) \bu^{C_1}(\bar{z}_1^*)}{\tilde{\alpha}_{s_0,1}^{G_1} \tilde{\lambda}_\zeta^{C_1}(0)}.
\end{align*}
This and Proposition \ref{pr:vR2} lead us to formula (\ref{eq:tildevarphiII1m2}). 

\medskip
(4) $\tilde{\bvarphi}_{1,-1}^{II}$ of formula (\ref{eq:tildevarphiII1m1}).\quad
From Propositions \ref{pr:limit_varphi2G1} and \ref{pr:limit_varphi2C2G1}, we obtain
\begin{align*}
\tilde{\bvarphi}_{1,-1}^{II} 
&= \lim_{\tilde{\Delta}_{\bar{z}_1^*}\ni z\to \bar{z}_1^*} (\bar{z}_1^*-z)^{\frac{1}{2}}\, \tilde{\bg}_1((\bar{z}_1^*-z)^{\frac{1}{2}})\, (I-C_1(z,\tilde{G}_1((\bar{z}_1^*-z)^{\frac{1}{2}})))^{-1} \cr
&= (\tilde{\bvarphi}^{\hat{C}_2}_{2,-1} -\tilde{\bvarphi}_{2,-1}^{\tilde{G}_1} )\, (I-C_1(\bar{z}_1^*,G_1(\bar{z}_1^*)))^{-1} \cr
&= \frac{c_{pole}^{\bvarphi_2} \bu^{C_2}(r_2) A_{1,*}^{(2)}(r_2)(\bar{z}_1^* I-G_2(r_2)) \bv_{s_0}^{G_1}(\bar{z}_1^*) \bu_{s_0}^{G_1}(\bar{z}_1^*) (I-C_1(\bar{z}_1^*,G_1(\bar{z}_1^*)))^{-1}}{-\tilde{\alpha}_{s_0,1}^{G_1} }.
\end{align*}
This and Proposition \ref{pr:vR2} lead us to formula (\ref{eq:tildevarphiII1m1}).

\end{document}